\pgfplotsset{compat=1.16} 
\newcommand{\wedgeq}{\scaleto{\mathbf{\bigwedge}}{10pt}}
\numberwithin{equation}{section}
\newtheorem{theorem}{Theorem}[section]
\newtheorem{lemma}[theorem]{Lemma}
\newtheorem{proposition}[theorem]{Proposition}
\newtheorem{corollary}[theorem]{Corollary}
\newtheorem{definition}[theorem]{Definition}
\theoremstyle{remark}
\newtheorem{remark}[theorem]{Remark}
\newtheorem{example}[theorem]{Example}
\numberwithin{equation}{section}
\def\XXint#1#2#3{{\setbox0=\hbox{$#1{#2#3}{\int}$ }
\vcenter{\hbox{$#2#3$ }}\kern-.6\wd0}}
\def\dashint{\Xint-}
\renewcommand{\geq}{\geqslant}
\newcommand{\di}{\operatorname{div}}
\newcommand{\Lip}{\operatorname{Lip}}
\newcommand{\dif}{\operatorname{d}\!}
\newcommand{\spt}{\operatorname{spt}}
\newcommand{\N}{\mathbb{N}}
\newcommand{\R}{\mathbb{R}}
\newcommand{\locc}{\operatorname{loc}}
\newcommand{\dista}{\operatorname{dist}}
\newcommand{\diameter}{\operatorname{diam}}
\newcommand{\ball}{\operatorname{B}}
\newcommand{\sobo}{\operatorname{W}}
\newcommand{\lebe}{\operatorname{L}}
\newcommand{\hold}{\operatorname{C}}
\newcommand{\D}{\operatorname{D}\!}
\newcommand{\curl}{\operatorname{curl}}
\renewcommand{\leq}{\leqslant}
\newcommand{\id}{\operatorname{Id}}
\newcommand{\A}{\mathbb{A}}
\newcommand{\besov}{\operatorname{B}}
\newcommand{\mres}{\!\mathbin{\vrule height 1.6ex depth 0pt width
0.13ex\vrule height 0.13ex depth 0pt width 1.1ex}\!}
\renewcommand{\dashint}{\fint}
\DeclareMathOperator{\diver}{div}
\newcommand{\Acal}{\mathscr{A}}
\newcommand{\Dcal}{\mathcal{D}}
\newcommand{\Ocal}{\mathcal{O}}
\newcommand{\Wcal}{\mathcal{W}}
\newcommand{\dist}{\mathrm{dist}}
\newcommand{\E}{\mathscr{E}}
\newcommand{\Rcal}{\mathcal{R}}
\newcommand{\Scal}{\mathcal{S}}
\renewcommand{\phi}{\varphi}
\newcommand{\dHaus}{~\textup{d}\mathscr{H}}
\newcommand{\dmu}{~\textup{d}\mu}
\newcommand{\dx}{~\textup{d}x}
\newcommand{\dy}{~\textup{d}y}
\DeclareMathOperator{\conv}{conv}
\newcommand{\franz}{\varepsilon_{\Omega}}
\renewcommand{\epsilon}{\varepsilon}
\definecolor{Gump}{rgb}{0,0.6,0.4}
\definecolor{Blue2}{rgb}{0.4,0.4,1}
\definecolor{Red2}{rgb}{1,0.4,0.4}
\begin{document}

\title[Divergence free extensions]{Extensions of divergence-free \\ fields in $\lebe^{1}$-based function spaces}

\thanks{}
\author[F. Gmeineder]{Franz Gmeineder}
\address{F.G.: Department of Mathematics and Statistics, University of Konstanz, Universit\"{a}sstra\ss e 10, 78457 Konstanz, Germany. E-mail: \texttt{franz.gmeineder@uni-konstanz.de}}
\author[S.~Schiffer]{Stefan Schiffer}
\address{S.Sc.: Max-Planck-Insitut f\"ur Mathematik in den Naturwissenschaften, Inselstra{\ss}e 22, 04103 Leipzig, Germany. E-mail: \texttt{schiffer@mis.mpg.de}}
\subjclass{46E40,35D30,(53A05),(26B20)}
\keywords{Extension operators, divergence-free fields, Whitney extension, Lipschitz domains, cuspidal domains}
\date{\today}

\begin{abstract}
We establish the first extension results for divergence-free (or solenoidal) elements of $\mathrm{L}^{1}$-based function spaces. Here, the key point is to preserve the solenoidality constraint while simultaneously keeping the underlying $\mathrm{L}^{1}$-boundedness. While previous results as in \textsc{Kato} et al. \cite{HKMT} for $\mathrm{L}^{p}$-based function spaces, $1<p<\infty$, rely on PDE approaches, basic principles from harmonic analysis rule out such strategies in the $\mathrm{L}^{1}$-context. By means of a novel method adapted to the divergence-free constraint via differential forms, we establish the existence of such extension operators in the $\mathrm{L}^{1}$-based situation. This applies both to the case of convex domains, where a global extensions can be achieved, as well as to the Lipschitz case, where a local extension can be achieved. Being applicable to $1<p<\infty$ too, our method provides a unifying approach to the cases $p\in\{1,\infty\}$ and $1<p<\infty$. Specifically, covering the exponents $p\in\{1,\infty\}$, this answers a borderline case left open by \textsc{Kato} et al. \cite{HKMT} in the affirmative. By use of explicit examples, the assumptions on the underlying domains are shown to be almost optimal. 
\end{abstract}

\maketitle

\tableofcontents
\section{Introduction}\label{sec:intro}
\subsection{Aims and scope}
A classical task in the theory of function spaces is to extend functions, a priori defined on a proper open subset $\Omega$ of $\R^{n}$, to the entire $\R^{n}$ while preserving certain properties. This allows to view such elements as restrictions of globally defined functions with  specific features, and then opens the gateway to techniques which are confined to the entire space situation.

Whereas classical results on weakly differentiable functions often deal with such properties being smoothness and integrability, much less is known on the preservation of differential constraints. Here, a key operator which arises in numerous applications and which displays the pivotal object in the present paper, is the \emph{divergence} $\di$. Given an open set $\Omega\subset\R^{n}$ of a certain geometry and regularity, and some normed function space $(X(\Omega),\|\cdot\|_{X(\Omega)})$ of vector fields $v\colon\Omega\to\R^{n}$, one might thus be interested in the following question: Does there exist a bounded linear operator $\mathscr{E}\colon X(\Omega)\to X(\R^{n})$ such that 
\begin{align}\label{eq:mainequestion}
\mathrm{div}(v)=0\;\text{in}\;\mathscr{D}'(\Omega)\Longrightarrow \mathrm{div}(\mathscr{E}v)=0\;\text{in}\;\mathscr{D}'(\R^{n})?
\end{align}
If this is possible, we shall refer to $\mathscr{E}$ as a \emph{solenoidality perserving} extension operator on $X(\Omega)$. Divergence-free or solenoidal  fields are ubiquitous in applications: For instance, in fluid mechanics they model incompressible fluid flows.

It is clear that, depending on the method (such as, e.g., PDE-based approaches to be revisited in Section \ref{sec:context} below), only certain function spaces can be tackled successfully. Specifically, the only known results are confined to $\lebe^{p}$-based function spaces for $1<p<\infty$, having left the borderline case $p=1$ open so far. In the present paper, we aim to close this gap and provide a satisfactory answer to the above extension problem for $\lebe^{1}$-based function spaces $X$. Our approach, which also applies to the case of $\lebe^{p}$-based function spaces for $1<p<\infty$, thus allows for a unifying treatment of the above extension problem. Before we embark on the precise description of our results in Section \ref{thm:mainthms}, we briefly pause to review the results available so far.

\subsection{Context and previous results}\label{sec:context}
In order to contextualise our results, we begin by revisiting the extension result \cite{HKMT} by \textsc{Kato} et al.  for the exponent range $1<p<\infty$. 
 
\textbullet \; Let $\Omega,\Omega'\subset\R^{n}$ be two bounded, convex domains with smooth boundaries such that  $\overline{\Omega}\subset\Omega'$ and $\Omega' \setminus \Omega$ is connected. In order to extend a distributionally solenoidal field $u\in\lebe^{p}(\Omega;\R^{n})$  
to $\Omega'$ while preserving solenoidality in $\mathscr{D}'(\Omega')$, one considers the Neumann problem for $(-\Delta)$:
\begin{align}\label{eq:neumann}
\begin{cases}
-\Delta v = 0&\;\text{in}\;\Omega'\setminus\overline{\Omega},\\ 
\partial_{\nu}v = u|_{\partial\Omega}\cdot\nu&\;\text{on}\;\partial\Omega, \\
\partial_{\nu}v =  0&\;\text{on}\;\partial\Omega'.
\end{cases}
\end{align}
Here, $\nu$ is the outer unit normal to $\partial\Omega$ and $u|_{\partial\Omega}$ is understood in the usual  boundary trace sense. Then one then has that $u|_{\partial\Omega}\cdot\nu\in\mathrm{B}_{p,p}^{-1/p}(\partial\Omega)$, and \eqref{eq:neumann} can be solved in $\sobo^{1,p}(\Omega' \setminus\Omega;\R^{n})$ (see, e.g., \textsc{Triebel} \cite[Thm. 4.3.3]{Triebel}). One then defines
\begin{align}\label{eq:KatoExt}
\overline{u} := \begin{cases} 
u&\;\text{in}\;\Omega,\\ 
\nabla v &\;\text{in}\;\Omega'\setminus\overline{\Omega},\\ 
 0&\;\text{otherwise},
\end{cases}
\end{align}
whereby $\overline{u}$ still satisfies $\mathrm{div}(\overline{u})=0$ in $\mathscr{D}'(\R^{n})$ and is compactly supported. The corresponding operator $\mathscr{E}\colon u \mapsto \overline{u}$ then boundedly maps
\begin{align*}
\lebe_{\di}^{p}(\Omega;\R^{n}):=\{u\in\lebe^{p}(\Omega;\R^{n})\colon\;\mathrm{div}(u)=0\;\text{in}\;\mathscr{D}'(\Omega)\}\to \lebe_{\di}^{p}(\Omega';\R^{n}). 
\end{align*}
An analogous strategy works for Sobolev spaces $\sobo^{m,p}(\Omega;\R^{n})$ with $m\in\mathbb{N}$, see \cite{HKMT}. 

\textbullet \; Even though it is, to the best of our knowledge, not written down explicitly in the literature, one may construct an extension of divergence-free maps by use of potentials. For simplicity, we assume that $n=3$ and that  $\Omega\subset\R^{n}$ is open and convex. Moreover, let  $u\in\lebe^{p}(\Omega;\R^{3})$ satisfy $\mathrm{div}(u)=0$ in $\mathscr{D}'(\Omega)$. In this situation, one considers the so-called \emph{regularised Poincar\'{e}-type operator} $R_{x_{0}}$. For $x_{0}\in\Omega$, this is an averaged version of the classical potential operator 
\begin{align}
R(x)= -(x-x_{0})\times \int_{0}^{1}t\,u(x_{0}+t(x-x_{0}))\dif t,\qquad x\in\Omega.
\end{align}
As established by \textsc{Costabel \& McIntosh} \cite{Costabel}, $R_{x_{0}}\colon\sobo^{m,p}(\Omega;\R^{3})\to\sobo^{m+1,p}(\Omega;\R^{3})$ boundedly (where $m\in\mathbb{N}_{0}$, $1<p<\infty$ and $\sobo^{0,p}:=\lebe^{p}$) and $u=\curl(R_{x_{0}}u)$. The highly non-trivial point here is that $R_{x_{0}}$ in fact yields a $\sobo^{m+1,p}$-solution of the heavily underdetermined curl equation. This gives  us access to extension operators in Sobolev spaces: Letting $E$ be a bounded linear extension operator $E\colon\sobo^{m+1,p}(\Omega;\R^{3})\to\sobo^{m+1,p}(\R^{3};\R^{3})$, one then may put $\mathscr{E}u:=\curl(ER_{x_{0}}u)$. 

The reader might notice that both extension operators are a priori only defined on divergence-free fields. In \eqref{eq:neumann}ff., this is reflected by the $\besov_{p,p}^{-1/p}$-regularity of $\partial_{\nu}v$. This regularity assertion  requires $\mathrm{div}(u)=0$ due to Stokes' theorem. Similarly, in the second approach, we can only assert that $u=\curl(R_{x_{0}}u)$ provided $\mathrm{div}(u)=0$. If $\partial\Omega$ moreover is sufficiently regular, one might perform a Helmholtz decomposition of $\lebe^{p}(\Omega;\R^{n})$ and separately extend the divergence- and curl-free parts. In this way, we obtain an affirmative answer to \eqref{eq:mainequestion}. 

In addition, the topological assumptions on $\Omega$ can be relaxed for both approaches. However, a solenodaility preserving extension to the entire $\R^{n}$ is not even possible for general, smoothly bounded domains. For instance, if $\R^{n}\setminus\Omega$ has multiple connected components, \eqref{eq:neumann} needs not be solvable. This is a consequence of Gauss' or Stokes' theorem, respectively. In particular, one can only come up with an extension to a neighbourhood of $\partial\Omega$ or pass to a suitable quotient space -- see \cite{HKMT} and Section \ref{sec:topcha} below. 
\smallskip

While such topological obstructions for extensions are inherent in the operator $\mathrm{div}$ through Stokes' theorem, the available approaches \textbf{do not apply to the endpoint cases $p=1$ or $p=\infty$}. For instance, it is well-known that the trace operator $\mathrm{tr}$ maps $\sobo^{1,1}(\Omega;\R^{n})$ surjectively to $\lebe^{1}(\partial\Omega;\R^{n})$, and that the Neumann problem for the Laplacian with $\lebe^{1}$-boundary data is ill-posed. Hence it is difficult, if not impossible, to employ PDE-based extension operators such as \eqref{eq:neumann}, \eqref{eq:KatoExt} in the $\lebe^{1}$-setting. On the other hand, approaches based on the explicit construction of potentials such as the regularised Poincar\'{e}-type operator, equally do not generalise to $p=1$. Here, the reason is that $R_{x_{0}}$ is a weakly singular integral operator with kernel $k(x,y)$ roughly of size $|k(x,y)|\sim |x-y|^{1-n}$, and does not map $\sobo^{m,1}(\Omega;\R^{n})\to\sobo^{m+1,1}(\Omega;\R^{n})$ boundedly. 

Yet, for geometrically very simple domains such as cubes $Q$, it is easy to obtain divergence-free extensions at least into a neighbourhood of $Q$:  
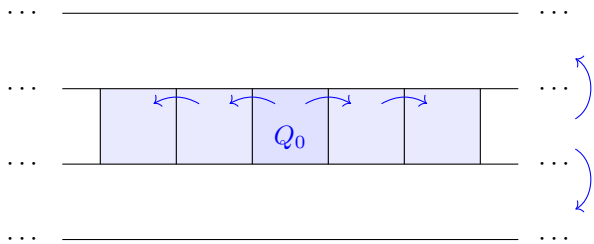
\begin{figure}[t]
\begin{tikzpicture}
\draw[-] (-3,1) -- (3,1);
\draw[-] (-3,0) -- (3,0);
\draw[-] (-3,-1) -- (3,-1);
\draw[-] (-3,-2) -- (3,-2);
\draw[-] (-2.5,0) -- (-2.5,-1);
\draw[-] (-1.5,0) -- (-1.5,-1);
\draw[-] (-0.5,0) -- (-0.5,-1);
\draw[-] (0.5,0) -- (0.5,-1);
\draw[-] (1.5,0) -- (1.5,-1);
\draw[-] (2.5,0) -- (2.5,-1);
\draw[fill=blue!60!white, opacity=0.2] (-0.5,-1) -- (0.5,-1) -- (0.5,0) -- (-0.5,0) -- (-0.5,-1);
\node[blue] at (0,-0.65) {$Q_{0}$};
\draw[fill=blue!40!white, opacity=0.2] (-1.5,-1) -- (-0.5,-1) -- (-0.5,0) -- (-1.5,0) -- (-1.5,-1);
\draw[fill=blue!40!white, opacity=0.2] (-2.5,-1) -- (-1.5,-1) -- (-1.5,0) -- (-2.5,0) -- (-2.5,-1);
\draw[fill=blue!40!white, opacity=0.2] (0.5,-1) -- (1.5,-1) -- (1.5,0) -- (0.5,0) -- (0.5,-1);
\draw[fill=blue!40!white, opacity=0.2] (1.5,-1) -- (2.5,-1) -- (2.5,0) -- (1.5,0) -- (1.5,-1);
\draw[->,blue] (0.2,-0.2) [out=30, in = 150] to (0.8,-0.2);
\draw[->,blue] (1.2,-0.2) [out=30, in = 150] to (1.8,-0.2);
\draw[->,blue] (-0.2,-0.2) [out=150, in = 30] to (-0.8,-0.2);
\draw[->,blue] (-1.2,-0.2) [out=150, in = 30] to (-1.8,-0.2);
\node at (3.5,1) {$\cdots$};
\node at (3.5,0) {$\cdots$};
\node at (3.5,-1) {$\cdots$};
\node at (3.5,-2) {$\cdots$};
\node at (-3.5,1) {$\cdots$};
\node at (-3.5,0) {$\cdots$};
\node at (-3.5,-1) {$\cdots$};
\node at (-3.5,-2) {$\cdots$};
\draw[->,blue] (3.75,-0.4) [out=30, in =330] to (3.75,0.4);
\draw[->,blue] (3.75,-0.8) [out=330, in =30] to (3.75,-1.6);
\end{tikzpicture}
\caption{Extensions by reflections in the flat case.}\label{fig:reflect}
\end{figure}

\begin{example}[Cubes]\label{ex:cube}
Consider the unit cube $Q_{0}=(0,1)^n$. We can extend a divergence-free function $u \colon Q \to \R^n$ as follows: First, we define an extension to $(0,2) \times (0,1)^{n-1}$ for $x_1 \in (1,2)$ and $x'\in(0,1)^{n-1}$
\[
u_1(x_1,x') = u_1(2-x_i),~u_i(x_1,x') = - u_i(2-x_1,x'), \quad i\in\{2,\ldots,n\}.
\]
A successive application of this procedure yields a divergence-free function on the strip $\R\times (0,1)^{n-1}$. Performing similar reflections in the remaining coordinate directions then yields an extension of $u$ to full space, see Figure \ref{fig:reflect}. However, proceeding in this way only gives an extension $\lebe^{1}(Q_{0};\R^{n})\to\lebe_{\locc}^{1}(\R^{n};\R^{n})$ which preserves solenodaility; note that localisation by use of a smooth cut-off destroys the divergence constraint.
\end{example}
Approaches as in \eqref{ex:cube} only work for flat boundaries, and reductions to the flat case seem difficult if not impossible either. Indeed,  locally flattening a curved boundary by nonlinear maps $\Phi_{j}$ equally destroys the requisite solenoidality of the 'flattened' maps $u\circ\Phi_{j}$. 

We shall also shortly mention that solving a boundary value problem \begin{equation}\label{intro:BoundaryValue}
\begin{cases}
\diver u = 0&\;\text{in}\;\R^{n}\setminus\overline{\Omega},\\ 
u \cdot \nu = g &\;\text{on}\;\partial\Omega, 
\end{cases}
\end{equation}
directly \emph{without} taking the detour via \eqref{eq:neumann}, this matter basically is as difficult as our extension: First we need to identify a suitable trace space and then also solve the equation. While a theory via divergence-measure fields seems to be quite promising for $p=\infty$, \cite{PT,CLT}, the case $p=1$ is, to the best of the authors' knowledge, relatively unventured.
We shall also mention that present approaches via solving boundary value problems or reflection do not improve its regularity anywhere: At each point $y \in Q^{\complement}$ we only get $u \in \lebe^1(B_{\varepsilon}(y))$ and nothing better. This is a contrast to the extension we construct: Away from the boundary, our extension is smooth.

While seemingly being only a technical, functional analytic result, the result comes in quite handy in problems dealing with incompressible fluids. In particular, the extension result of \cite{HKMT} is for instance applied to studying incompressible flow on domains, e.g. \cite{MMW,GKM,DRLS}, when divergence-free functions \emph{without} zero boundary conditions appear; we also refer to \cite{Kromer,Kreisbeck} for some applications that are not directly connected to fluids. We give some further applications of our extension result, that are inspired by \cite{BVS,GS21,Schiffer2} in Section \ref{sec:applications} and especially specific to the borderline case of $\lebe^1$-extensions.

Finally, the treatment of the constraint $\diver u=0$, albeit being one of the physically most relevant, is only the tip of the iceberg. Indeed, one may replace the constraint $\diver u=0$ by a general differential constraint $\A u=0$. While only investigating the divergence as a differential operator in the present paper, we shortly comment further on this general case in the final Section \ref{sec:applications}.

\section{Main results and strategy of proof}
\subsection{First main theorem: $\lebe^1$-extension}\label{thm:mainthms}
We now come to the description of our main results. Similarly as in \textsc{Kato} et al. \cite{HKMT}, we distinguish between the low regularity context of extending divergence-free $\lebe^{1}$-fields and the higher regularity context of extending divergence-free $\sobo^{m,1}$-fields. Section \ref{sec:strategy} will then provide a discussion of the underlying strategy of proof, and in how far it differs from that of \cite{HKMT}. For all of the following, we denote for a given open subset $\Omega\subset\R^{n}$ and a normed function space $(X(\Omega),\|\cdot\|_{X(\Omega)})$ with $X(\Omega)\subset\lebe_{\locc}^{1}(\Omega;\R^{n})$ 
\begin{align}
X_{\diver}(\Omega):= \{u\in X(\Omega)\colon\;\mathrm{div}(u)=0\;\;\text{in}\;\;\mathscr{D}'(\Omega)\}, 
\end{align}
where $\mathscr{D}'(\Omega)$ denotes the space of distributions on $\Omega$ as usual.
The main theorem regarding the lower regularity case is as follows.
\begin{theorem}[$\lebe^{1}$-case]\label{thm:W1}
Let $\Omega \subset \R^n$ be a bounded Lipschitz domain. Then there exists $\Omega'$ with $\Omega \Subset \Omega'$ and a bounded and linear extension operator $\E_{\lebe^1} \colon \lebe^1(\Omega;\R^n) \to \lebe^1(\Omega';\R^n)$ such that $u \in \lebe^1_{\diver}(\Omega) \Rightarrow \E_{\lebe^1}u \in \lebe^1_{\diver}(\Omega')$.
\end{theorem}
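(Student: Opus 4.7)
The plan is to reformulate the extension problem via differential forms. To $u\in\lebe^1(\Omega;\R^n)$ I associate the $(n-1)$-form
\[
\omega_u := \sum_{j=1}^n (-1)^{j-1} u_j\, dx_1\wedge\cdots\widehat{dx_j}\cdots\wedge dx_n,
\]
so that $\diver u=0$ is equivalent to $d\omega_u=0$. Solving the extension problem for $u$ then amounts to extending the closed $(n-1)$-form $\omega_u$ as a closed $\lebe^1$-form on $\Omega'$. The crucial advantage of this reformulation is that pullback of forms by a Lipschitz diffeomorphism commutes with $d$ in the distributional sense, whereas the corresponding pullback on vector fields does \emph{not} preserve the divergence constraint. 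In particular, the reflection in Example~\ref{ex:cube} is precisely the pullback of $\omega_u$ under coordinate reflection, transcribed back at the level of vector fields.

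For the local construction I would cover $\partial\Omega$ by a finite Lipschitz atlas $\{U_j\}$ in which the boundary appears as a Lipschitz graph, and build a bi-Lipschitz reflection $\Psi_j\colon U_j \to U_j$ across the graph that is the identity on $\partial\Omega\cap U_j$ and exchanges the interior and exterior halves of the chart. The local extension of $\omega_u$ to the exterior half is $\Psi_j^{\ast}\omega_u$: it is automatically closed since $d$ commutes with $\Psi_j^{\ast}$, its $\lebe^1$-norm is controlled by that of $\omega_u$ through the Lipschitz Jacobian, and its trace on $\partial\Omega\cap U_j$ matches that of $\omega_u$ because $\Psi_j|_{\partial\Omega}=\operatorname{id}$. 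This is the curved-Lipschitz analogue of Example~\ref{ex:cube}, made possible precisely because $d$ (unlike $\diver$) commutes with Lipschitz pullback. To avoid that a naive partition-of-unity sum of the $\Psi_j^{\ast}\omega_u$ fails to be closed, I would pass to $(n-2)$-form potentials: on each sufficiently small chart, $\omega_u|_{U_j\cap\Omega}$ admits a local $(n-2)$-form potential $\alpha_j$ via a Whitney-style cone homotopy, and the global extension in the collar $\Omega'\setminus\overline\Omega$ is defined as $d\bigl(\sum_j \psi_j\,\Psi_j^{\ast}\alpha_j\bigr)$ for a partition of unity $\{\psi_j\}$ subordinate to $\{U_j\}$. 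Because $d^2=0$, this $(n-1)$-form is closed.

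The main obstacle is making the patching of local potentials quantitatively $\lebe^1$-controlled. Classically one would solve the inhomogeneity by a Bogovski\u{\i}- or regularised-Poincaré-type operator, but both fail at the endpoint $p=1$. The decisive structural input is that the $(n-2)$-form potentials $\alpha_j$ themselves never need to be controlled in a Sobolev space -- only their exterior derivatives, which by construction equal $\omega_u$ on the interior side and its Lipschitz pullback on the exterior side, hence lie in $\lebe^1$ with the correct norm. Differences $\Psi_i^{\ast}\alpha_i-\Psi_j^{\ast}\alpha_j$ on overlaps are closed but nonzero and must be cancelled inductively by $(n-3)$-form sub-potentials along a Čech-type descent, each step built via the same cone homotopy on a Whitney cube; finiteness of the covering multiplicity then produces the desired $\lebe^1$-bound. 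Once this bookkeeping is carried out, setting $\E_{\lebe^1} u := u$ on $\Omega$ and equal to the corresponding vector field on $\Omega'\setminus\overline\Omega$ yields the required bounded, solenoidality-preserving extension operator.
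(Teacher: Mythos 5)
Your starting point -- reformulating via $(n-1)$-forms and observing that $d$ commutes with Lipschitz pullback, whereas $\diver$ does not survive pushforward of vector fields -- is exactly the paper's point of departure, and you are also right that naive partition-of-unity patching of local reflections destroys closedness. But the Čech-type fix you propose contains a concrete error that is not a bookkeeping detail. The differences $\Psi_i^{\ast}\alpha_i-\Psi_j^{\ast}\alpha_j$ on overlaps are \emph{not} closed: one has
\[
d\bigl(\Psi_i^{\ast}\alpha_i-\Psi_j^{\ast}\alpha_j\bigr)=\Psi_i^{\ast}\omega_u-\Psi_j^{\ast}\omega_u,
\]
and because the reflections $\Psi_i$, $\Psi_j$ are built in different local graph coordinates, their pullbacks of $\omega_u$ to the exterior side do not agree. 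Thus the descent cannot even begin: no sub-potentials of these differences exist. Independently, the genuinely hard part of the theorem is not closedness of the exterior piece but showing that $\diver$ of the \emph{combined} field is in $\lebe^1(\Omega')$ rather than a measure charging $\partial\Omega$. For $\lebe^1$ fields there is no normal trace, so ``traces match because $\Psi_j|_{\partial\Omega}=\operatorname{id}$'' cannot be made rigorous directly, and your scheme never engages with this. The paper flags this explicitly as the crucial step \ref{crucialstep}, proves $\lebe^1$-representability of $d\mathscr{E}_\Omega[u]$ for $\hold^1(\overline\Omega)$ data (Lemma \ref{lemma:globalsolenoidality}), and only then passes to $\lebe^1$ through a density result (Proposition \ref{prop:densitymain}) that itself requires the $\hold^1$-extension operator to be in hand.

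For contrast, the paper's construction is not a pointwise reflection. It uses the Jones correspondence between exterior and interior Whitney cubes (Lemma \ref{cor:Jones}), and the extension value at an exterior point is a weighted sum of averaged integrals of $u$ over simplices spanned by points inside reflected cubes, with weights of the form $\phi_{i_n}\wedge d\phi_{i_{n-1}}\wedge\cdots\wedge d\phi_{i_1}$. Stokes' theorem applied to these simplices yields pointwise closedness in the exterior (Lemma \ref{lemma:pointwise:solenoidal}), Whitney-cube geometry yields the $\lebe^1$-bound (Lemmas \ref{lem:convhullL1bound} and \ref{lemma:L1bound:c1}), and for non-convex Lipschitz domains the simplices are replaced by curvilinear ones built with the collar of Section \ref{sec:simplices}. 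If you wished to pursue a reflection route, you would at minimum need a single global bi-Lipschitz reflection (so the incompatible local pullbacks never arise) together with a rigorous verification that $\int_{\Omega'}\widetilde u\cdot\nabla\varphi=0$ for all test $\varphi$, which requires controlling $\varphi-\varphi\circ\Psi$ near $\partial\Omega$ by a Hardy-type cut-off argument; the paper's detour through $\hold^1$ density sidesteps having to make that limit precise at the $\lebe^1$ level.
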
 
The existence of such a $\Omega'$ is attributed to the topological challenges, cf. Example \ref{ex:topology} below. We may also formulate a global version of Theorem \ref{thm:W1} on a suitable quotient space, Theorem \ref{thm:global:extension}. 

Theorem \ref{thm:W1} will be established in Sections \ref{sec:L1} and \ref{sec:Lip}. Specifically, Section \ref{sec:L1} deals with the case of \emph{convex} sets $\Omega\subset\R^{n}$, whereas the general case of Lipschitz domains is the subject of Section \ref{sec:Lip}. The key reason for this distinction is two-fold, and is due to both our method, topological obstructions and their interplay to be described next. 

First, from a methodological perspective, the extension operator underlying Theorem \ref{thm:W1} hinges on nonlinear reflection procedure inspired by that of \textsc{Jones} \cite{Jones}. However, in order to map divergence-free fields to divergence-free fields, our modified construction works by reducing the properties of the extension operator to simplices inside $\Omega$. For the latter, we may consequently employ Stokes' theorem in order to obtain the requisite mapping property on divergence-free fields, also cf. \cite{Schiffer,BGS}. A direct approach of reflecting an $\lebe^p$ vector field and then correcting (as we do in the $\sobo^{1,p}$-case) probably does not work, also cf. Remark \ref{rem:ontheapproach}.

\subsubsection{Geometric challenges}
If $\Omega$ is \emph{convex}, flat simplices with vertices in $\Omega$ are always contained in $\Omega$ and are therefore well-defined. This is not necessarily so if $\Omega$ has only a Lipschitz boundary, and in this case, more care is needed to define the underlying simplices. In order to focus on the key construction, which is most transparent for \emph{convex} domains, we thus give in Section \ref{sec:L1} the detailed proof of Theorem \ref{thm:W1} for this class of domains. Here, it is most convenient to reformulate the above extension problem in the language of differential forms. Based on a construction strongly inspired by Stokes' theorem, we then first introduce the requisite extension operator on sufficiently smooth maps, cf. Definition \ref{def:extension:l1}. The general passage to the low regularity regime of $\lebe^{1}$-maps is then accomplished by a density result. This density result, in turn, requires the extension operator on smooth functions that will be at our disposal at this stage of the proof. In view of this strategy, the underlying geometric set-up in the convex situation is explained in Section \ref{sec:geomsetupconvex}, and a more detailed outline of the proof in this case is displayed at the beginning of Section \ref{sec:proofL1divcon}. As a key point, the proof to be completed in Section \ref{sec:density}, yields that distributionally divergence-free fields defined on a convex domain can be extended to distributionally divergence-free fields defined on the entire $\R^{n}$, and even compact supports can be achieved for the extensions.

\subsubsection{Topological challenges}\label{sec:topcha}
Second, when aiming for an extension result on Lipschitz domains, we face topological obstructions which are are invisible in the convex case. In general, if $\Omega\subset\R^{n}$ merely has Lipschitz boundary, we can only expect to extend divergence-free fields on $\Omega$ to divergence-free fields on a neighbourhood of $\Omega$. 
Recall the ansatz from \eqref{eq:neumann}. If $\Omega' \setminus\Omega$ has several connected components $A_{1},...,A_{N}$, the system \eqref{eq:neumann} or \eqref{intro:BoundaryValue} is solvable if and only if 
\begin{equation} \label{intro:bdryint}
 \int_{\partial A_i} u|_{\partial\Omega} \cdot \nu \dif\mathscr{H}^{n-1} =0.
 \end{equation}
 This is a consequence of the Gauss theorem and $\mathrm{div}(u)=0$ in $\mathscr{D}'(\Omega)$, and is automatically satisfied if $\R^{n}\setminus\Omega$ has only one connected component. Hence, for domains which are diffeomorphic to a ball, there are no topological obstructions since the system \eqref{eq:neumann} is solvable on the complement. However, for domains with holes (e.g. an annulus) and hereafter a different homology,  \eqref{intro:bdryint} does not hold and hence solenoidal extensions are impossible in general:
 \begin{example}[Topological obstructions] \label{ex:topology}
Consider the annulus $\ball_2(0) \setminus \overline{\ball}_1(0)$, the complement of its closure having the two connected components $A_1 =\ball_1(0)$ and $A_2= \R^{n}\setminus\overline{\ball}_2(0)$. 
Then \eqref{intro:bdryint} is not satisfied for any function, for instance not for $g(x) = \tfrac{x}{\vert x \vert^n}$, which is divergence-free in the annulus. As is pointed out in \cite{HKMT} (also see Theorem \ref{thm:global:extension} below), this function cannot be extended to solenoidal field on $\R^{n}$. Moreover, this type of fields is basically the only example with this property: For instance, in the case of the annulus we can extend any function satisfying \eqref{intro:bdryint} and for any other function $u$ we can find $\lambda \in \R$ such that $u - \lambda g$ obeys \eqref{intro:bdryint}.
In other words, we may find an extension only for elements in a \emph{quotient space}.
\end{example}
This impossibility of a global extension is reflected in our construction as follows: As discussed above, the definition of simplices required for the implementation of Stokes' theorem is non-trivial in the situation of Lipschitz domains. Yet, employing a collar type construction in the sense of algebraic topology or differential geometry (see, e.g. \textsc{Hatcher} \cite{Hatcher} or \textsc{Lee} \cite{Lee}), we are able to introduce the requisite (now in general curvilinear) simplices \emph{close to the boundary $\partial\Omega$}. By analogy with the convex case, the extension operator is constructed by transferring information from the interior to the exterior of $\Omega$ at a distance uniformly proportional to $\partial\Omega$. Since the curvilinear simplices are only well-defined close to the boundary, it is then clear that our extension operator can only extend divergence-free fields in $\Omega$ to divergence-free fields \emph{in a neighbourhood} of $\Omega$. 

Hence, the definition of suitable simplices inside $\Omega$ and close to $\partial\Omega$ appears not only as a technical but conceptual point too. As mentioned above, we here make use of a collar-type construction and a suitable definition of curvilinear simplices on (smooth) embedded manifolds. While such constructions are well-known to hold for smoothly bounded domains, this is not so for Lipschitz domains. To accomplish the passage from smooth to Lipschitz domains, we make use of a remarkable result due to \textsc{Ball \& Zarnescu} \cite{BZ} (cf. Lemma \ref{lemma:BZ}) which asserts that any bounded Lipschitz domain is Bi-Lipschitz homeomorphic to a smoothly bounded domain. This, in turn, leads to a suitable definition of simplices close to the boundary in the Lipschitz case, cf. Section \ref{sec:def:simplex}. Based on an analogy with the easier, yet more transparent case of convex domains displayed in Section \ref{sec:L1}, the slightly more involved simplex construction then allows to establish Theorem \ref{thm:W1} by parallel means in Section \ref{sec:Lip}. 

\subsection{Second Main Theorem: $\sobo^{1,1}$ extension} We now turn to a setting of higher regularity, namely solenoidal fields in Sobolev spaces. For brevity, we restrict our mathematical analysis to the case $m=1$, i.e. $\sobo^{1,1}$-cases. While the strategy for extension differs between $\lebe^1$- and $\sobo^{1,1}$ regularity, more derivatives only introduce some technical intricacies (cf. Section \ref{sec:higherorder}). To keep the presentation concise, we therefore only deal with $\sobo^{1,1}$-case.

\begin{theorem}[$\sobo^{1,1}$-case]\label{thm:L1}
Let $\Omega \subset \R^n$ be a bounded Lipschitz domain. Then there exists $\Omega'$ with $\Omega \Subset \Omega'$ and a bounded and linear extension operator $\E_{\sobo^{1,1}} \colon \sobo^{1,1}(\Omega;\R^n) \to \sobo^{1,1}(\Omega';\R^n)$ such that $u \in \sobo^{1,1}_{\diver}(\Omega) \Rightarrow \E_{\sobo^{1,1}}u \in \sobo^{1,1}_{\diver}(\Omega')$.
\end{theorem}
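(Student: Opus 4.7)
The plan is a \emph{reflect-and-correct} scheme, paralleling the $1<p<\infty$ proof in \cite{HKMT} but requiring a careful treatment of the corrective divergence equation at the $\lebe^{1}$-endpoint. First, by Lemma~\ref{lemma:BZ} one reduces to a smoothly bounded $\Omega$, transporting the solenoidality constraint via a Piola-type pull-back that preserves divergence-freeness. Next, apply a standard Sobolev extension operator such as Stein's or Jones', $E_{0}\colon\sobo^{1,1}(\Omega;\R^{n})\to\sobo^{1,1}(\R^{n};\R^{n})$, and multiply by a smooth cut-off $\chi$ equal to $1$ on $\Omega$ and supported in a narrow neighbourhood $\Omega'\Supset\Omega$. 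The resulting $\tilde u:=\chi E_{0}u\in\sobo^{1,1}(\Omega';\R^{n})$ coincides with $u$ on $\Omega$ but in general carries a nontrivial divergence in $\Omega'\setminus\overline{\Omega}$.

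Setting $f:=\diver\tilde u\in\lebe^{1}(\R^{n})$, the hypothesis $\diver u=0$ on $\Omega$ yields $\spt f\subset\overline{\Omega'\setminus\Omega}$, and the compact support of $\tilde u$ forces $\int_{\R^{n}}f=0$. Shrinking $\Omega'$ if necessary, one may arrange $\Omega'\setminus\overline{\Omega}$ to be a connected tubular neighbourhood of $\partial\Omega$, so that this mean-zero condition is the sole global compatibility requirement for inverting the divergence on the annulus. The extension is then produced in the form $\E_{\sobo^{1,1}}u:=\tilde u-w$, where $w\in\sobo^{1,1}(\R^{n};\R^{n})$ is a corrector with $\spt w\subset\overline{\Omega'\setminus\Omega}$, $\diver w=f$, and a quantitative bound $\|w\|_{\sobo^{1,1}}\lesssim\|u\|_{\sobo^{1,1}(\Omega)}$; granted such a $w$, the extension is linear, bounded from $\sobo^{1,1}(\Omega;\R^{n})$ to $\sobo^{1,1}(\Omega';\R^{n})$, agrees with $u$ on $\Omega$, and is divergence-free on $\R^{n}$.

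The main obstacle is precisely producing this corrector $w$ at the $\lebe^{1}$-endpoint. The classical Bogovski\u{\i} operator, which boundedly solves $\diver w=f$ on Lipschitz domains with a gain of one derivative, is available only for $1<p<\infty$, and the borderline $p=1$ is the harmonic-analytic obstruction flagged in the introduction. What rescues the argument is that $f$ is not an arbitrary $\lebe^{1}$-function: it is itself the divergence of the $\sobo^{1,1}$-field $\tilde u$, a structural constraint that one exploits by writing $w=\tilde u|_{\Omega'\setminus\overline{\Omega}}-v$, where $v\in\sobo^{1,1}(\Omega'\setminus\overline{\Omega};\R^{n})$ is a divergence-free field with $v\cdot\nu=\tilde u\cdot\nu$ on $\partial\Omega$ and $v=0$ on $\partial\Omega'$. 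Constructing $v$ is a solenoidal extension problem from the curved inner boundary into the annulus, and it is here that the simplex-based, differential-forms-driven machinery underlying Theorem~\ref{thm:W1} reappears in $\sobo^{1,1}$-form: a Whitney decomposition of the annular strip combined with the Ball--Zarnescu parametrisation of $\partial\Omega$ yields the required scale-invariant $\sobo^{1,1}$-estimates on $v$, and hence on $w$. The technical heart is verifying that the extra derivative does not destroy the $\lebe^{1}$-simplex estimates driving the earlier sections; this is where the density argument that closed the proof of Theorem~\ref{thm:W1} must be re-examined with derivative bounds built in.
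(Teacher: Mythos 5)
Your high-level architecture --- take a standard Sobolev extension $\E_0$, cut it off, and add a compactly supported corrector $\mathscr{R}$ with $\diver(\mathscr{R}u)=-\diver(\E_0 u)$ --- is the same skeleton the paper uses (cf.\ Section~\ref{sec:W1ext}). But the way you propose to produce the corrector has a genuine gap: it is circular, and the obstacle you correctly identify (absence of an $\lebe^1$-bounded Bogovski\u{\i} operator) is not actually circumvented by your reformulation.

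You propose to solve $\diver w=f$ by writing $w=\tilde u-v$, where $v$ is divergence-free on the annulus, $v\cdot\nu=\tilde u\cdot\nu$ on $\partial\Omega$, and $v=0$ on $\partial\Omega'$. Unpacking this, $\E_{\sobo^{1,1}}u=\tilde u-w=v$ on the annulus and $u$ on $\Omega$: you are asking for a solenoidal $\sobo^{1,1}$-field $v$ on $\Omega'\setminus\overline\Omega$ whose normal trace matches that of $u$ on $\partial\Omega$. That is exactly the solenoidal $\sobo^{1,1}$-extension problem you set out to solve; nothing has been reduced. You then say that the simplex machinery of Theorem~\ref{thm:W1} produces such a $v$ ``in $\sobo^{1,1}$-form,'' but the $\lebe^1$-extension operator of Sections~\ref{sec:L1}--\ref{sec:Lip} does \emph{not} have $\sobo^{1,1}$-bounded output near $\partial\Omega$: by Lemma~\ref{lemma:higherorder}, $\|\E_\Omega u\|_{\sobo^{m,1}(A^\delta)}\lesssim\delta^{-m}\|u\|_{\lebe^1}$ blows up as $\delta\to0$, and the paper flags explicitly (beginning of Section~\ref{sec:sobolev}) that the $\lebe^1$-formula \eqref{def:Eomega} satisfies a weak Neumann condition but cannot match Dirichlet data and thus ``fails in the $\sobo^{1,1}$ case.'' What the paper does instead is entirely different at the technical level: it builds a telescoping family of explicit correctors $\Rcal_k$, $k=1,\ldots,n-1$, from integrals of $[Du(z)\cdot\nu](y-z)$ over curvilinear $k$-simplices (cf.\ \eqref{def:Rcal}), and proves the key identities $\dif\E_0 u=\Scal_1$, $\dif\Rcal_k u=(n-k)(\Scal_k-\Scal_{k+1})$, $\Scal_n=0$ in Lemma~\ref{lemma:w11:2}. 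Your sketch provides no substitute for this telescoping construction.

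Two further, smaller points. First, your opening reduction ``transporting the solenoidality constraint via a Piola-type pull-back'' under the Ball--Zarnescu map $\Gamma_1$ does not preserve $\sobo^{1,1}$: $\Gamma_1$ is only Bi-Lipschitz on $\overline\Omega$, so $D\Gamma_1\in\lebe^\infty$ while $D^2\Gamma_1$ is uncontrolled, and the Piola transform $u\mapsto(\det D\Gamma_1)^{-1}D\Gamma_1\,(u\circ\Gamma_1^{-1})$ pushes a derivative onto $D\Gamma_1$. The paper uses $\Gamma_1$ only to define the geometry of the simplices, never to transfer the vector field. Second, your claim that one can ``arrange $\Omega'\setminus\overline\Omega$ to be a connected tubular neighbourhood of $\partial\Omega$'' fails whenever $\partial\Omega$ has several connected components (e.g.\ an annular $\Omega$): the annulus $\Omega'\setminus\overline\Omega$ is then disconnected, each component requires its own mean-zero compatibility condition, and the component inside an interior hole can violate it (cf.\ Example~\ref{ex:topology}). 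The paper's local simplex-based correctors sidestep this precisely because they are local constructions that do not require solving a global boundary-value problem on the annulus.
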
 
While we focuse on the case $p=1$, our theorems also works for any other integrability exponent.
\begin{remark}[$1<p<\infty$]
While the case $p=1$ is the most interesting, we also may show (cf. Corollary \ref{Coro:Lpextension}) that $\E_{\lebe^1}$ and $\E_{\sobo^{1,1}}$ provide an extension operator for $p=\infty$ and therefore extension operators for all $\lebe^p$ and $\sobo^{1,p}$ for all $1\leq p \leq \infty$. Moreover, while the $\lebe^1$-extension and the $\sobo^{1,1}$-extension are different on a conceptual level, the extension constructed for $\sobo^{1,1}$ should, in principle, be generalisable to all $\sobo^{m,1}$ for $m\geq 1$.
\end{remark}

\subsection{On the geometry of the domain}
As alluded to in Section \ref{sec:context}, for exponents $1<p<\infty$, an extension of divergence-free functions for Lipschitz bounded sets is constructed by \cite{HKMT}. For the endpoint cases, we are only aware of a few examples, where a $\lebe^1$-extension can be explicitly constructed, for instance by reflection for rectangular domains (cf. \cite{GS21}). If the boundary is not flat, then it is quite hard to construct an extension that preserves solenoidality (flattening the boundary destroys this condition); for general domains we therefore need a more flexible approach.

Furthermore, we show in Section \ref{sec:regularityofOmega} that the requirement of $\Omega$ having Lipschitz boundary is quite close to the optimum. In particular, we show that domains with exterior and interior cusps- in particular these are $\hold^{\alpha}$-domains- do not always allow for extension operators that preserve solenoidality. 

\noindent Last, we tacitly assume that $\Omega$ is a bounded domain, i.e. it is connected. While for several technical issues, the boundedness assumptions \emph{cannot} be dropped, we may indeed show everything also for open Lipschitz sets with several connected components. As the connected components are well-separated we may simply construct an extension for the connected components independently. 

The following theorem outlines some geometrical issues of the extension. It is by no means optimal (i.e. with a more elaborate construction we can find better H\"older exponents), but still shows what goes wrong.
\begin{theorem}[Geometric limitations] \label{thm:geometriclim}
Let $1\leq p  \leq \infty$ and let $\gamma < \gamma(p)<1$. Then there is a domain $\Omega$ that is homeomorphic to a ball with boundary of class $\hold^{\gamma}$, such that there is an infinite dimensional vector space of functions $u \in \lebe^p(\Omega;\R^n)$ such that
\begin{enumerate}
    \item $\diver u=0$ in $\mathcal{D}'(\Omega)$;
    \item for any $\Omega'$ with $\Omega \subset \subset \Omega'$ there is \textbf{no} $\tilde{u} \in \lebe^p(\Omega';\R^n)$ with $u = \tilde{u}$ in $\Omega$ and $\diver u=0$ in $\mathcal{D}'(\Omega)$.
\end{enumerate}
\end{theorem}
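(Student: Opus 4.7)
The plan is to reduce the divergence-free extension problem to the classical failure of Sobolev extension on sharp cusp domains. On a simply connected $\Omega\subset\R^n$, every divergence-free $u\in\lebe^p(\Omega;\R^n)$ admits a potential representation: $u=\nabla^\perp\phi$ (in $n=2$, with scalar $\phi\in\sobo^{1,p}(\Omega)$), $u=\curl A$ (in $n=3$, with $A\in\sobo^{1,p}(\Omega;\R^3)$), or more generally $u=d\omega$ for an $(n-2)$-form $\omega$ with components in $\sobo^{1,p}(\Omega)$. Given any $\Omega\Subset\Omega'$, the prospective cusp tip $x_0$ is interior to $\Omega'$, so we may shrink $\Omega'$ to $\Omega\cup\ball_\varepsilon(x_0)$ without loss (a putative extension to the original $\Omega'$ restricts to one on this smaller domain, which is still simply connected by Seifert--van Kampen). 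A divergence-free extension $\tilde u$ then yields a potential $\tilde\omega$ on this simply connected $\Omega'$, which agrees with $\omega$ on $\Omega$ modulo a closed form; after a gauge correction, $\tilde\omega$ furnishes a $\sobo^{1,p}(\Omega')$-extension of $\omega$. Consequently any infinite-dimensional family of non-extendable $\sobo^{1,p}$-potentials on $\Omega$ lifts to an infinite-dimensional family of non-extendable divergence-free $\lebe^p$-fields.

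For the construction, fix $\gamma<\gamma(p)<1$, set $\alpha:=1/\gamma>1$, and take
\[
\Omega := \{(x_1,x')\in\R\times\R^{n-1} : 0 < x_1 < 1,\ |x'| < x_1^{\alpha}\},
\]
the rotationally symmetric cusp with tip at the origin. This $\Omega$ is homeomorphic to a ball, smooth on $\partial\Omega\setminus\{0\}$, and of class $\hold^{1/\alpha}=\hold^{\gamma}$ at the tip. By the classical theory of Sobolev extension on cusp domains (in the tradition of Maz'ya and Poborchi), for $\alpha$ above the critical cusp sharpness determined by $(p,n)$, there exists an infinite-dimensional subspace $V_\phi\subset\sobo^{1,p}(\Omega)$ all of whose nonzero elements fail to admit any $\sobo^{1,p}$-extension. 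A model family is $\phi_k(x_1,x'):=\eta(x_1)\,\chi_k(x'/x_1^{\alpha})\,x_1^{\beta}$, with a smooth cutoff $\eta$ in $x_1$, linearly independent angular profiles $\{\chi_k\}$ on the unit ball of $\R^{n-1}$, and $\beta$ tuned so that $\phi_k\in\sobo^{1,p}(\Omega)$ while the angular trace at the cusp tip is nontrivial; the linear independence of the $\chi_k$ produces infinitely many independent obstructions to extension. Setting $V:=\{\nabla^\perp\phi:\phi\in V_\phi\}$ (with the analogous construction for $n\geq 3$) yields the required infinite-dimensional subspace of $\lebe^p_{\diver}(\Omega)$: for any nonzero $u=\nabla^\perp\phi\in V$ and any candidate extension $\tilde u=\nabla^\perp\tilde\phi$, the identity $\nabla^\perp(\tilde\phi-\phi)=0$ on the connected $\Omega$ forces $\tilde\phi|_\Omega=\phi+c$, whence $\tilde\phi-c$ would extend $\phi$, contradicting $\phi\in V_\phi$.

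The principal obstacle lies in the borderline case $p=1$, where the usual Bogovski/Helmholtz-type constructions of potentials are unbounded on $\lebe^1$. In $n=2$ I would circumvent this by direct line integration on the simply connected $\Omega$: setting $\phi(z):=\int_{z_0}^z(-u_2\,dx_1+u_1\,dx_2)$ gives $\nabla^\perp\phi=u$ in $\mathscr{D}'(\Omega)$ and $\nabla\phi\in\lebe^1(\Omega)$ directly, so $\phi\in\sobo^{1,1}(\Omega)$. For $n\geq 3$ one uses a Poincar\'e-lemma-type integration along rays emanating from an interior basepoint of $\Omega$, and verifies the $\sobo^{1,1}$-bound by Fubini on the explicit cusp geometry. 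A secondary technical point is pinning down the precise exponent $\gamma(p)$, but since the statement only requires \emph{some} threshold strictly below $1$, it suffices to exhibit a sufficiently sharp cusp together with an infinite-dimensional family of non-extendable $\sobo^{1,p}$-functions, as sketched above.
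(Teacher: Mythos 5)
Your approach — factor $u = \dif\omega$ with a Sobolev potential $\omega$, then invoke the classical failure of $\sobo^{1,p}$-extension on cusp domains — is genuinely different from the paper's argument, which never passes through potentials at all. The paper works directly with the divergence constraint: it fixes one explicit solenoidal field (e.g.\ $u=(y^{-\alpha},0)$ on the outward cusp $\Omega_+$), applies Gauss' theorem to a one-parameter family of rectangles (or cylinders) $A_s$ straddling the cusp, derives a pointwise flux lower bound on any putative extension $\overline u$ on the complementary part of $\partial A_s$, and then integrates in $s$ using Fubini and a reverse Jensen inequality to show $\|\overline u\|_{\lebe^p}$ must diverge. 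This is elementary, self-contained, and crucially, it also captures cases that your reduction cannot: for the inward cusp $\Omega_-$ with $n\geq 3$, the domain is an $(\varepsilon,\infty)$-domain so \emph{unconstrained} $\sobo^{1,p}$-extension succeeds, yet the divergence-constrained $\lebe^p$-extension still fails (Theorem~\ref{thm:counterexamples}\,(b),(c)). Your reduction can only ever detect obstructions already present at the Sobolev-potential level, so it is blind to this sharper phenomenon.

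Within its own scope your argument has two substantive gaps. First, for $n\geq 3$ and $p=1$ you claim ``one uses a Poincar\'e-lemma-type integration along rays, and verifies the $\sobo^{1,1}$-bound by Fubini'' — but the paper itself explains (in the introduction, discussing the regularised Poincar\'e operator $R_{x_0}$) why exactly this fails: the kernel has size $|x-y|^{1-n}$ and the operator does \emph{not} map $\lebe^1$ into $\sobo^{1,1}$. So the step producing a $\sobo^{1,1}$-potential $\omega$ from a solenoidal $\lebe^1$-field $u$ in dimension $\geq 3$ is exactly the kind of statement that basic harmonic analysis rules out, and the gauge-fixing needed to pass from ``some potential'' to ``a Sobolev potential'' (and similarly to fix up $\tilde\omega - \omega$ being closed rather than constant) is not resolved. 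Second, the appeal to Maz'ya--Poborchi produces, in its standard form, failure of a \emph{bounded linear extension operator}, not the existence of individual functions $\phi\in\sobo^{1,p}(\Omega)$ admitting \emph{no} $\sobo^{1,p}$-extension whatsoever, which is what you need; your proposed model family $\phi_k=\eta\,\chi_k(x'/x_1^\alpha)\,x_1^\beta$ is a plausible candidate but its non-extendability (even of a single member, let alone every nontrivial linear combination) is asserted, not proved. The paper's flux argument gives both the single-function non-extendability and, by the same method applied to translates or perturbations, the infinite-dimensional family.
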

We highlight that Theorem \ref{thm:geometriclim} is formulated for the low regularity case only- for solenoidal $\sobo^{1,p}$ it is clear that $\Omega$ must be regular enough to allow an unconstrained extension (i.e. Jones' extension), which directly excludes $\hold^{\alpha}$ domains. It is, however, not at all clear whether the same restrictions hold for the constrained and the unconstrained extension.

\subsection{Strategy of proof}\label{sec:strategy}
In view of divergence-free extensions on $\lebe^{1}$-based function spaces, we now explain the strategy employed below in detail. As will be visible from the discussion, here it is necessary to distinguish between Sobolev regularity, $\sobo^{1,1}$, and $\lebe^1$. As the strategy for higher regularity is more straightforward, we begin with the $\sobo^{1,1}$-case.
\subsubsection{Sobolev regularity}\label{sec:W1ext}

Here we assume that $u\in\sobo^{1,1}(\Omega;\R^{n})$ satisfies $\mathrm{div}(u)=0$ $\mathscr{L}^{n}$-a.e. in $\Omega$. In this case, there exists a bounded linear extension operator $\mathscr{E}_{0}\colon \sobo^{1,1}(\Omega;\R^{n})\to\sobo^{1,1}(\R^{n};\R^{n})$. In general, $\mathscr{E}_{0}$ cannot be assumed to map divergence-free maps to divergence-free maps, and so we introduce a corrector $\mathscr{R}$ with the following properties: 
\begin{enumerate} 
\item\label{item:strat1} $\mathscr{R}\colon\sobo^{1,1}(\Omega;\R^{n})\to(\hold^{\infty}\cap\sobo_{0}^{1,1})(\R^{n}\setminus\overline{\Omega};\R^{n})$ is a bounded linear operator with respect to the $\sobo^{1,1}$-norm: There exists $c>0$ such that we have 
\begin{align*}
\|\mathscr{R}u\|_{\sobo^{1,1}(\R^{n}\setminus\overline{\Omega})}\leq c\|u\|_{\sobo^{1,1}(\Omega)}\qquad\text{for all}\;u\in\sobo^{1,1}(\Omega;\R^{n}). 
\end{align*}
\item\label{item:strat2} Whenever $u\in\sobo^{1,1}(\Omega;\R^{n})$ satisfies $\mathrm{div}(u)=0$ $\mathscr{L}^{n}$-a.e. in $\Omega$, then for $\mathscr{L}^{n}$-a.e. $x\in\R^{n}\setminus\overline{\Omega}$ we have 
\begin{align*}
\mathrm{div}(\mathscr{R}u)(x)=-\mathrm{div}(\mathscr{E}_{0}u)(x).
\end{align*}
\end{enumerate} 
Based on these properties, we then define 
\begin{align*}
\mathscr{E}:=\mathscr{E}_{0}+\mathscr{R}. 
\end{align*}
Property \ref{item:strat1} then implies that $\mathscr{E}$ maps $\sobo^{1,1}(\Omega;\R^{n})\to\sobo^{1,1}(\R^{n};\R^{n})$ boundedly together with $\mathscr{E}u|_{\Omega}=u$. In particular, $\mathrm{div}(\mathscr{E}u)\in\lebe^{1}(\R^{n})$ whenever $u\in\sobo^{1,1}(\Omega;\R^{n})$, and property \ref{item:strat2} then yields the implication 
\begin{align*}
\mathrm{div}(u)=0\;\text{$\mathscr{L}^{n}$-a.e. in $\Omega$} \quad \Longrightarrow 
\quad \mathrm{div}(\mathscr{E}u)=0\;\text{in}\;\lebe^{1}(\R^{n}).
\end{align*}
The strategy to obtain extensions in even higher regularity is the same. In particular, one may introduce an extension operator $\E_0$ on $\sobo^{m,1}$ and also add corrector terms to preserve solenoidality, cf. Section \ref{sec:higherorder}

\subsubsection{Lebesgue spaces}\label{sec:L1ext} 
It is clear that, when lowering the regularity from $\sobo^{m,1}(\Omega;\R^{n})$ to $\lebe^{1}(\Omega;\R^{n})$, it is not possible to use an analogous approach as for the case $m \geq 1$ described above. To explain this point in detail, we start by choosing a bounded linear extension operator $\mathscr{E}_{0}\colon\lebe^{1}(\Omega;\R^{n})\to\lebe^{1}(\R^{n};\R^{n})$. At this stage, $\mathscr{E}_{0}$ might be taken to be the trivial extension (that is zero on $\Omega^{\complement}$).

Systematically replacing $\sobo^{1,1}$ or $\sobo_{0}^{1,1}$ in \ref{item:strat1}, there exists a linear extension operator $\widetilde{\mathscr{R}}\colon\lebe^{1}(\Omega;\R^{n})\to(\hold^{\infty}\cap\lebe^{1})(\R^{n}\setminus\overline{\Omega};\R^{n})$ which is bounded with respect to the $\lebe^{1}$-norm. Such an extension operator cannot be obtained by extending a given function $u\in\lebe^{1}(\Omega;\R^{n})$ trivially to the entire $\R^{n}$, but for instance by \textsc{Jones}' reflection method. However, the corresponding analogue of \ref{item:strat2}, namely 
\begin{itemize}
\item[(b')] Whenever $u\in\lebe^{1}(\Omega;\R^{n})$ satisfies $\mathrm{div}(u)=0$ in $\mathscr{D}'(\Omega)$, then we have that the identity
\begin{align*}
\mathrm{div}(\mathscr{R}u)=-\mathrm{div}(\mathscr{E}_{0}u)\qquad\text{in}\;\mathscr{D}'(\R^{n}\setminus\overline{\Omega})
\end{align*}
\end{itemize}
is not useful anymore: If we analogously introduce $\mathscr{E}=\mathscr{E}_{0}+\mathscr{R}$, we do not necessarily have $\mathrm{div}(\mathscr{E}u)\in\lebe^{1}(\R^{n})$. Hence, $\mathrm{div}(u)=0$ in $\mathscr{D}'(\Omega)$ and $\mathrm{div}(\mathscr{E}u)=0$ in $\mathscr{D}'(\R^{n}\setminus\overline{\Omega})$  do \emph{not} combine to $\mathrm{div}(\mathscr{E}u)=0$ in $\mathscr{D}'(\R^{n})$. In particular, $\diver \mathscr{E}u$ might, for instance, be a measure that is supported on $\partial \Omega$. Moreover, as only the \emph{normal trace} is defined in some negative Sobolev space, verifying a condition that $u$ and $\E u$ coincide on the boundary appears to be very challenging.

In order to overcome these issues, we will directly introduce a divergence-free $\lebe^{1}$-extension operator \\  $\mathscr{F}\colon \lebe^1(\Omega;\R^n) \to \lebe^1(\R^n;\R^n)$ with the following properties:
\begin{enumerate}
    \item\label{item:strat3} $\mathscr{F}\colon\lebe^{1}(\Omega;\R^{n})\to \lebe^{1}(\Omega;\R^{n})$ is a bounded linear operator, and we have 
\begin{align*}    
    \mathscr{F}u|_{\R^{n}\setminus\overline{\Omega}}\in (\lebe^{1}\cap \hold^{\infty})(\R^{n}\setminus\overline{\Omega};\R^n)\qquad\text{for all}\;u\in\lebe^{1}(\Omega;\R^{n}).
    \end{align*}
    \item\label{item:strat4} Whenever $u\in\lebe^{1}(\Omega;\R^{n})$ satisfies $\mathrm{div}(u)=0$ in $\mathscr{D}'(\Omega)$, then we have 
\begin{align*}
\mathrm{div}(\mathscr{F}u)=0\qquad\text{pointwisely in $\R^{n}\setminus\overline{\Omega}$}.
\end{align*}
    \item \label{crucialstep} Whenever $u\in\lebe^{1}(\Omega;\R^{n})$ satisfies $\mathrm{div}(u)=0$ in $\mathscr{D}'(\Omega)$, then $\diver(\mathscr{F} u)\in \mathscr{D}'(\R^n)$ is actually in $\lebe^{1}$, that is, 
\begin{align*}    
\mathrm{div}(u)=0\;\text{in}\;\mathscr{D}'(\Omega) \Longrightarrow     \diver(\mathscr{F}u)\in \lebe^1(\R^{n}).
\end{align*}
\end{enumerate}
In proving \ref{item:strat3}--\ref{item:strat4}, we first construct  the corresponding extension operator acting on vector fields $u\in \hold^{1}(\overline{\Omega};\R^{n})\cap\lebe^{1}(\Omega;\R^{n})$. Since all estimates for $\mathscr{F}u$ will only involve the $\lebe^{1}$-norms of the fields $u$, the requisite extension operator  $\mathscr{F}\colon\lebe^{1}(\Omega;\R^{n})\to\lebe^{1}(\R^{n};\R^{n})$ is then obtained by density. This step particularly requires density of $\hold_{\mathrm{div}}^{1}(\overline{\Omega};\R^{n})$ in $\lebe_{\mathrm{div}}^{1}(\Omega;\R^{n})$ with respect to the $\lebe^{1}$-norm. 

This density result needs to be approached with care: First, at this stage of the proof we cannot simply employ the corresponding extension operator $\lebe_{\mathrm{div}}^{1}(\Omega;\R^{n})\to\lebe_{\mathrm{div}}^{1}(\R^{n};\R^{n})$ and mollify. Namely, it is precisely this extension operator which we aim to construct, and which is \emph{not} yet at our disposal. Second, approaches which make use of localisation, subsequent mollification and finally patching together the single pieces (as is customary e.g. in proving density of $\hold^{1}(\overline{\Omega})$ in $\sobo^{1,p}(\Omega)$ for $1\leq p<\infty$, cf. \cite[Chpt. 5.3.3]{Evans}) are equally difficult to be implemented in the present setting. Here, the underlying obstruction is that, even though the single localised pieces are divergence-free and smooth up to the boundary, patching them together by means of a smooth partition of unity destroys solenoidality. As described in detail in Section \ref{sec:density}, it is here that we utilise the divergence-free extension operator for $\hold^{1}(\overline{\Omega};\R^{n})$ which at this stage \emph{is} available. 
\subsubsection{On the Lipschitz regularity of the boundary}
While it is imaginable for an extension operator to exist for a more general class of domains, we only focus on Lipschitz domains. The reason is two-fold. On the one hand, as the reader will notice, the approach underlying the proofs of Theorem \ref{thm:L1} and \ref{thm:W1} makes use of decomposition techniques available for $(\varepsilon,\infty)$-domains, hence we need to assume at least this regularity. On the other hand, to define the requisite extension operator for smooth fields we come up suitable definition of smooth (curvilinear) simplices on the boundary $\partial\Omega$ via an identification of the Lipschitz set with a set with smooth boundary, cf. \textsc{Ball \& Zarnescu}, \cite{BZ}. We refer to Section \ref{sec:simplices} for more motivation. Combining both these steps, one may write down some abstract regularity assumption replacing Lipschitz regularity- it is, however, not so clear whether this is really a much tighter assumption that Lipschitz regularity. As it becomes apparent by Theorem \ref{thm:geometriclim}/Theorem \ref{thm:counterexamples}, the assumption of Lischitz regularity is reasonably sharp.

\subsection{Structure of the paper}
We now briefly comment on the organisation of the remainder of the paper. After fixing notation and gathering auxiliary tools such as Whitney coverings in Section \ref{sec:prelims}, we address in Section \ref{sec:simplices} the definition of simplices as required for the divergence-free extension for Lipschitz domains later on. Section \ref{sec:L1} then is devoted to the extension operator for convex domains. Working from the construction in this case, we then give the proof of the extension result for Lipschitz domains in Section \ref{sec:Lip}.  In Section \ref{sec:sobolev}, we address the higher regularity regime and deal with extensions of divergence-free Sobolev functions. Section \ref{sec:further} is concerned with lowering the Lipschitz regularity of the boundaries and comparing the underlying findings with the classical results in the Sobolev context. The paper is concluded with a discussion of various applications in Section \ref{sec:applications}.
{\small  
\subsection*{Acknowledgment} 
The authors are thankful to Oliver C. Schn\"{u}rer for discussions on collar-type constructions. The first author gratefully acknowledges financial support by the Hector foundation. The second author is thankful to the Department of Mathematics and Statistics at the University of Konstanz for financial support and the kind hospitality during a research stay in May 2023, where parts of the paper were concluded. 
}
\section{Preliminaries}\label{sec:prelims}
In this section, we fix notation, gather background results on Whitney coverings and collect additional auxiliary tools which will prove useful in the main part of the paper.
\subsection{General notation}
Throughout, $\mathscr{L}^{n}$ and $\mathscr{H}^{k}$ with $k\in\{0,...,n\}$ denote the $n$-dimensional Lebesgue and the $k$-dimensional Hausdorff measures, respectively. For a Radon measure $\mu$ on $\R^{n}$, a $\mu$-measurable set $U\subset\R^{n}$ with $\mu(U)\in (0,\infty)$ and a $\mu$-measurable function $f\colon U\to V$ with some finite dimensional inner product space $V$, we employ the average integral notation 
\begin{align}\label{eq:dashedintegral}
\dashint_{U}f\dif \mu := \frac{1}{\mu(U)}\int_{U}f\dif\mu.
\end{align}
If $\mu(U)=0$, we define the left-hand side of \eqref{eq:dashedintegral} to be zero. Moreover, given a $\mu$-measurable set $A\subset\R^{n}$, we denote by $\mu\mres A$ the restriction of $\mu$ to $A$. 

For clarity of the integration variable and the measure, even when integrating over differential forms, we still write $\dif x$ or $\dif \mu(x)$. 

For one or more vectors $v_1,...,v_r$ in some inner product space $V$ we denote by $\langle v_1,\ldots,v_r \rangle$ the span of those vectors.
\smallskip

As usual, all distances are taken with respect to the Euclidean norm. For a set $U\subset\R^{n}$, we define the \emph{signed distance function} which is negative on the complement of $U$ by 
\begin{align}\label{eq:signeddistance}
d(x,\partial U):=\begin{cases} 
-\mathrm{dist}(x,\partial U)&\;\text{if}\;x\in \R^{n}\setminus U,\\ 
\mathrm{dist}(x,\partial U)&\;\text{if}\;x\in U.
\end{cases} 
\end{align}
If $Q\subset\R^{n}$ is a cube, we denote by $\ell(Q)$ its side length. Given $\lambda>0$, we define $\lambda Q$ as the cube with the same center and orientation as $Q$ but $\lambda$-times its length. For a given set $U\subset\R^{n}$, we moreover denote by $\conv(U)$ the convex hull of $U$. Given points $x_{1},...,x_{k}\in \R^{n}$, we then simply write $\conv(x_{1},...,x_{k}):=\conv(\{x_{1},...,x_{k}\})$. Lastly, given $x_{0}\in\R^{n}$ and $r>0$, we denote by $\ball_{r}(x_{0}):=\{x\in\R^{n}\colon\;|x-x_{0}|<r\}$ the Euclidean ball with radius $r$ centered at $x$ and, if $U\subset\R^{n}$ is a set, $\ball_{r}(U):=\{x\in\R^{n}\colon\;\dista(x,U)<r\}$.

In order to formulate several of the key estimates in the main part in a coordinate-free manner, it is useful to employ differential forms. Here we formally distinguish between $\R^{n}$ and its dual $(\R^{n})^{*}$. To fix notation, we denote by $\wedgeq^{k}(\R^{n})$ and $\wedgeq^{k}(\R^{n})^{*}$ the exterior product of $k$-copies of $\R^n$, and $(\R^n)^{\ast}$, respectively. As a consequence, $\wedgeq^{k}(\R^{n})^{*}$ corresponds to the set of $k$-multilinear, anti-symmetric functions  $(\R^{n})^{k}\to \R$. Identifying a vector field $v=(v_{1},...,v_{n})\colon \Omega\to\R^{n}$ with the corresponding $(n-1)$-form 
\begin{align}\label{eq:differentialconvention1}
\omega_{v}:=\sum_{j=1}^{n}(-1)^{j-1}v_{j}(\dif x_{1}\wedge ... \dif x_{j-1}\wedge \dif x_{j+1}\wedge ... \wedge \dif x_{n}), 
\end{align}
the exterior derivative of $\omega_{v}$ then satisfies 
\begin{align}\label{eq:differentialconvention2}
\dif\omega_{v} = \mathrm{div}(v)(\dif x_{1}\wedge ... \wedge \dif x_{n})
\end{align}
and hence can be identified with $\mathrm{div}(v)$.
For $1 \leq p \leq \infty$ we further define the function space
\[
\lebe^p_{\diver}(\Omega) := \left\{ u \in \lebe^p(\Omega;\R^n) \colon \diver u=0 \text{ in } \mathcal{D}'(\Omega) \right\}
\]
as $\lebe^p$-functions that have vanishing distributional divergence. In the same fashion for the identified $(n-1)$-forms we can define 
\[
\lebe^p_{\dif}(\Omega) := \left\{ u \in \lebe^p(\Omega;\wedgeq^{k}((\R^{n})^{*})) \colon \dif u=0 \text{ in } \mathcal{D}'(\Omega) \right\}.
\]

Consider a differentiable $(n-1)$-form $u \colon \Omega \to \wedgeq^{k}((\R^{n})^{*})$. Therefore, we may see the full derivative $Du$ as an element of ${\wedgeq}^1((\R^{n})^{*}) \otimes {\wedgeq}^{n-1}((\R^{n})^{*})$.  Consider an element $v \in \wedgeq^{k}(\R^n)$. By $D u \cdot v$ we understand the pairing between the first coordinate of $Du$ with $v$, i.e. for a tensor $\omega_1 \otimes \omega_2 \in \wedgeq^{1}((\R^n)^{\ast}) \otimes \wedgeq^{n-1}((\R^n)^{\ast})$ and $v \in \wedgeq^{k}(\R^n)$ we have
\begin{equation} \label{eq:Skmeaning}
(\omega_1 \otimes \omega_2) \cdot v = (\omega_1(v) \otimes \omega_2) \in \wedgeq^{k-1}(\R^n) \otimes {\wedgeq}^{n-1}(\R^n)^{\ast}.
\end{equation}
Moreover, we have $z \in \R^n = \wedgeq^{1}(\R^n)$, i.e. we may associate
\[
\left[(\omega_1 \otimes \omega_2) \cdot v\right]z \quad \text{with} \quad ( \omega_1(v) \wedge z) \otimes \omega_2 \in \wedgeq^{k}(\R^n) \otimes {\wedgeq}^{n-1}((\R^n)^{\ast})
\]
and then map elements of $ {\wedgeq}^{k}(\R^n) \otimes {\wedgeq}^{n-1}((\R^n)^{\ast})$ 
to elements in ${\wedgeq}^{n-1-k}((\R^n)^{\ast})$ in the natural way, i.e. 
\begin{equation} \label{eq:inner}
\left[(\omega_1 \otimes \omega_2) \cdot v\right]z := \omega_2 \left[\omega_1(v) \wedge z\right] \in  {\wedgeq}^{n-1-k}((\R^n)^{\ast}).
\end{equation}

 \subsection{Whitney and double Whitney coverings}\label{sec:Whitney}
Let $\Omega\subset\R^{n}$ be open. In this situation, $\Omega$ admits a \emph{Whitney covering} (cf. \cite[Chapter 6.1]{Stein}), by which we understand a countable collection $\mathcal{W}=(Q_{j})$ of closed dyadic cubes $Q_{j}\subset\Omega$ with the following properties: 
\begin{enumerate}[label=(W\arabic*)]
    \item\label{item:Whitney1} $\bigcup_{Q\in\mathcal{W}}Q=\Omega$, and for any $Q\in\mathcal{W}$ we have $\frac{7}{6}Q\Subset\Omega$.
    \item\label{item:Whitney2} There exists a number $\mathtt{N}=\mathtt{N}(n)\in\mathbb{N}$ such that for any $Q\in\mathcal{W}$ there exist at most $\mathtt{N}$ cubes $Q'\in\mathcal{W}$ such that $\frac{7}{6}Q\cap \frac{7}{6}Q'\neq\emptyset$. 
    \item\label{item:Whitney3} There exists a constant $c=c(n)>0$ such that we have 
    \begin{align*}
    \frac{1}{c}\ell(Q) \leq \dista(\tfrac{7}{6}Q,\partial\Omega)\leq \dista(Q,\partial\Omega) \leq c\ell(Q)\qquad\text{for all}\;Q\in\mathcal{W}. 
    \end{align*}
\end{enumerate}
\begin{figure}
    \centering \includegraphics[width=0.6 \textwidth]{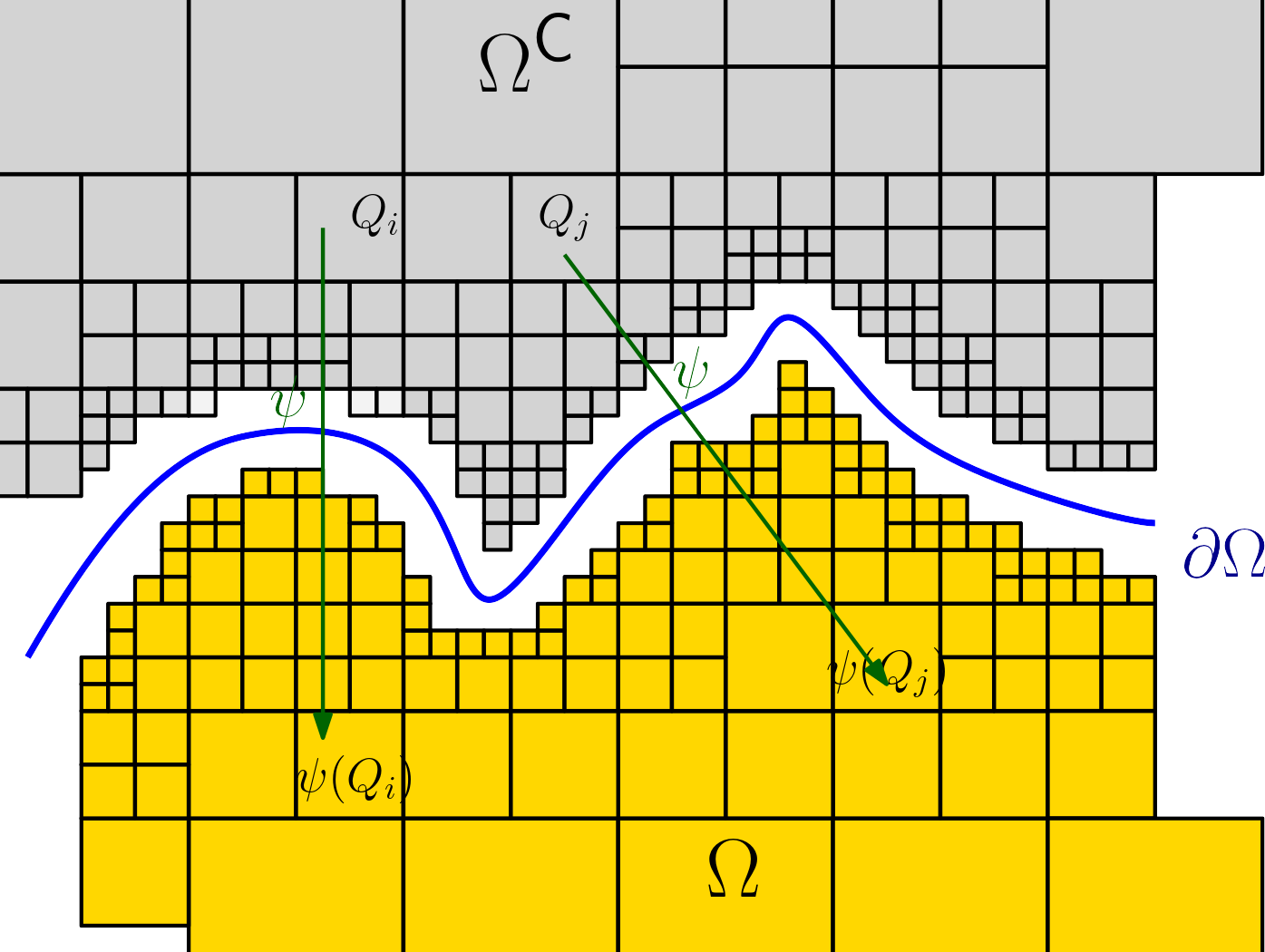}
    \caption{A double Whitney cover and the map $\psi$ between the covers: Even though the cubes $Q_i$ and $Q_j$ are quite close, $\psi(Q_i)$ and $\psi(Q_j)$ are not close together. Indeed, their distance is only controlled in terms of Lemma \ref{cor:Jones}. Observe that if the Lipschitz norm of the graph-like boundary gets large, the distance of a cube $Q_i$ to $\psi(Q_j)$ of similar size also increases.}
    \label{fig:Whitney}
\end{figure}

\textsc{Jones}' construction for extensions of Sobolev functions \cite{Jones} relies on the construction of \emph{interior} and \emph{exterior} Whitney coverings for both $\Omega$ and $\R^{n}\setminus\overline{\Omega}$, respectively, and a suitable identification map between the dyadic cubes in the corresponding coverings.

This is reflected by Lemma \ref{cor:Jones} below, for which we introduce the following notation: For a bounded Lipschitz domain $\Omega$, recall that we can find a finite cover $\mathcal{O}=\{O_{1},...,O_{m}\}$ of $\partial\Omega$ such that, upon rotating and translating, we can write 
\begin{align}\label{eq:nowandthen}
\Omega\cap O_{i} = \{x=(x_{1},...,x_{n})\in O_{i}\colon\; x_{n}<f_{i}(x_{1},...,x_{n-1})\} 
\end{align}
for some Lipschitz maps $f_{i}\colon U_{i}\to\R$ and suitable open subsets $U_{i}\subset\R^{n-1}$, $i\in\{1,...,m\}$. In what follows, we denote by 
\begin{align}\label{eq:nowandthen1}
\mathrm{Lip}(\mathcal{O},\Omega):=\max_{i\in\{1,...,m\}}\mathrm{Lip}(f_{i}) 
\end{align}
the maximal value of the underlying Lipschitz constants. 
\begin{lemma}\label{cor:Jones}
Let $\Omega\subset\R^{n}$ be open and bounded with Lipschitz boundary, and let $\mathcal{W}_{1},\mathcal{W}_{2}$ be Whitney coverings in the sense of \ref{item:Whitney1}--\ref{item:Whitney3} of $\Omega$ and $\R^{n}\setminus\overline{\Omega}$, respectively. 
Then there exists a map $\Psi\colon\mathcal{W}_{2}\to\mathcal{W}_{1}$, a cube $Q_{0}\in\mathcal{W}_{1}$, a threshold number $\eta>0$ and a constant $C>0$ that depends only on $\mathrm{Lip}(\Omega,\mathcal{O})$ such that the following hold for all $Q\in\mathcal{W}_{2}$:
\begin{enumerate}[label=(J\arabic*)]
    \item\label{item:Jonesrefined1} If $\ell(Q) > \eta$, then $\Psi(Q) = Q_0$;
    \item\label{item:Jonesrefined2} $\frac{1}{4}\ell(Q) \leq \ell(\Psi(Q)) \leq 4 \ell(Q) $ for all $Q\in\mathcal{W}_{2}$ with $\ell(Q) \leq \eta$;
    \item \label{item:Jonesrefined3} $\dist(Q,\Psi(Q)) \leq C \ell(Q)$;
    \item \label{item:Jonesrefined3a} $C^{-1} \ell(Q)\leq \dist(\Psi(Q),\partial\Omega) \leq C \ell(Q)$; 
    \item\label{item:Jonesrefined4} $C^{-1}\ell(Q)\leq\dist(Q,\Psi(Q)) \leq C \ell(Q)$ whenever $\ell(Q)\leq\eta$.
\end{enumerate}
\end{lemma}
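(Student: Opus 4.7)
The plan is to construct $\Psi$ via an explicit reflection across the Lipschitz boundary $\partial\Omega$, carried out in local graph charts, which is the natural adaptation of Jones' original construction for $(\varepsilon,\delta)$-domains to the Lipschitz setting. Fix the atlas $\mathcal{O}=\{O_{1},\ldots,O_{m}\}$ with graph representations \eqref{eq:nowandthen} and set $L := \mathrm{Lip}(\mathcal{O},\Omega)$. By compactness of $\partial\Omega$ and finiteness of the atlas, I would choose $\eta > 0$ small enough that whenever $Q \in \mathcal{W}_{2}$ has $\ell(Q) \leq \eta$, a suitable enlargement of $Q$ lies inside some chart $O_{i}$. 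A ``large'' cube $Q_{0} \in \mathcal{W}_{1}$ of side-length comparable to $\eta$ is then fixed well inside $\Omega$; its existence follows from the boundedness of $\Omega$ and \ref{item:Whitney1}--\ref{item:Whitney3}.

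For $Q \in \mathcal{W}_{2}$ with $\ell(Q) > \eta$ I set $\Psi(Q) := Q_{0}$. Then \ref{item:Jonesrefined1} is immediate, while \ref{item:Jonesrefined3} and \ref{item:Jonesrefined3a} follow from $\mathrm{diam}(\Omega) < \infty$ together with the lower bound $\ell(Q) > \eta$; properties \ref{item:Jonesrefined2} and \ref{item:Jonesrefined4} are vacuous in this regime. For $Q \in \mathcal{W}_{2}$ with $\ell(Q) \leq \eta$ I pick $p \in \partial\Omega$ realising $\mathrm{dist}(Q,\partial\Omega)$, select a chart $O_{i}$ containing both $p$ and the required enlargement of $Q$, and in local coordinates adapted to the graph $x_{n} = f_{i}(x')$ let $y = (y',y_{n})$ denote the centre of $Q$. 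Define the reflected point
\[
\widetilde{y} := (y',\, 2f_{i}(y') - y_{n}) \in \Omega \cap O_{i},
\]
and let $\Psi(Q)$ be (a suitably adjusted dyadic scale-match of) the Whitney cube in $\mathcal{W}_{1}$ containing $\widetilde{y}$. By \ref{item:Whitney3} applied to $\mathcal{W}_{2}$ and the Lipschitz bound $|y_{n} - f_{i}(y')| \leq (1+L)\,\mathrm{dist}(Q,\partial\Omega)$, the quantities $\ell(Q)$, $\mathrm{dist}(Q,\partial\Omega)$, $\mathrm{dist}(\widetilde{y},\partial\Omega)$ and $\ell(\Psi(Q))$ are all comparable up to a constant depending only on $n$ and $L$. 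The identity $|y - \widetilde{y}| = 2|y_{n} - f_{i}(y')|$ combined with the triangle inequality then gives both the upper and lower distance bounds \ref{item:Jonesrefined3}, \ref{item:Jonesrefined4}, and \ref{item:Jonesrefined3a} follows from applying \ref{item:Whitney3} to $\mathcal{W}_{1}$.

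I expect the main obstacle to be the sharp factor $4$ in \ref{item:Jonesrefined2}: the reflection argument naturally produces a ratio $\ell(\Psi(Q))/\ell(Q)$ bounded by some constant $c = c(n,L)$ which may well exceed $4$ for large $L$. I would resolve this by replacing $\Psi(Q)$ by an appropriate dyadic ancestor or descendant in $\mathcal{W}_{1}$ at the correct scale; such a cube exists within distance $O(\ell(Q))$ by \ref{item:Whitney1}--\ref{item:Whitney3} and the established comparability $\mathrm{dist}(\widetilde{y},\partial\Omega) \sim \ell(Q)$. The adjustment contributes only an additive error of order $\ell(Q)$ to $\mathrm{dist}(Q,\Psi(Q))$, which is absorbed into the final constant $C = C(n,L)$ in \ref{item:Jonesrefined3}--\ref{item:Jonesrefined4}. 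A small amount of bookkeeping, such as fixing a linear order on $\mathcal{W}_{1}$ and always choosing the least admissible cube, ensures that $\Psi$ is a genuine (single-valued) map.
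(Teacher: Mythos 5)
The paper does not prove this lemma itself but attributes it to Jones \cite{Jones}, whose construction for $(\varepsilon,\delta)$-domains is more abstract: given an exterior Whitney cube $Q$, Jones uses the chain condition (or, more precisely, his Lemma 2.1) to produce a point $z\in\Omega$ whose distance to $\partial\Omega$ lies \emph{exactly} in the interval $[\ell(Q),2\ell(Q)]$ and whose distance to $Q$ is $\lesssim_{\varepsilon}\ell(Q)$, and then takes $\Psi(Q)$ to be the interior Whitney cube containing $z$. Your reflection across the graph is a genuinely different and more concrete route, valid in the Lipschitz setting, and it is a perfectly natural way to obtain a map $\Psi$ satisfying \ref{item:Jonesrefined1}, \ref{item:Jonesrefined3}, \ref{item:Jonesrefined3a} and \ref{item:Jonesrefined4} with $L$-dependent constants. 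Where the two approaches diverge is precisely the step you flag as the ``main obstacle,'' and here your fix does not actually work.

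Concretely, the reflected point $\widetilde y=(y',2f_i(y')-y_n)$ has the same \emph{vertical} distance to the graph as $y$, but its \emph{Euclidean} distance to $\partial\Omega$ is only comparable to $\mathrm{dist}(y,\partial\Omega)$ up to factors of $\sqrt{1+L^2}$. Consequently the Whitney cube of $\mathcal{W}_1$ containing $\widetilde y$ has side length comparable to $\ell(Q)$ only with an $L$-dependent constant, and there is no a priori reason for the universal bound $4$ in \ref{item:Jonesrefined2}. Your proposed remedy --- ``replacing $\Psi(Q)$ by an appropriate dyadic ancestor or descendant in $\mathcal{W}_{1}$'' --- cannot be carried out: Whitney cubes are by construction \emph{maximal} dyadic cubes with respect to the distance condition \ref{item:Whitney3}, so no proper dyadic ancestor or descendant of a cube in $\mathcal{W}_1$ is itself in $\mathcal{W}_1$. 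What is needed instead is Jones' device of first choosing the \emph{point} at the right distance. In the Lipschitz setting this is easy: the map $t\mapsto\mathrm{dist}\big((y',f_i(y')-t),\partial\Omega\big)$ is continuous, vanishes at $t=0$, and increases to a value $\geq 2\ell(Q)$ for $t$ of order $(1+L)\ell(Q)$ (still inside the chart for $\eta$ small); by the intermediate value theorem pick $t_0$ with $\mathrm{dist}\big((y',f_i(y')-t_0),\partial\Omega\big)\in[\ell(Q),2\ell(Q)]$, and let $\Psi(Q)$ be the cube of $\mathcal{W}_1$ containing $z:=(y',f_i(y')-t_0)$. Then \ref{item:Whitney3} with dimensional constant gives $\ell(\Psi(Q))\sim\ell(Q)$ with a dimensional (not $L$-dependent) factor; whether that factor is literally $4$ depends on the normalisation of the Whitney decomposition, but this is the correct mechanism, and the displacement $|y-z|\lesssim(1+L)\ell(Q)$ is absorbed into $C$ exactly as you intended. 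Two smaller points also deserve a line: you must take the threshold $\eta$ (or the enlargement of $Q$) to depend on $L$ so that both $y$ and the reflected/selected point remain inside the same chart $O_i$, and you should note that the single-valuedness of $\Psi$ is automatic once the target cube is defined as \emph{the} Whitney cube containing the selected point.
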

\begin{remark}\label{remark:Lipschitzepsdel}
   Lemma \ref{cor:Jones} holds true for a wider class of open sets that just those with Lipschitz boundary. Indeed, such a map $\Psi$ exists for so called $(\varepsilon,\delta)$-domains, cf. \cite{Jones}. These are open sets $\Omega\subset\R^{n}$ such that (for $\varepsilon>0$ and $0<\delta \leq \infty$) $x,y\in\Omega$ with $|x-y|<\delta$ there exists a rectifiable curve $\gamma \colon [0,1] \to \Omega$ connecting $x$ to $y$ (i.e. $\gamma(0)=x$, $\gamma(1)=y$) with the following properties:
\begin{enumerate}
\item\label{item:epsdelta1} $\ell(\gamma)<\frac{|x-y|}{\varepsilon}$, where $\ell(\gamma)$ is the length of $\gamma$;
\item\label{item:epsdelta2} $\mathrm{dist}(z,\partial\Omega)|x-y|>\varepsilon |x-z|\,|y-z|$ for all $z\in\gamma([0,1])$. 
\end{enumerate}
For instance, domains with Lipschitz boundaries are of class $(\varepsilon,\infty)$. In contrast, $\hold^{0,\alpha}$-domains for $0<\alpha<1$ are in general not $(\varepsilon,\delta)$ domains: Domains with suitable inward cusps violate \ref{item:epsdelta1} and domains with suitable outward cusps violate \ref{item:epsdelta2} (cf. \cite{PM}, Section \ref{sec:regularityofOmega}). Even though our constructions in the subsequent sections are partially inspired by the method of \textsc{Jones} for $(\varepsilon,\delta)$-domains,
the construction of suitable simplices (cf. Section 
\ref{sec:simplices}) however requires more regularity than merely domains of class $(\varepsilon,\delta)$.

Interestingly, in Section \ref{sec:regularityofOmega} we establish that one of the main objectives of this paper, namely a solenoidal extension operator for $\lebe^p$-functions, \emph{cannot} be achieved for $(\varepsilon,\delta)$ domains in general, and we refer the reader to Section \ref{sec:further} for more on this matter.
\end{remark}
\subsection{Signed distances and normals}
Let $\Omega \subset \R^n$ be a domain with $\hold^k$ boundary. For $x \in \partial \Omega$ we denote by $T_x \partial \Omega$ the tangent space of the manifold $\partial \Omega$. Moroever, by $\mathbf{n}(x)$ 
we denote the \emph{inner} unit normal vector at $x$, so that $T_x \partial \Omega = \langle \mathbf{n}(x) \rangle^{\perp}$. Finally, we denote by $N_x \partial \Omega = \langle \mathbf{n}(x) \rangle$ the normal space.
The following lemma is important for the construction of a collar neighbourhood (cf. Lemma \ref{lemma:collar} below) which, in turn, is crucial for the definition of simplices in domains.
\begin{lemma}[{\cite[Lem. 14.16]{GILTRU77}}]\label{lem:signeddistance}
Let $k\geq 2$ and let $\Omega\subset\R^{n}$ be open and bounded with boundary $\partial\Omega$ of class $\hold^{k}$. Then the signed distance function $x\mapsto d(x,\Omega)$ from \eqref{eq:signeddistance} is of class $\hold^{k}$ in a neighbourhood of $\partial\Omega$. Moreover, at each point $x\in\partial\Omega$, we have $\nabla d(x,\partial\Omega)=-\nu(x)=:\mathbf{n}(x)$ with the outer unit normal $\nu\colon\partial\Omega\to\mathbb{S}^{n-1}$. 
\end{lemma}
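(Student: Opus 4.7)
The plan is to work locally around each boundary point and exploit the normal bundle together with the inverse function theorem to realize $d(\cdot,\partial\Omega)$ as one of the tubular coordinates. Since $\partial\Omega$ is of class $\hold^{k}$ with $k\geq 2$, around any $x_{0}\in\partial\Omega$ we may choose local coordinates in which $\partial\Omega$ appears as the graph of a $\hold^{k}$ function, from which it follows that the inward unit normal field $\mathbf{n}\colon\partial\Omega\to\sphere^{n-1}$ is of class $\hold^{k-1}$.

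First, I would introduce the normal map $F(y,t):=y+t\,\mathbf{n}(y)$ on an open subset of $\partial\Omega\times (-\varepsilon_{0},\varepsilon_{0})$. At $(x_{0},0)$ the differential of $F$ sends $T_{x_{0}}\partial\Omega$ to itself by the identity and the $t$-direction to $\mathbf{n}(x_{0})$; since $T_{x_{0}}\partial\Omega\oplus\langle\mathbf{n}(x_{0})\rangle=\R^{n}$, this differential is invertible. The inverse function theorem then produces a $\hold^{k-1}$ diffeomorphism of a neighbourhood of $(x_{0},0)$ onto a tubular neighbourhood $N_{\varepsilon}(x_{0})\subset\R^{n}$, whose inverse I write as $x\mapsto (\pi(x),t(x))$. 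Patching the local tubular neighbourhoods via compactness of $\partial\Omega$ yields a tubular neighbourhood $N_{\varepsilon}(\partial\Omega)$ on which the projection $\pi$ and the parameter $t$ are globally defined maps of class $\hold^{k-1}$.

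Next, I would check that $t(x)=d(x,\partial\Omega)$. Shrinking $\varepsilon>0$ if necessary — here we use that the $\hold^{k}$ regularity with $k\geq 2$ furnishes a uniform bound on the principal curvatures of $\partial\Omega$ — ensures that $\pi(x)$ is the \emph{unique} nearest boundary point of any $x\in N_{\varepsilon}(\partial\Omega)$. The signed distance is then encoded by the $t$-coordinate, with the correct sign convention from \eqref{eq:signeddistance} because $\mathbf{n}$ was chosen as the inward unit normal.

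The main obstacle is upgrading the $\hold^{k-1}$ regularity handed to us by the inverse function theorem to the stated $\hold^{k}$ regularity. Here I would exploit the key identity
\begin{equation*}
\nabla d(x,\partial\Omega)=\mathbf{n}(\pi(x))\qquad\text{for all }x\in N_{\varepsilon}(\partial\Omega),
\end{equation*}
which on the one hand follows on $\partial\Omega$ from the fact that $d(\cdot,\partial\Omega)$ vanishes there (so $\nabla d\perp T_{y}\partial\Omega$) combined with the directional derivative computation $\tfrac{\dif}{\dif t}|_{t=0}d(y+t\mathbf{n}(y),\partial\Omega)=1$ for $y\in\partial\Omega$, and on the other hand extends to all of $N_{\varepsilon}(\partial\Omega)$ because $d$ is affine along each normal ray $t\mapsto y+t\mathbf{n}(y)$. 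Since $\pi\in\hold^{k-1}(N_{\varepsilon}(\partial\Omega);\partial\Omega)$ and $\mathbf{n}\in\hold^{k-1}(\partial\Omega;\sphere^{n-1})$, the composition $\mathbf{n}\circ\pi$ is $\hold^{k-1}$, which promotes $\nabla d(\cdot,\partial\Omega)$ to $\hold^{k-1}$ and hence $d(\cdot,\partial\Omega)$ to $\hold^{k}$. Evaluating the identity on $\partial\Omega$ itself then gives $\nabla d(x,\partial\Omega)=\mathbf{n}(x)=-\nu(x)$ as claimed.
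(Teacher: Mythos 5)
Your argument is correct. The paper gives no proof of this lemma at all --- it merely cites Gilbarg--Trudinger, Lemma~14.16 --- so there is no in-paper proof to compare against. Your proposal is essentially the standard Gilbarg--Trudinger argument: build the normal map $F(y,t)=y+t\,\mathbf{n}(y)$, invoke the inverse function theorem (which requires $k-1\geq 1$, consistent with the hypothesis $k\geq 2$), identify the $t$-coordinate with the signed distance via the curvature bound that guarantees uniqueness of the nearest point, and then bootstrap from the a priori $\hold^{k-1}$ regularity to $\hold^{k}$ through the identity $\nabla d=\mathbf{n}\circ\pi$. The one step worth stating a little more carefully is the off-boundary part of that identity: rather than ``$d$ is affine along normal rays'' on its own, one combines $d(y+t\mathbf{n}(y),\partial\Omega)=t$ (giving directional derivative $1$ along $\mathbf{n}(\pi(x))$) with the Lipschitz bound $|\nabla d|\leq 1$ to force $\nabla d(x)=\mathbf{n}(\pi(x))$; but this is a standard point and does not affect the structure of the argument.
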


\section{Simplices in Lipschitz domains} \label{sec:simplices} 
The divergence-free extension operator to be introduced in Sections \ref{sec:L1} and \ref{sec:Lip} will crucially rely on the Gauss -Green theorem applied to certain simplices close to the boundary. For convex domains $\Omega\subset\R^{n}$, such simplices can be obtained as the convex hull $\mathrm{conv}(x_{1},...,x_{n})$ of points $x_{1},...,x_{n}\in\Omega$, and then $\mathrm{conv}(x_{1},...,x_{n})$ is still contained in $\Omega$ by convexity. This is the key reason for first treating the case of convex sets $\Omega\subset\R^{n}$ in Section \ref{sec:L1}, before embarking on the more general Lipschitz case in Section \ref{sec:Lip}. The reader interested in the convex case might therefore skip the following section.

Specifically, for non-convex domains with Lipschitz boundary $\partial\Omega$ an analogous construction of simplices does not work, and it is here where we borrow some elementary observations from differential geometry:
\begin{itemize}
    \item[(i)] At least locally, one may define the requisite simplices on smooth (embedded) manifolds (cf. Section \ref{sec:def:simplex}).
    \item[(ii)] A suitable neighbourhood of the boundary $\partial \Omega$ can be endowed with the structure of a smooth manifold (cf. Sections \ref{sec:approxsmooth} and \ref{sec:collar}).
\end{itemize}

\subsection{Set-up}
Let $1\leq k\leq n$. If $\Omega\subset\R^{n}$ is convex, we define for $\overline{x}=(x_0,...,x_{k}) \in \Omega^{k+1}$ a map 
    \begin{align}\label{eq:Sdef}
    S_{\bar{x}} \colon \mathbb{D}^k \longrightarrow \Omega, \quad (t_1,...,t_k) \longmapsto \sum_{i=1}^k t_i x_i+ \Big(1- \sum_{i=1}^k t_{k}\Big) x_0,     
    \end{align}
where the unit simplex $\mathbb{D}^{k}$ is given by 
\begin{equation} \label{def:unitsimplex}
    \mathbb{D}^k = \{t=(t_{1},...,t_{k}) \in [0,1]^k \colon t_1+...+t_k \leq 1\}. 
\end{equation}
In particular, $S_{\bar{x}}(\mathbb{D}^{k})$ is the  simplex with vertices $x_i$, $i\in\{0,...,k\}$, so the convex hull of the $x_{i}$'s. For general bounded domains $\Omega$ with Lipschitz boundary, this convex hull does not need to be contained in $\Omega$, and so the definition of suitable curvilinear simplices for our future applications requires a more careful construction. The latter, in turn, is motivated by the following principles which will be formalised in Theorem \ref{thm:prop:simplex} below: Given points $x_{0},...,x_{k}\in\Omega$,
\begin{enumerate} [label=(\alph*)]
    \item\label{item:principlesA} the curvilinear simplex is well-defined whenever all the $x_i$'s are close to each other and sufficiently close to the boundary. 
    \item the diameter of the curvilinear simplex is bounded in terms of the maximal distance between two points $x_{i},x_{j}$, $i,j\in\{0,...,k\}$.
    \item the curvilinear simplex is either degenerate (i.e., less than $k$-dimensional) or a $k$-dimensional manifold with boundary.
    \item\label{item:principlesD} The faces of the simplices are compatible with each other, i.e., the definition of the face opposite to some $x_i$ does not depend on the choice of $x_i$.
\end{enumerate}
Quantifying the closeness described in \ref{item:principlesA} is related to the size of the neighbourhood of $\Omega$ to which we can extend divergence-free fields; cf. Section \ref{sec:Lip}. We achieve such a definition obeying \ref{item:principlesA}--\ref{item:principlesD} following three main steps. First, a definition of such simplices can be obtained by easier means if $\Omega$ has smooth boundary. Second, we show that a small  neighbourhood of the boundary of a smooth domain is smoothly diffeomorphic to $\partial \Omega \times (-1,1)$, cf. Section \ref{sec:collar}. We finally use this observation to define simplices on smooth domains and then go back to Lipschitz domains, cf. Section \ref{sec:def:simplex}. The passage from smooth to Lipschitz domains requires an approximation result due to \textsc{Ball \& Zarnescu} \cite{BZ}, which we state in Section \ref{sec:approxsmooth}. 

\subsection{Bi-Lipschitz maps between Lipschitz and smooth domains} \label{sec:approxsmooth}
We begin by recording the following approximation result due to \textsc{Ball \& Zarnescu} \cite{BZ}:
\begin{lemma}[{\cite[Thm. 5.1, Rem. 5.3]{BZ}}] \label{lemma:BZ}
Let $\Omega \subset \R^n$ be a bounded Lipschitz domain. Then there exists a domain $V$ with smooth boundary $\partial V$ and a Bi-Lipschitz map $\Gamma_1 \colon \overline{\Omega} \to \overline{V}$ such that the following hold:
    \begin{enumerate}
        \item\label{item:BZ1} $\Gamma_1$ maps $\partial \Omega$ onto $\partial V$. 
        \item\label{item:BZ2} $\Gamma_1$ is smooth inside $\Omega$: $\Gamma_{1}|_{\Omega}\in\hold^{\infty}(\Omega;\R^{n})$.
    \end{enumerate}
\end{lemma}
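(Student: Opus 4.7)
The plan is to work locally in boundary charts and then patch via a partition of unity. By the Lipschitz hypothesis we may fix a finite cover $\mathcal{O}=\{O_1,\dots,O_m\}$ of $\partial\Omega$ and rotations $R_i$ such that, in the rotated coordinates, $\Omega\cap O_i$ is the subgraph of a Lipschitz function $f_i\colon U_i\to\R$ with $U_i\subset\R^{n-1}$ and Lipschitz constant $L:=\mathrm{Lip}(\mathcal{O},\Omega)$ as in \eqref{eq:nowandthen1}. The first step would be to mollify each $f_i$: set $\tilde f_i:=f_i\ast\rho_\kappa$ on a slightly shrunk domain, where $\rho_\kappa$ is a standard smooth mollifier. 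Standard properties of convolution give $\tilde f_i\in\hold^{\infty}$, $\|\tilde f_i-f_i\|_{L^\infty}\leq L\kappa$ and $\mathrm{Lip}(\tilde f_i)\leq L$. Choosing $\kappa$ small compared to the Lebesgue number of $\mathcal{O}$ and to $L^{-1}$ will be the key quantitative input later on.

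For each $i$, define a local vertical-shift map on $O_i$ by
\begin{equation*}
\Gamma_i(x',x_n) := \bigl(x',\,x_n+g_i(x')\bigr),\qquad g_i(x'):=\tilde f_i(x')-f_i(x'),
\end{equation*}
which sends the Lipschitz graph $\{x_n=f_i(x')\}$ onto the smooth graph $\{x_n=\tilde f_i(x')\}$. Direct computation shows $\Gamma_i$ is globally bi-Lipschitz on $O_i$ with bi-Lipschitz constant $1+CL\kappa$, and it maps $\Omega\cap O_i$ into a set whose boundary piece is smooth. To globalize, pick a smooth partition of unity $\{\eta_0,\eta_1,\dots,\eta_m\}$ subordinate to $\{\Omega_{\mathrm{int}},O_1,\dots,O_m\}$, where $\Omega_{\mathrm{int}}:=\{x\in\Omega:\mathrm{dist}(x,\partial\Omega)>\delta\}$ with $\delta$ much smaller than $\kappa$, and set
\begin{equation*}
\Gamma_1(x) := x+\sum_{i=1}^{m}\eta_i(x)\,\bigl(\Gamma_i(R_i x)-R_i x\bigr)_{R_i^{-1}},
\end{equation*}
where the subscript indicates we rotate the shift back to the original coordinates. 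On the interior region (where $\eta_0\equiv 1$) the map is the identity and hence smooth. Near the boundary, smoothness of $\Gamma_1$ in $\Omega$ follows because each $\eta_i$ is smooth and each $g_i\circ R_i$ is a \emph{smooth} function of $x'$ (it is $\tilde f_i-f_i$ evaluated on the tangential coordinate, and although $f_i$ is only Lipschitz, the value of $f_i(x')$ plays no role in the \emph{shift amount} once we re-express $g_i$ as an affine function of $\tilde f_i$ and the ambient variable $x_n$; any residual non-smoothness can be removed by a further regularization inside $\Omega$, using a smooth cutoff that is $1$ in a neighborhood of $\partial\Omega$ and $0$ deep inside).

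The third step is to verify that $\Gamma_1$ is a global bi-Lipschitz homeomorphism of $\overline{\Omega}$ onto $\overline{V}:=\Gamma_1(\overline{\Omega})$, and that $\partial V$ is smooth. The bi-Lipschitz estimate reduces to showing $\|D\Gamma_1-\mathrm{Id}\|_{L^\infty}\leq \tfrac12$; expanding the derivative and using that $|g_i|,|\nabla g_i|\leq CL\kappa$ together with $|\nabla\eta_i|\leq C/\delta$, we obtain $\|D\Gamma_1-\mathrm{Id}\|_\infty\leq C(L,m)\kappa/\delta$, which is small once $\kappa\ll\delta$. This ensures invertibility (by a Banach-type fixed point argument, or the inverse function theorem applied to the Lipschitz setting), and the inverse is automatically Lipschitz. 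Smoothness of $\partial V$ follows because in each chart $O_i$, $\partial V\cap \Gamma_1(O_i)$ is (locally) the graph of a smooth function obtained as a smooth combination of the $\tilde f_j$'s whose supports meet $O_i$; the overlap of smooth graphs parametrised by a smooth partition of unity is again smooth.

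I expect the main obstacle to be the patching step: while each $\Gamma_i$ is bi-Lipschitz and each $\tilde f_i$ is smooth, the partition-of-unity combination mixes perturbations from different charts, and simultaneous global injectivity together with the smoothness of $\partial V$ requires carefully balancing $\kappa$ against $\delta$ and the Lipschitz constants of the transition maps $R_jR_i^{-1}$. A clean way to handle this is to choose, once and for all, a quantitative $\kappa$ so small that $\|D\Gamma_1-\mathrm{Id}\|_\infty$ is dominated by a small universal constant; uniform invertibility then yields the bi-Lipschitz property, and smoothness in $\Omega$ follows because, away from $\partial\Omega$, the map agrees with a smooth modification of the identity.
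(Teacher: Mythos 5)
The paper does not prove this lemma at all---it records it as a citation to Ball and Zarnescu \cite[Thm.~5.1, Rem.~5.3]{BZ} and moves on. Your attempt is therefore not ``the same approach as the paper''; it is an attempt to reprove the cited theorem from scratch, which is a genuinely nontrivial task, and unfortunately the proposed construction has two concrete gaps that make it fail.

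First, the derivative estimate you assert for the shift is false. You set $g_i = \tilde f_i - f_i$ with $\tilde f_i = f_i * \rho_\kappa$ and claim $|g_i|, |\nabla g_i| \le C L \kappa$. Only the first bound is true. Since $f_i$ is merely Lipschitz, $\nabla f_i$ is an $L^\infty$ function that need not be continuous, so $\|\nabla \tilde f_i - \nabla f_i\|_{L^\infty}$ does \emph{not} tend to zero as $\kappa \to 0$; in general it is of order $L$, and no amount of shrinking $\kappa$ improves this. Consequently $\| D\Gamma_1 - \mathrm{Id} \|_{L^\infty}$ contains the term $\sum_i \eta_i \, \nabla g_i$ of size $O(L)$ that is independent of $\kappa$ and $\delta$, so the bound $\| D\Gamma_1 - \mathrm{Id}\|_\infty \le C(L,m)\kappa/\delta$ is incorrect and the inverse-function/Neumann-series argument for global invertibility and bi-Lipschitzness does not apply. (Each local shear $\Gamma_i$ is of course bi-Lipschitz in its own chart, but a partition-of-unity combination of bi-Lipschitz maps that are far from the identity is not automatically injective; that is precisely what the contraction estimate was supposed to deliver.)

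Second, the smoothness requirement $\Gamma_1|_\Omega \in \hold^\infty(\Omega;\R^n)$ is not achieved. The shift $g_i(x') = \tilde f_i(x') - f_i(x')$ is only Lipschitz in $x'$ because $f_i$ is, and this non-smoothness persists at every interior point of $\Omega \cap O_i$, not just at $\partial\Omega$. Multiplying by the smooth cutoff $\eta_i$ does not help: the product $\eta_i g_i$ is still only Lipschitz on the open set where $\eta_i > 0$. Your parenthetical remark that ``the value of $f_i(x')$ plays no role in the shift amount'' and that ``any residual non-smoothness can be removed by a further regularization inside $\Omega$'' is where the actual difficulty of the theorem is concentrated, and it is not addressed: a vertical shift sending the graph of $f_i$ to the graph of $\tilde f_i$ must by definition know $f_i$. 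Getting a map that is smooth in the interior, Lipschitz up to the boundary, and sends the Lipschitz boundary to a smooth one requires a more careful device---in effect a mollification at a scale that degenerates as one approaches $\partial\Omega$ (as in Whitney/Jones-type constructions), which is the content of the Ball--Zarnescu argument. A fixed-scale mollification of the boundary graphs cannot produce it.
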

The full result from \cite[Thm. 5.1 and Rem. 5.3]{BZ} is actually stronger, but Lemma \ref{lemma:BZ} suffices for our purposes below. 
For future reference,  we remark that in the situation of Lemma \ref{lemma:BZ} we have 
\begin{align}\label{eq:BZbound}
\tfrac{1}{c} \dist(x,\partial \Omega) \leq \dist(\Gamma_1(x),\partial V) \leq c \dist(x,\partial \Omega)\qquad \text{for all } x\in\overline{\Omega}.
\end{align}
with a constant $c>0$ that only depends on $\max\{\mathrm{Lip}(\Gamma_{1}),\mathrm{Lip}(\Gamma_{1}^{-1})\}$. 
\begin{remark}
For most of the subsequent definitions e.g. in Section \ref{sec:def:simplex},  a $\hold^2$- instead of $\hold^{\infty}$-boundary would be sufficient. On the contrary, as many definitions (implicitly) depend on the curvature of the boundary, it is unclear how to define simplices \emph{without} a result similar to Lemma \ref{lemma:BZ}; also see Remark
\ref{rem:collar} below.
\end{remark}

\subsection{Collar neighbourhoods} \label{sec:collar}

\begin{figure} \label{figure:collar}
\includegraphics[width=0.9\textwidth]{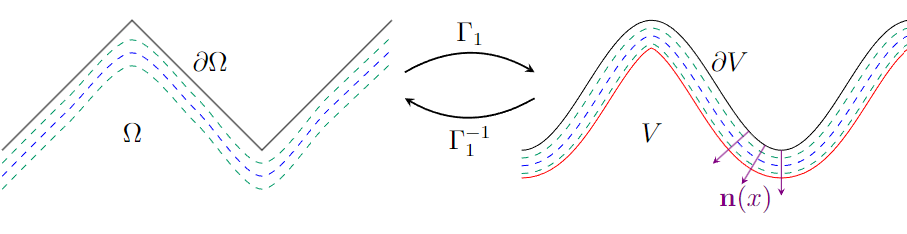}
\caption{On Corollary \ref{coro:diffeo}. \emph{Left-hand side}: The Lipschitz domain $\Omega$ is mapped to $V$ via $\Gamma_1$. We then define a collar map on $V$ and then go back with $\Gamma_1^{-1}$. The neighbourhood of the boundary of $\Omega$ then gets a smooth structure, as $\Gamma_1$ is smooth in the interior of $\Omega$. \emph{Right-hand side}: The dashed lines represent level sets of the collar map (i.e. lines $\Gamma_2^{-1}(\partial V \times \{t\})$. This collar map is constructed via taking the distance to $\partial V$ through the normal $\mathbf{n}(x)$ at $x \in \Omega$. The collar neighbourhood constructed in this way lies in between $\partial V$ and the red line (note that the red line is not smooth anymore), and in particular, the distance between $\partial V$ and this red line depends on the curvature of the domain. }

\end{figure}

In this section, we collect some background results on \emph{collar neighbourhoods} as frequently employed in algebraic topology, see e.g. \textsc{Hatcher} \cite[Proposition 3.42]{Hatcher}. We henceforth suppose that $V \subset \R^n$ is a bounded domain with smooth boundary. Then the boundary $\partial V$ is a smooth, compact submanifold in $\R^{n}$ of dimension $(n-1)$ which has a finite number of connected components. Moreover, there exists a finite collection of domains $O_i$, $i\in\{1,...,m\}$, that cover $\partial V$, such that (after translating and rotating)
\begin{align}\label{eq:covercollar1}
V \cap O_i = O_i \cap \{(x',s) \colon x' \in \R^{n-1}, s< f_i(x') \},\qquad i\in\{1,...,m\},
\end{align}
for suitable smooth functions $f_i \in \hold^{\infty}_c(\R^{n-1})$ together with
\[
\partial V \cap O_i = O_i \cap \{(x',f_i(x')) \colon x' \in \R^{n-1} \},\qquad i\in\{1,...,m\}.
\]
By the compactness of $\partial V$, Lebesgue's covering lemma (see, e.g.,  \cite[Lemma 27.5]{Munkres}) implies that there exists $\varepsilon_1>0$ such that for any $x \in \partial V$ there exists $i\in\{1,...,m\}$ with  $\ball_{\varepsilon_1}(x) \subset O_i$. For the following, let
\begin{align}\label{eq:curv}
\kappa = \sup_{i\in\{1,...,m\}} \Vert \!\D^2 f_i \Vert_{\lebe^{\infty}(\R^{n-1})}
\end{align}
be the \emph{maximal curvature} of $\partial V$. We then call $\partial V \times (-1,1) \subset \R^{n+1}$ a \emph{collar neighbourhood} of the boundary $\partial V$. We then have the following result: 
\begin{lemma} [Collar neighbourhood theorem, {\cite[Thm. 9.25]{Lee}}] \label{lemma:collar}
Let $V \subset \R^n$ be a bounded domain with boundary $\partial V$ of class $\hold^{\infty}$. Then there exists an open neighbourhood $\mathfrak{U}$ of $\partial V$ and a $\hold^{\infty}$-diffeomorphism $\Gamma_{2} \colon \mathfrak{U} \to \partial V\times (-1,1)$ such that $\Gamma_{2}(\partial V) = \partial V \times \{0\}$. 
\end{lemma}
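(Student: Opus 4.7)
The plan is to construct $\Gamma_{2}$ as (essentially) the inverse of the \emph{normal exponential map}
\[
\Phi\colon\partial V\times(-\delta,\delta)\longrightarrow \R^{n},\qquad \Phi(x,t):=x+t\,\mathbf{n}(x),
\]
where $\mathbf{n}\colon\partial V\to\mathbb{S}^{n-1}$ is the (smooth) inner unit normal to $\partial V$. Since $\partial V$ is a compact embedded $\hold^{\infty}$-submanifold of $\R^{n}$, the field $\mathbf{n}$ is of class $\hold^{\infty}$ on $\partial V$, and hence $\Phi$ is a smooth map between smooth manifolds. The final $\Gamma_{2}$ will then be obtained from $\Phi^{-1}$ by composing with the diffeomorphism $(x,t)\mapsto(x,t/\delta)$ of $\partial V\times(-\delta,\delta)$ onto $\partial V\times(-1,1)$, which clearly sends $\partial V\times\{0\}$ to $\partial V\times\{0\}$.

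The first step is to show that $\Phi$ is a local diffeomorphism near $\partial V\times\{0\}$. For each $x\in\partial V$ one computes, using $T_{x}\partial V=\langle\mathbf{n}(x)\rangle^{\perp}$ as recorded after Lemma \ref{lem:signeddistance}, that the differential $\D\Phi(x,0)\colon T_{x}\partial V\oplus\R\to\R^{n}$ is given by $(v,s)\mapsto v+s\,\mathbf{n}(x)$, which is a linear isomorphism. Thus by the inverse function theorem (applied in a smooth local chart around $x$), there is a neighbourhood of $(x,0)$ on which $\Phi$ is a $\hold^{\infty}$-diffeomorphism onto an open subset of $\R^{n}$.

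The main obstacle is upgrading this to \emph{global} injectivity of $\Phi$ on $\partial V\times(-\delta,\delta)$ for some uniform $\delta>0$; this is precisely the issue that fails when the boundary has too large a curvature (different normal lines meet each other). The argument is a compactness/contradiction scheme. Suppose no such $\delta$ exists; then there are sequences $(x_{k},t_{k})\neq(y_{k},s_{k})$ in $\partial V\times(-1/k,1/k)$ with $\Phi(x_{k},t_{k})=\Phi(y_{k},s_{k})$. By compactness of $\partial V$ we may extract subsequences $x_{k}\to x_{\infty}$, $y_{k}\to y_{\infty}$ in $\partial V$ and $t_{k},s_{k}\to 0$. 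Passing to the limit yields $x_{\infty}=y_{\infty}$, so eventually $(x_{k},t_{k})$ and $(y_{k},s_{k})$ both lie in the neighbourhood on which $\Phi$ is a diffeomorphism by the first step; this contradicts $(x_{k},t_{k})\neq(y_{k},s_{k})$. Quantitatively, the admissible $\delta$ is controlled by the reciprocal of the maximal curvature $\kappa$ from \eqref{eq:curv} together with the uniform size $\varepsilon_{1}$ coming from the Lebesgue covering lemma applied to the graph charts \eqref{eq:covercollar1}, exactly as in Section \ref{sec:collar}.

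Once such a $\delta>0$ is fixed, $\Phi\colon\partial V\times(-\delta,\delta)\to\R^{n}$ is an injective smooth immersion between manifolds of equal dimension, hence a $\hold^{\infty}$-diffeomorphism onto its image $\mathfrak{U}:=\Phi(\partial V\times(-\delta,\delta))$, which is open in $\R^{n}$ and contains $\partial V$. Setting
\[
\Gamma_{2}(y):=\big(\pi(\Phi^{-1}(y)),\,\delta^{-1}\tau(\Phi^{-1}(y))\big),\qquad y\in\mathfrak{U},
\]
with $\pi$ and $\tau$ the projections of $\partial V\times(-\delta,\delta)$ onto its two factors, yields a $\hold^{\infty}$-diffeomorphism $\mathfrak{U}\to\partial V\times(-1,1)$ with $\Gamma_{2}(\partial V)=\partial V\times\{0\}$, as required.
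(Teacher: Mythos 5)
Your proof is correct and reaches the same conclusion, but it does so along the ``dual'' route to the paper's. The paper builds the map $\Gamma_{2}$ directly as $x\mapsto(\mathbb{P}(x),d(x,\partial V))$ (projection and signed distance) going \emph{from} a thin collar in $\R^{n}$ \emph{to} the product, shows the differential at boundary points splits as the direct sum of the tangential and normal projections, invokes the inverse function theorem and Lebesgue's covering lemma to get a uniform $\franz>0$, and then establishes global bijectivity by exhibiting the explicit inverse $(x,s)\mapsto x+s\mathbf{n}(x)$ and asserting that both compositions are the identity. You instead start from that explicit inverse as your forward map $\Phi(x,t)=x+t\mathbf{n}(x)$, verify it is a local diffeomorphism by the inverse function theorem, and then obtain \emph{global} injectivity on a uniform thin collar $\partial V\times(-\delta,\delta)$ by a compactness/contradiction argument. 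These are honest inverse formulations of the same construction; the local-diffeomorphism step is identical up to taking inverses of differentials.

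The trade-off is primarily in the global-bijectivity step. The paper's verification that $\Gamma_{2}\circ\Gamma_{2}^{-1}=\mathrm{id}$ silently uses that, for $(x,s)\in\partial V\times(-\franz,\franz)$, the unique nearest point on $\partial V$ to $x+s\mathbf{n}(x)$ is $x$ and the signed distance is $s$ --- which is logically equivalent to the global injectivity of $\Phi$ and therefore not really ``free.'' Your compactness argument makes this dependency explicit: you locate where the uniformity (compactness of $\partial V$, the covering $\varepsilon_{1}$, the curvature bound $\kappa$) enters, namely to guarantee that for large $k$ the pairs $(x_{k},t_{k})$, $(y_{k},s_{k})$ fall into a single neighbourhood where $\Phi$ is already injective. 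This is the standard textbook route (and closer to the citation \textsc{Lee}). The one thing the paper's explicit-inverse approach buys is a slightly more transparent quantitative dependence of $\franz$ on $\kappa$ and $\varepsilon_{1}$, which the paper later uses in Corollary \ref{coro:diffeo}\ref{coro:diffeo:4}; your version recovers the same quantitative control, but you only assert it in the last sentence of the compactness step rather than reading it off from a formula. Overall: same construction, different but equally valid argument for the global step, with your version arguably cleaner in its logical bookkeeping.
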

In the setting of Lemma \ref{lemma:collar}, $\Gamma_2^{-1}(\partial V \times (0,1)) \subset V$ and $\Gamma_2^{-1}(\partial V \times (-1,0)) \subset \R^{n}\setminus \overline{V}$. We will only need the inner collar, that is, the map on $\mathfrak{U} \cap \overline{V}$. If we need both the neighbourhood is also referred to as \emph{tubular} neighbourhood. Even though Lemma \ref{lemma:collar} is well-known, our subsequent constructions hinge on the construction of the map  $\Gamma_{2}$ as appearing therein. We thus briefly pause to address the construction of $\Gamma_{2}$:
\begin{proof}[Construction of $\Gamma_{2}$ from Lemma \ref{lemma:collar}.] 
We adopt the convention from \eqref{eq:covercollar1}. By an explicit calculation for each neighbourhood $O_{i}$, we find a number $\epsilon_{i}>0$ (depending on $\kappa$), such that the distance-minimising orthogonal projection $\mathbb{P}\colon\ball_{\varepsilon_{i}}(\partial V)\cap O_{i}\to \partial V$ is well-defined. Recalling the signed distance function from \eqref{eq:signeddistance} and diminishing $\varepsilon_{i}>0$ if necessary, Lemma \ref{lem:signeddistance} implies that we may assume that 
\begin{equation*}
      \widetilde{\Gamma}_{2} \colon\ball_{\varepsilon_{i}}(\partial V)\cap O_{i}\ni x \mapsto (\mathbb{P}(x),d(x,\partial V))
\end{equation*}
is smooth and injective. For points $ x \in \partial V$, the corresponding differentials are given by $\mathrm{D} \mathbb{P}(x) = P_{T_x\partial V}$, the projection onto the tangent space $T_x \partial V = \langle \mathbf{n}(x) \rangle^{\perp}$ of $\partial V$, and $\mathrm{D}d(x, \partial V) = P_{N_x\partial V}$, the projection onto the normal space $\langle \mathbf{n}(x) \rangle$. Hence, $\D\widetilde{\Gamma}_{2} \colon \R^n \to \R^n$ is an isomorphism, and by the use of inverse function theorem we conclude that $\widetilde{\Gamma}_{2}$ is a local diffeomorphism. In particular, again by Lebesgue's covering lemma, there exists $\franz>0$, such that 
\begin{equation} \label{def:Gamma2}
    \widetilde{\Gamma}_{2} \colon \ball_{\franz}(\partial V) \to \partial V \times (-\franz,\franz)
\end{equation}
is a local diffeomorphism. We may, however, give the explicit inverse proving that this is a \emph{global} diffeomorphism. Indeed, define 
\begin{align*}
\widetilde{\Gamma}_{2}^{-1} \colon (x, s) \longmapsto x + s \mathbf{n}(x).
\end{align*}
Then $\widetilde{\Gamma}_{2}^{-1} \circ \widetilde{\Gamma}_2 = \id$ and $\widetilde{\Gamma}_2 \circ \widetilde{\Gamma}_2^{-1}=\id$, proving that $\widetilde{\Gamma}_{2}$ is globally bijective. Now rescaling the second coordinate by $\tfrac{1}{\franz}$ gives the desired map $\Gamma_2$.
\end{proof}
A combination of Lemma \ref{lemma:BZ} and \ref{lemma:collar} yields the following corollary:
\begin{corollary} \label{coro:diffeo}
    Let $\Omega \subset \R^n$ be a bounded domain with Lipschitz boundary. 
    Then there exist a neighbourhood $U\subset\overline{\Omega}$ of $\partial \Omega$ and  a bounded domain $V\subset\R^{n}$ with smooth boundary $N:=\partial V$ and a map $T=(T_{1},T_{2}) \colon U\to N\times[0,1)$ with the following properties:
    \begin{enumerate} 
        \item\label{item:collar1} $T$ maps $\partial \Omega$ onto $N \times \{0\}$; 
        \item\label{item:collar2} $T$ is Bi-Lipschitz;
        \item\label{item:collar3} $T \colon U \setminus \partial \Omega \to N \times (0,1)$ is a $\hold^{\infty}$-diffeomorphism;
        \item \label{coro:diffeo:4} there exists a constant $c>0$ such that we have 
        \[  
        \tfrac{1}{c} \dist(x,\partial \Omega) \leq T_2(x) \leq c\dist(x,\partial \Omega)\qquad\text{for all $x\in U$}.
        \]
    \end{enumerate}
\end{corollary}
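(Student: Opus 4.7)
The plan is to obtain $T$ as a composition $T = \Gamma_2 \circ \Gamma_1$, where $\Gamma_1$ is the Bi-Lipschitz map from Lemma \ref{lemma:BZ} and $\Gamma_2$ is the (inner) collar map from Lemma \ref{lemma:collar} applied to the smooth domain $V = \Gamma_1(\Omega)$. The constructions from the two previous lemmas fit together naturally because $\Gamma_1$ sends $\partial \Omega$ to $\partial V$ and is smooth in the interior of $\Omega$, which is precisely what is needed to upgrade the collar diffeomorphism on $V$ to a diffeomorphism pulled back through $\overline{\Omega} \setminus \partial \Omega$.

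First I would invoke Lemma \ref{lemma:BZ} to obtain the smooth domain $V \subset \R^n$ and the Bi-Lipschitz homeomorphism $\Gamma_1 \colon \overline{\Omega} \to \overline{V}$, with $\Gamma_1(\partial \Omega) = \partial V$ and $\Gamma_{1}|_{\Omega}\in \hold^\infty(\Omega;\R^n)$. Next, I would apply Lemma \ref{lemma:collar} to $V$ to produce an open neighbourhood $\mathfrak{U}$ of $\partial V$ in $\R^n$ and a smooth diffeomorphism $\Gamma_2 \colon \mathfrak{U} \to \partial V \times (-1,1)$ with $\Gamma_2(\partial V) = \partial V \times \{0\}$. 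Restricting to the inner collar $\mathfrak{U} \cap \overline{V}$ yields a map into $N \times [0,1)$, where $N := \partial V$. Setting
\begin{equation*}
U := \Gamma_1^{-1}(\mathfrak{U} \cap \overline{V}), \qquad T := \Gamma_2 \circ \Gamma_1 \colon U \to N \times [0,1),
\end{equation*}
defines the candidate map. The neighbourhood $U$ indeed contains $\partial \Omega$ because $\Gamma_1(\partial \Omega) = \partial V \subset \mathfrak{U}$.

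The verification of properties \ref{item:collar1}--\ref{item:collar3} is then a matter of combining the corresponding properties of the two factor maps: $T(\partial \Omega) = \Gamma_2(\partial V) = N \times \{0\}$ gives \ref{item:collar1}; the composition of two Bi-Lipschitz maps is Bi-Lipschitz, giving \ref{item:collar2}; and since $\Gamma_1|_{\Omega} \in \hold^\infty$ and $\Gamma_2$ is a smooth diffeomorphism on $\mathfrak{U}$, the composition $T$ is a $\hold^\infty$-diffeomorphism on $U \setminus \partial \Omega$ onto $N \times (0,1)$ (the inverse is $\Gamma_1^{-1} \circ \Gamma_2^{-1}$, and $\Gamma_1^{-1}$ is smooth in the interior of $V$ since $\Gamma_1$ is a smooth diffeomorphism there), which yields \ref{item:collar3}.

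Property \ref{coro:diffeo:4} is the part that uses the explicit form of the collar map. Recall from the construction in the proof of Lemma \ref{lemma:collar} that the second coordinate of $\Gamma_2$ is (up to a fixed rescaling by some $\franz > 0$) the signed distance $d(\,\cdot\,,\partial V)$ on the inner collar. Consequently $T_2(x) = \frac{1}{\franz}\,d(\Gamma_1(x),\partial V)$ on $U$, and applying the Bi-Lipschitz estimate \eqref{eq:BZbound} for $\Gamma_1$ immediately yields
\begin{equation*}
\tfrac{1}{c}\,\dist(x,\partial \Omega) \leq T_2(x) \leq c\,\dist(x,\partial \Omega) \qquad \text{for all } x \in U,
\end{equation*}
with $c$ depending only on $\franz$ and $\max\{\Lip(\Gamma_1),\Lip(\Gamma_1^{-1})\}$. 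No single step here is a serious obstacle: the non-trivial work has been absorbed into Lemmas \ref{lemma:BZ} and \ref{lemma:collar}, and the only point requiring care is the interplay between smoothness and Lipschitz regularity along $\partial \Omega$, which is exactly what the Ball–Zarnescu map is designed to mediate.
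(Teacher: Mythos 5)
Your proposal is correct and follows essentially the same route as the paper: choose $V$ and $\Gamma_1$ from Lemma \ref{lemma:BZ}, choose $\mathfrak{U}$ and $\Gamma_2$ from Lemma \ref{lemma:collar}, set $U = \Gamma_1^{-1}(\mathfrak{U} \cap \overline{V})$ and $T = \Gamma_2 \circ \Gamma_1$, and derive \ref{coro:diffeo:4} from the explicit form of $\Gamma_2$'s second component together with the bi-Lipschitz bound \eqref{eq:BZbound}. You fill in the routine verifications of \ref{item:collar1}--\ref{item:collar3} in slightly more detail than the paper does, but nothing differs in substance.
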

\begin{proof} 
 We choose $V$ according to Lemma \ref{lemma:BZ} and $\mathfrak{U}$ according to Lemma \ref{lemma:collar}. Put $U :=\Gamma_{1}^{-1}(\mathfrak{U}\cap \overline{V})$.
We combine both Lemma \ref{lemma:BZ} and \ref{lemma:collar} and in particular, compose the maps $\Gamma_1$ and $\Gamma_2$. In particular, bound \ref{coro:diffeo:4} follows from the fact that $\Gamma_1$ is Bi-Lipschitz (where $c_1$ denotes the Bi-Lischitz constant of this map) and from the proof of Lemma \ref{lemma:collar}, i.e. set $c= c_1 \franz^{-1}$ in \ref{coro:diffeo:4}, where $c_1$ is the Bi-Lipschitz constant of $\Gamma_1$.
\end{proof}
\begin{remark}\label{rem:collar}
 The constant $c>0$ from Corollary \ref{coro:diffeo}\ref{coro:diffeo:4}  depends on several choices we make throughout. In particular, we fix the boundary of a smooth manifold $V$ that is Bi-Lipschitz homeomorphic to $\Omega$. This choice has a two-fold impact on the constant: On the one hand, if $V$ is a good approximation of the Lipschitz set $\Omega$, the constant $c$ in \ref{coro:diffeo:4} will be  close to $1$, but the curvature of the boundary of the domain might be quite high. This, in turn, diminishes the size of the collar neighbourhood. On the other hand, if the curvature of $\partial V$ is small and so the collar neighbourhood is larger, the Lipschitz constant of the map $T$ from Corollary \ref{coro:diffeo}\ref{item:collar2} deteriorates.
\end{remark}
\begin{remark}
While it is also possible to prove a collar theorem for Bi-Lipschitz homeomorphisms, in the following subsection we aim for a smoother structure than just $\hold^{0,1}$ that allows us to define smooth simplices. This is why we take the detour over the smooth approximation from \cite{BZ}.
\end{remark}
\subsection{Definition of simplices close to the boundary} \label{sec:def:simplex}
After the two previous preparatory subsections, we finally come to the definition of simplices in this section. To this end recall the definition of the $k$-dimensional unit simplex $\mathbb{D}^k$, cf. \eqref{def:unitsimplex}.
We denote by $\mathbb{D}^k_j$ the $j$-th face of $\mathbb{D}^k$:
\[
\mathbb{D}^k_j= \{ t=(t_{1},...,t_{k}) \in \mathbb{D}^k \colon t_j =0\}, \quad j=1,...,k, \quad \mathbb{D}^k_{0} =\{ z \in \mathbb{D}^k \colon z_1+...+z_k=1\}.
\]
and by $v_j$ the vertex that is opposite to $\mathbb{D}^k_j$, i.e.
\[
v_j = e_j, \quad j=1,...,k, \quad v_{0} = 0.
\]

First, we define the notion of a simplex on the one-sided collar domain $\partial V \times (0,1)$ and then use the map $T^{-1}$ from  Corollary \ref{coro:diffeo} to obtain curvilinear simplices in the original domain $\Omega$.

Let $(x_0,s_0),...,(x_k,s_k) \in \partial V \times (0,1)$. We give a coordinate-wise definition and hence introduce for $t \in \mathbb{D}^k$ the second  coordinate by
\begin{equation} \label{def:coord2}
s(t) := \sum_{i=1}^k t_i s_i + \Big(1-\sum_{i=1}^k t_{i}\Big) s_0.
\end{equation}
For the first coordinate (i.e. elements in $\partial V$), one might define simplices in multiple ways, most notably by a barycentre construction, cf. \cite[Chapter 8]{BK81} or \cite{Sander}, using that $\partial V$ is a smooth manifold. For our purposes, however, the following construction is well-suited (also see Remark \ref{rem:simplex:choice} at the end of the section as well as Figure \ref{figure:simplex} in Section \ref{sec:Lip} for a visualisation): Recall the projection map $\mathbb{P} \colon \R^n \to \partial V$, which is well-defined and smooth in a  neighbourhood of $\partial V$. Then we define the simplex in the first coordinate as the projection of the convex hull of $x_0,\ldots,x_k$ onto $\partial V$:
\begin{equation} \label{def:coord1}
    x(t) := \mathbb{P} \left( \sum_{i=1}^k t_i x_i + \Big(1-\sum_{i=1}^k t_i \Big) x_0\right).
\end{equation}
We finally define the simplex as the image of the map defined by $S_{\mathbf{x}_{I}} \colon \mathbb{D}^k \to \partial V \times (0,1)$ by
\[
S_{\mathbf{x}_{I}} \colon t \longmapsto (x(t),s(t)),
\]
where we employed the index $\mathbf{x}_{I}:=(x_{0},...,x_{k})\in (\R^{n})^{k+1 }$. To state the properties of the map $S$, we recall the threshold number $\franz>0$ from the proof of Lemma \ref{lemma:collar}:
\begin{enumerate}
    \item The map $S_{\mathbf{x}_{I}}$ is well-defined whenever $\sup_{0\leq i,j \leq k} \vert x_i -x_j \vert< \tfrac{1}{2} \franz$ and  $\sup_{0 \leq i \leq k} \dist(x,\partial V) \leq \tfrac{1}{2}\franz$ with $\franz>0$ as in \eqref{def:Gamma2}. Indeed, in this case we have for any $t \in \mathbb{D}^k$ that
    \[
    \dist\left( \sum_{i=1}^k t_i x_i + \Big(1-\sum_{i=1}^k t_{i}\Big) x_0, \partial V\right) < \franz,
    \]
   and the well-definedness follows. In the following, we will therefore always assume that $\sup_{0\leq i,j\leq k} \vert x_i -x_j \vert \leq \tfrac{\franz}{2}$ and  $\sup_{0 \leq i \leq k} \dist(x,\partial V) \leq \tfrac{\franz}{2}$.
    \item We have
        \begin{equation} \label{eq:distboundary}
            \min_{0\leq i \leq k} s_i \leq s(t) \leq \max_{0\leq i \leq k} s_i.
        \end{equation}
    \item \label{obs:3} Let $M = S_{\mathbf{x}_{I}}(\mathbb{D}^k)$. Then precisely one of the following holds: 
        \begin{itemize}
            \item Either $M$ is a $k$-dimensional manifold with boundary and $S_{\mathbf{x}_{I}}$ is a smooth diffeomorphism,
            \item or $\mathscr{H}^{k}(M) =0$. This is the case in the following two situations: Either the dimension of $\conv(x_0,...,x_k)$ (the convex hull of these $(k+1)$ points) is smaller or equal to $(k-2)$ or the dimension is $(k-1)$ and $s_0=s_1=\ldots=s_k$.
        \end{itemize}
        This can be shown by a local calculation (note that we may assume that $\partial V = \{(x,f(x)\}$ locally). An important point here is that the manifold $\conv(x_0,\ldots,x_k)$ is 'almost tangential', i.e. the normal vector (that is required for the definition of the map $\mathbb{P}$) is almost perpendicular to $\conv(x_0,...,x_k$). This entails that $\mathbb{P}$ will \emph{not} map the $k$-dimensional manifold $\conv(x_0,\ldots,x_k)$ to a lower-dimensional object.
    \item \label{obs:4} There is a constant $x>0$ not depending on $x_0,...,x_k$ such that
        \[
        \vert \D S \vert \leq C \max\{ \sup_{i,j} \vert s_i -s_j \vert , \sup_{i,j}\vert x_i-x_j \vert \} \cdot \sup_{i,j}\vert x_i-x_j \vert^{k-1}, \
        \]
        This indeed follows from the definition and the fact that the projection $\mathbb{P}$ has bounded derivative.
     \item \textbf{Compatibility:} The restriction of $S$ to the face $\mathbb{D}^k_j$ is independent on the choice of $(x_j,s_j)$. Indeed, considering the definition, we have $t_j =0$ on $\mathbb{D}^k_j$ and we directly observe that $s(t)$ and $x(t)$ are independent of $(x_j,s_j)$.  
\end{enumerate}

For $x_0,...,x_k \in \Omega$ we define curvilinear simplices as follows. Consider $T(x_0),\ldots,T(x_k) \in V \times (0,1)$, define the simplex through the map $S$ there and then apply $T^{-1}$. Denoting by $\bar{x}= (x_0,...,x_k) \in \Omega^{k+1}$ we then get a map $S_{\bar{x}} \colon \mathbb{D}^k \to \Omega$. 

\begin{theorem} [Properties of the simplex] \label{thm:prop:simplex}
    Suppose that $k \in \{2,...,n+1\}$. Let $\Omega \subset \R^n$ be an open Lipschitz domain. Let $C_1>0$. There is \begin{itemize}
    \item 
    $\varepsilon=\varepsilon(\Omega)$ (depending on $\franz$, i.e. on the curvature $\kappa$ and on the covering $\varepsilon_1$);
    \item 
    $C_2>0$ (depending on the Lipschitz constant of the map $U \to \partial V \times (0,1)$);
    \item $\alpha>0$ (depending on the Lipschitz constant of $\Gamma_1$);
    \end{itemize}
    such that the following holds: If  $\bar{x}=(x_0,...,x_k) \in \Omega^{k+1}$ satisfies
    \begin{enumerate} [label=(A\arabic*)]
        \item \label{ass1} $\eta := \inf_{0\leq i\leq k} \dist(x_i,\partial \Omega) \leq \varepsilon$, 
        \item \label{ass2} $ \sup_{0\leq i\leq k} \dist(x_i, \partial \Omega) \leq C_1 \eta$, and 
         \item \label{ass3} $\sup_{0\leq i,j \leq k} \vert x_i - x_j \vert \leq C_1 \eta$, 
    \end{enumerate}
   then the map $S_{\bar x} \colon \mathbb{D}^k \to \Omega$ and the curvilinear simplex $M_{\bar{x}} = S_{\bar{x}}(\mathbb{D}^k)$ satisfy the following: 
    \begin{enumerate} [label=(S\arabic*)]
    \item \label{def:prop:1} $S_{\bar x}\in \hold^{\infty}(\mathbb{D}^k;\Omega)$; 
    \item  \label{def:prop:2} The vertices of $\mathbb{D}^k$ are mapped to $x_0,...,x_{k}$, i.e. $S_{\bar x}(v_s) = S_{\bar x}(v_s)$;
    \item  \label{def:prop:3} One of the following holds:
        \begin{itemize}
            \item $M$ is a $k$-dimensional manifold with boundary and $S_{\bar x}$ is a $\hold^{\infty}$-diffeormorphism;
            \item The rank of $S_{\bar x}$ is smaller than $k$ for all $x \in \mathbb{D}^k$;
        \end{itemize}
    \item \label{def:prop:35} The derivative of $S_{\bar x}$ is bounded by $C_2 \eta$, i.e. the $\mathscr{H}^k$-measure of $M_{\bar x}$ is bounded by $C_2^k \eta^{k}$;
    \item  \label{def:prop:4} For all $x \in M$ we have
        \[
        \alpha^{-1} \eta \leq \dist(x,\partial \Omega) \leq C \alpha  \eta;
        \]
       
    \item  \label{def:prop:5} \emph{Compatibility:} The map $\Gamma$ restricted to the face $\mathbb{D}^k_s$ \emph{does not} depend on the $x_{k+1}$.

    \end{enumerate}
\end{theorem}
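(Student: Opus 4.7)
The plan is to reduce each assertion \ref{def:prop:1}--\ref{def:prop:5} to the corresponding observation (1)--(5) formulated in Section \ref{sec:def:simplex} for simplices in the one-sided collar $\partial V\times(0,1)$, and then transfer the statements back to $\Omega$ via the map $T^{-1}$ from Corollary \ref{coro:diffeo}. The map $S_{\bar x}$ is by construction defined as
\[
S_{\bar x}=T^{-1}\circ S_{T(\mathbf{x}_I)},\qquad T(\mathbf{x}_I)=(T(x_0),\ldots,T(x_k)),
\]
so everything boils down to choosing $\varepsilon$ small enough that the collar-side construction is applicable and then using the Bi-Lipschitz/smoothness properties of $T$ and $T^{-1}$.

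First I would fix the threshold $\varepsilon=\varepsilon(\Omega)>0$. Under \ref{ass1}--\ref{ass3}, Corollary \ref{coro:diffeo}\ref{coro:diffeo:4} gives $T_2(x_i)\leq c\,\mathrm{dist}(x_i,\partial\Omega)\leq c\,C_1\eta$, and the Lipschitz constant $L$ of $T$ yields $|T(x_i)-T(x_j)|\leq L\,|x_i-x_j|\leq LC_1\eta$. Choosing $\varepsilon\leq (2\max\{cC_1,LC_1\})^{-1}\franz$ with $\franz>0$ from the proof of Lemma \ref{lemma:collar} guarantees $\sup_{i,j}|T(x_i)-T(x_j)|\leq \franz/2$ and $\sup_i\mathrm{dist}(T(x_i),\partial V)\leq \franz/2$, which is precisely the range in which $S_{T(\mathbf{x}_I)}$ is well-defined on $\mathbb{D}^k$.

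With well-definedness in hand, \ref{def:prop:1} and \ref{def:prop:2} follow immediately from the corresponding properties of $S_{T(\mathbf{x}_I)}$ in the collar together with the fact that $T^{-1}\colon \partial V\times(0,1)\to U\setminus\partial\Omega$ is a $\hold^{\infty}$-diffeomorphism by Corollary \ref{coro:diffeo}\ref{item:collar3}; note that assumption \ref{ass1} guarantees that the image of $S_{T(\mathbf{x}_I)}$ stays in the smooth part of $T^{-1}$, i.e.\ away from $\partial V\times\{0\}$. The dichotomy in \ref{def:prop:3} is then inherited from observation (3): if $S_{T(\mathbf{x}_I)}$ is a smooth diffeomorphism onto a $k$-manifold with boundary, composing with the diffeomorphism $T^{-1}$ preserves this; in the degenerate case $S_{T(\mathbf{x}_I)}$ has rank $<k$ everywhere on $\mathbb{D}^k$, and composition with $T^{-1}$ cannot increase the rank.

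For \ref{def:prop:35}, observation (4) bounds $|\mathrm{D} S_{T(\mathbf{x}_I)}|\leq C\eta$ by a combined use of \ref{ass2} and \ref{ass3}, and the chain rule combined with the Lipschitz constant of $T^{-1}$ yields $|\mathrm{D} S_{\bar x}|\leq C_2\eta$ with $C_2$ depending only on $\mathrm{Lip}(T^{-1})$; the area bound follows by integration. Assertion \ref{def:prop:4} is a direct consequence of \eqref{eq:distboundary} combined with Corollary \ref{coro:diffeo}\ref{coro:diffeo:4}: the collar second coordinate $s(t)$ stays between $\min_i T_2(x_i)$ and $\max_i T_2(x_i)$, which are each comparable to $\eta$ via \ref{ass1}--\ref{ass2} and \ref{coro:diffeo:4}, and the constant $\alpha$ absorbs both Bi-Lipschitz constants. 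Finally, the compatibility property \ref{def:prop:5} is inherited from the analogous property in the collar: since neither $x(t)$ in \eqref{def:coord1} nor $s(t)$ in \eqref{def:coord2} depends on $(x_j,s_j)$ when $t_j=0$, the restriction of $S_{T(\mathbf{x}_I)}$ to $\mathbb{D}^k_j$ is independent of $T(x_j)$, and composition with $T^{-1}$ preserves this independence. The only mild technical point is bookkeeping of how the constants $\varepsilon,C_2,\alpha$ depend on $\kappa,\varepsilon_1,\mathrm{Lip}(\Gamma_1),\mathrm{Lip}(\Gamma_1^{-1})$, but no new geometric input is required beyond Lemmas \ref{lemma:BZ}--\ref{lemma:collar} and Corollary \ref{coro:diffeo}.
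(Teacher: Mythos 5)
Your proposal is correct and follows essentially the same route as the paper's own proof sketch: reduce each of \ref{def:prop:1}--\ref{def:prop:5} to the corresponding observations for the collar simplex on $\partial V\times(0,1)$ and transfer back to $\Omega$ through the map $T=\Gamma_2\circ\Gamma_1$ from Corollary \ref{coro:diffeo}. The only thing you add is the explicit quantification of $\varepsilon$ guaranteeing well-definedness on the collar side, which the paper leaves implicit, but this is fully in the spirit of the paper's argument.
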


\begin{proof}[Proof (Sketch)]
    This statement can be recovered through Corollary \ref{coro:diffeo} and the previously outlined properties for the simplex defined on $\partial V \times (0,1)$. Indeed, \ref{def:prop:1} and \ref{def:prop:2} follow directly from the definition. Property \ref{def:prop:3} follows by the third observation \ref{obs:3} and, likewise, \ref{def:prop:4} may be derived from \ref{obs:4} and the fact that $T$ is Bi-Lipschitz.  The bound on the distance to the boundary, \ref{def:prop:4} is a direct consequence of \eqref{eq:distboundary} and Corollary \ref{coro:diffeo} \ref{coro:diffeo:4}. Finally the compatibility \ref{def:prop:5} follows from the compatibility for the simplices in $\partial V \times (0,1)$.
\end{proof}

\begin{remark}[On the choice of the simplex] \label{rem:simplex:choice}
Having discussed the desired properties of the simplex, we shortly discuss the choice we have made at the beginning (i.e. which object to be the simplex). To this end, note the following:
\begin{enumerate} [label=(\roman*)]
    \item On the one hand, as the projection onto the boundary is well-defined in a neighbourhood of the boundary (i.e. \emph{globally}) the definition does not depend on local choices of a coordinate system.
    \item On the other hand, as the projection onto the boundary is a \emph{local} operation, for purposes of calculation we might reduce ourselves to situations, where the boundary is given by a graph of a function.
    \item A key point in the proofs (in particular Lemma \ref{lem:LipschitzL1boonhund} that proves the $\lebe^1$-bound) is to show that the simplices behave in an appropriate way   whenever we fix $(k-1)$ points and vary the final vertex of the simplex. In particular, we want no 'concentration of simplices' (cf. Figure \ref{fig:concentration}), i.e. that different points lead to  a similar simplex.

    Such a property is much easier to verify for a construction that builds on barycentres than on a construction using geodesics.

\end{enumerate}

\end{remark}
  
\begin{figure} \label{fig:concentration}
\begin{tikzpicture}

 \coordinate[label=above:\Large{\textcolor{black}{$x_0$}}] (D) at (0,0);
 \coordinate[label=right:\Large{\textcolor{gray}{$x_1$}}] (E) at (3,0);

 \filldraw (0,0) circle (2pt);
 \filldraw[color=gray] (3,0) circle (2pt);
 \filldraw[color=gray] (3,0.5) circle (2pt);
 \filldraw[color=gray] (3,1) circle (2pt);
 \filldraw[color=gray] (3,-0.5) circle (2pt);
 \filldraw[color=gray] (3,-1) circle (2pt);
    \draw[color=black] (0,0) to (3,0);
    \draw[color=black] (0,0) to (3,0.5);
    \draw[color=black] (0,0) to (3,-0.5);
    \draw[color=black] (0,0) to (3,1);
    \draw[color=black] (0,0) to (3,-1);
 \coordinate[label=above:\Large{\textcolor{black}{$x_0$}}] (F) at (5,0);
 \coordinate[label=right:\Large{\textcolor{gray}{$x_1$}}] (G) at (8,0);   
\filldraw (5,0) circle (2pt);
 \filldraw[color=gray] (8,0) circle (2pt);
 \filldraw[color=gray] (8,0.5) circle (2pt);
 \filldraw[color=gray] (8,1) circle (2pt);
 \filldraw[color=gray] (8,-0.5) circle (2pt);
 \filldraw[color=gray] (8,-1) circle (2pt);
\draw[color=black] (5,0) to (8,0);
\draw[color=black] plot [smooth] coordinates {(5,0)(6,0.01)(7,0.05)(8,0.5)};
\draw[color=black] plot [smooth] coordinates {(5,0)(6,-0.01)(7,-0.05)(8,-0.5)};
\draw[color=black] plot [smooth] coordinates {(5,0)(6,0.02)(7,0.1)(8,1)};
\draw[color=black] plot [smooth] coordinates {(5,0)(6,-0.02)(7,-0.1)(8,-1)};
\end{tikzpicture} 
\caption{On Remark \ref{rem:simplex:choice}: Consider 1D-simplices, i.e. paths. On the left-hand-side the corresponding paths are the straight paths. Consequently, varying $x_1$ has an effect on the whole simplex, i.e. even if we change $x_1$, this has an effect on the shape of the whole simplex. On the right-hand-side the paths are chosen differently (maybe according to some underlying geometry). Now changing $x_1$ has (almost) no effect on the simplex in some distance to $x_1$: This is a situation that we would like to avoid.
}
\end{figure}
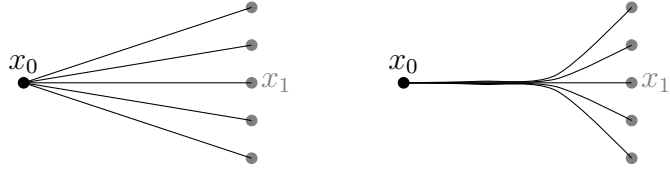

\section{Divergence-free extensions in $\lebe^{1}$ for convex domains}\label{sec:L1}
In this section we address Theorem \ref{thm:W1} in the case where $\Omega\subset\R^{n}$ is an open, bounded \emph{and convex} domain. As explained in the introduction after Theorem \ref{thm:W1}, the key construction is most transparent in this situation and will serve as a conceptual starting point in Section \ref{sec:Lip} when dealing with the more general Lipschitz domains. Specifically, our main objective of the present section is to establish the following result: 
\begin{theorem}[Theorem \ref{thm:L1} for convex domains]\label{thm:divL1conv}
Let $\Omega \subset \R^n$ be open, bounded and convex. Then there exists a \emph{linear and bounded extension operator} $\mathscr{E}_{\Omega} \colon \lebe^{1}(\Omega;\R^{n}) \to \lebe^{1}(\R^n;\R^n)$ such that, in particular,   $\mathscr{E}_{\Omega} u \vert_{\Omega} = u$ $\mathscr{L}^{n}$-a.e. in $\Omega$ whenever $u\in\lebe^{1}(\Omega;\R^{n})$ and 
\begin{align}\label{eq:extensionconvex}
u\in\lebe^{1}(\Omega;\R^{n})\;\text{and}\;\diver u =0 \text{ in } \mathscr{D}'(\Omega) \quad \Longrightarrow \quad \diver \mathscr{E}_{\Omega} u =0\;\text{ in } \;\mathscr{D}'(\R^n).
\end{align}
\end{theorem}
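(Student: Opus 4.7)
The strategy is to realise the abstract scheme~\ref{item:strat3}--\ref{crucialstep} of Section~\ref{sec:L1ext}: first define $\mathscr{E}_\Omega$ on a dense smooth subspace, then extend by $\lebe^1$-continuity. The construction on smooth fields will be a Whitney reflection in the spirit of Jones~\cite{Jones}, adapted to the divergence constraint via the language of differential forms. Fix Whitney coverings $\mathcal{W}_1$ of $\Omega$ and $\mathcal{W}_2$ of $\R^n\setminus\overline{\Omega}$ together with the Jones assignment $\Psi\colon\mathcal{W}_2\to\mathcal{W}_1$ of Lemma~\ref{cor:Jones}. For $y$ in an exterior cube $Q\in\mathcal{W}_2$ the value $\mathscr{E}_\Omega u(y)$ will be defined, with the aid of a smooth partition of unity subordinate to $\mathcal{W}_2$, by a smoothly parametrised family of simplices whose vertices lie in the interior partner cube $\Psi(Q)$. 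The crucial role of convexity of $\Omega$ is that every such simplex sits inside $\Omega$: identifying $u$ with the $(n-1)$-form $\omega_u$ via \eqref{eq:differentialconvention1}--\eqref{eq:differentialconvention2}, the hypothesis $\diver u=0$ translates to $\dif\omega_u=0$ on $\Omega$, so Stokes' theorem yields $\int_{\partial S}\omega_u=0$ on each simplex $S\subset\Omega$. The formula for $\mathscr{E}_\Omega u$ will be engineered so that this compatibility forces $\dif\omega_{\mathscr{E}_\Omega u}=0$ pointwise on $\R^n\setminus\overline{\Omega}$, thereby realising properties~\ref{item:strat3}--\ref{item:strat4}.

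The next step is the $\lebe^1$-bound $\Vert\mathscr{E}_\Omega u\Vert_{\lebe^1(\R^n)}\leq C\Vert u\Vert_{\lebe^1(\Omega)}$ on smooth $u$, obtained by summing over exterior cubes. Each $Q\in\mathcal{W}_2$ samples $u$ only on simplices near $\Psi(Q)$, so properties~\ref{item:Jonesrefined2}--\ref{item:Jonesrefined4} together with normalisation by the simplex volume yield $\Vert\mathscr{E}_\Omega u\Vert_{\lebe^1(Q)}\leq C\Vert u\Vert_{\lebe^1(\Psi(Q))}$; the bounded-overlap~\ref{item:Whitney2} then prevents inflation under summation over $Q\in\mathcal{W}_2$. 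Since this estimate does not involve derivatives of $u$, $\mathscr{E}_\Omega$ extends by density of $\hold^1(\overline{\Omega})$ in $\lebe^1(\Omega)$ to a bounded linear operator $\lebe^1(\Omega)\to\lebe^1(\R^n)$.

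To promote the solenoidality-preserving property from smooth $u$ to all $u\in\lebe_{\diver}^1(\Omega)$ (property~\ref{crucialstep}), the plan is to invoke density of $\hold^1(\overline{\Omega})\cap\lebe_{\diver}^1(\Omega)$ in $\lebe_{\diver}^1(\Omega)$, the auxiliary result to be proved in Section~\ref{sec:density}. Given such density, one approximates $u$ in $\lebe^1$ by smooth divergence-free $u_k$, applies $\mathscr{E}_\Omega$ to each $u_k$ (obtaining globally divergence-free extensions by the smooth case), and passes to the limit in $\lebe^1(\R^n)$; the continuity of the distributional divergence under $\lebe^1$-convergence then yields \eqref{eq:extensionconvex}.

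The hard part will be this density result. Standard localisation-and-mollification breaks the divergence constraint and hence cannot be used directly; the argument in Section~\ref{sec:density} will therefore have to feed the smooth divergence-free extension $\mathscr{E}_\Omega$ built in Stage~1 back into the approximation procedure, which is why the stages of the proof cannot be reordered. A secondary but technically substantial obstacle lies inside Stage~1 itself: the simplex-based formula must be designed so that no singular divergence of $\mathscr{E}_\Omega u$ is concentrated on $\partial\Omega$. This amounts to a precise boundary-compatibility identity between the exterior values of $\mathscr{E}_\Omega u$ and the interior values of $u$, which will be engineered by Stokes' theorem applied to curvilinear simplices straddling $\partial\Omega$.
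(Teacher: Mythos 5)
Your proposal reproduces the architecture of the paper's proof: a simplex/Stokes-based reflection formula (Definition~\ref{def:extension:l1}) built on a double Whitney cover and the Jones map $\Psi$, with convexity guaranteeing the sampled convex hulls lie in $\Omega$; an $\lebe^1$-operator bound (Lemmas~\ref{lem:convhullL1bound}, \ref{lemma:L1bound:c1}) that lets you extend by continuity from $\hold^1(\overline\Omega)$ to all of $\lebe^1(\Omega)$; and the density of $\hold^1_{\diver}(\overline\Omega;\R^n)$ in $\lebe^1_{\diver}(\Omega;\R^n)$ (Proposition~\ref{prop:densitymain}), correctly identified as depending on the smooth-field extension itself applied on inner approximations $\Omega_\varepsilon$. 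So the structure is the same and the plan is sound.

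Two points where the sketch misdescribes what actually has to happen, one of them substantive. First, minor: in the convex case the simplices are genuinely flat convex hulls $\conv(x_{i_1},\dots,x_{i_n})$; the curvilinear ones only enter in the Lipschitz case (Section~\ref{sec:Lip}). Second, and more importantly: the ``no singular divergence on $\partial\Omega$'' step (property~\ref{crucialstep}) is \emph{not} obtained by Stokes' theorem on simplices straddling $\partial\Omega$. By construction every simplex lives strictly inside $\Omega$ (that is exactly how Stokes is available for the pointwise solenoidality in $\R^n\setminus\overline\Omega$, Lemma~\ref{lemma:pointwise:solenoidal}), so there is no simplex crossing the boundary to which Stokes could be applied. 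What the paper actually does (Lemma~\ref{lemma:globalsolenoidality}) is take a McShane Lipschitz extension $v$ of $u$ to $\R^n$, decompose $\mathscr{E}_\Omega[u]$ on $\R^n\setminus\overline\Omega$ into a $\sobo_0^{1,1}$-part $\mathcal{T}$ (built from the centred differences $v(z)-v(y)$) plus a remainder handled by an inductive integration-by-parts identity \eqref{claim:sb2}, and from this representation read off that $\dif\mathscr{E}_\Omega[u]$ is an $\lebe^1$-density. If you tried to realise your stated hint as written, you would find that the simplices do not touch $\partial\Omega$ and that a direct ``boundary-compatibility identity'' is unavailable in $\lebe^1$ because there is no trace operator; this is exactly why the McShane-plus-integration-by-parts manoeuvre, which uses that $u\in\hold^1(\overline\Omega)$, is needed as an intermediate step before the density argument lowers the regularity.
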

Towards the proof of Theorem \ref{thm:divL1conv}, we give the geometric set-up and the definition of $\mathscr{E}_{\Omega}$ in Section \ref{sec:geomsetupconvex}. Then, examining the properties of $\mathscr{E}_{\Omega}$ on $\hold^{1}(\overline{\Omega};\wedgeq^{n-1}(\R^{n})^{*})$ in Section \ref{sec:proofL1divcon}, the proof of Theorem \ref{thm:divL1conv} is then provided in Section \ref{sec:density}.
\subsection{Geometric set-up for convex $\Omega\subset\R^{n}$ and definition of $\mathscr{E}_{\Omega}$}\label{sec:geomsetupconvex}
Throughout this entire section, we assume $\Omega\subset\R^{n}$ to be convex so that, as a consequence, $\partial\Omega$ is automatically Lipschitz.  In consequence, there exists a finite open covering $\mathcal{O}=\{O_{1},...,O_{m}\}$ with \eqref{eq:nowandthen} and \eqref{eq:nowandthen1}. For future reference, let us note that if $\varepsilon>0$ is sufficiently small, then $\mathcal{O}$ is also an open covering of $\partial\Omega_{\varepsilon}$, where we denote $
\Omega_{\varepsilon}:=\{x\in\Omega\colon\;\dista(x,\partial\Omega)>\varepsilon\}$. 
Adopting the representation \eqref{eq:nowandthen} of $\Omega\cap O_{i}$ for  $i\in\{1,...,m\}$, there exists $\varepsilon_{0}>0$ such that we have 
\begin{align*}
\Omega_{\varepsilon}\cap O_{i} = \{x=(x_{1},...,x_{n})\in O_{i}\colon\; x_{n}<f_{i}^{\varepsilon}(x_{1},...,x_{n-1})\}
\end{align*}
for all $0<\varepsilon<\varepsilon_{0}$ with suitable Lipschitz maps  $f_{i}^{\varepsilon}\colon U_{i}\to\R$ having Lipschitz constants being  bounded uniformly in $0<\varepsilon<\varepsilon_{0}$. Hence, recalling the notation \eqref{eq:nowandthen1}, we may record that
\begin{align}\label{eq:Lipindependent}
\mathrm{Lip}(\mathcal{O},\Omega_{\varepsilon})\;\text{is uniformly bounded in terms of}\;\mathrm{Lip}(\mathcal{O},\Omega)\;\text{for all}\;0<\varepsilon<\varepsilon_{0}. 
\end{align}
We now pick Whitney coverings $\mathcal{W}_{1}$ and $\mathcal{W}_{2}$ of $\Omega$ and $\R^{n}\setminus\overline{\Omega}$ in the sense of \ref{item:Whitney1}--\ref{item:Whitney3}. Based on Corollary \ref{cor:Jones}, we then infer the existence of a map $\Psi\colon\mathcal{W}_{2}\to\mathcal{W}_{1}$, a cube $Q'_{0}\in\mathcal{W}_{1}$ and a threshold number $\eta>0$ such that the following hold for all $Q\in\mathcal{W}_{2}$:
\begin{enumerate}[label=(J\arabic*)]
    \item\label{A:1} If $\ell(Q) > \eta$, then $\Psi(Q) = Q_0'$; 
    \item\label{A:2} $\frac{1}{4}\ell(Q) \leq \ell(\Psi(Q)) \leq 4 \ell(Q) $ for all $Q\in\mathcal{W}_{2}$ with $\ell(Q) \leq \eta$;
    \item \label{A:3} $\dist(Q,\Psi(Q)) \leq C \ell(Q)$ and the constant $C>0$ only depends on $\Lip(\Omega,\Ocal)$.
\end{enumerate}
In consequence, if we denote by $\mathcal{W}_{1}^{\varepsilon},\mathcal{W}_{2}^{\varepsilon}$ Whitney coverings of $\Omega_{\varepsilon}$ and $\R^{n}\setminus\overline{\Omega_{\varepsilon}}$ and by $\Psi_{\varepsilon}\colon\mathcal{W}_{2}^{\varepsilon}\to\mathcal{W}_{1}^{\varepsilon}$ the corresponding map from Lemma \ref{cor:Jones}, it follows by the analogous properties \ref{A:1}--\ref{A:3} for $Q\in\mathcal{W}_{2}^{\varepsilon}$ and \eqref{eq:Lipindependent} that we have for all $Q\in\mathcal{W}_{2}^{\varepsilon}$ 
\begin{enumerate}[start=3,label=(J\arabic*')]
    \item \label{A:3a} $\dist(Q,\Psi_{\varepsilon}(Q)) \leq C \ell(Q)$ and the constant $C>0$ only depends on $\Lip(\Omega,\Ocal)$
\end{enumerate}
provided $0<\varepsilon<\varepsilon_{0}$. Moreover, the above considerations establish that the involved constants of the construction \emph{do not} depend on the choice of $0<\varepsilon<\varepsilon_0$. Therefore, in the following, we omit the index $\varepsilon>0$ and continue with the construction for the open, bounded and convex set $\Omega$.
\smallskip

From now on, we denote cubes from the exterior cover by $Q_i$ and cubes from the interior cover by $Q_j'$ (and, later on after applying the map $\Psi$, by $Q^i$).
We now choose a smooth partition of unity $(\varphi_{i})$ subject to the slightly blown up Whitney covering $\mathcal{W}_{2}=(Q_{i})$ of $\R^{n}\setminus\overline{\Omega}$. By this we understand that $(\varphi_{i})$ satisfies the following properties:  
\begin{enumerate}[label=(PU\arabic*)]
    \item\label{item:POU1} $\phi_i \in \hold_c^{\infty}(\tfrac{7}{6}Q_i)$ for all $i\in\mathbb{N}$;
    \item\label{item:POU2} $0\leq \phi_i\leq 1$ for all $i\in\mathbb{N}$ and $\sum_{i=1}^{\infty}\varphi_{i}=1$ on $\R^{n}\setminus\overline{\Omega}$; 
    \item\label{item:POU3} there exists a constant $c=c(n)>0$ such that we have 
\begin{align*}    
    \Vert\!\D \phi_i \Vert_{\lebe^{\infty}(\R^{n})} \leq \frac{c}{\ell(Q_i)}\;\;\;\text{and}\;\;\; \Vert\!\D^{2} \phi_i \Vert_{\lebe^{\infty}(\R^{n})} \leq \frac{c}{\ell(Q_i)^{2}}\qquad \text{for all}\;i\in\mathbb{N}. 
\end{align*}
\end{enumerate}
Given $i\in\mathbb{N}$, it is then convenient to define the collection of \emph{neighbouring cubes} by
\begin{align}\label{eq:peaceintheneighbourhood}
\mathcal{N}(i):=\{\tfrac{7}{6}Q\colon\; Q\in\mathcal{W}_{2},\;\;\;\tfrac{7}{6}Q\cap\tfrac{7}{6}Q_{i}\neq\emptyset\}.
\end{align}
For cubes $Q'_j \in \Wcal_{1}$, consider the down-scaled cube $\widetilde{Q}_j :=\tfrac{1}{2} Q'_j$ and define a normalised  measure $\overline{\mu}_j$ supported in $\widetilde{Q}_{j}$ by
\begin{align}\label{eq:mudefMAIN}
\bar{\mu}_j = \frac{\mathscr{L}^n \mres\widetilde{Q}_j}{\mathscr{L}^n(\widetilde{Q}_j)}. \end{align}
Writing $\mathcal{W}_{2}=(Q_{i})$, we now put $Q^{i}:=\Psi(Q_{i})\in\mathcal{W}_{1}$. Note that for $i_0 \neq i_1$, we might have $Q^{i_1}=Q^{i_2}$. The number of indices $i$ such that $Q=Q^{i}$ for cubes $Q^{i} \in \mathcal{W}_2$ of a fixed distance to $\partial\Omega$ is, however, uniformly bounded by a dimensional constant and this fixed distance.

If $Q'_j \in\mathcal{W}_{1}$ satisfies $Q'_{j}= \Psi(Q_i)$ for some $Q_{i}\in\mathcal{W}_{2}$, let us further denote $\mu^i = \bar{\mu}_j$. For a multi-index $I=(i_1,...,i_r)$ we denote by $\mu^I$ the product measure of the $\mu^{i_l}$'s acting on $(\R^n)^r$, i.e.
\begin{align}\label{eq:prodmeasuredef}
\mu^I := \mu^{i_1} \times ... \times \mu^{i_r}.
\end{align}
Given a multi-index $I=(i_{1},...,i_{r})\in\mathbb{N}^{r}$ and points $x_{i_{1}},...,x_{i_{r}} \in\R^{n}$,  we employ the shorthand $\mathbf{x}_{I}=(x_{i_{1}},...,x_{i_{r}})$ and then denote by 
\begin{align*}
\conv(\mathbf{x}_{I}):=\conv(x_{i_{1}},...x_{i_{r}})
\end{align*}
the convex hull of these points. 
Note that the convex hull is an $(r-1)$-dimensional simplex, and if its $(r-1)$-dimensional Hausdorff measure is non-zero, then we say that this simplex is \emph{non-degenerate}. If this is the case, we may define an element of ${\wedgeq}{^{r-1}}(\R^{n})$ by
\begin{align}\label{eq:normaldefine}
\nu(x_{i_1},...,x_{i_r}) := \frac{1}{(r-1)!}(x_{i_r} - x_{i_{r-1}}) \wedge (x_{i_{r-1}} - x_{i_{r-2}}) \wedge ... \wedge (x_{i_{2}}-x_{i_{1}}).
\end{align}
Note that the length of this vector is exactly the $\mathscr{H}^{r-1}$-measure of the simplex and that $\nu(x_{i_{1}},...,x_{i_{r}})$ might be seen as a \emph{normal vector to the simplex}. In particular, given a differential form $u \in  \hold^1(\R^n;\wedgeq^r (\R^{n})^{*})$, this vector helps us to formulate Stokes' or Gauss' theorem, respectively. This is the key motivation for Definition \ref{def:extension:l1} below, for which we identify a vector field $u \colon \R^n \to \R^n$ with the (non-relabeled) differential form $u \colon \R^n \to \wedgeq^{n-1}(\R^{n})^{*}$ via the identification of $\R^n$ with $\wedgeq^{n-1}(\R^{n})^{\ast}$. In particular,
\[
e_1 \cong \dif x_2 \wedge ... \wedge \dif x_n, \quad e_2 \cong - \dif x_1 \wedge \dif x_3 \wedge... \wedge \dif x_n \text{ etc.,}
\]
so that the divergence operator becomes the operator of exterior differentiation (note that $\wedgeq^n (\R^{n})^{\ast}$ is isomorphic to $\R$), cf. \eqref{eq:differentialconvention1} and \eqref{eq:differentialconvention2}. In particular, $\diver$-free functions are closed differential forms and vice-versa. We now come to the definition of $\mathscr{E}_{\Omega}$:
\begin{definition}[Divergence-free extension operator] \label{def:extension:l1}
    Let $u \in \hold^1(\overline{\Omega};\wedgeq^{n-1}(\R^{n})^{*})$. We define 
    \begin{align}\label{def:Eomegscr}
    \mathscr{E}_{\Omega}[u]:=\begin{cases} 
    u &\;\text{in}\;\;\Omega,\\ 
    \E_{\Omega}[u]&\;\text{in}\;\;\R^{n}\setminus\overline{\Omega}, 
    \end{cases}
    \end{align}
where $\E_{\Omega}u$ is given by 
    \begin{equation} \label{def:Eomega}
    \begin{split}
         \E_{\Omega}[u]  & :=  (-1)^{n-1} \sum_{I=(i_1,...,i_n)\in\mathbb{N}^{n}} \Big(\phi_{i_n} \wedge \dif \phi_{i_{n-1}} \wedge \dots \wedge \dif \phi_{i_1} \Big. \\ & \Big. \;\;\;\;\;\;\;\;\;\;\;\;\;\;\;\;\;\;\;\;\;\;\;\;\cdot \int_{(\R^{n})^{n}} \fint_{\conv(\mathbf{x}_{I})} u(z) \nu(\mathbf{x}_{I}) \dHaus^{n-1}(z) ~\textup{d}\mu^{I}(\mathbf{x}_{I})\Big).
        \end{split}
    \end{equation}
If the simplex spanned by $x_{i_1},...,x_{i_n}$ is degenerate, we define the inner integral over the convex hull/simplex $\conv(x_{i_{1}},...,x_{i_{n}})$ to be zero.    
\end{definition}
Note that due to convexity of the domain, in \eqref{def:Eomega} the integral over the convex hull  of $x_{i_1},...,x_{i_n}$ is always well-defined. Moreover, by the local finiteness of the Whitney covering (cf. \ref{item:Whitney2}), the sum in \eqref{def:Eomega} is well-defined. By virtue of Stokes' theorem, it will become clear in the proof of Theorem \ref{thm:divL1conv} in Section \ref{sec:proofL1divcon} below why the definition \eqref{def:Eomegscr} via \eqref{def:Eomega} in fact is a natural choice and maps divergence-free fields to divergence-free fields. 
\subsection{Properties of $\mathscr{E}_{\Omega}[u]$ for $u\in\hold^{1}(\overline{\Omega};\wedgeq^{n-1}(\R^{n})^{*})$}\label{sec:proofL1divcon}
We now proceed to the proof of Theorem \ref{thm:divL1conv} and devote this section to the proof of the following intermediate result:
\begin{proposition}\label{prop:intermediateL1convex}
Let $\Omega\subset\R^{n}$ be open, bounded and convex, and let the linear operator $\mathscr{E}_{\Omega}$ be as in Definition \ref{def:extension:l1}. Then there exists a constant $c>0$ only depending on $n$ and $\Lip(\Omega,\mathcal{O})$ such that we have 
\begin{align*}
\|\mathscr{E}_{\Omega}[u]\|_{\lebe^{1}(\R^{n})}\leq c\|u\|_{\lebe^{1}(\Omega)}\qquad\text{for all}\;u\in\hold^{1}(\overline{\Omega};{\wedgeq}{^{n-1}}(\R^{n})^{*}). 
\end{align*}
Moreover, if $u\in\hold^{1}(\overline{\Omega};\wedgeq^{n-1}(\R^{n})^{*})$ satisfies $\dif u=0$ in $\Omega$, then we have $\dif\mathscr{E}_{\Omega}[u]=0$ in $\mathscr{D}'(\R^{n};{\wedgeq}{^{n}}(\R^{n})^{*})$. 
\end{proposition}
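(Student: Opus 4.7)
The proof splits into the $\lebe^1$-bound and the implication $\dif u = 0 \Rightarrow \dif\mathscr{E}_\Omega[u] = 0$ in $\mathscr{D}'(\R^n;\wedgeq^n(\R^n)^*)$. Since on $\Omega$ the operator is the identity, the substance lies on $\R^n\setminus\overline{\Omega}$ for the first claim and across $\partial\Omega$ for the second.

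For the $\lebe^1$-bound, I would fix $x$ in an exterior Whitney cube $Q_i\in\Wcal_2$. By the support properties of $\varphi_{i_k},\dif\varphi_{i_k}$ together with \ref{item:Whitney2}, only $O(\mathtt{N}^n)$ multi-indices $I$ contribute at $x$, each with $\ell(Q_{i_k})\sim\ell(Q_i)$. Using $|\dif\varphi_{i_k}|\lesssim\ell(Q_i)^{-1}$ and $|\nu(\mathbf{x}_I)|=\mathscr{H}^{n-1}(\conv(\mathbf{x}_I))$, the normalised flux is absorbed into $\int_{\conv(\mathbf{x}_I)}|u|\dHaus^{n-1}$. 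Parametrising $\conv(\mathbf{x}_I)$ by the affine map $t\mapsto\sum_k t_k x_{i_k}$ on $\mathbb{D}^{n-1}$ and changing variables $y=\sum_k t_k x_{i_k}$ in a coordinate with $t_{k^*}\geq 1/n$ reduces the contribution on $Q_i$ to $\int_{\widehat{\Psi(Q_i)}}|u(y)|\dy$, where $\widehat{Q}$ is a dimensional dilation. Summing in $Q_i\in\Wcal_2$ and using the bounded multiplicity of $\Psi\colon\Wcal_2\to\Wcal_1$ from Lemma \ref{cor:Jones} yields the $\lebe^1$-estimate. A useful byproduct: for $x$ whose neighbouring cubes in $\Wcal_2$ all satisfy $\ell>\eta$, Jones' map sends them all to the fixed cube $Q_0'$, so $\mu^{i_k}=\bar\mu_{Q_0'}$ for every $k$, and the antisymmetry $F_{\sigma I}=\operatorname{sgn}(\sigma)F_I$ then forces $F_I=0$. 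Hence $\mathscr{E}_\Omega[u]$ is actually supported in a $C\eta$-neighbourhood of $\partial\Omega$, which is also convenient for the next step.

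For the divergence-free property, convexity of $\Omega$ invokes the Poincar\'{e} lemma to produce $v\in\hold^2(\overline{\Omega};\wedgeq^{n-2}(\R^n)^*)$ with $\dif v=u$, and guarantees $\conv(\mathbf{x}_I)\subset\Omega$ for every $\mathbf{x}_I$ in the support of $\mu^I$. Stokes' theorem on the oriented $(n-1)$-simplex gives
\[
\int_{\conv(\mathbf{x}_I)} u \;=\; \sum_{k=1}^n (-1)^{k-1}\int_{\conv(\mathbf{x}_{I\setminus\{i_k\}})} v,
\]
so $F_I=\sum_k (-1)^{k-1} G^{(k)}_{I\setminus\{i_k\}}$ with each $G^{(k)}_J$ independent of $i_k$. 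Inserting this into $\dif\mathscr{E}_\Omega[u]=(-1)^{n-1}\sum_I F_I\dif\varphi_{i_n}\wedge\dots\wedge\dif\varphi_{i_1}$ and carrying out the inner summation over $i_k$ in the $k$-th contribution invokes $\sum_{i_k}\dif\varphi_{i_k}=\dif(1)=0$; each of the $n$ contributions vanishes, so $\dif\mathscr{E}_\Omega[u]\equiv 0$ classically on $\R^n\setminus\overline{\Omega}$. To promote this to $\dif\mathscr{E}_\Omega[u]=0$ in $\mathscr{D}'(\R^n)$, I test against $\phi\in\ccinfty(\R^n)$; two applications of Stokes' theorem (in $\Omega$ and in $\R^n\setminus\overline{\Omega}$, using the compact support from the $\lebe^1$-step) reduce the identity to the trace-matching
\[
(\mathscr{E}_\Omega[u])_{\mathrm{ext}}\big|_{\partial\Omega} \;=\; u\big|_{\partial\Omega}\qquad\text{as }(n-1)\text{-forms on }\partial\Omega.
\]
Letting $x\to y\in\partial\Omega$ from outside, continuity of $u$ and dominated convergence give $F_I\to u(y)\bigl(\nu(c_{i_1},\dots,c_{i_n})\bigr)$, where $c_{i_k}$ is the centre of $\widetilde{Q}^{i_k}$; by multilinearity of $\nu$ in its arguments and further cancellations via $\sum_i\dif\varphi_i=0$, the limiting expression collapses to $u(y)\bigl(\partial_{\tau_1}\Pi\wedge\dots\wedge\partial_{\tau_{n-1}}\Pi\bigr)$, where $\Pi(x):=\sum_i c_i\varphi_i(x)$ is a ``reproduction map'' and $(\tau_k)_{k=1}^{n-1}$ is an oriented tangential frame at $y$. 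Since each contributing centre $c_i\to y$ as $x\to y$, $\Pi$ extends continuously to $\partial\Omega$ as the identity, hence $\partial_{\tau_k}\Pi(y)=\tau_k$, so the limit equals $u(y)(\hat\tau(y))$ as required.

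The hard part is precisely this last trace-matching at $\partial\Omega$: the $\lebe^1$-bound and the classical vanishing of $\dif\mathscr{E}_\Omega[u]$ in the exterior are fairly routine Whitney- and Stokes-type calculations, but showing that the averaged-simplex construction recovers the correct boundary trace requires both a quantitative control of $F_I\approx u(y)(\bar\nu_I)$ that is uniform in $\ell(Q_i)$ and the algebraic identity that the wedge of tangential gradients of $\Pi$ reproduces $\hat\tau(y)$. The antisymmetric choice of $\nu(\mathbf{x}_I)$ in \eqref{eq:normaldefine} (rather than a symmetric average) is precisely what forces this identity, and is the reason the whole construction is designed around the simplex geometry.
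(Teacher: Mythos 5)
Your $\lebe^1$-bound argument (bounded multiplicity from the Whitney structure, the $t_{k^*}\geq 1/n$ change-of-variables trick, summation over $\Psi$) and your observation that $\mathscr{E}_{\Omega}[u]$ has compact support are essentially the paper's Lemmas \ref{lem:compactsupport}, \ref{lem:convhullL1bound} and \ref{lemma:L1bound:c1}. Your route to the pointwise vanishing of $\dif\mathscr{E}_{\Omega}[u]$ on $\R^{n}\setminus\overline{\Omega}$ — Poincar\'{e} primitive $v$ with $\dif v=u$, Stokes on the $(n-1)$-simplex to shift the index, then $\sum_{i_k}\dif\varphi_{i_k}=0$ — is a legitimate dual of what the paper does (Lemma \ref{lemma:pointwise:solenoidal}), which instead inserts an extra partition-of-unity index $i_{n+1}$ and applies Gauss on the $n$-simplex using $\dif u=0$ directly; both work for convex $\Omega$.

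The genuine gap is the final step. You reduce the distributional identity $\dif\mathscr{E}_{\Omega}[u]=0$ in $\mathscr{D}'(\R^{n})$ to a pointwise trace-matching $(\mathscr{E}_{\Omega}[u])_{\mathrm{ext}}|_{\partial\Omega}=u|_{\partial\Omega}$, and then argue via the reproduction map $\Pi(x)=\sum_i c_i\varphi_i(x)$. This is exactly the route the paper flags as problematic (see the discussion in Section \ref{sec:L1ext}:
``$\diver \mathscr{E}u$ might, for instance, be a measure that is supported on $\partial\Omega$'' and ``verifying a condition that $u$ and $\mathscr{E}u$ coincide on the boundary appears to be very challenging''). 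Concretely: your passage ``$F_I\to u(y)\bigl(\nu(c_{i_1},\dots,c_{i_n})\bigr)$ as $x\to y$'' is not a convergence statement for a fixed $I$ — the contributing indices change as $x\to y$, and the balancing $\ell(Q_i)^{-(n-1)}$ from the $\dif\varphi$'s against $\ell(Q_i)^{n-1}$ from $|\nu|$ only yields boundedness, not a limit. Likewise, $\Pi$ has $\dif\varphi_i\sim\ell(Q_i)^{-1}$ in its derivative, and it is not at all clear that $\D\Pi$ converges to the identity on tangential directions at $\partial\Omega$; the claim ``$\partial_{\tau_k}\Pi(y)=\tau_k$'' needs proof, not just the remark that $c_i\to y$. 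In fact the extension need not be (and is not claimed to be) continuous up to $\partial\Omega$ from the exterior.

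The paper avoids this cliff entirely. Lemma \ref{lemma:globalsolenoidality} shows that $\dif\mathscr{E}_{\Omega}[u]$ is representable by an $\lebe^1(\R^n)$-function \emph{without assuming $\dif u=0$}: it uses the McShane/Lipschitz extension $v\in(\sobo^{1,1}\cap\sobo^{1,\infty})(\R^n)$ of $u$, rewrites $\mathscr{E}_{\Omega}[u]$ as $v(y)$ plus a corrector $\mathcal{T}$ that is shown to lie in $\sobo^{1,1}_0(\R^n\setminus\overline{\Omega})$ by an absolute-convergence argument, and then performs an inductive integration by parts (using the algebraic identity $v(y)\,\dif^{\ast}\nu_r(\overline{\mathbf{x}}_{\tilde{I}})=\tfrac{n+1-r}{r-1}v(y)\,\nu_{r-1}(\overline{\mathbf{x}}_{\tilde{I}})$). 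Since $\dif\mathscr{E}_{\Omega}[u]\in\lebe^1$ and vanishes $\mathscr{L}^n$-a.e.\ on $\Omega$ and on $\R^n\setminus\overline{\Omega}$ while $\mathscr{L}^n(\partial\Omega)=0$, Corollary \ref{coro:divfree} follows at once. This is structurally different from trace-matching and is precisely the technical heart of the result; it is the piece your proposal leaves unproven.
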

The proof of Proposition \ref{prop:intermediateL1convex} is divided into several steps, and to this end, we start by giving an outline of the proof  first. Specifically, we will establish 
\begin{enumerate} 
\item\label{item:stepP1} in Lemma \ref{lemma:pointwise:solenoidal} that, if $u \in \hold^1(\bar{\Omega};\wedgeq^{n-1}(\R^{n})^{*})$, then  $\E_{\Omega}[u] \in \hold^{\infty}(\R^{n}\setminus\overline{\Omega};\wedgeq^{n-1}(\R^{n})^{*})$ and that, if $\dif u =0$ holds pointwisely in $\Omega$, the equation $\dif\mathscr{E}_{\Omega}[u]=0$ holds pointwisely in $\R^{n}\setminus\overline{\Omega}$;
\item\label{item:stepP2} in Lemma \ref{lemma:globalsolenoidality} that, if $u\in\hold^{1}(\overline{\Omega};\wedgeq^{n-1}(\R^{n})^{*})$, then the distributional differential expression $\dif\mathscr{E}_{\Omega}[u]\in\mathscr{D}'(\R^{n};\wedgeq^{n}(\R^{n})^{*})$ can be represented by an $\lebe^{1}(\R^{n};\wedgeq^{n}(\R^{n})^{*})$-function;
\item\label{item:stepP3} in Corollary \ref{coro:divfree} that, if $u \in \hold^1(\bar{\Omega};\wedgeq^{n-1}(\R^{n})^{*})$ satisfies $\dif u=0$ pointwisely in $\Omega$, then $\dif\mathscr{E}_{\Omega}[u]=0$ holds in $\mathscr{D}'(\R^{n},\wedgeq^{n}(\R^{n})^{*})$. This is a consequence of Lemmas \ref{lemma:pointwise:solenoidal} and \ref{lemma:globalsolenoidality};
\item\label{item:stepP4} in Lemmas \ref{lem:convhullL1bound} and \ref{lemma:L1bound:c1} that $\mathscr{E}_{\Omega}$ is an $\lebe^{1}$-bounded linear operator on $\hold^{1}(\overline{\Omega},\wedgeq^{n-1}(\R^{n})^{*})$, with the underlying constants only depending on certain geometric features of $\Omega$ and $\partial\Omega$.
\end{enumerate} 
Step \ref{item:stepP2} explicitly uses that $u\in\hold^{1}(\overline{\Omega};\wedgeq^{n-1}(\R^{n})^{*})$. In order to prove Theorem \ref{thm:divL1conv}, we will combine steps \ref{item:stepP1}--\ref{item:stepP4} together with a density result. This density result, in turn, will rely on the properties of $\mathscr{E}_{\Omega}$ on $\hold^{1}(\overline{\Omega};\wedgeq^{n-1}(\R^{n})^{*})$ as gathered in steps \ref{item:stepP1}--\ref{item:stepP4}, and will be given together with the proof of Theorem \ref{thm:divL1conv} in Section \ref{sec:density} below.
We begin with an elementary observation: 
\begin{lemma}\label{lem:compactsupport}
For any $u\in\hold^{1}(\overline{\Omega};\wedgeq^{n-1}(\R^{n})^{*})$, $\spt(\mathscr{E}_{\Omega}[u])$ is compact. Specifically, letting $\eta>0$ be the constant from \ref{A:1}, $c>0$ be the constant from \ref{item:Whitney3} and setting 
\begin{align}\label{eq:michaeljackson}
\vartheta =2\eta\Big(\frac{13\sqrt{n}}{12}+c\Big), 
\end{align}
we have $\spt(\mathscr{E}_{\Omega}[u])\subset\overline{\ball}_{\vartheta}(\Omega)$. 
\end{lemma}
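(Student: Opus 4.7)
The plan is to split $\mathscr{E}_{\Omega}[u]$ according to the definition \eqref{def:Eomegscr}. Since $\mathscr{E}_{\Omega}[u]|_{\Omega}=u$, the part of the support inside $\overline{\Omega}$ is automatically contained in $\overline{\Omega}\subset\overline{\ball}_{\vartheta}(\Omega)$, so it suffices to show the pointwise vanishing $\E_{\Omega}[u](x)=0$ for every $x\in\R^{n}\setminus\overline{\Omega}$ with $\dist(x,\Omega)=\dist(x,\partial\Omega)>\vartheta$; this yields $\spt(\mathscr{E}_{\Omega}[u])\subset\overline{\ball}_{\vartheta}(\Omega)$, which is bounded and hence compact in $\R^{n}$.

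The first step I would carry out is the geometric estimate that, for every $Q_{i}\in\mathcal{W}_{2}$ and every $x\in\tfrac{7}{6}Q_{i}$,
$$\dist(x,\partial\Omega) \leq \Big(\tfrac{13\sqrt{n}}{12}+c\Big)\ell(Q_{i}).$$
This is a triangle inequality through the centre $c_{i}$ of $Q_{i}$: one has $|x-c_{i}|\leq\tfrac{7\sqrt{n}}{12}\ell(Q_{i})$ because $\tfrac{7}{6}Q_{i}$ has half-diagonal $\tfrac{7\sqrt{n}}{12}\ell(Q_{i})$, and choosing $q_{i}^{\ast}\in Q_{i}$ realising $\dist(Q_{i},\partial\Omega)\leq c\ell(Q_{i})$ from \ref{item:Whitney3} gives $\dist(c_{i},\partial\Omega)\leq|c_{i}-q_{i}^{\ast}|+c\ell(Q_{i})\leq(\tfrac{\sqrt{n}}{2}+c)\ell(Q_{i})$, whose sum with $|x-c_{i}|$ delivers the bound. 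Consequently, the hypothesis $\dist(x,\Omega)>\vartheta=2\eta(\tfrac{13\sqrt{n}}{12}+c)$ forces $\ell(Q_{i})>\eta$ for \emph{every} exterior Whitney cube $Q_{i}$ whose slight inflation $\tfrac{7}{6}Q_{i}$ contains $x$, and \ref{A:1} then yields the uniform identity $\Psi(Q_{i})=Q_{0}'$ for all such $i$.

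The conclusion comes by exploiting this uniformity together with the partition-of-unity identity. Let $\mathcal{I}(x):=\{i\in\mathbb{N}\colon x\in\tfrac{7}{6}Q_{i}\}$ and write
$$A(I):=\int_{(\R^{n})^{n}}\fint_{\conv(\mathbf{x}_{I})}u(z)\nu(\mathbf{x}_{I})\dHaus^{n-1}(z)\dif\mu^{I}(\mathbf{x}_{I})$$
for the integral factor in \eqref{def:Eomega}. For any $I=(i_{1},\ldots,i_{n})$ with all $i_{j}\in\mathcal{I}(x)$, each measure $\mu^{i_{j}}$ appearing in \eqref{eq:prodmeasuredef} coincides with the fixed normalised measure on $\widetilde{Q}_{0}'$; hence $\mu^{I}$ is the common product measure and $A(I)=A$ for a single constant $A$ independent of $I$. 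For $I$ with some $i_{j}\notin\mathcal{I}(x)$, the support condition from \ref{item:POU1} forces both $\phi_{i_{j}}(x)=0$ and $\dif\phi_{i_{j}}(x)=0$, so the corresponding summand in \eqref{def:Eomega} vanishes at $x$. Including these null contributions, the sum over multi-indices factorises,
$$\E_{\Omega}[u](x)=(-1)^{n-1}A\Big(\sum_{i}\phi_{i}(x)\Big)\bigwedge_{j=1}^{n-1}\Big(\sum_{i}\dif\phi_{i}(x)\Big),$$
and since \ref{item:POU2} gives $\sum_{i}\phi_{i}\equiv 1$ on the open set $\R^{n}\setminus\overline{\Omega}$, its exterior derivative $\sum_{i}\dif\phi_{i}$ vanishes identically there, annihilating each of the $(n-1)$ wedge factors. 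The main non-trivial point, and the place where the earlier construction pays off, is the legitimacy of the factorisation: it depends precisely on the $I$-independence of $A(I)$ for indices $i_{j}\in\mathcal{I}(x)$, which is guaranteed by \ref{A:1} via the uniform assignment $\Psi(Q_{i})=Q_{0}'$.
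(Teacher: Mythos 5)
Your proof is correct and follows essentially the same route as the paper's: the geometric threshold forces $\ell(Q_i)>\eta$ and hence $\Psi(Q_i)=Q_0'$ for every cube whose inflated copy contains $x$, after which $\sum_i\dif\phi_i\equiv 0$ on $\R^n\setminus\overline{\Omega}$ annihilates the expression. Your triangle inequality routes through the centre of $Q_i$ instead of an arbitrary $z\in Q_i$ but produces the same constant, and you make explicit the multilinear factorisation of the sum that the paper leaves implicit in its terse ``by the very definition'' closing step.
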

\begin{proof}
As usual, we denote by $\Psi\colon\mathcal{W}_{2}\to\mathcal{W}_{1}$ the map from Lemma \ref{cor:Jones}. In particular, by \ref{item:Jonesrefined1}, there exists a number $\eta>0$ such that $\ell(Q)>\eta$ for $Q\in\mathcal{W}_{2}$ implies that $\Psi(Q)=Q'_{0}$. With $\vartheta$ as in \eqref{eq:michaeljackson}, we claim that 
\begin{align}\label{eq:Titanic}
y\in\R^{n}\setminus\ball_{\vartheta}(\Omega)\;\;\text{and}\;\;\varphi_{i}(y)\neq 0 \Longrightarrow \Psi(Q_{i})=Q'_{0}.
\end{align}
To see \eqref{eq:Titanic}, let $y\in\R^{n}\setminus\ball_{\vartheta}(\Omega)$ be such that $\varphi_{i}(y)\neq 0$. We note that, for all $z\in Q_{i}$ and all $y\in \frac{7}{6}Q_{i}$, there holds $|z-y|\leq \frac{13}{12}\sqrt{n}\ell(Q_{i})$, and hence we get for all $x\in\partial\Omega$ and $z\in Q_{i}$
\begin{align*}
\dist(y,\partial\Omega) & \leq |y-x| \leq |y-z|+|z-x| \leq \frac{13}{12}\sqrt{n}\ell(Q_{i}) + |z-x|. 
\end{align*}
Infimising the previous inequality over all $x\in\partial\Omega$ and $z\in Q_{i}$ then yields 
\begin{align*}
\vartheta \stackrel{y\in\R^{n}\setminus\ball_{\vartheta}(\Omega)}{\leq} \dist(y,\partial\Omega) & \leq  \frac{13}{12}\sqrt{n}\ell(Q_{i}) + \dist(Q_{i},\partial\Omega) \stackrel{\ref{item:Whitney3}}{\leq} \Big(\frac{13}{12}\sqrt{n}+c\Big)\ell(Q_{i}).
\end{align*}
By our choice of $\vartheta$, cf. \eqref{eq:Titanic}, we then infer that $\ell(Q_{i})>\eta$ and thus $\Psi(Q_{i})=Q'_{0}$. This settles the implication \eqref{eq:Titanic}. 

Hence, if $y\in\R^{n}\setminus\ball_{\vartheta}(\Omega)$ and $(i_{1},...,i_{n})\in\mathbb{N}^{n}$ are such that  $y\in\spt(\varphi_{i_{1}})\cap...\cap\spt(\varphi_{i_{n}})$, then each of the $Q_{i_{j}}$'s, $j\in\{1,...,n\}$,  is mapped to the same cube $Q'_{0}\subset\Omega$. However, since we have $\sum_{i_{s}\in\mathbb{N}}\dif\varphi_{i_{s}}=0$ in $\R^{n}\setminus\overline{\Omega}$, this implies $\mathscr{E}_{\Omega}[u]=0$ in $\R^{n}\setminus\overline{\ball}_{\vartheta}(\Omega)$ by the very definition of $\mathscr{E}_{\Omega}[u]$, cf. \eqref{def:Eomega}. This completes the proof. 
\end{proof}

\begin{lemma} \label{lemma:pointwise:solenoidal}
    Let $u \in \hold^{1}(\overline{\Omega};\wedgeq^{n-1}(\R^{n})^{*})$. Then there holds $\mathscr{E}_{\Omega}[u] \in \hold^{\infty}(\R^{n}\setminus\overline{\Omega};\wedgeq^{n-1}(\R^{n})^{*})$. Moreover, if we have $\dif  u = 0$ in $\Omega$, then also $ \dif\mathscr{E}_{\Omega}[u]=0$ in $\R\setminus\overline{\Omega}$. 
\end{lemma}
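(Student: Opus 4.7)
The strategy is to split the lemma into two essentially independent parts, handling $\hold^\infty$-regularity by the locality of the Whitney cover, and solenoidality by a Stokes-type cancellation that exploits the partition-of-unity identity $\sum_i\dif\phi_i\equiv 0$ on $\R^n\setminus\overline{\Omega}$.

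For the smoothness claim, I would observe that each coefficient
\[
A_I := \int_{(\R^{n})^{n}} \fint_{\conv(\mathbf{x}_{I})} u(z)\,\nu(\mathbf{x}_{I})\,\dHaus^{n-1}(z)\,\textup{d}\mu^{I}(\mathbf{x}_{I})
\]
appearing in \eqref{def:Eomega} is a real number (independent of the evaluation point), whereas $\phi_{i_n}\,\dif\phi_{i_{n-1}}\wedge\dots\wedge\dif\phi_{i_1}$ is a smooth $(n-1)$-form by \ref{item:POU1}. In view of the local finiteness of the (blown-up) Whitney cover \ref{item:Whitney2} combined with \ref{item:POU2}, at each point $y\in\R^n\setminus\overline{\Omega}$ only finitely many tuples $I$ contribute in a neighbourhood of $y$, so the series in \eqref{def:Eomega} is locally a finite sum of $\hold^\infty$-forms, hence $\hold^\infty$.

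For the solenoidality, the plan is to use that $A_I$ does not depend on $y$ to compute
\[
\dif\E_\Omega[u]=(-1)^{n-1}\sum_I A_I\,\big(\dif\phi_{i_n}\wedge\dif\phi_{i_{n-1}}\wedge\dots\wedge\dif\phi_{i_1}\big)\qquad\text{on }\R^n\setminus\overline{\Omega},
\]
again a locally finite sum. The decisive manoeuvre is then to decompose each $A_I$ so that every resulting summand carries an index on which it does not actually depend. To this end I would fix an auxiliary reference point $x_0\in\Omega$ (say, the centre of the anchor cube $Q_0'$) and exploit convexity of $\Omega$: the $n$-simplex $\conv(x_0,x_{i_1},\ldots,x_{i_n})$ lies in $\Omega$ whenever $x_{i_j}\in\widetilde{Q}^{i_j}\subset\Omega$. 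Applying Stokes' theorem to $u$ on this $n$-simplex and using $\dif u\equiv 0$ in $\Omega$ yields, after identifying $\int u(z)\nu(\mathbf{x}_I)\dHaus^{n-1}(z)$ with the integral of the $(n-1)$-form $u$ over the oriented face $\conv(\mathbf{x}_I)$,
\[
\int_{\conv(\mathbf{x}_I)}u=\sum_{j=1}^n(-1)^{j-1}\int_{\conv(x_0,x_{i_1},\ldots,\widehat{x_{i_j}},\ldots,x_{i_n})}u.
\]
Averaging both sides against $\textup{d}\mu^I$, the $j$-th summand on the right-hand side turns into a quantity $B^{(j)}_{I\setminus\{i_j\}}$ that is manifestly independent of the index $i_j$, because the $\mu^{i_j}$-integration passes through the integrand (which no longer involves $x_{i_j}$) and yields $\int\textup{d}\mu^{i_j}=1$.

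Substituting the decomposition $A_I=\sum_j(-1)^{j-1}B^{(j)}_{I\setminus\{i_j\}}$ into $\dif\E_\Omega[u]$ and interchanging the (locally finite) sums, the $j$-th contribution reads
\[
\sum_{i_1,\ldots,\widehat{i_j},\ldots,i_n}B^{(j)}_{I\setminus\{i_j\}}\;\dif\phi_{i_n}\wedge\cdots\wedge\Big(\sum_{i_j}\dif\phi_{i_j}\Big)\wedge\cdots\wedge\dif\phi_{i_1},
\]
which vanishes by \ref{item:POU2}, since $\sum_i\dif\phi_i\equiv 0$ on $\R^n\setminus\overline{\Omega}$. Summing over $j$ then gives $\dif\E_\Omega[u]\equiv 0$ pointwise on $\R^n\setminus\overline{\Omega}$. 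The main obstacle I expect is purely bookkeeping: tracking the correct signs and orientations in the Stokes decomposition, verifying that the averaging against $\mu^I$ preserves the Stokes identity (which it does, as the identity is pointwise in the $x_{i_j}$'s and convexity guarantees that the enlarged simplex lies in $\Omega$), and justifying the interchange of sums via the local finiteness of the Whitney cover and the partition of unity.
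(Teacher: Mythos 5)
Your argument is correct, and the overall template---local finiteness for smoothness, then a Stokes-type cancellation exploiting $\sum_i\dif\phi_i\equiv 0$ on $\R^n\setminus\overline{\Omega}$---coincides with the paper's, but you take a genuinely different route through the Stokes step. You fix a single auxiliary vertex $x_0\in\Omega$ and apply Stokes' theorem on the $n$-simplex $\conv(x_0,x_{i_1},\ldots,x_{i_n})$ to split each coefficient $A_I$ into $n$ quantities $B^{(j)}_{I\setminus\{i_j\}}$, each independent of the index $i_j$; the vanishing of $\dif\mathscr{E}_\Omega[u]$ then follows by summing over $i_j$ in the wedge factor $\dif\phi_{i_j}$. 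The paper avoids a fixed reference vertex: it introduces an extra summation index $i_{n+1}$ through the insertion $\sum_{i_{n+1}}\varphi_{i_{n+1}}\equiv 1$, subtracts a telescoping expression that already vanishes by the same $\sum\dif\phi=0$ cancellation, and recognises the remainder $\mathrm{II}(\widetilde{I})$ as the oriented boundary integral of $u$ over the $n$-simplex $\conv(x_{i_1},\ldots,x_{i_{n+1}})$, which is zero by Gauss since $\dif u=0$ in $\Omega$. In short: your auxiliary vertex is fixed, the paper's is partition-of-unity-averaged. Your version is slightly leaner---it avoids the $(n+1)$-st sum and makes the role of convexity (keeping $\conv(x_0,\ldots)\subset\Omega$) transparent---while the paper's keeps all $n+1$ vertices on equal footing, a structure that is recycled verbatim in the Lipschitz case and for the $\sobo^{1,1}$-correctors, where the variable extra vertex becomes $y$ itself. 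One bookkeeping item you should spell out: the inner expression $\fint_{\conv(\mathbf{x}_I)}u(z)\,\nu(\mathbf{x}_I)\dHaus^{n-1}(z)$ is an average against the non-unit normal $\nu(\mathbf{x}_I)$ of \eqref{eq:normaldefine}, whose modulus equals $\mathscr{H}^{n-1}(\conv(\mathbf{x}_I))$, so that it coincides with the oriented integral $\int_{\conv(\mathbf{x}_I)}u$ appearing in your Stokes decomposition; the signs $(-1)^{j-1}$ in your face expansion depend on this normalisation, so it is worth stating.
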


\begin{proof}
Let $y\in\R^{n}\setminus\overline{\Omega}$. By \ref{item:Whitney1} and \ref{item:Whitney2}, there exists an open neighbourhood $U\Subset\R^{n}\setminus\overline{\Omega}$ such that $U\cap \tfrac{7}{6}Q\neq\emptyset$ for at most $\mathtt{N}$ cubes $Q\in\mathcal{W}_{2}$. Hence, \ref{item:POU1} implies that the sum on the right-hand side of \eqref{def:Eomega} is locally finite in $\R^{n}\setminus\overline{\Omega}$. Hence, since each individual summand is smooth, so is the sum, and we may compute $\dif\mathscr{E}_{\Omega}[u]$ in $\R^{n}\setminus\overline{\Omega}$ by interchanging the exterior derivative and the sum defining $\mathscr{E}_{\Omega}[u]$. Given $\widetilde{I}=(i_1,..,i_{n+1}) \in \N^{n+1}$ we denote for $s\in\{1,...,n\}$ 
\begin{align*}
I_s=(i_1,...,i_{s-1},i_{n+1},i_{s+1},...,i_n)\;\;\;\text{and}\;\;\; I_{n+1} =(i_1,...,i_n).
\end{align*} 
We then have for $y\in\R^{n}\setminus\overline{\Omega}$
\begin{align*}
\dif\,(\mathscr{E}_{\Omega}[u](y)) & = (-1)^{n-1} \sum_{I=(i_1,...,i_n)\in\mathbb{N}^{n}} \Big(\dif \phi_{i_n} \wedge \dif \phi_{i_{n-1}} \wedge \dots \wedge \dif \phi_{i_1}(y)\Big. \\ & \Big. \;\;\;\;\;\;\;\;\;\;\;\;\;\;\;\;\;\;\;\;\cdot \int_{(\R^{n})^{n}} \fint_{\conv(\mathbf{x}_{I})} u(z) \nu(\mathbf{x}_{I}) \dHaus^{n-1}(z) ~\textup{d}\mu^I(\mathbf{x}_{I})\Big)\\ 
& = {(-1)^{n-1}} \sum_{\widetilde{I}=(i_1,...,i_n,i_{n+1})\in\mathbb{N}^{n+1}} \Big(\varphi_{i_{n+1}}\dif \phi_{i_n} \wedge \dif \phi_{i_{n-1}} \wedge \dots \wedge \dif \phi_{i_1}(y)\Big. \\ & \Big. \;\;\;\;\;\;\;\;\;\;\;\;\;\;\;\;\;\;\;\;\cdot \int_{(\R^{n})^{n}} \fint_{\conv(\mathbf{x}_{{I_{n+1}}})} u(z) \nu(\mathbf{x}_{{I_{n+1}}}) \dHaus^{n-1}(z) ~\textup{d}\mu^{{I_{n+1}}}(\mathbf{x}_{{I_{n+1}}})\Big) =:  \mathrm{I}, 
\end{align*}
where we have used in the last step that $(\varphi_{i_{n+1}})_{i_{n+1}\in\mathbb{N}}$ is a partition of unity in $\R^{n}\setminus\overline{\Omega}$, cf. \ref{item:POU2}. Next, we observe that 
\begin{align}\label{eq:vanish}
\begin{split}
\sum_{s=1}^{n}&\sum_{\widetilde{I}=(i_{1},...,i_{n+1})\in\mathbb{N}^{n+1}}\Big(\varphi_{i_{n+1}}\dif \phi_{i_n} \wedge \dif \phi_{i_{n-1}} \wedge \dots \wedge \dif \phi_{i_1}(y)\Big. \\ & \;\;\;\;\;\;\;\;\;\;\;\;\;\;\Big.\cdot \int_{(\R^{n})^{n}}\dashint_{\mathrm{conv}(\mathrm{x}_{I_{s}})}u(z) \nu(\mathbf{x}_{I_{s}}) \dHaus^{n-1}(z) ~\textup{d}\mu^{I_{s}}(\mathbf{x}_{I_{s}})\Big) = 0.
\end{split}
\end{align}
This is a consequence of the ultimate integral being independent of $s$,  $(\varphi_{i_{s}})_{i_{s}\in\mathbb{N}}$ being a partition of unity on $\R^{n}\setminus\overline{\Omega}$ and therefore 
\begin{align*}
\sum_{i_{s}\in\mathbb{N}}\dif\varphi_{i_{s}}=\dif\sum_{i_{s}\in\mathbb{N}}\varphi_{i_{s}} = 0\qquad\text{in}\;\R^{n}\setminus\overline{\Omega}, 
\end{align*}
from where \eqref{eq:vanish} follows. 
Hence, by \eqref{eq:vanish}, we arrive at 
\begin{align*}
\mathrm{I} & = (-1)^{n-1} \sum_{\widetilde{I}=(i_1,...,i_n,i_{n+1})\in\mathbb{N}^{n+1}} \Big(\varphi_{i_{n+1}}\dif \phi_{i_n} \wedge \dif \phi_{i_{n-1}} \wedge \dots \wedge \dif \phi_{i_1}(y)\Big.\\ & \;\;\;\;\;\;\;\;\;\;\;\;\;\;\;\;\;\;\;\;\;\;\;\; \cdot \Big(\int_{(\R^{n})^{n}} \fint_{\conv(\mathbf{x}_{I_{n+1}})} u(z) \nu(\mathbf{x}_{I_{n+1}}) \dHaus^{n-1}(z) ~\textup{d}\mu^{{I_{n+1}}}(\mathbf{x}_{I_{n+1}}) \Big. \\ & \Big. \;\;\;\;\;\;\;\;\;\;\;\;\;\;\;\;\;\;\;\;\;\;\;\;\;\;\;\;\;\;\;\;\;\;\;\;- \sum_{s=1}^{n}\int_{(\R^{n})^{n}}\dashint_{\mathrm{conv}(\mathrm{x}_{I_{s}})}u(z) \nu(\mathbf{x}_{I_{s}}) \dHaus^{n-1}(z) ~\textup{d}\mu^{I_{s}}(\mathbf{x}_{I_{s}})\Big) \Big) \\ 
& =: (-1)^{n-1} \sum_{\widetilde{I}=(i_1,...,i_n,i_{n+1})\in\mathbb{N}^{n+1}} \Big(\varphi_{i_{n+1}}\dif \phi_{i_n} \wedge \dif \phi_{i_{n-1}} \wedge \dots \wedge \dif \phi_{i_1} \Big)\mathrm{II}(\widetilde{I})
\end{align*}
with an obvious definition of $\mathrm{II}(\widetilde{I})$. We may now apply Gauss' theorem to the sum defining $\mathrm{II}(\widetilde{I})$, as the $(n+1)$-different $(n-1)$-dimensional simplices form the boundary of an $n$-dimensional simplex and $\nu_{I_s}$ is the corresponding normal vector. Since we have $\dif u=0$ in $\Omega$, we conclude that
\begin{align*}  
\mathrm{II}(\widetilde{I}) = \int_{(\R^{n})^{n+1}} \int_{\conv_{\tilde{I}}} \dif u(z) \nu(\mathbf{x}) ~\textup{d}z \dmu^{\tilde{I}}(\mathbf{x}) =0
\end{align*}
and therefore also $\mathrm{I}=0$. This completes the proof. 
\end{proof}
The next lemma states that the distributional exterior derivative can be represented by an $\lebe^1$-function and thus is a regular distribution. We explicitly mention that, in this lemma, we do not assume the differential form $u$ to be closed. Moreover, it is here where we require $u$ and $\D u$ to be continuous up to the boundary.
\begin{lemma} \label{lemma:globalsolenoidality}
Let $u \in \hold^{1}(\overline{\Omega};\wedgeq^{n-1}(\R^{n})^{*})$. Then there holds $\dif\,(\mathscr{E}_{\Omega}[u]) \in \lebe^1(\R^n;\wedgeq^{n}(\R^{n})^{*})$, meaning that there exists $h \in \lebe^1(\R^n;\wedgeq^{n}(\R^{n})^{*})$ such that we have 
\begin{align}
\int_{\R^{n}} \mathscr{E}_{\Omega}[u] \wedge \dif\psi \dx = \int_{\R^{n}} h \psi \dx\qquad \text{for all}\;\psi\in\hold_{c}^{\infty}(\R^{n}).
\end{align}
\end{lemma}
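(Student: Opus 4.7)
The plan is to exhibit an explicit $\lebe^1$-candidate for $\dif(\mathscr{E}_{\Omega}[u])$ and then verify the resulting duality identity by splitting and integrating by parts on both sides of $\partial\Omega$. I would take
\[
h := (\dif u)\mathbf{1}_\Omega + (\dif E_{\Omega}[u])\mathbf{1}_{\R^n\setminus\overline{\Omega}}
\]
as the candidate. Integrability on $\Omega$ is immediate since $\dif u \in\hold(\overline{\Omega})\subset\lebe^\infty(\Omega)$ and $\Omega$ is bounded. For the exterior piece, I would repeat the computation carried out in the proof of Lemma~\ref{lemma:pointwise:solenoidal} \emph{without} invoking $\dif u=0$, arriving at a pointwise formula expressing $\dif E_{\Omega}[u](y)$ as a locally finite sum of terms $\phi_{i_{n+1}}\dif\phi_{i_n}\wedge\dots\wedge\dif\phi_{i_1}(y)\cdot \mathrm{II}(\widetilde I)$, where the scalar $\mathrm{II}(\widetilde I)$ is a $\mu^{\widetilde I}$-average over the $n$-simplex $\conv(\mathbf{x}_{\widetilde I})$ of $\dif u\cdot\nu(\mathbf{x}_{\widetilde I})$. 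For contributing $\widetilde I$, all side lengths $\ell(Q_{i_j})$ are comparable to $\ell(Q_{i_{n+1}})$, so by \ref{item:POU3} we have $|\dif\phi_{i_n}\wedge\dots\wedge\dif\phi_{i_1}|\lesssim \ell(Q_{i_{n+1}})^{-n}$, while $|\nu(\mathbf{x}_{\widetilde I})|\lesssim\ell(Q_{i_{n+1}})^n$ by \ref{A:2} and the size of the $\widetilde Q_{i_j}$'s; these powers cancel, and local finiteness \ref{item:Whitney2} yields the uniform bound $|\dif E_{\Omega}[u](y)|\lesssim \|\dif u\|_{\lebe^\infty(\Omega)}$. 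Together with the compact support furnished by Lemma~\ref{lem:compactsupport}, this gives $h\in\lebe^1(\R^n;\wedgeq^n(\R^n)^*)$.

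To verify that this $h$ represents the distributional exterior derivative, I would fix $\psi\in\ccinfty(\R^n)$ and split $\int_{\R^n}\mathscr{E}_{\Omega}[u]\wedge\dif\psi$ into the integrals over $\Omega$ and $\R^n\setminus\overline{\Omega}$. Since $u\in\hold^1(\overline{\Omega})$, Stokes' theorem applied on $\Omega$ is classical and produces the boundary contribution $(-1)^{n-1}\int_{\partial\Omega}i^*(u)\psi$. For the exterior, $E_{\Omega}[u]$ is smooth on $\R^n\setminus\overline{\Omega}$ by Lemma~\ref{lemma:pointwise:solenoidal} but need not extend smoothly up to $\partial\Omega$, so I would approximate by the outer parallel sets $A_\varepsilon:=\{x\in\R^n\colon \dist(x,\overline{\Omega})>\varepsilon\}$ and apply Stokes on each $A_\varepsilon$, producing a boundary contribution $-(-1)^{n-1}\int_{\partial A_\varepsilon}i^*(E_{\Omega}[u])\psi$ with sign reflecting the reversed orientation. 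The volume pieces converge to $\int_\Omega \dif u\cdot\psi$ and $\int_{\R^n\setminus\overline{\Omega}}\dif E_{\Omega}[u]\cdot\psi$ by the previous step and dominated convergence.

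The hard part will be establishing the trace-matching limit
\[
\lim_{\varepsilon\searrow 0}\int_{\partial A_\varepsilon}i^*(E_{\Omega}[u])\psi = \int_{\partial\Omega}i^*(u)\psi,
\]
which forces the cancellation of the two boundary contributions and hence produces an $\lebe^1$-representation rather than a distribution concentrated on $\partial\Omega$. My strategy here is to use the explicit formula for $E_{\Omega}[u]$: as $y\to x_0\in\partial\Omega$ from outside, the simplices $\conv(\mathbf{x}_I)$ appearing in $E_{\Omega}[u](y)$ shrink to $x_0$ at rate $\sim\dist(y,\partial\Omega)$, since their vertices lie in $\widetilde Q_{i_j}\subset\Psi(Q_{i_j})$ whose side lengths are, by \ref{A:2}, comparable to the Whitney cube containing $y$. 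Uniform continuity of $u$ on $\overline{\Omega}$ then lets me replace each inner average $\fint_{\conv(\mathbf{x}_I)}u\cdot\nu(\mathbf{x}_I)\dHaus^{n-1}$ by $u(x_0)\cdot\nu(\mathbf{x}_I)$ with an error that vanishes as $\varepsilon\to 0$; exploiting $\sum_i\phi_i\equiv 1$ and $\sum_i\dif\phi_i\equiv 0$ on $\R^n\setminus\overline{\Omega}$ via the same telescoping device as in the proof of Lemma~\ref{lemma:pointwise:solenoidal} should then collapse the weighted sum to the trace $u(x_0)$ modulo the modulus of continuity of $u$. This is precisely the step where the $\hold^1(\overline{\Omega})$-regularity enters essentially (through the existence of a continuous boundary trace), which explains why Lemma~\ref{lemma:globalsolenoidality} must be proved under this regularity assumption and why the raw $\lebe^1$-case has to be addressed by the density argument deferred to Section~\ref{sec:density}.
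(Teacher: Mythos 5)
Your construction of the candidate $h$ and the $\lebe^1$-estimate for its exterior part are correct: the cancellation $|\dif\phi_{i_n}\wedge\cdots\wedge\dif\phi_{i_1}|\lesssim\delta^{-n}$ against the $\mathscr{L}^n$-measure $\sim\delta^n$ of the $n$-simplex, together with the compact support from Lemma~\ref{lem:compactsupport}, does give $\dif E_{\Omega}[u]\in\lebe^\infty_c(\R^n\setminus\overline{\Omega})\subset\lebe^1$. The gap is in the final step. You assert the trace-matching limit
\[
\lim_{\varepsilon\searrow 0}\int_{\partial A_\varepsilon}i^*(E_{\Omega}[u])\,\psi = \int_{\partial\Omega}i^*(u)\,\psi,
\]
and claim it follows from uniform continuity of $u$ plus ``the same telescoping device as in Lemma~\ref{lemma:pointwise:solenoidal}.'' But that telescoping shows $\mathrm{II}(\widetilde I)=0$ when $\dif u=0$; it does not show that $E_{\Omega}[u]$ has a continuous normal trace equal to that of $u$. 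Once you freeze $u(z)\approx u(x_0)$ using uniform continuity, what remains is
\[
(-1)^{n-1}\sum_I\phi_{i_n}\wedge\dif\phi_{i_{n-1}}\wedge\cdots\wedge\dif\phi_{i_1}(y)\,\big(u(x_0)\,\nu(\overline{\mathbf{x}}_I)\big),
\]
and showing that its pullback to $\partial\Omega$ converges to $i^*(u(x_0))$ is a \emph{geometric}, not purely combinatorial, claim. Already for a constant form $c$ in $n=2$ the double sum collapses only to $\sum_{i_1}\dif\phi_{i_1}(y)\,c(\overline{x}_{i_1})$; its normal trace tends to $i^*(c)$ iff $\sum_{i_1}\dif\phi_{i_1}(y)\otimes\overline{x}_{i_1}\to\mathrm{Id}$ tangentially along $\partial\Omega$, i.e.\ iff the tangential derivative of the Jones-reflected identity map tends to the identity. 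That is plausible (and for a flat boundary it holds exactly), but it is a nontrivial property of the Whitney covers and the reflection $\Psi$, not furnished by $\sum\phi_i\equiv 1$ and $\sum\dif\phi_i\equiv 0$ alone, and you would have to prove it. As written, the proposal has a genuine hole precisely at the step that distinguishes an $\lebe^1$-representation from a distribution with a singular part on $\partial\Omega$.

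The paper sidesteps traces entirely. It McShane-extends $u$ to a compactly supported $v\in(\sobo^{1,1}\cap\sobo^{1,\infty})(\R^n)$, introduces the auxiliary function $\mathcal{T}$ (which replaces $u(z)$ in the inner integral by $v(z)-v(y)$), proves $\mathcal{T}\in\sobo^{1,1}_0(\R^n\setminus\overline\Omega)$ by an absolutely convergent series estimate exploiting the Lipschitz bound $|v(y)-v(z)|\lesssim\ell(Q_{i_n})$, and then handles the leftover term $\sum_I(\cdots)v(y)\nu(\overline{\mathbf{x}}_I)$ by an explicit inductive integration by parts in the bulk of $\R^n\setminus\overline{\Omega}$, powered by the algebraic identity for $\dif^*\nu_r$, that peels off one index at a time and terminates at $\int v\wedge\dif\psi$. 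This route proves the $\lebe^1$-representability without ever computing a boundary trace of $E_{\Omega}[u]$, which is why it works; your route would only work after you supply the missing geometric lemma about the tangential derivative of the reflected identity.
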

\begin{proof}
  The proof of this lemma is heavily inspired by  \cite[Lemma 4.7]{Schiffer} by the second named author. As $u\in\hold^{1}(\overline{\Omega};\wedgeq^{n-1}(\R^{n})^{*})$ is Lipschitz continuous up to the boundary, the classical \textsc{McShane} extension (cf. \textsc{Evans \& Gariepy} \cite[\S 3.1.1, Thm. 1]{EvansGariepy}) provides us with some  $v\in\sobo^{1,\infty}(\R^{n};\wedgeq^{n-1}(\R^{n})^{*})$ such that $v|_{\Omega}=u$ in $\Omega$. Cutting off $v$ far away from $\partial\Omega$ with a smooth cut-off function, it is then no loss of generality to assume that $v\in(\sobo^{1,1}\cap\sobo^{1,\infty})(\R^{n};\wedgeq^{n-1}(\R^{n})^{*})$. We note that $v$ does not necessarily satisfy $\dif v =0$.  
  
  We aim to show that there exists  $h\in\lebe^{1}(\R^{n})$ such that 
\begin{equation} \label{eq:steffenbaumgart}
\int_{\R^{n}\setminus\overline{\Omega}} \E_{\Omega} [u] \wedge \dif\psi \,\dif x = \int_{\R^{n}\setminus\overline{\Omega}} h \psi \dif x+ \int_{\R^{n}\setminus\overline{\Omega}} v \wedge \dif\psi  \,\dif x\qquad \text{for all}\;\psi\in\hold_{c}^{\infty}(\R^{n}).
\end{equation}
Using that $\dif v \in \lebe^{1}(\R^{n};\wedgeq^{n}(\R^{n})^{*})$, one then infers that $\dif \mathscr{E}_{\Omega}[u] \in \lebe^1(\R^{n};\wedgeq^{n}(\R^{n})^{*})$ as follows: First using that $\mathscr{E}_{\Omega}[u]|_{\Omega}=v|_{\Omega}=u$ in $\Omega$ and then employing \eqref{eq:steffenbaumgart}, we find 
\begin{align*}
\int_{\R^{n}}\mathscr{E}_{\Omega}[u]\wedge \dif\psi \,\dif x& = \int_{\R^{n}\setminus\overline{\Omega}}\mathscr{E}_{\Omega}[u]\wedge\dif\psi \,\dif x+ \int_{\Omega}v\wedge\dif\psi \,\dif x\stackrel{\eqref{eq:steffenbaumgart}}{=}  \int_{\R^{n}\setminus\overline{\Omega}}h\psi\dif x + \int_{\R^{n}}v\wedge\dif\psi\dif x.
\end{align*}
Using that $v\in(\sobo^{1,1}\cap\sobo^{1,\infty})(\R^{n};\wedgeq^{n-1}(\R^{n})^{*})$, an integration by parts in the last term and recalling that $h\in\lebe^{1}(\R^{n})$ then yields that $\dif \mathscr{E}_{\Omega}[u] \in \lebe^1(\R^{n};\wedgeq^{n}(\R^{n})^{*})$ as claimed. In order to establish \eqref{eq:steffenbaumgart}, we split the proof into two  steps.

\emph{Step 1: $\sobo_{0}^{1,1}(\R^{n}\setminus\overline{\Omega};\wedgeq^{n-1}(\R^{n})^{*})$-bounds for an auxiliary function.}
With the Lipschitz function $v$ introduced above, we  claim that the auxiliary function defined by 
\begin{align}\label{eq:thomastuchelciao}
\begin{split}
\mathcal{T} \colon y & \mapsto \sum_{I\in\mathbb{N}^{n}}\mathcal{T}_{I}(y) := \sum_{I\in\mathbb{N}^{n}}\phi_{i_n} \wedge \dif \phi_{i_{n-1}} \wedge \dots \wedge \dif \phi_{i_1}(y) T_{I}(y)\\ & := \sum_{I\in\mathbb{N}^{n}} \Big(\phi_{i_n} \wedge \dif \phi_{i_{n-1}} \wedge \dots \wedge \dif \phi_{i_1}(y)\Big. \\ & \;\;\;\;\;\;\;\;\;\;\;\;\;\;\;\;\;\;\;\;\;\;\;\;\;\;\;\;\;\;\;\;\;\Big. \cdot\int_{(\R^{n})^{n}} \fint_{\conv(\mathbf{x}_{I})} (v(z) - v(y)) \nu(\mathbf{x}_{I}) \dHaus^{n-1}(z) ~\textup{d}\mu^{I}(\mathbf{x}_{I})\Big) 
\end{split}
\end{align}
belongs to $\sobo_{0}^{1,1}(\R^{n}\setminus\overline{\Omega};\wedgeq^{n-1}(\R^{n})^{*})$, and recall that $v|_{\Omega}=u$ on $\Omega$. To this end, we establish that the series in \eqref{eq:thomastuchelciao}  is absolutely convergent in $(\sobo_{0}^{1,1}(\R^{n}\setminus\overline{\Omega};\wedgeq^{n-1}(\R^{n})^{*}),\|\cdot\|_{\sobo^{1,1}(\R^{n}\setminus\overline{\Omega})})$. In view of this aim, we note the following: 
\begin{enumerate} 
\item\label{item:dangerzone1} If $y\in\R^{n}\setminus\overline{\Omega}$ satisfies $\phi_{i_n} \wedge \dif \phi_{i_{n-1}} \wedge \dots \wedge \dif \phi_{i_1}(y)\neq 0$, then we have for all $\mathbf{x}_{I}=(x_{i_{1}},...,x_{i_{n}})\in \spt(\mu^{i_{1}})\times...\times\spt(\mu^{i_{n}})$ the inequality
\begin{align*}
\tfrac{1}{c}\ell(Q_{i_{n}})\leq |z-y|\leq c\,\ell(Q_{i_{n}})\qquad\text{for all}\;z\in \conv(\mathbf{x}_{I})
\end{align*}
with a universal constant $c>0$, cf. \ref{item:Jonesrefined4} from Lemma \ref{cor:Jones}. For such $y$ and $z$, we consequently have 
\begin{align}\label{eq:Lippo}
|v(y)-v(z)|\leq c\,\ell(Q_{i_{n}})\|\!\D v\|_{\lebe^{\infty}(\R^{n})}.
\end{align}
\item\label{item:dangerzone3} If $y\in\R^{n}\setminus\overline{\Omega}$ and $\mathbf{x}_{I}$ are as in \ref{item:dangerzone1}, we have with a universal constant $c>0$ that 
\begin{align*}
|\nu(\mathbf{x}_{I})|\leq c\ell(Q_{i_{n}})^{n-1}.
\end{align*}
\item\label{item:dangerzone2} As in Lemma \ref{lem:compactsupport}, we see that $\mathcal{T}$ is compactly supported and that $\spt(\mathcal{T})$ is contained in a ball $\ball_{R}(0)$ only depending on $\Omega$, specifically $\diameter(\Omega)$ and $\Lip(\Omega,\mathcal{O})$.

\end{enumerate}
Recalling the neighbouring cube notation from \eqref{eq:peaceintheneighbourhood} and that $\mu^{I}$ is a probability measure on $(\R^{n})^{n}$, \ref{item:dangerzone1}--\ref{item:dangerzone3} and \ref{item:POU3} imply that 
\begin{align}\label{eq:thirtydays}
\begin{split}
|\mathcal{T}_{I}(y)| & \leq c\,\frac{1}{\ell(Q_{i_{n}})^{n-1}}\mathbbm{1}_{\bigcup\{Q\colon\;Q\in\mathcal{N}(i_{n})\}}(y)\ell(Q_{i_{n}})^{{n}}\|\!\D v\|_{\lebe^{\infty}(\R^{n})}\\ & \leq c\,{\ell(Q_{i_{n}})}\mathbbm{1}_{\bigcup\{Q\colon\;Q\in\mathcal{N}(i_{n})\}}(y)\|\!\D v\|_{\lebe^{\infty}(\R^{n})}.
\end{split}
\end{align}
Similarly, for the estimation of the $\lebe^{1}$-norm of the first order derivatives, we employ the Leibniz rule to find by use of \ref{item:dangerzone1}--\ref{item:dangerzone3} and \ref{item:POU3}:
\begin{align}\label{eq:manonthemoon}
\begin{split}
|\D \mathcal{T}_{I}(y)| & = \big\vert\!\D\,(\phi_{i_n} \wedge \dif \phi_{i_{n-1}} \wedge \dots \wedge \dif \phi_{i_1}(y))\big\vert\,|T_{I}(y)| \\ 
& + \big\vert (\phi_{i_n} \wedge \dif \phi_{i_{n-1}} \wedge \dots \wedge \dif \phi_{i_1}(y))\big\vert\,|\!\D T_{I}(y)|\\ 
& \leq c\,\frac{1}{\ell(Q_{i_{n}})^{n}}\mathbbm{1}_{\bigcup\{Q\colon\;Q\in\mathcal{N}(i_{n})\}}(y)\ell(Q_{i_{n}})^{n}\|\!\D v\|_{\lebe^{\infty}(\R^{n})} \\ 
& + c\,\frac{1}{\ell(Q_{i_{n}})^{n-1}}\mathbbm{1}_{\bigcup\{Q\colon\;Q\in\mathcal{N}(i_{n})\}}(y)\ell(Q_{i_{n}})^{n-1}\|\!\D v\|_{\lebe^{\infty}(\R^{n})} \\ 
& \leq c \mathbbm{1}_{\bigcup\{Q\colon\;Q\in\mathcal{N}(i_{n})\}}(y) \|\!\D v\|_{\lebe^{\infty}(\R^{n})}.
\end{split}
\end{align}
By \ref{item:Whitney2}, for each $i_{n}\in\mathbb{N}$, $\mathcal{N}(i_{n})$ has cardinality uniformly bounded by some $\mathtt{N}=\mathtt{N}(n)\in\mathbb{N}$. 
Combining the pointwise estimates \eqref{eq:thirtydays},  \eqref{eq:manonthemoon} and recalling the compact support of $\mathcal{T}_{I}$ as asserted in \ref{item:dangerzone2}, we then arrive at 
\begin{align*}
\sum_{I=(i_{1},...,i_{n})\in\mathbb{N}^{n}}\|\mathcal{T}_{I}\|_{\sobo^{1,1}(\R^{n}\setminus\overline{\Omega})} & = \sum_{I=(i_{1},...,i_{n})\in\mathbb{N}^{n}}\|\mathcal{T}_{I}\|_{\sobo^{1,1}(\ball_{R}(0)\setminus\overline{\Omega})}\\ & \!\!\!\!\!\!\!\! = \sum_{i_{n}\in\mathbb{N}}\sum_{\substack{(i_{1},...,i_{n-1})\in\mathbb{N}^{n-1}\\ \frac{7}{6}Q_{i_{1}},...,\frac{7}{6}Q_{i_{n-1}}\in\mathcal{N}(i_{n})}}\|\mathcal{T}_{I}\|_{\sobo^{1,1}(\ball_{R}(0)\setminus\overline{\Omega})}\\ 
& \!\!\!\!\!\!\!\!\!\!\!\!\!\!\!\!\!\! \stackrel{\eqref{eq:thirtydays},\,\eqref{eq:manonthemoon}}{\leq} c(1+R)\Big(\sum_{i_{n}\in\mathbb{N}} \Big\vert\!\Big\vert\mathbbm{1}_{\bigcup\{Q\colon\;Q\in\mathcal{N}(i_{n})\}}\Big\vert\!\Big\vert_{\lebe^{1}(\ball_{R}(0)\setminus\overline{\Omega})} \Big)\|\!\D v\|_{\lebe^{\infty}(\R^{n})}\\ 
& \!\!\!\!\!\!\!\!\!\leq c(1+R)R^{n} \|\!\D v\|_{\lebe^{\infty}(\R^{n})}. 
\end{align*}
Since we have $\mathcal{T}_{I}\in\sobo_{0}^{1,1}(\R^{n}\setminus\overline{\Omega};\wedgeq^{n-1}(\R^{n})^{*})$ for each $I=(i_{1},...,i_{n})\in\mathbb{N}^{n}$, we then conclude that the series defining $\mathcal{T}$ converges absolutely in $(\sobo_{0}^{1,1}(\R^{n}\setminus\overline{\Omega};\wedgeq^{n-1}(\R^{n})^{*}),\|\cdot\|_{\sobo^{1,1}(\R^{n}\setminus\overline{\Omega})})$ as claimed. 

Denoting by $\overline{x}_i$ the center of the inner cube $Q^i$, we may use the linearity of the normal to write
 \[
 \int_{(\R^n)^{n}}
 \nu(\mathbf{x}_{I}) \dif\mu^I(\mathbf{x}_I) = \nu(\overline{\mathbf{x}}_I),
 \]
where $\overline{\mathbf{x}}_I=(\overline{x}_{i_1},...,\overline{x}_{i_n})$. Now, because $\mathcal{T}\in\sobo_{0}^{1,1}(\R^{n}\setminus\overline{\Omega};\wedgeq^{n-1}(\R^{n})^{*})$ and $v|_{\Omega}=u$, we have that 
\begin{align*}
\int_{\R^{n}\setminus\overline{\Omega}} \mathscr{E}_{\Omega}[u]\wedge \dif\psi {\,\dif y}& = {(-1)^{n-1}}\int_{\R^{n}\setminus\overline{\Omega}}\mathcal{T}\wedge\dif\psi(y) {\,\dif y}\\ 
& + {(-1)^{n-1}}\int_{\R^{n}\setminus\overline{\Omega}}\sum_{I\in\mathbb{N}^{n}} (\varphi_{i_{n}}\wedge...\wedge\dif\varphi_{i_{1}}(y))(v(y)\nu(\overline{\mathbf{x}}_{I}))\wedge\dif\psi(y) {\,\dif y}\\ 
& ={(-1)^{n-1}}\int_{\R^{n}\setminus\overline{\Omega}} h_{0}(y)\psi(y)\dif y \\ 
& + {(-1)^{n-1}}\int_{\R^{n}\setminus\overline{\Omega}}\sum_{I\in\mathbb{N}^{n}} (\varphi_{i_{n}}\wedge...\wedge\dif\varphi_{i_{1}}(y))(v(y)\nu(\overline{\mathbf{x}}_{I}))\wedge\dif\psi(y){\,\dif y}. 
\end{align*}
for the function $h_0 = -\dif\mathcal{T} \in \lebe^1(\R^n;\wedgeq^{n}(\R^{n})^{*})$.

\emph{Step 2: Inductive integration by parts.} To conclude the proof, we claim that there exists a function $h_1 \in \lebe^1(\R^n;\wedgeq^{n}((\R^n)^{\ast})$ such that 
\begin{align} \label{claim:sb}
\begin{split}
(-1)^{n-1}\int_{\R^{n}\setminus\overline{\Omega}} &\sum_{I\in\mathbb{N}^{n}} \phi_{i_n} \wedge \dif \phi_{i_{n-1}} \wedge \dots \wedge \dif \phi_{i_1} (v(y) \nu(\overline{\mathbf{x}}_I)) \wedge \dif\psi {\,\dif y}\\ & = \int_{\R^{n}\setminus\overline{\Omega}} h_1 \psi  + v \wedge \dif\psi {\,\dif y}.
\end{split}
\end{align}
We prove \eqref{claim:sb} by an inductive argument. Namely, we establish that for any $r\in\{2,...,n\}$ there exists an $\lebe^1$-function $h_r$ such that
\begin{equation}\label{claim:sb2}\tag{$\mathrm{D}_{r}$}
\begin{split}
&\int_{\R^{n}\setminus\overline{\Omega}} \sum_{I=(i_1,...,i_r)\in\mathbb{N}^{r}} \Big(\phi_{i_r} \wedge \dif \phi_{i_{r-1}} \wedge \dots \wedge \dif \phi_{i_1} (v(y) \nu(\overline{\mathbf{x}}_I)) \wedge \dif \psi {\,\dif y}= \int_{\R^{n}\setminus\overline{\Omega}} h_r \psi {\,\dif y}
\\
& - \frac{(n+1)-r}{r-1}\int_{\R^{n}\setminus\overline{\Omega}} \sum_{\widetilde{I}=(i_1,...,i_{r-1})\in\mathbb{N}^{r-1}} \Big(\phi_{i_{r-1}} \wedge \dif \phi_{i_{r-2}} \wedge \dots \wedge \dif \phi_{i_1} (v(y) \nu(\overline{\mathbf{x}}_{\widetilde{I}}))\Big) \wedge \dif\psi {\,\dif y}.
\end{split}
\end{equation}
where $\nu$ is given as in \eqref{eq:normaldefine} and we tacitly define the normal $\nu(x) :=1$ in dimension zero. Once \eqref{claim:sb2} is established for all $r\in\{2,...,n\}$, it suffices to realise that, for $r=n$ the left-hand side of $(\mathrm{D}_{n})$ equals the left-hand side of \eqref{claim:sb} up to a prefactor of $(-1)^{n-1}$. Turning to right-hand side of $(\mathrm{D}_{n})$, we then keep the integral containing $h_{n}$ and apply $(\mathrm{D}_{n-1})$ to the second integral appearing on the right-hand side of $(\mathrm{D}_{n})$. Inductively proceeding in this way, we then conclude \eqref{claim:sb}: Indeed, the inductive appearance of the integrals containing $h_{r}$ yields the integral containing $h$ in \eqref{claim:sb}, and the corresponding prefactors in front of the second integrals on the right-hand sides of \eqref{claim:sb2} lead to the term 
\begin{align*}
    \prod_{r=2}^{n} \Big(-\frac{(n+1)-r}{r-1}\Big) = (-1)^{n-1} \frac{(n-1)!}{(n-1)!} = (-1)^{(n-1)}, 
\end{align*}
which is precisely the prefactor appearing in \eqref{claim:sb}.

It remains to show \eqref{claim:sb2} for $r\in\{2,...,n\}$. Given $I=(i_{1},...,i_{r})\in\mathbb{N}^{r}$ and $x_{i_{1}},...,x_{i_{r}}\in\R^{n}$, we have for any $y\in\R^{n}$
\[
\nu(\mathbf{\overline{x}}_I) := \int_{(\R^{n})^{r}}\nu(\mathbf{x}_{I})\dif\mu^{I}(\mathbf{x}_{I}) = \nu_{1}^{{y}}(\mathbf{\bar{x}}_I) + ... + \nu_{r}^{{y}}(\mathbf{\bar{x}}_I),
\]
where, for $1 \leq s \leq r$, 
\[
\nu_{s}^{{y}}(\mathbf{\bar{x}}_I) := \nu(\bar{x}_{i_1},...,\bar{x}_{i_{s-1}},y,\bar{x}_{i_{s+1}},...,\bar{x}_{i_r}).
\]
We also write for $\widetilde{I}=(i_1,...,i_{r-1})\in\mathbb{N}^{r-1}$ 
\[
\nu_{r}^{y}(\mathbf{\overline{x}}_{\widetilde{I}}) := \nu(\overline{x}_{i_1},...,\overline{x}_{i_{r-1}},y).
\]
By definition,  $\nu_{s}^{y}$ \emph{does not} depend on the specific choice of the value $\overline{x}_{i_s}$. Using that $(\phi_{i_s})_{i_{s}\in\mathbb{N}}$ is a partition of unity on $\R^{n}\setminus\overline{\Omega}$ and therefore $\sum_{i_s \in \N} \dif\phi_{i_s} =0$ on $\R^{n}\setminus\overline{\Omega}$, we obtain
\begin{align*}
\int_{\R^{n}\setminus\overline{\Omega}} &\sum_{I=(i_1,...,i_r)\in\mathbb{N}^{r}} \phi_{i_r} \wedge \dif \phi_{i_{r-1}} \wedge \dots \wedge \dif \phi_{i_1} (v(y) \nu(\overline{\mathbf{x}}_I)) \wedge \dif\psi {\,\dif y}
\\
&= \int_{\R^{n}\setminus\overline{\Omega}} \sum_{I=(i_1,...,i_r)\in\mathbb{N}^{r}} \phi_{i_r} \wedge \dif \phi_{i_{r-1}} \wedge \dots \wedge \dif \phi_{i_1} (v(y) \nu_{r}^{{y}}(\overline{\mathbf{x}}_{\widetilde{I}})) \wedge \dif\psi {\,\dif y}\\
&=  \int_{\R^{n}\setminus\overline{\Omega}} \sum_{\tilde{I}=(i_1,...,i_{r-1})\in\mathbb{N}^{r-1}} \dif \phi_{i_{r-1}} \wedge \dots \wedge \dif \phi_{i_1} (v(y)\nu_{r}^{{y}}(\bar{\mathbf{x}}_{\widetilde{I}})) \wedge \dif\psi {\,\dif y}.
\end{align*}
An integration by parts then yields
\begin{equation}
 \label{timoschultz}
 \begin{split}
\int_{\R^{n}\setminus\overline{\Omega}} &\sum_{\widetilde{I}=(i_1,...,i_{r-1})\in\mathbb{N}^{r-1}} \dif \phi_{i_{r-1}} \wedge \dots \wedge \dif \phi_{i_1} (v(y) \nu_{r}^{y} (\overline{\mathbf{x}}_{\widetilde{I}})) \wedge \dif \psi {\,\dif y}\\
 &= - \int_{\R^{n}\setminus\overline{\Omega}} \sum_{\widetilde{I}=(i_1,...,i_{r-1})\in\mathbb{N}^{r-1}} \phi_{i_{r-1}} \wedge \dots \wedge \dif \phi_{i_1} (\dif v(y) \nu_{r}^{y}(\overline{\mathbf{x}}_{\widetilde{I}})) \wedge \dif \psi {\,\dif y}\\
 & - \int_{\R^{n}\setminus\overline{\Omega}} \sum_{\widetilde{I}=(i_1,...,i_{r-1})\in\mathbb{N}^{r-1}} \phi_{i_{r-1}} \wedge \dots \wedge \dif \phi_{i_1} (v(y) \dif^{\ast}\nu_{r}^{y}(\overline{\mathbf{x}}_{\widetilde{I}})) \wedge \dif\psi {\,\dif y}.
\end{split}
\end{equation}
Again, one can argue that the first summand converges absolutely in $\sobo^{1,1}$, as 
\begin{equation} \label{w11}
\vert  \phi_{i_{r-1}} \wedge \dots \wedge \dif \phi_{i_1} \vert \leq C \delta^{r-2}, \quad \vert \nu_r(\mathbf{\bar{x}}_{\tilde{I}}) \vert \leq C \delta^{r-1},
\end{equation}
and we may prove that the derivative of each summand
\[
\D\,( \phi_{i_{r-1}} \wedge \dots \wedge \dif \phi_{i_1} (\dif v(y) \nu_r(\bar{\mathbf{x}}_{\tilde{I}}))
\]
is uniformly bounded in $\lebe^{\infty}$, i.e. the sum converges in $\lebe^1$ (the $\lebe^1$ norm of each summand is bounded by the the $\lebe^{\infty}$-norm times the volume of $Q_{i_1}$).
One the other hand, (for example by a coordinatewise calculation\footnote{
For instance, assuming $v= \dif x_1 \wedge ... \wedge \dif x_{n-1}$ and $\nu = (y-x_{i_r}) \wedge e_{s} \wedge .... \wedge e_{n-1}$ one may calculate that $u d^{\ast} \nu = (s-1) \dif x_1 \wedge ... \wedge \dif x_{s-1}$.
}) we have
\begin{align*}
    v(y) \dif^{\ast} \nu_r(\mathbf{\overline{x}}_{\tilde{I}}) 
    &= \frac{(n+1-r)}{(r-1)!} v(y) \left((x_{i_{r-1}} - x_{i_{r-2}}) \wedge \ldots \wedge (x_{i_{2}}-x_{i_{1}}))\right)  \\
    &=\frac{(n+1-r)}{(r-1)} v(y) \nu_{r-1}(\mathbf{\overline{x}}_{\tilde{I}})
\end{align*}
Plugging this into the previous formula yields \eqref{claim:sb2}.
\end{proof}
Combining the previous two lemmas yields the desired solenoidality:
\begin{corollary} \label{coro:divfree}
    Let $u \in \hold^{1}(\overline{\Omega};\wedgeq^{n-1}(\R^{n})^{*})$ satisfy $\dif u=0$ in $\Omega$. Then we have $\dif \mathscr{E}_{\Omega}[u]=0$ in $\mathscr{D}'(\R^n;\wedgeq^{n}(\R^{n})^{*})$.
\end{corollary}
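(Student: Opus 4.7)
The plan is to combine the two preceding lemmas, \ref{lemma:pointwise:solenoidal} and \ref{lemma:globalsolenoidality}, which together already pin down the distributional exterior derivative $\dif\mathscr{E}_{\Omega}[u]$ on the interior $\Omega$, on the exterior $\R^{n}\setminus\overline{\Omega}$, and globally as a regular $\lebe^{1}$-distribution. With this in hand, the conclusion follows by arguing that the $\lebe^{1}$-representative must vanish almost everywhere.

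More concretely, I would first invoke Lemma \ref{lemma:globalsolenoidality}, which yields an element $h \in \lebe^{1}(\R^{n};\wedgeq^{n}(\R^{n})^{*})$ representing the distribution $\dif\mathscr{E}_{\Omega}[u]$, i.e., for all $\psi \in \hold_{c}^{\infty}(\R^{n})$,
\begin{equation*}
    \int_{\R^{n}} \mathscr{E}_{\Omega}[u] \wedge \dif\psi \,\dif x = \int_{\R^{n}} h\,\psi \,\dif x.
\end{equation*}
It then suffices to show $h = 0$ $\mathscr{L}^{n}$-almost everywhere. For this I would split $\R^{n}$ into the disjoint pieces $\Omega$, $\partial\Omega$ and $\R^{n}\setminus\overline{\Omega}$.

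On $\Omega$, the definition \eqref{def:Eomegscr} gives $\mathscr{E}_{\Omega}[u] = u$, so the distributional exterior derivative of $\mathscr{E}_{\Omega}[u]$ restricted to $\Omega$ equals $\dif u$, which vanishes pointwise (and hence $h=0$ $\mathscr{L}^{n}$-a.e. on $\Omega$). On $\R^{n}\setminus\overline{\Omega}$, Lemma \ref{lemma:pointwise:solenoidal} provides that $\mathscr{E}_{\Omega}[u]\in\hold^{\infty}(\R^{n}\setminus\overline{\Omega};\wedgeq^{n-1}(\R^{n})^{*})$ with $\dif\mathscr{E}_{\Omega}[u]=0$ pointwise there; since the distributional derivative of a smooth function agrees with its classical derivative on open sets, one concludes $h=0$ $\mathscr{L}^{n}$-a.e. on $\R^{n}\setminus\overline{\Omega}$ as well. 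Finally, because $\Omega$ is a bounded convex (in particular Lipschitz) domain, $\mathscr{L}^{n}(\partial\Omega) = 0$, so the two pieces exhaust $\R^{n}$ up to a null set. Consequently $h=0$ $\mathscr{L}^{n}$-a.e., which is exactly $\dif\mathscr{E}_{\Omega}[u]=0$ in $\mathscr{D}'(\R^{n};\wedgeq^{n}(\R^{n})^{*})$.

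The argument is essentially bookkeeping once Lemmas \ref{lemma:pointwise:solenoidal} and \ref{lemma:globalsolenoidality} are in place, and the only mildly delicate point is the rationale for discarding the $\mathscr{L}^{n}$-null set $\partial\Omega$: it is harmless here precisely because $\dif\mathscr{E}_{\Omega}[u]$ has already been upgraded from a generic distribution to an $\lebe^{1}$-function by Lemma \ref{lemma:globalsolenoidality}, so no spurious boundary mass (e.g.\ a surface measure on $\partial\Omega$) can survive.
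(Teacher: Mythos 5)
Your proof is correct and follows essentially the same route as the paper: combine Lemma~\ref{lemma:pointwise:solenoidal} (pointwise vanishing of $\dif\mathscr{E}_{\Omega}[u]$ on $\R^{n}\setminus\overline{\Omega}$) with Lemma~\ref{lemma:globalsolenoidality} (the distributional exterior derivative is an $\lebe^{1}$-function, so no surface mass on $\partial\Omega$ can arise), and use $\mathscr{L}^{n}(\partial\Omega)=0$. The only difference is that you spell out the trivial observation that $h=0$ a.e.\ on $\Omega$ since $\mathscr{E}_{\Omega}[u]|_{\Omega}=u$ and $\dif u=0$, which the paper leaves implicit.
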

\begin{proof}
By Lemma \ref{lemma:pointwise:solenoidal}, we have $\dif\mathscr{E}_{\Omega}[u]=0$ pointwisely in $\R^{n}\setminus\overline{\Omega}$.  Moreover, by Lemma \ref{lemma:globalsolenoidality}, the distributional expression  $\dif\mathscr{E}_{\Omega}[u]$ can be represented by an $\lebe^{1}(\R^{n};\wedgeq^{n}(\R^{n})^{*})$-map. Since  the $\mathscr{L}^{n}$-measure of the boundary $\partial \Omega$ is zero (for $\Omega$ is convex and thus has Lipschitz boundary), we obtain $\dif\mathscr{E}_{\Omega}[u]=0$ in $\mathscr{D}'(\R^{n};\wedgeq^{n}(\R^{n})^{*})$ as claimed.
\end{proof}
The last step is to show a uniform $\lebe^1$-bound on the extension $\mathscr{E}_{\Omega}[u]$. For this, we require a preparatory lemma as follows: 
\begin{lemma} \label{lem:convhullL1bound}
There is a constant $c>0$ depending on the dimension $n$ and $\Lip(\Omega,\mathcal{O})$, such that the following holds: If $I=(i_1,\ldots,i_n) \in \mathbb{N}^n$ is such that ${\frac{7}{6}}Q_{i_1},\ldots,{\frac{7}{6}}Q_{i_n}$ intersect, then for any $u\in\hold^{1}(\overline{\Omega};\wedgeq^{n-1}(\R^{n})^{*})$ there holds 
\begin{align}\label{eq:convhullbound1}
\left\vert \int_{(\R^{n})^{n}} \fint_{\conv(\mathbf{x}_{I})} u(z) \nu(\mathbf{x}_{I}) \dHaus^{n-1}(z) ~\textup{d}\mu^I(\mathbf{x}_{I}) \right\vert \leq \frac{c}{\delta_{I}}\int_{\conv_{I}}|u(x)|\dif x, 
\end{align}
where $\conv_{I}:=\conv( \tfrac{1}{2}Q^{i_{1}},..., \tfrac{1}{2}Q^{i_{n}})$ and
\begin{align}\label{eq:deltadef}
\delta_{I}:=\min\{\ell(Q_{i_{j}})\colon\;j\in\{1,...,n\}\}.
\end{align}
\end{lemma}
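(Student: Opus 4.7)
The first step is to reduce the differential-form pairing to a scalar integral. By \eqref{eq:normaldefine} we have $|\nu(\mathbf{x}_I)| = \mathscr{H}^{n-1}(\conv(\mathbf{x}_I))$, so the duality pairing satisfies
\[
\left| \fint_{\conv(\mathbf{x}_I)} u(z) \nu(\mathbf{x}_I) \dHaus^{n-1}(z) \right| \leq \int_{\conv(\mathbf{x}_I)} |u(z)| \dHaus^{n-1}(z),
\]
the bound being trivial on degenerate simplices, where both sides vanish. It thus suffices to estimate
\[
\mathcal{I} := \int_{(\R^n)^n}\!\! \int_{\conv(\mathbf{x}_I)} |u(z)| \dHaus^{n-1}(z)~\textup{d}\mu^I(\mathbf{x}_I)
\]
by $(c/\delta_I)\int_{\conv_I}|u|$. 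The geometric input I will use is that, since $\tfrac{7}{6}Q_{i_1},\dots,\tfrac{7}{6}Q_{i_n}$ all intersect, property \ref{item:Whitney3} of the Whitney cover together with \ref{A:2}--\ref{A:3} from Lemma \ref{cor:Jones} imply $\ell(Q_{i_j}) \sim \ell(Q^{i_j}) \sim \delta_I$ uniformly in $j$, and that the inner cubes $\tfrac12 Q^{i_1},\dots,\tfrac12 Q^{i_n}$ all lie in a ball of radius $c\delta_I$, with constants depending only on $n$ and $\Lip(\Omega,\mathcal{O})$. In particular, for any $\mathbf{x}_I$ in the support of $\mu^I$ we have $\max_{j,k}|x_{i_j}-x_{i_k}|\leq c\delta_I$ and hence $\mathscr{H}^{n-1}(\conv(\mathbf{x}_I))\leq c\delta_I^{n-1}$.

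The second step is a barycentric parametrization followed by a change of variables in a single coordinate. Setting $t_1 := 1-t_2-\dots-t_n$ for $t'=(t_2,\dots,t_n)\in\mathbb{D}^{n-1}$, the standard area formula yields
\[
\int_{\conv(\mathbf{x}_I)} |u(z)|\dHaus^{n-1}(z) = (n-1)!\,\mathscr{H}^{n-1}(\conv(\mathbf{x}_I))\int_{\mathbb{D}^{n-1}}\Bigl|u\Bigl(\textstyle\sum_{j=1}^n t_j x_{i_j}\Bigr)\Bigr| ~\textup{d}t'.
\]
Inserting this, swapping the order of integration via Fubini, and using $\mathscr{H}^{n-1}(\conv(\mathbf{x}_I))\leq c\delta_I^{n-1}$, matters reduce to controlling $\int_{(\R^n)^n}|u(\sum_j t_j x_{i_j})|~\textup{d}\mu^I(\mathbf{x}_I)$ uniformly in $t'$. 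Since $\sum_j t_j=1$ with $t_j\geq 0$, there is an index $j^*=j^*(t)$ with $t_{j^*}\geq 1/n$; fixing the other $x_{i_j}$'s and carrying out the affine change of variables $y=\sum_j t_j x_{i_j}$ in the single variable $x_{i_{j^*}}$ produces a Jacobian $t_{j^*}^n\geq n^{-n}$ and an image contained in $\conv_I$. Combined with $|\tfrac12 Q^{i_{j^*}}|\sim \delta_I^n$, this gives
\[
\int_{\R^n}\Bigl|u\Bigl(\textstyle\sum_j t_j x_{i_j}\Bigr)\Bigr|~\textup{d}\mu^{i_{j^*}}(x_{i_{j^*}}) \leq \frac{c}{\delta_I^n}\int_{\conv_I}|u(y)| \dy.
\]
Integration against the remaining probability measures $\mu^{i_j}$ ($j \ne j^*$) and over $t' \in \mathbb{D}^{n-1}$ (of finite volume) contributes only a bounded factor, so combining with the prefactor $\delta_I^{n-1}$ yields $\mathcal{I} \leq (c/\delta_I)\int_{\conv_I}|u|$, which is \eqref{eq:convhullbound1}.

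The main obstacle is the precise bookkeeping of constants: the $(n-1)$ powers of $\delta_I$ produced by the area of the simplex $\conv(\mathbf{x}_I)$ must cancel exactly against the single power lost in the Jacobian of the affine change of variables, leaving the claimed $\delta_I^{-1}$ and no worse. The pigeonhole observation $\max_j t_j \geq 1/n$ is what decouples the estimate from the explicit dependence on $t$, and the uniform comparability of the cube sizes furnished by Lemma \ref{cor:Jones} is essential in order that a single scale $\delta_I$ controls all geometric quantities arising in the argument.
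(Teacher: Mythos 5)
Your proof is correct and takes essentially the same approach as the paper's: dominate the pairing by $|\nu(\mathbf{x}_I)|\,|u|$, parametrize the simplex barycentrically, and then for each $t$ pick the coordinate with $t_{j^*}\geq 1/n$ and push the single change of variables $y=\sum_j t_j x_{i_j}$ through the measure $\mu^{i_{j^*}}$, losing only a power $\delta_I^{-n}$ that is compensated by $\mathscr{H}^{n-1}(\conv(\mathbf{x}_I))\lesssim\delta_I^{n-1}$. The paper implements exactly this idea, only phrasing the pigeonhole step as a partition of the barycentric simplex into the pieces $\widetilde{\mathbb{D}}^{n-1}_k=\{t_k\geq 1/n\}$ and summing over $k$, which is the same selection of $j^*$ in different packaging.
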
 
\begin{proof}
Since $\frac{7}{6}Q_{i_{1}}\cap...\cap\frac{7}{6}Q_{i_{n}}\neq\emptyset$, \ref{item:Whitney3} implies that the sidelenghts of all cubes $Q_{i_{j}}$, $j\in\{1,...,n\}$, are uniformly comparable. If $\mathbf{x}_{I}$ belongs to the support of $\mu^{I}$, their distance is bouned by $c \delta_I$ and thus 
\begin{align}\label{eq:leroysane}
\mathscr{H}^{n-1}(\conv(\mathbf{x}_{I}))\leq c\delta_{I}^{n-1}. 
\end{align}
Since $\mathscr{L}^{n}(\conv_{I})$ moreover is uniformly proportional to $\delta_{I}^{n}$, inequality \eqref{eq:convhullbound1} is seen to scale in the right way. We now rewrite the inner integral  on the left-hand side of \eqref{eq:convhullbound1} as follows: We put \begin{align*}
\widetilde{\mathbb{D}}^{n-1} := \{ t=(t_{1},...,t_{n})\in [0,1]^n \colon\;t_{1}+...+t_{n}=1 \}
\end{align*}
and then employ the change of variables $\Phi_{\mathbf{x}_{I}}\colon \widetilde{\mathbb{D}}^{n-1}\ni (t_{1},...,t_{n})\mapsto t_{1}x_{i_{1}}+...+t_{n}x_{i_{n}}\in\mathrm{conv}(\mathbf{x}_{I})$ to obtain 
 \begin{equation} \label{eq:trafo}
 \begin{split}
 \dashint_{\conv(\mathbf{x}_{I})} \vert u(z) \nu(\mathbf{x}_{I}) \vert \dHaus^{n-1}(z) & = \mathscr{H}^{n-1}(\mathrm{conv}(\mathbf{x}_{I}))\int_{\widetilde{\mathbb{D}}^{n-1}} |u(\Phi_{\mathbf{x}_{I}}(t))| \dHaus^{n-1}(t) \\ 
 & \!\!\!\!\stackrel{\eqref{eq:leroysane}}{\leq} c\delta_{I}^{n-1}\int_{\widetilde{\mathbb{D}}^{n-1}} |u(\Phi_{\mathbf{x}_{I}}(t))| \dHaus^{n-1}(t).
 \end{split}
\end{equation}
Now consider for $k\in\{1,...,n\}$ \begin{align}\label{eq:thomasmueller}
\begin{split}
\widetilde{\mathbb{D}}^{n-1}_{k}= \{t=(t_{1},...,t_{n}) \in \widetilde{\mathbb{D}}^{n-1} \colon t_k \geq \tfrac{1}{n} \},\;\;\;\text{so that}\;\;\;\widetilde{\mathbb{D}}^{n-1} = \bigcup_{k=1}^n \widetilde{\mathbb{D}}^{n-1}_{k}.
\end{split}
\end{align}
By the definition of $\mu^{i_{k}}$, see \eqref{eq:mudefMAIN} ff., $\mu^{i_{k}}$ satisfies with a purely dimensional constant $c>0$
\begin{align*}
0\leq \mu^{i_{k}} \leq \frac{c}{\delta_{I}^{n}}\mathscr{L}^{n}\mres \mathrm{conv}_{I}\qquad\text{for all}\;k\in\{1,...,n\},
\end{align*}
and so we obtain for all $k\in\{1,...,n\}$ and any fixed $\mathbf{x}_{I^{(k)}}:=(x_{i_{1}},...,x_{i_{k-1}},x_{i_{k+1}},...,x_{i_{n}})\in(\R^{n})^{n-1}$
\begin{align*}
\int_{\R^{n}}\int_{\widetilde{\mathbb{D}}_{k}^{n-1}}|u(\Phi_{\mathbf{x}_{I}}(t))|\dif\mathscr{H}^{n-1}(t)\dif\mu^{i_{k}}(x_{i_{k}}) & = \int_{\widetilde{\mathbb{D}}_{k}^{n-1}}\int_{\R^{n}}|u(\Phi_{\mathbf{x}_{I}}(t))|\dif\mu^{i_{k}}(x_{i_{1}})\dif\mathscr{H}^{n-1}(t) \\ 
& \leq \frac{1}{\delta_{I}^{n}}\int_{\widetilde{\mathbb{D}}_{k}^{n-1}}\frac{1}{t_{k}^{n}}\int_{\mathrm{conv}_{I}}|u(z)|\dif z \dif\mathscr{H}^{n-1}(t)\\ 
& \leq \frac{c(n)}{\delta_{I}^{n}} \int_{\mathrm{conv}_{I}}|u(z)|\dif z. 
\end{align*}
Since $\mu^{i_{1}},...,\mu^{i_{k-1}},\mu^{i_{k+1}},...,\mu^{i_{n}}$ are probability measures, we may integrate the preceding inequality with respect to the product measure $\mu^{I^{(k)}}$ and arrive at 
\begin{align*}
\int_{(\R^{n})^{n-1}}\int_{\R^{n}}\int_{\widetilde{\mathbb{D}}_{k}^{n-1}}|u(\Phi_{\mathbf{x}_{I}}(t))|\dif\mathscr{H}^{n-1}(t)\dif\mu^{i_{k}}(x_{i_{k}})\dif\mu^{I^{(k)}}(\mathbf{x}_{I^{(k)}})\leq \frac{c(n)}{\delta_{I}^{n}} \int_{\mathrm{conv}_{I}}|u(z)|\dif z.
\end{align*}
Summing this inequality over all $k\in\{1,...,n\}$ and recalling the ultimate identity recorded in \eqref{eq:thomasmueller}, we then conclude \eqref{eq:convhullbound1} in view of \eqref{eq:trafo}. This completes the proof. 
\end{proof}

\begin{figure}
\begin{center}
\includegraphics[scale=0.35]{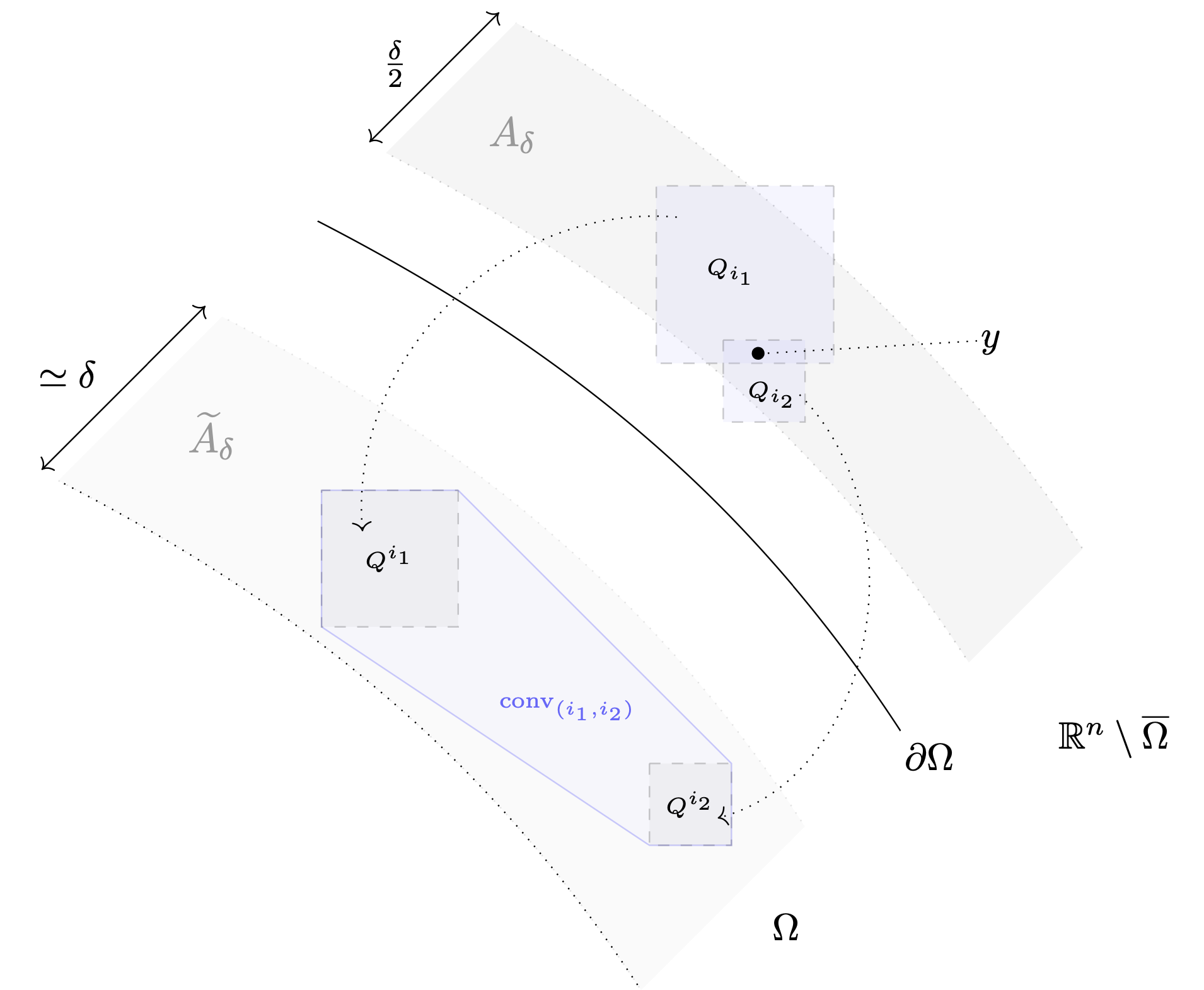} 
\end{center}
\caption{On the strategy of proof of Lemma \ref{lemma:L1bound:c1}, schematically displayed for $n=2$ dimensions.}
\end{figure}
We next turn to the uniform $\lebe^{1}$-bounds required as a main ingredient of Proposition \ref{prop:intermediateL1convex}:
\begin{lemma}[Uniform $\lebe^{1}$-bounds] \label{lemma:L1bound:c1}
    Let $u \in \hold^1(\overline{\Omega};\wedgeq^{n-1}(\R^{n})^{*})$. Then we have 
    \begin{align*}
    \mathscr{E}_{\Omega}[u] \in \lebe^1(\R^n;{\wedgeq}^{n-1}(\R^{n})^{*}).
    \end{align*}
    Specifically, there exists constants $c_1,c_2,c_3>0$, only depending on the underlying space dimension $n\in\mathbb{N}$ and, for a given finite cover $\Ocal$ of $\overline{\Omega}$, on $\Lip(\mathcal{O},\Omega)$ (cf. \eqref{eq:nowandthen1}) and $\mathrm{M}:=\min_{O\in\Ocal}\mathrm{diam}(O)$, such that the following hold:
\begin{enumerate}
\item \label{item:L1bound0} There is a constant $\mathtt{c}>0$ and some $\delta>0$ such that $0<\delta < \delta_0$ implies
\begin{equation} \label{eq:strip}
    \int_{\{x \in \R^n \setminus \bar{\Omega} \colon \delta/2 < \dist(x,\partial \Omega) < \delta \}} \vert \E_{\Omega}[u] \vert \dif x \leq c_1 \int_{\{x \in \Omega \colon \delta/(2\mathtt{c}) < \dist(x,\partial \Omega < \mathtt{c}\delta\}} \vert u \vert \dif x;
\end{equation}
\item\label{item:L1bound1}  $\displaystyle 
            \Vert \mathscr{E}_{\Omega}[u] \Vert_{\lebe^1(\R^n)} \leq c_2 \Vert u \Vert_{\lebe^1(\Omega)};
         $
\item\label{item:L1bound2} there exists a constant $\mathtt{c}>0$  and $\delta_{0}>0$ such that $0<\delta<\delta_{0}$ implies 
\begin{align}\label{eq:stripsum}
\int_{\{x\in\R^{n}\setminus\overline{\Omega}\colon\;\mathrm{dist}(x,\partial\Omega)<\delta\}}|\mathscr{E}_{\Omega}[u]|\dif x \leq c_3 \int_{\{x\in\Omega\colon\;\mathrm{dist}(x,\partial\Omega)<\mathtt{c}\delta\}}|u|\dif x.
\end{align}
\end{enumerate}
\end{lemma}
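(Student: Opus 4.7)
I would prove the three items in the order \ref{item:L1bound0}, \ref{item:L1bound2}, \ref{item:L1bound1}, with the heart of the argument being a pointwise estimate for $\E_\Omega[u]$ in $\R^n\setminus\overline\Omega$. For $y\in \R^n\setminus\overline\Omega$, property \ref{item:POU3} of the partition of unity gives $|\phi_{i_n}\wedge\dif\phi_{i_{n-1}}\wedge\cdots\wedge\dif\phi_{i_1}(y)|\leq c\,\ell(Q_{i_n})^{-(n-1)}\mathbbm{1}_{\bigcap_{j}\tfrac{7}{6}Q_{i_j}}(y)$, while Lemma \ref{lem:convhullL1bound} estimates the inner $\mu^I$-integral by $(c/\delta_I)\int_{\conv_I}|u|\,\dif x$. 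If $y\in\bigcap_j\tfrac{7}{6}Q_{i_j}$ then \ref{item:Whitney3} forces all the $\ell(Q_{i_j})$ to be mutually comparable and comparable to $\dist(y,\partial\Omega)$, so $\delta_I\sim \ell(Q_{i_n})\sim\dist(y,\partial\Omega)$. Combining these bounds,
\[
|\E_\Omega[u](y)|\leq c\sum_{I}\mathbbm{1}_{\bigcap_j\tfrac{7}{6}Q_{i_j}}(y)\,\frac{1}{\dist(y,\partial\Omega)^n}\int_{\conv_I}|u(x)|\,\dif x.
\]

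\textbf{Proof of \ref{item:L1bound0}.} For $0<\delta<\delta_0$ with $\delta_0$ chosen small enough that every Whitney cube $Q\in\mathcal{W}_2$ hitting the strip has $\ell(Q)\leq\eta$, set $A^{\rm ext}_\delta:=\{y\in\R^n\setminus\overline\Omega\colon \delta/2<\dist(y,\partial\Omega)<\delta\}$. Integrating the above pointwise bound over $A^{\rm ext}_\delta$ and swapping sum and integral, only multiindices $I$ with $\ell(Q_{i_j})\sim\delta$ for every $j$ contribute; for each such $I$ one has $|A^{\rm ext}_\delta\cap\bigcap_j\tfrac{7}{6}Q_{i_j}|\leq c\,\delta^n$, which cancels the $\delta^{-n}$ factor and yields
\[
\int_{A^{\rm ext}_\delta}|\E_\Omega[u]|\,\dif y\leq c\sum_{I\text{ valid at scale }\delta}\int_{\conv_I}|u(x)|\,\dif x.
\]
By Lemma \ref{cor:Jones}\ref{item:Jonesrefined2}--\ref{item:Jonesrefined3a}, for such $I$ the inner cubes $Q^{i_j}=\Psi(Q_{i_j})$ satisfy $\ell(Q^{i_j})\sim\delta$ and $\dist(Q^{i_j},\partial\Omega)\sim\delta$, so $\conv_I\subset\{x\in\Omega\colon\delta/(2\mathtt{c})<\dist(x,\partial\Omega)<\mathtt{c}\delta\}$ for a suitable constant $\mathtt{c}=\mathtt{c}(n,\Lip(\mathcal{O},\Omega))>0$. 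It then suffices to control the multiplicity $\sum_I\mathbbm{1}_{\conv_I}(x)$: for $x\in\conv_I$ every $\tfrac{1}{2}Q^{i_j}$ lies within distance $\sim\delta$ of $x$; only boundedly many inner Whitney cubes of sidelength $\sim\delta$ fit within distance $\sim\delta$ of $x$, and by \ref{item:Whitney2} together with Lemma \ref{cor:Jones}\ref{item:Jonesrefined2}--\ref{item:Jonesrefined3} only boundedly many outer cubes $Q_{i_j}$ of sidelength $\sim\delta$ map to any fixed such inner cube under $\Psi$. Hence the multiplicity is bounded by a constant $M=M(n,\Lip(\mathcal{O},\Omega))$, and \eqref{eq:strip} follows.

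\textbf{Derivations and main obstacle.} For \ref{item:L1bound2}, I decompose $\{y\in\R^n\setminus\overline\Omega\colon\dist(y,\partial\Omega)<\delta\}=\bigcup_{k\geq 0}A^{\rm ext}_{2^{-k}\delta}$ dyadically, apply \ref{item:L1bound0} to each annulus, and sum; since each point $x$ in the corresponding interior strip lies in only $O(1)$ many of the resulting interior annuli, I obtain \eqref{eq:stripsum}. For \ref{item:L1bound1}, I combine $\mathscr{E}_\Omega[u]|_\Omega=u$ with the compact-support bound $\spt(\mathscr{E}_\Omega[u])\subset\overline B_\vartheta(\Omega)$ from Lemma \ref{lem:compactsupport} and apply \ref{item:L1bound2} with $\delta=\vartheta$. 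The main technical obstacle is the multiplicity bound sketched above: it requires simultaneously the finite-overlap property \ref{item:Whitney2}, the comparability of sidelengths and of distances to $\partial\Omega$ under $\Psi$, and the upper bound $\dist(Q,\Psi(Q))\leq C\ell(Q)$ from Lemma \ref{cor:Jones}\ref{item:Jonesrefined3}, with all constants tracked uniformly in terms of $n$ and $\Lip(\mathcal{O},\Omega)$ only.
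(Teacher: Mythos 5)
Your argument mirrors the paper's proof almost exactly: the pointwise estimate combining Lemma \ref{lem:convhullL1bound} with \ref{item:POU3} and \ref{item:Whitney3}, the dyadic exterior strips paired with interior strips via \ref{A:2}--\ref{A:3a}, the finite-multiplicity bound for $x\mapsto\#\{I\colon x\in\conv_I\}$, and the telescoping of the strip estimate into the full $\lebe^1$-bound all appear, in the same roles, in the paper. The only structural difference is the order in which \ref{item:L1bound1} and \ref{item:L1bound2} are derived, which is cosmetic.

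There is, however, a genuine gap in your derivation of \ref{item:L1bound1}. Your proof of \ref{item:L1bound0}, hence of \ref{item:L1bound2}, is carried out under the restriction $0<\delta<\delta_0$, where $\delta_0$ is chosen so small that every cube $Q\in\mathcal{W}_2$ meeting the annulus satisfies $\ell(Q)\leq\eta$ and thus falls into the ``small cube'' regime \ref{A:2}--\ref{A:3}. But the support radius $\vartheta$ from Lemma \ref{lem:compactsupport} is, by \eqref{eq:michaeljackson}, of order $\eta$ and in particular $\vartheta>\delta_0$; so your final step ``apply \ref{item:L1bound2} with $\delta=\vartheta$'' invokes \eqref{eq:stripsum} outside its range of validity. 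Concretely, for $y$ with $\delta_0/2\leq\dist(y,\partial\Omega)<\vartheta$ the cubes $Q_{i_j}$ may have $\ell(Q_{i_j})>\eta$, in which case \ref{A:1} forces $\Psi(Q_{i_j})=Q_0'$, the comparability $\ell(\Psi(Q))\sim\ell(Q)$ fails, and the $\conv_I$ degenerate to subsets of $\tfrac12 Q_0'$ deep inside $\Omega$. The paper handles precisely this case explicitly in its proof of \eqref{eq:kiss}~ff.\ (``If instead $\ell(Q_{i_j})>\eta$, we have $\Psi(Q_{i_j})=Q_0'$\dots''), picking up constant dependence on $\eta$ and $Q_0'$. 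You need an analogous separate treatment of the outer annulus $\{\delta_0/2\leq\dist(\cdot,\partial\Omega)<\vartheta\}$: there the pointwise bound still holds with $\delta_I\gtrsim\delta_0$ and finitely many cube sizes, so a direct integration bounded by $\|u\|_{\lebe^1(\Omega)}$ closes the argument, but this step must be written out rather than obtained by stretching \ref{item:L1bound2} beyond $\delta_0$.
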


\begin{proof}
We pick the number $\vartheta$ from Lemma \ref{lem:compactsupport}, cf. \eqref{eq:michaeljackson}. In particular, we have $\spt(\mathscr{E}_{\Omega}[u])\subset\overline{\ball}_{\vartheta}(\Omega)$, and thus we may thus focus on estimating $\|\mathscr{E}_{\Omega}[u]\|_{\lebe^{1}(\ball_{\vartheta}(\Omega))}$ in the sequel. To this end, we define exterior strips as follows: 
\begin{align}\label{eq:strips}
\mathtt{S}_{k}:=\ball_{2^{-k+1}\vartheta}(\Omega)\setminus\overline{\ball}_{2^{-k-1}\vartheta }(\Omega),\qquad k\in\mathbb{N}.
\end{align}
Then we have $\bigcup_{k\in\mathbb{N}}\mathtt{S}_{k}=\ball_{\vartheta}(\Omega)\setminus\overline{\Omega}$, and at most two of the $\mathtt{S}_{k}$'s have non-trivial intersection. The key idea of the remaining part of the proof then is as follows: If a cube $Q\in\mathcal{W}_{2}$ is far away from $\partial\Omega$ but close to $\partial\!\ball_{\vartheta}(\Omega)$, $\Psi$ as given by Lemma \ref{cor:Jones} maps $Q$ to a cube in the center or bulk part of $\Omega$. If it is, however, close to $\partial\Omega$, then the convex hull of its image under $\Psi$ belongs to a strip inside $\Omega$ and close to $\partial\Omega$. This shows part \ref{item:L1bound0}. These strips, in turn, have a uniformly finite overlap. This observation allows us to sum up the $\lebe^{1}$-norms of $u$ over these strips and thereby obtain the requisite estimate of the $\lebe^{1}$-norm of $\mathscr{E}_{\Omega}[u]$ close to $\partial\Omega$, i.e. \ref{item:L1bound1} and \ref{item:L1bound2}. \smallskip

To this end, we claim that there exist numbers $0<\mathtt{c}_{0}<\mathtt{c}_{1}$ only depending on $n$ and  $\Lip(\Omega,\mathcal{O})$ such that the following holds for all $k\in\mathbb{N}$: If $I=(i_{1},...,i_{n})\in\mathbb{N}^{n}$ is such that $\mathtt{S}_{k}\cap\frac{7}{6}Q_{i_{1}}\cap...\cap\frac{7}{6}Q_{i_{n}}\neq\emptyset$ and $\ell(Q_{i_{j}})\leq \eta$ holds for all $j\in\{1,...,n\}$, then 
\begin{align}\label{eq:uniformity}
\conv\Big(\bigcup_{j=1}^{n}\tfrac{1}{2}\Psi(Q_{i_{j}})\Big) \subset \{x\in\Omega\colon\;\mathtt{c}_{0}2^{-k} \leq \dista(x,\partial\Omega)\leq \mathtt{c}_{1}2^{-k}\} =: \mathtt{S}'_{k}. 
\end{align}
We note that the sets $\mathtt{S}'_{k}$ are only proper \emph{strips} for sufficiently large $k$.

To see \eqref{eq:uniformity}, we first note that if $z\in\mathtt{S}_{k}\cap\frac{7}{6}Q_{i_{1}}\cap...\cap\frac{7}{6}Q_{i_{n}}\neq\emptyset$, then we have for all $j\in\{1,...,n\}$, all $\xi\in\partial\Omega$ and all $y\in \frac{7}{6}Q_{i_{j}}$
\begin{align*}
2^{-k-1}\vartheta & \stackrel{z\in\mathtt{S}_{k}}{\leq} \dista(z,\partial\Omega) \leq |z-\xi| \leq |z-y|+|y-\xi| \leq \tfrac{7}{6}\sqrt{n}\ell(Q_{i_{j}}) + |y-\xi|.
\end{align*}
First infimising the right-hand side in $\xi\in\partial\Omega$ and then in $y\in\frac{7}{6}Q_{i_{j}}$ yields that 
\begin{align}\label{eq:stereolove}
\begin{split}
2^{-k-1}\vartheta & \leq \tfrac{7}{6}\sqrt{n}\ell(Q_{i_{j}}) + \dista(\tfrac{7}{6}Q_{i_{j}},\partial\Omega) \stackrel{\text{\ref{item:Whitney3}}}{\leq} c\,\ell(Q_{i_{j}}) \\ & \!\!\!\stackrel{\text{\ref{item:Whitney3}}}{\leq} c\,\dista({\tfrac{7}{6}Q_{i_{j}},\partial\Omega) \stackrel{z\in \frac{7}{6}}Q_{i_{j}}}\leq c\,\dist(z,\partial\Omega)  \stackrel{z\in\mathtt{S}_{k}}{\leq} c\,2^{-k+1}\vartheta 
\end{split}
\end{align}
with a constant $c=c(n)>0$. Based on \ref{A:2} and $\ell(Q_{i_{j}})\leq\eta$, we infer that 
\begin{align}\label{eq:kiss}
\begin{split}
2^{-k-1}\vartheta & \stackrel{\eqref{eq:stereolove}}{\leq} c\,\ell(Q_{i_{j}})  \stackrel{\text{\ref{A:2}}}{\leq} c\,\ell(\Psi(Q_{i_{j}})) \stackrel{\ref{item:Jonesrefined3a}}{\leq} c\,\dista(\Psi(Q_{i_{j}}),\partial\Omega) \\ 
& \;\;\leq c\,\dista(\tfrac{1}{2}\Psi(Q_{i_{j}}),\partial\Omega) \leq c\big(\tfrac{1}{4}\sqrt{n}\ell(\Psi(Q_{i_{j}}))+\dista(\Psi(Q_{i_{j}}),\partial\Omega)\big) \\ 
& \!\!\!\!\!\stackrel{\ref{A:2},\,\ref{A:3}}{\leq} c\,\ell(Q_{i_{j}}) \stackrel{\eqref{eq:stereolove}}{\leq} c\,2^{-k+1}\vartheta, 
\end{split}
\end{align}
and here we have $c=c(n,\Lip(\Omega,\mathcal{O}))>0$. If instead $\ell(Q_{i_j}) > \eta$, we have $\psi(Q_{i_j})= Q_0'$. From here we conclude that there exists a constant $\mathtt{c}=\mathtt{c}(n,\Lip(\Omega,\mathcal{O}),\eta,Q_0')>1$ such that 
\begin{align*}
\tfrac{1}{2}\Psi(Q_{i_{1}}),...,\tfrac{1}{2}\Psi(Q_{i_{n}})\subset \Big\{x\in\Omega\colon\;\tfrac{1}{c}\vartheta 2^{-k} \leq \dista(x,\partial\Omega)\leq c\vartheta 2^{-k}\Big\}. 
\end{align*}
On the other hand, if $\mathtt{S}_{k}\cap\frac{7}{6}Q_{i_{1}}\cap...\cap\frac{7}{6}Q_{i_{n}}\neq\emptyset$, then all of $\tfrac{1}{2}\Psi(Q_{i_{1}}),...,\tfrac{1}{2}\Psi(Q_{i_{n}})$ have length, distance from $\partial\Omega$ and distance from each other uniformly comparable to $\delta_{I}$. From here it follows that \eqref{eq:uniformity} is satisfied for suitable choices of $\mathtt{c}_{0}$ and $\mathtt{c}_{1}$. 

We are now ready to conclude the proof:  Whenever $I=(i_{1},...,i_{n})\in\mathbb{N}^{n}$ is such that $\varphi_{i_{n}}\wedge\dif\varphi_{i_{n-1}}\wedge ... \wedge\dif\varphi_{i_{1}}(y)\neq 0$, then the  sidelengths of the cubes $Q_{i_{1}},...,Q_{i_{n}}$ are uniformly comparable, cf. \ref{item:Whitney3}. Combining this with \ref{item:Whitney2}, we then conclude that the number $\delta_{I}$ from \eqref{eq:deltadef} is uniformly comparable to $\dist(y,\partial\Omega)$. We then use \ref{item:POU3} and Lemma \ref{lem:convhullL1bound} to find for all $j\in\{1,...,n-1\}$
\begin{align}\label{eq:faraway1}
\begin{split}
    & \|\phi_{i_{n}}\|_{\lebe^{\infty}(\R^{n})}\leq 1,\;\;\Vert\!\dif\phi_{i_{j}}\!\Vert_{\lebe^{\infty}(\R^{n})} \leq \frac{c}{\ell(Q_{i_{j}})} \leq \frac{c}{\mathrm{dist}(Q_{i_{j}},\partial\Omega)}\leq \frac{c}{\delta_{I}}, \\ 
    &\left\vert \int_{(\R^{n})^{n}} \fint_{\conv(\mathbf{x}_{I})} u(z) \nu(\mathbf{x}_{I}) \dHaus^{n-1}(z) ~\textup{d}\mu^I(\mathbf{x}_{I}) \right\vert \leq \frac{c}{\delta_{I}} \int_{\conv_{I}}|u(x)|\dif x, 
    \end{split}
\end{align}
where $c=c(n,\Lip(\Omega,\mathcal{O}))>0$. Recalling the neighbouring cube notation from \eqref{eq:peaceintheneighbourhood}, we have $|\mathcal{N}(i_{n})|\leq c(n)$ for all $i_{n}\in\mathbb{N}$ by \ref{item:Whitney2}. Hence we obtain for all $k\in\mathbb{N}$
\begin{align*}
\|& \mathscr{E}_{\Omega}[u]\|_{\lebe^{1}(\mathtt{S}_{k})}  \stackrel{\eqref{eq:faraway1}_{2}}{\leq} c\int_{\mathtt{S}_{k}}\sum_{I=(i_{1},...,i_{n})}\frac{1}{\delta_{I}}\big\vert\varphi_{i_{n}}\wedge\dif\varphi_{i_{n-1}}\wedge ... \wedge \dif\varphi_{i_{1}}(y)|\Big(\int_{\conv_{I}}|u(x)|\dif x\Big)\dif y 
\\ & \;\;\;= c \sum_{\substack{I=(i_{1},...,i_{n})\\ \mathtt{S}_{k}\cap \bigcup\{Q\in\mathcal{N}(i_{n})\}\neq\emptyset}}\frac{1}{\delta_{I}}\int_{\bigcup\{Q\in\mathcal{N}({i_{n}})\}}\big\vert\varphi_{i_{n}}\wedge\dif\varphi_{i_{n-1}}\wedge ... \wedge \dif\varphi_{i_{1}}(y)|\Big(\int_{\conv_{I}}|u(x)|\dif x\Big)\dif y \\ 
& \stackrel{\eqref{eq:faraway1}_{1}}{\leq} c  \sum_{\substack{I=(i_{1},...,i_{n})\\ \mathtt{S}_{k}\cap \bigcup\{Q\in\mathcal{N}(i_{n})\}\neq\emptyset}}\frac{1}{\delta_{I}^{n}}\mathscr{L}^{n}\Big(\bigcup \{Q\in\mathcal{N}(i_{n})\} \Big)\Big(\int_{\conv_{I}}|u(x)|\dif x\Big) \\
\\ & \;\;\; \leq c \sum_{\substack{I=(i_{1},...,i_{n})\\ \mathtt{S}_{k}\cap \bigcup\{Q\in\mathcal{N}(i_{n})\}\neq\emptyset}}\int_{\conv_{I}}|u(x)|\dif x . 
\\ & \;\;\; \leq c \int_{\mathtt{S}'_k} \vert u(x) \vert \cdot  \# \bigl \{ I =(i_1,...,i_n) \colon \mathtt{S}_k \cap \bigcup \{Q \in \mathcal{N}(i_n)\} \neq \emptyset, x \in \conv_I \bigr\}\dif x =: I_k.
\end{align*}
It remains to uniformly bound the number of indices above. Note that due to the definition of $\conv_I$ and convexity of the distance function
\[
x \in \conv_I \quad \Longrightarrow \quad \dist(x,Q_{i_n}) \leq \sup_{y \in \cup_{j=1}^n \Psi(Q_{i_j})} \dist(y,Q_{i_n}).
\]
Observe that if $y \in \Psi(Q_{i_j})$ and $Q_{i_j}$ neighbours $Q_{i_n}$ we can bound the distance to $Q_{i_n}$ as follows:
\[
\dist(y,Q_{i_n}) \leq \sqrt{n} \ell(\Psi(Q_{i_j}) + \dist(\Psi(Q_{i_j}),Q_{i_j}) + \sqrt{n} \ell(Q_{i_j}).
\]
 Du to properties \ref{item:Jonesrefined2} and \ref{item:Jonesrefined2} of the map $\Psi$ and \ref{item:Whitney1} we obtain for a constant $C=C(n,\Lip(\Omega,\mathcal{O}))$
 \[
 \dist(y,Q_{i_n}) \leq C \ell(Q_{i_n}) \leq C 2^{-k-1} \vartheta.
 \]
Consequently, the distance of $x \in \conv_I$ to $Q_{i_n}$ is bounded by $C2^{-i-1} \vartheta$. Now recall that any $Q_{i_n}$ with $\cup \{Q \in \mathcal{N}(Q_{i_n})\} \cap \mathtt{S}_k$ needs to satisfy
\[
2^{-k-3 \vartheta} \leq  \ell Q_{i_n} \leq 2^{-k+3} \vartheta.
\]
Then, on the one hand, if $x \in \conv_I$
\[
Q_{i_n} \subset B_{2^{-i-1}(C+16\sqrt{n})}(x)
\]
and, on the other hand, 
\[
\mathscr{L}^n(Q_{i_n}) \geq (2^{-k-3 \vartheta})^n.
\]
Using that only finitely many cubes overlap, i.e. \ref{item:Whitney3}, we obtain that the number of $i_n$ such that for some index $I$ (with neighbouring cubes $Q_{i_1},....,Q_{i_{n-1}}$), $x \in \conv_I$, is bounded by
\[
\mathtt{N}(n)\frac{\mathscr{L}^n (B_{2^{-i-1}(C+16\sqrt{n})}(x))}{(2^{-k-3 \vartheta})^n} =:\tilde{C}(n).
\]
As only $\mathtt{N}(n)$ cubes are a neighbour of $Q_{i_n}$ we conclude
\[
\# \bigl \{ I =(i_1,...,i_n) \colon \mathtt{S}_k \cap \bigcup \{Q \in \mathcal{N}(i_n)\} \neq \emptyset, x \in \conv_I \bigr\} \leq \tilde{C}(n) \mathtt{N}(n)^n.
\]
Thus,
\[
I_k \leq c \tilde{C}(n) \mathtt{N}(n)^n \int_{\mathtt{S}'_k} \vert u(x) \vert \dif x 
\]
and \ref{item:L1bound0} is established with $c_1= c \tilde{C}(n) \mathtt{N}(n)^n$.

We continue with \ref{item:L1bound2}. The definition of the strips $S_k'$ implies that any strip intersects with at most $2 + \log_2(\mathtt{c}_1/\mathtt{c}_0)$ and thus
\begin{align}\label{eq:jarrett}
\begin{split}
\|\mathscr{E}_{\Omega}[u]\|_{\lebe^{1}(\ball_{\vartheta}(\Omega)\setminus\Omega)} & \leq \sum_{k=0}^{\infty}\|\mathscr{E}_{\Omega}[u]\|_{\lebe^{1}(\mathtt{S}_{k})} = \\
& \leq c_1\sum_{k=0}^{\infty} \|u\|_{\lebe^{1}(\mathtt{S}'_{k})} \leq   c_1 (3 + \log_2(\mathtt{c}_1/\mathtt{c}_0))\|u\|_{\lebe^{1}(\Omega)}  =: c_2 \|u\|_{\lebe^{1}(\Omega)}
 \end{split}
    \end{align}
and from here \ref{item:L1bound1} is immediate. For  \ref{item:L1bound1} we may only sum over $k \geq k_0$ for some $k_0 \in \N$ to obtain the bound.
\end{proof}
\begin{remark}
Observe that all constants in this proof are uniform in $\varepsilon$, when the domain $\Omega$ is replaced by the inner approximation $\Omega_{\varepsilon}= \{x \in \Omega \colon \dist(x,\partial \Omega)> \varepsilon\}$. In particular, in the construction of the double Whitney cover and the map $\psi$, involved constants are only dependent on $\Lip(\mathcal{O},\Omega_{\varepsilon})$, which is uniformly bounded in $\varepsilon$.
\end{remark}
\noindent We are now ready to give the proof of Proposition \ref{prop:intermediateL1convex}:
\begin{proof}[Proof of Proposition \ref{prop:intermediateL1convex}]
In Lemma \ref{lemma:L1bound:c1}, we established that 
\begin{align*}
\mathscr{E}_{\Omega}\colon \hold^{1}(\overline{\Omega};{\wedgeq}{^{n-1}}(\R^{n})^{*})\to\lebe^{1}(\R^{n};{\wedgeq}{^{n-1}}(\R^{n})^{*})
\end{align*}
is a bounded linear operator with respect to the $\lebe^{1}$-norm. On the other hand, Corollary \ref{coro:divfree} implies that, if $u\in\hold(\overline{\Omega},\wedgeq^{n-1}(\R^{n})^{*})$ satisfies $\dif u = 0$ in $\Omega$, then $\dif\mathscr{E}_{\Omega}[u]=0$ in $\mathscr{D}'(\Omega;\wedgeq^{n}(\R^{n})^{*})$, and this completes the proof of Proposition \ref{prop:intermediateL1convex}.
\end{proof} 
\subsection{Density of $\hold_{\mathrm{div}}^{1}(\overline{\Omega};\R^{n})$ in $\lebe_{\mathrm{div}}^{1}(\Omega;\R^{n})$ and the proof of Theorem \ref{thm:divL1conv}}\label{sec:density}
In order to introduce the divergence-free extension operator on $\lebe^{1}$, we now proceed to establish a density result. For its proof, we directly utilise Proposition \ref{prop:intermediateL1convex} and translate back from differential forms to $\R^{n}$-valued maps. 
\begin{proposition}\label{prop:densitymain}
Let $\Omega\subset\R^{n}$ be open, convex and bounded. Then $\hold_{\di}^{1}(\overline{\Omega};\R^{n})$ is dense in $\lebe_{\mathrm{div}}^{1}(\Omega;\R^{n})$ with respect to the $\lebe^{1}$-norm. 
\end{proposition}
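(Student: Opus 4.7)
The plan is to regularise $u$ by interior mollification, apply the $\hold^{1}$-divergence-free extension operator from Proposition \ref{prop:intermediateL1convex} on a slightly shrunken inner convex subdomain, and then mollify a second time at a much finer scale. Precisely, for $u\in\lebe_{\diver}^{1}(\Omega)$ and small $\varepsilon>0$, set $\Omega_{\varepsilon}:=\{x\in\Omega\colon\;\dista(x,\partial\Omega)>\varepsilon\}$, which is open, bounded and convex. Let $u_{\varepsilon}:=\rho_{\varepsilon}\ast u\in\hold^{\infty}(\Omega_{\varepsilon};\R^{n})$; since mollification commutes with the divergence, $\diver u_{\varepsilon}=0$ pointwise on $\Omega_{\varepsilon}$. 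Hence $u_{\varepsilon}|_{\overline{\Omega_{2\varepsilon}}}\in\hold^{1}(\overline{\Omega_{2\varepsilon}};\R^{n})$ is divergence-free, and Proposition \ref{prop:intermediateL1convex} applied on $\Omega_{2\varepsilon}$ yields $v_{\varepsilon}:=\mathscr{E}_{\Omega_{2\varepsilon}}[u_{\varepsilon}]\in \lebe^{1}(\R^{n};\R^{n})$ with $\diver v_{\varepsilon}=0$ in $\mathscr{D}'(\R^{n})$ and $v_{\varepsilon}|_{\Omega_{2\varepsilon}}=u_{\varepsilon}$. A second mollification $w_{\varepsilon}:=\rho_{\delta(\varepsilon)}\ast v_{\varepsilon}$ at a scale $\delta(\varepsilon)\ll \varepsilon$ then produces a map in $(\hold^{\infty}\cap\lebe^{1})(\R^{n};\R^{n})$ which is globally divergence-free; its restriction to $\overline{\Omega}$ is the desired candidate in $\hold_{\di}^{1}(\overline{\Omega};\R^{n})$.

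To establish $w_{\varepsilon}\to u$ in $\lebe^{1}(\Omega)$ I would split
\[
\Vert w_{\varepsilon}-u\Vert_{\lebe^{1}(\Omega)}\leq \Vert w_{\varepsilon}-v_{\varepsilon}\Vert_{\lebe^{1}(\Omega)}+\Vert u_{\varepsilon}-u\Vert_{\lebe^{1}(\Omega_{2\varepsilon})}+\Vert v_{\varepsilon}\Vert_{\lebe^{1}(\Omega\setminus\Omega_{2\varepsilon})}+\Vert u\Vert_{\lebe^{1}(\Omega\setminus\Omega_{2\varepsilon})}.
\]
Choosing $\delta(\varepsilon)$ small enough forces the first summand below $\varepsilon$. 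The second vanishes by standard $\lebe^{1}$-mollification and the fourth by absolute continuity of the Lebesgue integral. The decisive summand is the third: since $\Omega\setminus\Omega_{2\varepsilon}$ is an exterior collar of width $\leq 2\varepsilon$ around $\partial\Omega_{2\varepsilon}$, the strip bound of Lemma \ref{lemma:L1bound:c1}\,\ref{item:L1bound2}, applied on $\Omega_{2\varepsilon}$, gives
\[
\Vert v_{\varepsilon}\Vert_{\lebe^{1}(\Omega\setminus\Omega_{2\varepsilon})}\leq c_{3}\Vert u_{\varepsilon}\Vert_{\lebe^{1}(\{x\in\Omega_{2\varepsilon}\colon \dista(x,\partial\Omega_{2\varepsilon})<\mathtt{c}\,C\varepsilon\})}\leq c_{3}\Vert u\Vert_{\lebe^{1}(\{x\in\Omega\colon \dista(x,\partial\Omega)<C'\varepsilon\})},
\]
where in the last step I used that mollification is an $\lebe^{1}$-contraction. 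The right-hand side tends to zero by absolute continuity, which concludes the convergence.

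The principal obstacle in this plan is to ensure that the constants $c_{3},\mathtt{c}$ (together with all auxiliary geometric constants from Lemma \ref{cor:Jones} and the Whitney covering of $\Omega_{2\varepsilon}$) are \emph{uniform} in $\varepsilon$ as $\varepsilon\searrow 0$; otherwise the third summand would carry no quantitative information. This uniformity is precisely what is granted by \eqref{eq:Lipindependent}, asserting that $\Lip(\mathcal{O},\Omega_{2\varepsilon})$ remains bounded uniformly in small $\varepsilon$, together with the explicit remark following Lemma \ref{lemma:L1bound:c1}; both rely crucially on the convexity of $\Omega$, which prevents degeneration of the inner parallel body. Modulo this uniformity, all remaining ingredients, namely convexity of $\Omega_{2\varepsilon}$, $\lebe^{1}$-continuity of mollification, and the already established Proposition \ref{prop:intermediateL1convex}, are entirely standard.
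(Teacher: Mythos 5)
Your proposal is correct and follows essentially the same route as the paper's own proof: mollify at scale $\varepsilon$, apply the $\hold^{1}$-extension operator on a shrunken inner convex parallel body, mollify again at a finer scale, and control the boundary-layer contribution via the strip estimate of Lemma \ref{lemma:L1bound:c1} together with the uniformity of constants from \eqref{eq:Lipindependent}. The only cosmetic differences are that you invoke the extension on $\Omega_{2\varepsilon}$ rather than $\Omega_{\varepsilon}$ and phrase the inner-scale choice as a diagonalisation $\delta=\delta(\varepsilon)$ rather than an iterated limit; both variants carry identical content.
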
 
\begin{proof} 
Let $u\in\lebe_{\di}^{1}(\Omega;\R^{n})$. Moreover, let $\rho\in\hold_{c}^{\infty}(\ball_{1}(0);[0,1])$ be a standard mollifier with $\int_{\ball_{1}(0)}\rho(x)\dif x =1$. Given $\varepsilon>0$ sufficiently small, we denote by $\rho_{\varepsilon}(x):=\frac{1}{\varepsilon^{n}}\rho(\frac{x}{\varepsilon})$ its $\varepsilon$-rescaled variant. Letting $\overline{u}$ be the trivial extension of $u$ to $\R^{n}$ by zero, we then note that the mollified map $u_{\varepsilon}:=\rho_{\varepsilon}*\overline{u}$ satisfies the following: 
\begin{enumerate}[label=(M\arabic*)]
\item\label{item:molli1} $\|\overline{u}-u_{\varepsilon}\|_{\lebe^{1}(\R^{n})}\to 0$ as $\varepsilon\searrow 0$; 
\item\label{item:molli2} $u_{\varepsilon}|_{\Omega_{\varepsilon}}\in\hold^{\infty}(\overline{\Omega_{\varepsilon}};\R^{n})$; 
\item\label{item:molli3} $\mathrm{div}(u_{\varepsilon})=0$ pointwisely in $\Omega_{\varepsilon}$. 
\end{enumerate} 
For all sufficiently small $\varepsilon>0$, the set  $\Omega_{\varepsilon}$ is non-empty and convex, and by \eqref{eq:Lipindependent}, $\Lip(\mathcal{O},\Omega_{\varepsilon})$ is uniformly bounded in terms of $\Lip(\mathcal{O},\Omega)$ for all sufficiently small $\varepsilon>0$. Based on the divergence-free extension operator from Definition \ref{def:Eomega}, we then define $v_{\varepsilon}:=\mathscr{E}_{\Omega_{\varepsilon}}[u_{\varepsilon}]$. By construction and due to Lemma \ref{lem:compactsupport} function is compactly supported in a fixed ball $\ball_{R}(0)$. Then there exists $\varepsilon_{0}>0$ such that, for all $0<\varepsilon<\varepsilon_{0}$, $\Omega_{\varepsilon}$ is non-empty, convex, open and bounded. Moreover, based on \eqref{eq:Lipindependent}, Lemma \ref{lemma:pointwise:solenoidal}, Corollary \ref{coro:divfree} and Lemma \ref{lemma:L1bound:c1} now imply the following properties of $v_{\varepsilon}$:
\begin{enumerate}[label=(V\arabic*)]
\item\label{item:molli4} $v_{\varepsilon}|_{\Omega_{\varepsilon}}\in\hold^{\infty}(\Omega_{\varepsilon};\R^{n})$, $v_{\varepsilon}|_{\R^{n}\setminus\overline{\Omega_{\varepsilon}}}\in\hold^{\infty}(\R^{n}\setminus\overline{\Omega_{\varepsilon}};\R^{n})$;
\item\label{item:molli5} $\mathrm{div}(v_{\varepsilon})=0$ in $\mathscr{D}'(\R^{n})$;
\item\label{item:molli6} $v_{\varepsilon}=u_{\varepsilon}$ in $\Omega_{\varepsilon}$;
\item\label{item:molli7} there exist $\mathtt{c}=\mathtt{c}(n,\Lip(\Omega,\mathcal{O}))>0$ such that we have $
\|v_{\varepsilon}\|_{\lebe^{1}(\R^{n})} \leq \mathtt{c}\|u_{\varepsilon}\|_{\lebe^{1}(\Omega_{\varepsilon})}.$
\end{enumerate}
We explicitly note that \ref{item:molli7} is a consequence of the fact that the $\lebe^{1}$-operator norm of $\mathscr{E}_{\Omega_{\varepsilon}}$ only depends on $n$ and $\Lip(\Omega_{\varepsilon},\mathcal{O})$ for all sufficiently small $\varepsilon>0$. By \eqref{eq:Lipindependent}, this number in turn can be bounded in terms of $\mathrm{Lip}(\Omega,\mathcal{O})$ \emph{uniformly} in $0<\varepsilon<\varepsilon_{0}$, and so the constant in \ref{item:molli7} can be assumed to be independent of $0<\varepsilon<\varepsilon_{0}$.

For $\delta>0$ sufficiently small, we then put $w_{\delta,\varepsilon}:=\rho_{\delta}*v_{\varepsilon}$. Then we have $w_{\delta,\varepsilon}\in\hold_{c}^{\infty}(\R^{n};\R^{n})$ and, by \ref{item:molli5},  $\mathrm{div}(w_{\delta,\varepsilon})=0$ pointwisely in $\R^{n}$. Lastly, we estimate 
\begin{align}\label{eq:ulihoeness}
\|u-w_{\delta,\varepsilon}\|_{\lebe^{1}(\Omega)} & \leq \|u-u_{\varepsilon}\|_{\lebe^{1}(\Omega)}+\|u_{\varepsilon}-v_{\varepsilon}\|_{\lebe^{1}(\Omega)}+\|v_{\varepsilon}-w_{\delta,\varepsilon}\|_{\lebe^{1}(\Omega)}. 
\end{align}
By \ref{item:molli1}, the first term tends to zero as $\varepsilon\searrow 0$, and the third term can also be made arbitrarily small by first sending $\delta\searrow 0$ and then $\varepsilon\searrow 0$. For the second term, we note that, by \ref{item:molli6} and \ref{item:molli7},
\begin{align*}
\|u_{\varepsilon}-v_{\varepsilon}\|_{\lebe^{1}(\Omega)} & \leq \|u_{\varepsilon}\|_{\lebe^{1}(\Omega\setminus\Omega_{\varepsilon})}+\|v_{\varepsilon}\|_{\lebe^{1}(\Omega\setminus\Omega_{\varepsilon})} \\ & \!\!\!\!\stackrel{\eqref{eq:stripsum}}{\leq} \|u\|_{\lebe^{1}(\Omega\setminus\Omega_{2\varepsilon})} + c\|u_{\varepsilon}\|_{\lebe^{1}(\Omega\setminus\Omega_{\mathtt{c}\varepsilon})} \leq c\|u\|_{\lebe^{1}(\Omega\setminus\Omega_{\max\{2,\mathtt{c}\}\varepsilon})}.
\end{align*}
Since $u\in\lebe^{1}(\Omega;\R^{n})$, it is then clear that the last term vanishes as $\varepsilon\searrow 0$. Going back to \eqref{eq:ulihoeness}, $\|u-w_{\delta,\varepsilon}\|_{\lebe^{1}(\Omega)}$ can be made arbitrarily small, and since $w_{\delta,\varepsilon}\in\hold_{\diver}^{1}(\overline{\Omega};\R^{n})$, this completes the proof. 
\end{proof} 
\noindent Based on Propositions  \ref{prop:intermediateL1convex} and \ref{prop:densitymain}, we are now in a position to prove Theorem \ref{thm:divL1conv}: 
\begin{proof}[Proof of Theorem \ref{thm:divL1conv}]
Let $u\in\lebe^{1}(\Omega;\R^{n})$ and let $(u_{j})\subset\hold^{1}(\overline{\Omega};\R^{n})$ be an arbitrary sequence with $\|u-u_{j}\|_{\lebe^{1}(\Omega)}\to 0$ as $j\to\infty$. By Proposition \ref{prop:intermediateL1convex}, $(\mathscr{E}_{\Omega}[u_{j}])$ is a Cauchy sequence in $\lebe^{1}(\R^{n};\R^{n})$ and thus strongly converges to some element in $\lebe^{1}(\R^{n};\R^{n})$. Since this $\lebe^{1}(\R^{n};\R^{n})$-limit is clearly independent of the particular approximating sequence $(u_{j})\subset\hold^{1}(\overline{\Omega};\R^{n})$, we may define $\mathscr{E}_{\Omega}[u]$ as the $\lebe^{1}(\R^{n};\R^{n})$-limit of $(\mathscr{E}_{\Omega}[u_{j}])$ for any such approximating sequence. Clearly, $\mathscr{E}_{\Omega}$ is linear on $\lebe^{1}(\Omega;\R^{n})$. Moreover, by Proposition \ref{prop:intermediateL1convex}, we then have 
\begin{align*}
\|\mathscr{E}_{\Omega}[u]\|_{\lebe^{1}(\R^{n})}=\lim_{j\to\infty} \|\mathscr{E}_{\Omega}[u_{j}]\|_{\lebe^{1}(\R^{n})} \leq c\lim_{j\to\infty}\|u_{j}\|_{\lebe^{1}(\Omega)} = c\|u\|_{\lebe^{1}(\Omega)}
\end{align*}
with $c>0$ only depending on $n$ and $\Lip(\Omega,\mathcal{O})$. 

Now let $u\in\lebe_{\mathrm{div}}^{1}(\Omega;\R^{n})$. We then use Proposition \ref{prop:densitymain} to find a sequence $(u_{j})\subset\hold_{\mathrm{div}}^{1}(\overline{\Omega};\R^{n})$ such that $\|u-u_{j}\|_{\lebe^{1}(\Omega)}\to 0$ as $j\to\infty$. Since the definition of $\mathscr{E}_{\Omega}[u]$ is independent of the particular approximating sequence, $\mathscr{E}_{\Omega}[u]$ is the $\lebe^{1}(\R^{n};\R^{n})$-limit of $(\mathscr{E}_{\Omega}[u_{j}])$ for the particular sequence $(u_{j})\subset\hold_{\mathrm{div}}^{1}(\overline{\Omega};\R^{n})$. By Proposition \ref{prop:intermediateL1convex}, $\diver(\mathscr{E}_{\Omega}[u_{j}])=0$ in $\mathscr{D}'(\Omega)$, and so we conclude   for all $\varphi\in\hold_{c}^{\infty}(\R^{n})$ that
\begin{align*}
\int_{\R^{n}}\mathscr{E}_{\Omega}[u]\cdot\nabla\varphi\dif x & = \lim_{j\to\infty}\int_{\R^{n}}\mathscr{E}_{\Omega}[u_{j}]\cdot\nabla\varphi\dif x = 0. 
\end{align*}
Hence, $\mathscr{E}_{\Omega}\colon\lebe_{\mathrm{div}}^{1}(\Omega;\R^{n})\to\lebe_{\mathrm{div}}^{1}(\R^{n};\R^{n})$ indeed, and this completes the proof. 
\end{proof} 

\section{Divergence-free extensions in $\lebe^{1}$ for Lipschitz domains} \label{sec:Lip}
In this section, we generalise the strategy from Section \ref{sec:L1} to Lipschitz domains. Specifically, this yields the proof of the following result and hereby Theorem \ref{thm:L1}: 
\begin{theorem}[Divergence-free $\lebe^{1}$-extensions for Lipschitz domains]\label{thm:main:Lip}
    Let $\Omega \subset \R^n$ be an bounded domain with Lipschitz boundary $\partial\Omega$. Then there exist $\varepsilon >0$ and a \emph{linear and bounded extension operator}  $\mathscr{E}_{\Omega} \colon \lebe^1(\Omega;\R^n) \to \lebe^1(\Omega^{\varepsilon};\R^n)$ with $\Omega^{\varepsilon}:=\ball_{\varepsilon}(\Omega)$ such that, in particular, $\mathscr{E}_{\Omega}u|_{\Omega}=u$ $\mathscr{L}^{n}$-a.e. in $\Omega$ whenever $u\in\lebe^{1}(\Omega;\R^{n})$ and  
    \begin{align*}
    u\in\lebe^{1}(\Omega;\R^{n})\;\text{and}\;\diver u = 0 \text{ in } \mathscr{D}'(\Omega) \quad \Longrightarrow \quad \diver \mathscr{E}_{\Omega}[u] =0 \text{ in } \mathscr{D}'(\Omega^{\varepsilon}).
    \end{align*}
\end{theorem}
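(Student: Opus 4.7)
The plan is to mirror the proof of Theorem \ref{thm:divL1conv} almost verbatim, replacing the flat simplices $\conv(\mathbf{x}_{I})$ in Definition \ref{def:extension:l1} with the curvilinear simplices $M_{\bar{\mathbf{x}}_{I}}=S_{\bar{\mathbf{x}}_{I}}(\mathbb{D}^{n-1})$ from Theorem \ref{thm:prop:simplex}. Because these simplices are only defined under the proximity conditions \ref{ass1}--\ref{ass3}, the construction has to be localised to a small tubular neighbourhood of $\partial\Omega$, and this localisation is exactly what confines the image to $\Omega^{\varepsilon}$ instead of all of $\R^{n}$.

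Concretely, I would fix Whitney coverings $\mathcal{W}_{1}$ of $\Omega$ and $\mathcal{W}_{2}$ of $\R^{n}\setminus\overline{\Omega}$, together with the map $\Psi$ of Lemma \ref{cor:Jones}, and then pick $\varepsilon>0$ small enough that every $Q_{i}\in\mathcal{W}_{2}$ intersecting $\Omega^{\varepsilon}\setminus\overline{\Omega}$ satisfies $\ell(Q_{i})\leq\eta$, with $\eta$ the threshold in Theorem \ref{thm:prop:simplex}. Under this constraint, for every multi-index $I=(i_{1},\ldots,i_{n})$ with $\tfrac{7}{6}Q_{i_{1}}\cap\cdots\cap\tfrac{7}{6}Q_{i_{n}}\neq\emptyset$ the centres $\bar{x}_{i_{j}}$ of the inner cubes $\Psi(Q_{i_{j}})$ satisfy \ref{ass1}--\ref{ass3}, and so the curvilinear simplex is well-defined. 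The extension $\mathscr{E}_{\Omega}[u]$ is then defined on $\hold^{1}(\overline{\Omega};\wedgeq^{n-1}(\R^{n})^{*})$ by the analogue of \eqref{def:Eomega}, in which each inner integral is taken over $M_{\bar{\mathbf{x}}_{I}}$ with the surface measure pulled back from $\mathbb{D}^{n-1}$ via $S_{\bar{\mathbf{x}}_{I}}$.

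The solenoidality argument of Lemma \ref{lemma:pointwise:solenoidal} and Corollary \ref{coro:divfree} then carries over: the combinatorial identity \eqref{eq:vanish} is independent of the simplex geometry, while the cancellation in the remaining term $\mathrm{II}(\widetilde{I})$ becomes Stokes' theorem applied to the $n$-dimensional curvilinear simplex $S_{\bar{\mathbf{x}}_{\widetilde{I}}}(\mathbb{D}^{n})$, whose $n+1$ boundary faces agree \emph{exactly} with the $M_{\bar{\mathbf{x}}_{I_{s}}}$ by the compatibility property \ref{def:prop:5}. The global $\lebe^{1}$-representation of $\dif\mathscr{E}_{\Omega}[u]$ from Lemma \ref{lemma:globalsolenoidality} follows by the same inductive integration by parts; the flat identity $\dif^{*}\nu_{r}=\tfrac{n+1-r}{r-1}\nu_{r-1}$ acquires lower-order curvature corrections, which are absorbed into the error term via the uniform derivative bound \ref{def:prop:35} on $S_{\bar{\mathbf{x}}_{I}}$.

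The $\lebe^{1}$ estimates (Lemmas \ref{lem:convhullL1bound} and \ref{lemma:L1bound:c1}) are obtained by pulling back the surface integral to $\mathbb{D}^{n-1}$ via $S_{\bar{\mathbf{x}}_{I}}$ and running the same Fubini-based strip argument; the Bi-Lipschitz bound \ref{def:prop:4} together with the Jacobian bound \ref{def:prop:35} reduce the change of variables to the flat case, at the cost of constants depending only on $\Lip(\mathcal{O},\Omega)$ and on the curvature of the smooth model $V$ from Lemma \ref{lemma:BZ}. The main obstacle, highlighted in Remark \ref{rem:simplex:choice}, is the non-concentration property: as one vertex $x_{i_{j}}$ varies over $\tfrac{1}{2}\Psi(Q_{i_{j}})$, the resulting family of curvilinear simplices must sweep out an $n$-dimensional region of volume comparable to $\mathscr{L}^{n}(\mathrm{conv}_{I})$, and this is exactly what the projection-based construction \eqref{def:coord1} together with Corollary \ref{coro:diffeo} provides. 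With the $\hold^{1}$-extension in hand, density of $\hold^{1}_{\diver}(\overline{\Omega};\R^{n})$ in $\lebe^{1}_{\diver}(\Omega;\R^{n})$ is then proved along the lines of Proposition \ref{prop:densitymain} by mollifying on the inner Lipschitz domains $\Omega_{\delta}$ (which retain uniform Lipschitz constants for small $\delta$) and applying the $\hold^{1}$-extension there; a passage to the limit finally yields the bounded operator $\mathscr{E}_{\Omega}\colon \lebe^{1}(\Omega;\R^{n})\to \lebe^{1}(\Omega^{\varepsilon};\R^{n})$.
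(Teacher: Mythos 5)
Your overall plan matches the paper's proof of Theorem~\ref{thm:main:Lip}: the same Whitney/Jones setup, the same replacement of $\conv(\mathbf{x}_I)$ by the curvilinear simplices $M_{\bar{\mathbf{x}}_I}$ (with $\int$ rather than $\fint$ and $\nu$ now the pointwise unit normal), the same restriction of $\varepsilon$ so that all active cubes satisfy $\ell(Q_i)\leq\eta$ and thus \ref{ass1}--\ref{ass3} hold, the same Stokes cancellation via the compatibility \ref{def:prop:5}, and the same density-via-$\Omega_\delta$ finish.

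The one place you depart from the paper, and where your plan is weakest, is Lemma~\ref{lemma:globalsolenoidality:Lip}. You propose to reuse the flat identity $\dif^{*}\nu_{r}=\tfrac{n+1-r}{r-1}\nu_{r-1}$ and argue that it ``acquires lower-order curvature corrections, absorbed via \ref{def:prop:35}.'' That identity was a concrete algebraic computation for the \emph{constant} wedge-product vector \eqref{eq:normaldefine}; on a curvilinear simplex the integrand carries a genuinely $z$-dependent unit normal and the analogous differential-geometric computation is neither a small perturbation of the flat one nor obviously controlled by the $C^{1}$-bound on $S_{\bar{\mathbf{x}}_I}$ alone (one would have to track $\mathrm{D}^2 S$ and the second fundamental form). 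The paper avoids this entirely by running Stokes' theorem ``backwards'': writing
\begin{align*}
\int_{M_I} u(y)\,\nu^{n-1}\,\dHaus^{n-1}(z) \;=\; \int_{\partial M_I}\langle z-y,\,u(y)\rangle\,\nu^{n-2}\,\dHaus^{n-2}(z),
\end{align*}
then splitting $\partial M_I$ into its faces, using compatibility to drop the index dependence, and integrating by parts in $y$. This reduces the codimension by one at each inductive step without ever computing a tangential codifferential of the normal. So the paper's Lipschitz argument is not a curvature-perturbed version of the convex one but a conceptually different, cleaner route; your plan, as stated, has a genuine gap at that step which you would need to fill (most naturally by adopting the same ``Stokes backwards'' device).

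Two smaller points. First, the inner integral in Definition~\ref{def:extension:Lip} is over the curvilinear simplex with its Hausdorff measure and \emph{unit} normal, not the $\fint$/vector-normal of the convex case; saying ``surface measure pulled back via $S$'' is fine but you should be explicit that the $\fint$ normalisation is dropped precisely because $\nu$ is now unit-length. Second, the inner domains $\Omega_\delta$ used for the density step are the collar-level sets $T^{-1}(\partial V\times(\delta/\franz,1))$, not the metric inner parallel sets $\{\dist(\cdot,\partial\Omega)>\delta\}$; the former are the ones on which the simplex construction persists with uniform constants, and that distinction matters for making the $\varepsilon$-uniformity in \ref{item:molli7}-type bounds go through.
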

The construction of the extension operator underlying Theorem \ref{thm:main:Lip}  is inspired by the treatment of the easier convex case from Section \ref{sec:L1}. While the overarching strategy is the same and various intermediate results follow likewise, others require slight modification. As in Section \ref{sec:L1}, we identify divergence-free functions with closed $(n-1)$-forms, and organise the proof of Theorem \ref{thm:main:Lip} as follows: In
\begin{enumerate} [label=(\alph*)]
    \item\label{item:MaxEberl} Section \ref{sec:Lip:geometric}, we give the geometric set-up underlying Theorem \ref{thm:main:Lip} and introduce the number $\varepsilon>0$ based on the geometric limitations in the present setting.
    \item Section \ref{sec:Lip:def}, we define the extension operator in view of  \ref{item:MaxEberl}. Moreover, we outline how an analogue of Lemma \ref{lemma:pointwise:solenoidal} can be achieved.
    \item Section \ref{sec:Lip:Lem44}, we establish the solenoidality of $\mathscr{E}_{\Omega}[u]$ in $\mathscr{D}'(\Omega^{\varepsilon})$ provided $u\in\lebe^{1}(\Omega;\R^{n})$ is divergence-free. This is the counterpart of Lemma \ref{lemma:globalsolenoidality} and Corollary \ref{coro:divfree}.
    \item\label{item:MaxEberl1} Section \ref{sec:Lip:L1bounds}, we establish the corresponding $\lebe^{1}$-bounds for the extensions. This is the counterpart of Lemmas  \ref{lem:convhullL1bound} and \ref{lemma:L1bound:c1}.
    \item Section \ref{sec:Lip:final}, we employ a density argument similar to that from Section \ref{sec:density} to accomplish the passage from $\hold^{1}(\overline{\Omega};\R^{n})$- to $\lebe^{1}(\Omega;\R^{n})$-maps.
    \end{enumerate}
By the slightly different geometric set-up from \ref{item:MaxEberl}, it is especially the estimations underlying \ref{item:MaxEberl1} which require modification. Lastly, after having accomplished the proof of Theorem \ref{thm:main:Lip}, we discuss global extensions in the spirit of \cite{HKMT} for the present $\lebe^{1}$-setting in Section \ref{sec:globalext}. 
\subsection{Geometric set-up} \label{sec:Lip:geometric}
We now proceed to the detailled geometric set-up of Theorem \ref{thm:main:Lip}.
At this point we emphasise that we construct the extension for general \emph{Lipschitz} domains. The following diffeomorphism to a smooth domain is \emph{not} directly relevant for the extension itself, but only for the suitable definition of simplices in $\Omega$. Hence, given the definition of those simplices we will not need the identification with a smooth domain any longer.
\subsubsection{Lipschitz vs smooth domains}
We consider a Lipschitz domain, so an open, bounded and connected set $\Omega \subset \R^n$. By the Ball-Zarnescu Lemma \ref{lemma:BZ}, there is an open domain $V$ with smooth boundary, and a Bi-Lipschitz map $\Gamma_1 \colon \overline{\Omega} \to \overline{V}$
, which is smooth in the interior, i.e., smooth in $\Omega$. We may cover $\partial V$ with a finite number of sets $O_i$ such that (upon possible rotations)
\[
O_i \cap U = O_i \cap \{ (x',t) \colon x' \in \R^{n-1}, 
t < f_i(x') \}
\]
for $f_i \in\hold_{c}^{\infty}(\R^{n-1})$. By Lebesgue's covering lemma (\cite[Lemma 27.5]{Munkres}) there is some $\varepsilon_1>0$, such that for any $x \in \partial \Omega$ there is some $O_i$ such that $\ball_{\varepsilon_1}(x) \subset O_i$.

Moreover, due to Lemma \ref{lemma:collar} there exists $\varepsilon_2>0$ such that there is a collar domain around $\partial V$, that is,
\[
\{ x \in V \colon \dist(x, \partial V) < \franz\} \quad \text{and} \quad \partial V \times (0,1)
\]
are smoothly diffeomorphic by means of a diffeomorphism $\Gamma_2$. Letting $0<\varepsilon< \varepsilon_2$, we put
\[
\Omega_\varepsilon = T^{-1} ( \{x \in V \colon \dist(x, \partial V) > \franz \}).
\]
which forms the basis of a density argument similar to that from Section \ref{sec:density}, cf. Section \ref{sec:Lip:final}.
\subsubsection{Definition of simplices} \label{sec:def:simp}
We have discussed in Section \ref{sec:def:simplex} how to define simplices in Lipschitz domains by identification of a neighbourhood to the boundary to $\partial V \times (0,1)$ for a smoothly bounded domains $V$.
Let us shortly recall the setup. The map $\Gamma_1$ was a Bi-Lipschitz homeomorphism between $\bar{\Omega}$ and $\bar{V}$ that is smooth in the interior. Moreover, $\Gamma_2$ is a smooth diffeomorphism between a neighbourhood of  $\partial V$ and $\partial V \times (0,1)$. We defined simplices on $\Omega$ by defining simplices on $\partial V \times (0,1)$ first (cf. Figure \ref{figure:simplex}) and then using both homeomorphism to define the simplex on $\Omega$.

In particular, given $(x_0,s_0),\ldots,(x_k,s_k) \in \partial V \times (0,1)$, we established for the second coordinate $t \mapsto s(t)$ with \eqref{def:coord2}
\[
    \min_{i} s_i \leq s(t) \leq \max_{i} s_i.
\]
This observation gives us the estimate on the distance of the manifold to the boundary, i.e. Theorem \ref{thm:prop:simplex} \ref{def:prop:4}. The set $\Omega_{\varepsilon}$, is however characterised by $\Omega_{\varepsilon}=T^{-1}((\varepsilon/\franz,1)\times\partial V)$, $T= \Gamma_2 \circ \Gamma_1$. Therefore, we may extract the following.
\begin{lemma}
    Let $x_0,...,x_k \in \Omega_{\varepsilon}$ and $0<\varepsilon<\franz$ be such that the following hold:
    \begin{enumerate} [label=(A\arabic*')]
        \item \label{ass1:v2} $\eta := \inf_{0\leq i\leq k} \dist(x_i,\partial \Omega_\varepsilon) \leq \varepsilon$;
        \item \label{ass2:v2} $ \sup_{0\leq i\leq k} \dist(x_i, \partial \Omega_\varepsilon) \leq C_1 \eta$; 
         \item \label{ass3:v2} $\sup_{0\leq i,j \leq k} \vert x_i - x_j \vert \leq C_1 \eta$.
    \end{enumerate}
    Then the simplex $S_{\bar{x}}$ as defined in Section \ref{sec:def:simplex} (cf. Theorem \ref{thm:prop:simplex}) satisfies
    \begin{enumerate}
        \item[(S5')] \label{def:prop:4:v2} For all $x \in S_{\bar{x}}$ we have
        \[
        \alpha^{-1} \eta \leq \dist(x,\partial \Omega_\varepsilon) \leq C \alpha  \eta.
        \]
    \end{enumerate}
\end{lemma}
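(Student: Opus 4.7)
The plan is essentially to transfer the bound from the collar coordinates $\partial V\times(0,1)$ back to $\Omega$ via the bi-Lipschitz map $T=\Gamma_2\circ\Gamma_1$, relying on the fact that the construction of the simplex in Section \ref{sec:def:simp} proceeds coordinate-wise and that in the second coordinate the value $s(t)$ is a genuine convex combination of $s_0,\ldots,s_k$.

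First, I would rewrite $\Omega_\varepsilon$ in collar coordinates. Since $0<\varepsilon<\varepsilon_2$ is below the collar threshold, the portion of $\partial\Omega_\varepsilon$ that is relevant to points near $\partial\Omega$ is precisely $T^{-1}(\partial V\times\{\varepsilon/\franz\})$; this is the only boundary component that the hypotheses \ref{ass1:v2}--\ref{ass3:v2} can see because $\eta\leq\varepsilon$ forces all $x_i$ to lie in the collar. Combining Corollary \ref{coro:diffeo}\ref{coro:diffeo:4} applied to $\Omega$ with the fact that $T_2$ is bi-Lipschitz in the collar gives a constant $\tilde c>0$ (depending on the bi-Lipschitz constant of $T$ and on $\franz$) such that, for every $x$ in that collar region,
\begin{equation*}
    \tfrac{1}{\tilde c}\bigl(T_2(x)-\varepsilon/\franz\bigr)\;\leq\;\dist(x,\partial\Omega_\varepsilon)\;\leq\;\tilde c\bigl(T_2(x)-\varepsilon/\franz\bigr).
\end{equation*}
Setting $T(x_i)=(y_i,s_i)$ this converts \ref{ass1:v2} and \ref{ass2:v2} into the equivalent statements that $\min_i(s_i-\varepsilon/\franz)$ and $\max_i(s_i-\varepsilon/\franz)$ are both comparable to $\eta$, with constants depending only on $\tilde c$ and $C_1$.

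Next I would apply the coordinate-wise description of $S_{\bar x}$. By construction, a point on the simplex has the form $T^{-1}(x(t),s(t))$ with $s(t)=\sum_{i=1}^k t_i s_i+(1-\sum_{i=1}^k t_i)s_0$ and $t\in\mathbb{D}^k$. Since $s(t)$ is a convex combination of the $s_i$, the inequality $\min_i s_i\leq s(t)\leq\max_i s_i$ recorded in \eqref{eq:distboundary} yields
\begin{equation*}
    \min_i(s_i-\varepsilon/\franz)\;\leq\;s(t)-\varepsilon/\franz\;\leq\;\max_i(s_i-\varepsilon/\franz),
\end{equation*}
which is $\eqsim\eta$ by the previous step. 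Feeding this back through the collar estimate above, applied now at the point $T^{-1}(x(t),s(t))\in S_{\bar x}$, produces
\begin{equation*}
    \tfrac{1}{\tilde c}\bigl(s(t)-\varepsilon/\franz\bigr)\;\leq\;\dist\bigl(S_{\bar x}(t),\partial\Omega_\varepsilon\bigr)\;\leq\;\tilde c\bigl(s(t)-\varepsilon/\franz\bigr),
\end{equation*}
and chaining the comparabilities gives $\alpha^{-1}\eta\leq\dist(S_{\bar x}(t),\partial\Omega_\varepsilon)\leq C\alpha\eta$ with $\alpha$ depending only on $\tilde c$ and $C_1$, exactly as claimed in (S5').

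The only genuinely delicate point is verifying that the relevant portion of $\partial\Omega_\varepsilon$ is $T^{-1}(\partial V\times\{\varepsilon/\franz\})$ and that no other component of $\partial\Omega_\varepsilon$ can be closer than the value produced above; this is handled by choosing $\varepsilon$ small enough (depending on $\franz$ and $\varepsilon_1$) so that the tubular neighbourhood of $\partial\Omega_\varepsilon$ of radius $C\alpha\varepsilon$ sits inside the collar region where $T$ is a diffeomorphism. All remaining work is book-keeping on the bi-Lipschitz constants, and does not require reproving anything already contained in Theorem \ref{thm:prop:simplex}.
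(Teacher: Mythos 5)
Your proposal is correct and follows essentially the same route as the paper's own proof: translate $\dist(\cdot,\partial\Omega_\varepsilon)$ to the collar coordinate $T_2(x)-\varepsilon/\franz$ via the bi-Lipschitz map, use that $s(t)$ is a convex combination of $s_0,\ldots,s_k$ to squeeze $s(t)-\varepsilon/\franz$ between $\min_i$ and $\max_i$, and chain the two comparabilities. The extra paragraph you add about ruling out closer components of $\partial\Omega_\varepsilon$ is a reasonable sanity check that the paper handles implicitly via the hypothesis $\eta\leq\varepsilon$ and the choice of the collar threshold, but it does not change the argument.
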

Hence, the simplices constructed for $\Omega$ through the identification of a neighbourhood of the boundary $\partial\Omega$ with $\partial V \times (-1,0)$ can also be used for $\Omega_{\varepsilon}$.

\begin{figure}
\includegraphics[width=0.9\textwidth]{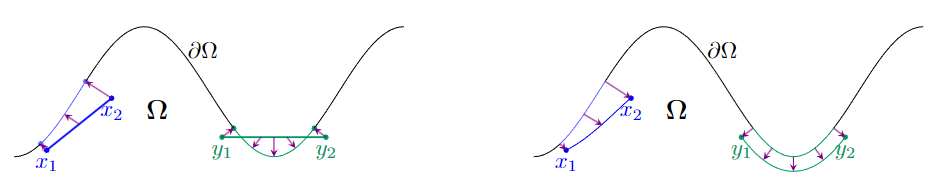}
\caption{ An exemplaric picture how the simplices are defined in dimension $n=2$ (after possibly already identifying $\Omega$ with a domain with smooth boundary, cf. Figure \ref{figure:collar}). In the first step, consider the simplex just in $\R^2$, i.e. the direct line \textcolor{blue}{$[x_1,x_2]$} and \textcolor{Gump} {$[y_1,y_2]$}. This simplex might not even be contained in $\Omega$, as one might see with $[y_1,y_2]$. Project the simplex onto the boundary through the normal such that (in 2D), we get a map $\Gamma_2^1 \colon [0,1] \to \partial \Omega$, the parametrisation of the projection. On the right hand side we see the second step of the construction: We then use the interior normal $\nu_{\Omega}$ and define $x(t)= \Gamma_2^1(x) + ((1-t) \dist(x_1,\partial \Omega)+ t \dist(x_2,\partial \Omega)) \cdot \nu_{\Omega}$ (similarly with $y$).
}
\end{figure}
\begin{proof}
   Let $W \subset \bar{\Omega}$ such that the map $T \colon W \to \partial V \times [0,1)$ is Bi-Lipschitz. Then, for any $x$
   \[
   c^{-1} \vert T_2(x) - \epsilon/\franz \vert 
    \leq \dist(x,\partial \Omega_{\varepsilon}) \leq c \vert T_2(x) - \epsilon/\franz \vert 
   \]
    As, given $(x_0',s_0'),...,(x_k',s_k') \in \partial V$, the second component of the simplex obeys
    \[
    \min_{i} s_i \leq s(t) \leq \max_{i} s(t),
    \]
   we conclude for the simplex $S_{\bar{x}}$:
    \[
 c^{-2} \min_{i} \dist(x_i,\partial \Omega_{\varepsilon}) \leq   \min_{t \in \mathbb{D}^k} \dist(S_{\bar{x}}(t), \partial \Omega_{\varepsilon}) \leq  \max_{t \in \mathbb{D}^k} \dist(S_{\bar{x}}(t), \partial \Omega_{\varepsilon}) \leq c^2 \max_{i} \dist(x_i,\partial \Omega_{\varepsilon}).
    \]
    Using Assumption \ref{ass1:v2} and \ref{ass2:v2} then yields the result.
\end{proof}

Therefore, given points $x_{i_1},...,x_{i_{k}}$, $I=(i_1, \ldots,i_k)$ we denote by \textbf{$M_I$} the $(k-1)$-dimensional simplex with vertices $x_{i_j}$, $j=1,\ldots,k$,

\subsubsection{Whitney cubes etc.}
In the following we consider domains $U=\Omega_{\varepsilon}$. As for the convex case we might pick Whitney covers $\mathcal{W}_{1}^{\varepsilon}$ and $\mathcal{W}_{2}^{\varepsilon}$ and a map 
$\psi_\varepsilon \colon\mathcal{W}_{2}^{\varepsilon} \to \mathcal{W}_{1}^{\varepsilon}$ enjoying
\begin{enumerate} [label=(J\arabic*)]
    \item\label{item:Jonesrefined1:v2} If $\ell(Q) > \eta$, then $\Psi_\epsilon(Q) = Q_0^{\epsilon}$ for some cube $Q_0^{\epsilon}$;
    \item\label{item:Jonesrefined2:v2} $\frac{1}{4}\ell(Q) \leq \ell(\Psi_\epsilon(Q)) \leq 4 \ell(Q) $ for all $Q\in\mathcal{W}_{2}^{\epsilon}$ with $\ell(Q) \leq \eta$;
    \item \label{item:Jonesrefined3:v2} $\dist(Q,\Psi(Q)) \leq C \ell(Q)$, where $C>0$ only depends on $\Lip(\Omega_{\varepsilon},\mathcal{O})$;
    \item \label{item:Jonesrefined3a:v2} $c\ell(Q)\leq \dist(\Psi(Q),\partial\Omega) \leq C \ell(Q)$, where $c,C$ only depend on $\Lip(\Omega_{\varepsilon},\mathcal{O})$;
    \item\label{item:Jonesrefined4:v2} $C^{-1}\ell(Q)\leq\dist(Q,\Psi(Q)) \leq C \ell(Q)$ whenever $\ell(Q)\leq\eta$, where $C$ only depends on $\Lip(\Omega_{\varepsilon},\mathcal{O})$.
\end{enumerate}
In other words, the size of the cube $\psi_{\epsilon}(Q)$, its distance to the original exterior cubes $Q$ and its distance the boundary,  are all comparable to the sidelength of the cubes. In particular, involved constants only depend on $\Lip(\Omega_{\varepsilon},\mathcal{O})$, which \emph{does not} depend on $\varepsilon$. Moreover, recall that we have a smooth partition of unity $\phi_i$ on the exterior cubes with the properties \ref{item:POU1}, \ref{item:POU2} \& \ref{item:POU3}. 
\subsection{Definition of the extension} \label{sec:Lip:def}
Let $U= \Omega_{\delta}$ for some $0 < \delta <\franz$. We define the extension as follows.
\begin{definition} \label{def:extension:Lip}
    Let $u \in \hold^1(\overline{U};\wedgeq^{n-1}(\R^{n})^{\ast})$. Then we define the \emph{divergence-free extension operator} by
    \begin{align*}
    \mathscr{E}_{U}[u]:=\begin{cases} 
    u &\;\text{in}\;\;U,\\ 
    \E_{U}[u]&\;\text{in}\;\;\R^{n}\setminus\overline{U}, 
    \end{cases}
    \end{align*}
where $\E_{U}u$ is given by 
    \begin{equation} \label{def:Eomega:Lip}
    \begin{split}
         \E_{U}[u]  & :=  (-1)^{n-1} \sum_{I=(i_1,...,i_n)\in\mathbb{N}^{n}} \Big(\phi_{i_n} \wedge \dif \phi_{i_{n-1}} \wedge \dots \wedge \dif \phi_{i_1} \Big. \\ & \Big. \;\;\;\;\;\;\;\;\;\;\;\;\;\;\;\;\;\;\;\;\;\;\;\;\cdot \int_{(\R^{n})^{n}} \int_{M(\mathbf{x}_{I})} u(z) \nu \dHaus^{n-1}(z) ~\textup{d}\mu^{I}(\mathbf{x}_{I})\Big).
        \end{split}
    \end{equation}
In the case that the simplex spanned up by $x_{i_1},...,x_{i_n}$ is degenerate, we define the inner integral over the manifold $M_I$ to be zero.   
\end{definition}
We warn the reader that the normal $\nu$ appearing in \eqref{def:Eomega:Lip} now denotes the \emph{unit normal} of $M(\mathbf{x}_I)$
at a point $z \in M(\mathbf{x}_I)$, which is well-defined as the simplex is smooth. This is in contrast to the previous section where the normal was linearly depending on $x_I$ (this was compensated by an \emph{average} integral). The orientation of the normal is chosen in such a way that
\[
\int_{M(\mathbf{x}_I)} \nu \dif \mathscr{H}^{r-1} = \frac{1}{(r-1)!}(x_{i_r} - x_{i_{r-1}}) \wedge (x_{i_{r-1}} - x_{i_{r-2}}) \wedge ... \wedge (x_{i_{2}}-x_{i_{1}}),
\]
which, in the case of a flat curvilinear simplex (given by the convex hull of points), is up to the aforementioned normalising positive factor consistent with the definition \eqref{eq:normaldefine}.

This also hints at one of the major difference between the proofs here and the proofs of Section \ref{sec:L1ext}: The normal is not constant, hence for some of the results we need to argue slightly differently. Nevertheless, the proofs are quite parallel and use the same ideas as in Section \ref{sec:L1ext}. In the proofs, we therefore highlight the key differences and refer to the appropriate counterpart in Section \ref{sec:L1ext} for technical details instead of repeating the whole argument.

Let $U^{\varepsilon}$ be the $\varepsilon$-neighbourhood of $U$, (i.e. if $\delta$ is small enough, then $U^{\varepsilon} \subset B_{\varepsilon/2}(\Omega)$). Observe that if $Q_{i_1},...,Q_{i_n}$ are cubes in $\mathcal{W}_1^{\varepsilon}$ that touch, then for any $x_j \in Q^{i_j},x_l \in Q^{i_l}$, we have
\[
\dist(x_i,x_j) \leq C \varepsilon \leq \varepsilon_2/3.
\]
In particular, the simplex spanned by $x_1,...,x_n$ (such that $x_j \in Q^{i_j}$) as in Section \ref{sec:def:simp}, is well-defined. Consequently, $\E_U u$ is well-defined in $U^{\varepsilon}$.

First of all, we may check that $\E_U u$ is solenoidal pointwisely in $U^{\varepsilon}$.
\begin{lemma} \label{lemma:pointwise:solenoidal:Lip}
    Let $u \in \hold^{1}(\overline{U},\wedgeq^{n-1}(\R^{n})^{*})$ and $U^{\varepsilon}$ be the previously defined neighbourhood of $U$. Then there holds $\mathscr{E}_{U}[u] \in \hold^{\infty}(U^{\varepsilon}\setminus\overline{U};{\wedgeq}^{n-1}(\R^n)^{\ast})$. Moreover, if we have $\dif  u = 0$ in $U$, then also $ \dif\mathscr{E}_{U}[u]=0$ in $U^{\varepsilon} \setminus\overline{U}$. 
\end{lemma}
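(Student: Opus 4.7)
The plan is to follow the strategy of Lemma \ref{lemma:pointwise:solenoidal}, replacing flat simplices by the curvilinear simplices $M(\mathbf{x}_I)$ constructed in Section \ref{sec:def:simp}, and using Stokes' theorem on smooth manifolds with boundary in place of the Euclidean Gauss--Green formula.

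First I would verify the smoothness claim. Fix $y\in U^{\varepsilon}\setminus\overline{U}$. By \ref{item:Whitney2} and \ref{item:POU1}, only finitely many terms in the sum \eqref{def:Eomega:Lip} are nonzero on a neighbourhood of $y$, so the series is locally finite. Each individual summand is smooth in $y$: the factor $\phi_{i_n}\wedge\dif\phi_{i_{n-1}}\wedge\dots\wedge\dif\phi_{i_1}$ depends smoothly on $y$, and the integral over $M(\mathbf{x}_I)$ is a number independent of $y$ (depending only on the vertices, which are integrated out against $\mu^{I}$). Hence $\mathscr{E}_{U}[u]\in\hold^{\infty}(U^{\varepsilon}\setminus\overline{U})$ and $\dif$ commutes with the sum.

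Next I would compute $\dif\mathscr{E}_{U}[u](y)$. Exactly as in the proof of Lemma \ref{lemma:pointwise:solenoidal}, I insert a trivial partition of unity $\sum_{i_{n+1}}\varphi_{i_{n+1}}=1$, and for a multi-index $\widetilde I=(i_1,\dots,i_{n+1})$ denote by $I_s$ the index obtained by replacing $i_s$ by $i_{n+1}$ (with $I_{n+1}=(i_1,\dots,i_n)$). Using that $\sum_{i_s}\dif\varphi_{i_s}=0$ on $U^{\varepsilon}\setminus\overline{U}$, one checks (as in equation \eqref{eq:vanish}) that adding the $s=1,\dots,n$ terms to the original expression produces no change. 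The result is
\[
\dif\mathscr{E}_{U}[u](y)=(-1)^{n-1}\!\!\sum_{\widetilde I\in\mathbb{N}^{n+1}}\!\!\Big(\varphi_{i_{n+1}}\dif\varphi_{i_n}\wedge\cdots\wedge\dif\varphi_{i_1}(y)\Big)\sum_{s=1}^{n+1}\!(-1)^{\sigma(s)}\!\!\int_{(\R^n)^{n+1}}\!\!\int_{M(\mathbf{x}_{I_s})}\!u\,\nu\dHaus^{n-1}\dmu^{\widetilde I},
\]
with appropriate signs $(-1)^{\sigma(s)}$ coming from the orientation convention for $\nu$.

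The crucial step is to recognise the inner sum as the boundary integral of the $n$-dimensional curvilinear simplex $M(\mathbf{x}_{\widetilde I})$. By the compatibility property \ref{def:prop:5} of Theorem \ref{thm:prop:simplex}, the restriction of the parametrisation $S_{\mathbf{x}_{\widetilde I}}$ to the face $\mathbb{D}^n_s$ coincides with $S_{\mathbf{x}_{I_s}}$. Hence the $(n-1)$-simplices $M(\mathbf{x}_{I_s})$ are precisely the faces of $M(\mathbf{x}_{\widetilde I})$, and the orientation convention on $\nu$ (chosen at the definition of \eqref{def:Eomega:Lip}) makes these faces coherently oriented as the boundary $\partial M(\mathbf{x}_{\widetilde I})$. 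Applying Stokes' theorem on the smooth $n$-dimensional manifold with boundary $M(\mathbf{x}_{\widetilde I})\subset U$ (which is contained in $U$ thanks to the distance bound in the revised property of the simplex from Section \ref{sec:def:simp}), the inner sum equals $\int_{M(\mathbf{x}_{\widetilde I})}\dif u$, which vanishes by assumption. Summing over $\widetilde I$ yields $\dif\mathscr{E}_{U}[u]=0$ on $U^{\varepsilon}\setminus\overline{U}$.

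The main obstacle is the application of Stokes' theorem on a curvilinear simplex: one must check that $M(\mathbf{x}_{\widetilde I})$ is either a genuine smooth $n$-manifold with boundary (so that Stokes applies classically) or degenerate in the sense of Theorem \ref{thm:prop:simplex}\ref{def:prop:3} (in which case all faces are degenerate too and both sides vanish), and that the normals chosen on the faces match the induced boundary orientation. Both points are controlled by the results of Section \ref{sec:simplices}, in particular properties \ref{def:prop:3}, \ref{def:prop:4} and the compatibility \ref{def:prop:5}.
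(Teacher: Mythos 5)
Your proposal is correct and follows essentially the same approach as the paper: local finiteness for smoothness, inserting a trivial partition of unity and using $\sum_{i_s}\dif\varphi_{i_s}=0$ to recognise the inner sum as a boundary integral, then applying Stokes' theorem on the $n$-dimensional curvilinear simplex $M(\mathbf{x}_{\widetilde I})$ whose faces are the $M(\mathbf{x}_{I_s})$ by the compatibility property. Your remarks on orientation matching and the degenerate case are sensible details the paper leaves implicit.
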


This is indeed the same computation as in Lemma \ref{lemma:pointwise:solenoidal}. In particular, the way we constructed simplices $M_I$ yields that we may use Gau{\ss} theorem/ Stokes' theorem in dimension $n$.
\begin{proof}
As in the proof of Lemma \ref{lemma:pointwise:solenoidal}, the sum is locally finite and hence smooth in $U^{\varepsilon} \setminus \bar{U}$. It remains to calculate the pointwise derivative. Given $\widetilde{I}=(i_1,..,i_{n+1}) \in \N^{n+1}$, we denote for $s\in\{1,...,n\}$ 
\begin{align*}
I_s=(i_1,...,i_{s-1},i_{n+1},i_{s+1},...,i_n)\;\;\;\text{and}\;\;\; I_{n+1} =(i_1,...,i_n).
\end{align*} 
Using that the $\phi_{i_s}$'s form a partition of unity, we have for $y\in\R^{n}\setminus\overline{\Omega}$
\begin{align*}
\dif\,(\mathscr{E}_{\Omega}[u](y))  
& = \sum_{\widetilde{I}=(i_1,...,i_n,i_{n+1})\in\mathbb{N}^{n+1}} \Big(\varphi_{i_{n+1}}\dif \phi_{i_n} \wedge \dif \phi_{i_{n-1}} \wedge \dots \wedge \dif \phi_{i_1} \Big. \\ & \hspace{1cm}  \cdot \Big(\int_{(\R^{n})^{n}} \int_{M(\mathbf{x}_{I_{n+1}})} u(z) \nu(\mathbf{x}_{I_{n+1}}) \dHaus^{n-1}(z) ~\textup{d}\mu^I(\mathbf{x}_{I_{n+1}}) \Big. \\ & \hspace{1.3cm} - \sum_{s=1}^{n}\int_{(\R^{n})^{n}}\int_{M(\mathrm{x}_{I_{s}})}u(z) \nu(\mathbf{x}_{I_{s}}) \dHaus^{n-1}(z) ~\textup{d}\mu^{I_{s}}(\mathbf{x}_{I_{s}})\Big) \Big).
\end{align*}

Again, one may now apply Gauss theorem to the sum, as the the $(n+1)$ different $(n-1)$-dimensional simplices form the boundary of an $n$-dimensional simplex (and $\nu_{I_s}$ is the corresponding normal vector).
\end{proof}

\begin{remark} \label{remark:thomasmueller}
Observe that we can only show global solenoidality of $u$ in a small neighbourhood of $\Omega$, as otherwise we obtain a problem in defining simplices, such that the application of Stokes' theorem is reasonable. For example, considering an annulus (i.e. a domain with a hole), it is clear that $n$-dimensional simplices with an interior \emph{inside} of $\Omega$ can only be defined canonically \emph{locally} but not \emph{globally}, cf. Figure \ref{annulus}. \begin{figure}
\centering
\includegraphics[width=0.4\textwidth]{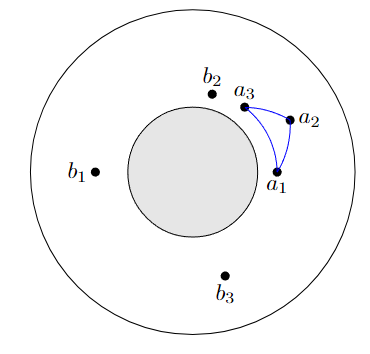}
\caption{For points $a_i$ close to each other, a meaningful simplex may be uniquely defined as in the picture above (in that specific example by identifying the simplex to $(1,2) \times \mathbb{S}^1$. For points $b_i$ far away from each other such a 'natural' definition is, however, impossible.} \label{annulus} \end{figure}
\end{remark} 
\subsection{Global Solenoidality} \label{sec:Lip:Lem44}

Compared to the convex framework, more care is needed for checking that the divergence is an $\lebe^1$-function. One reason for this is that we need to modify the proof of Lemma \ref{lemma:globalsolenoidality}, especially the recursion in Step 2, as our unit normal $\nu$ is \emph{not} explicitly given.
\begin{lemma} \label{lemma:globalsolenoidality:Lip}
Let $u \in \hold^{1}(\bar{U};{\wedgeq}^{n-1}(\R^n)^{\ast})$. Then $d (\E_{\Omega} u) \in \lebe^1(U^{\varepsilon};{\wedgeq}^{n}(\R^n)^{\ast})$, i.e. there exists a function $h \in \lebe^1(U^{\varepsilon};{\wedgeq}^{n}(\R^n)^{\ast})$, such that for any $\psi \in \hold_c^{\infty}(U^{\varepsilon})$
\[
\int u \wedge d \psi \dx = \int h \psi \dx.
\]
\end{lemma}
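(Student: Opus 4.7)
The plan is to adapt the two-step structure of the convex counterpart, Lemma \ref{lemma:globalsolenoidality}. As there, I would first extend $u$ to a globally Lipschitz, compactly supported form $v \in (\sobo^{1,1}\cap\sobo^{1,\infty})(\R^{n};\wedgeq^{n-1}(\R^{n})^{*})$ with $v|_U = u$, via McShane extension and a cut-off. The problem then reduces to producing some $h_0 \in \lebe^1$ such that
\begin{equation*}
\int_{U^{\varepsilon}\setminus\overline{U}} \E_U[u] \wedge \dif \psi \,\dif x = \int_{U^{\varepsilon}\setminus\overline{U}} h_0 \psi \,\dif x + \int_{U^{\varepsilon}\setminus\overline{U}} v \wedge \dif \psi \,\dif x\qquad \text{for all } \psi\in\hold_c^\infty(U^\varepsilon),
\end{equation*}
since combining with $\dif v \in \lebe^1(\R^n)$ and $\E_U[u]|_U = u = v|_U$ then yields the claimed $\lebe^1$-representation of $\dif(\E_U[u])$.

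\emph{Step 1 (auxiliary Sobolev function).} In direct analogy with \eqref{eq:thomastuchelciao}, I introduce
\begin{equation*}
\mathcal{T}(y) := \sum_{I\in\N^n} \phi_{i_n} \wedge \dif\phi_{i_{n-1}} \wedge \cdots \wedge \dif\phi_{i_1}(y) \int_{(\R^{n})^{n}} \int_{M(\mathbf{x}_I)} (v(z)-v(y))\,\nu\, \dHaus^{n-1}(z)\,\dif\mu^I(\mathbf{x}_I),
\end{equation*}
and show that the series converges absolutely in $\sobo_0^{1,1}(U^{\varepsilon}\setminus\overline{U};\wedgeq^{n-1}(\R^n)^{*})$. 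The relevant pointwise inputs are: $\mathrm{diam}(M(\mathbf{x}_I)) \lesssim \ell(Q_{i_n})$ and $\mathscr{H}^{n-1}(M(\mathbf{x}_I)) \lesssim \ell(Q_{i_n})^{n-1}$, both supplied by Theorem \ref{thm:prop:simplex}\ref{def:prop:35}--\ref{def:prop:4} for $k=n-1$; the Lipschitz bound $|v(z)-v(y)|\lesssim \ell(Q_{i_n})\|\!\D v\|_{\lebe^\infty}$ whenever $y$ belongs to a neighbour cube of $Q_{i_n}$ and $z \in M(\mathbf{x}_I)$, which follows from \ref{item:Jonesrefined4:v2} and the distance bound (S5$'$); and the partition-of-unity estimates \ref{item:POU3}. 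Combining these exactly as in the convex case yields $|\mathcal{T}_I(y)| + \ell(Q_{i_n})\,|\D\mathcal{T}_I(y)| \lesssim \ell(Q_{i_n})\,\mathbbm{1}_{\bigcup\mathcal{N}(i_n)}(y)\,\|\!\D v\|_{\lebe^\infty}$, and local finiteness of the Whitney cover together with the compact support of $\mathcal{T}$ (cf. Lemma \ref{lem:compactsupport}) then yield absolute convergence of the series in $\sobo_0^{1,1}$.

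\emph{Step 2 (inductive integration by parts).} The decisive observation that allows me to rerun the induction $(\mathrm{D}_r)$ verbatim is the orientation identity
\begin{equation*}
\int_{M(\mathbf{x}_I)}\nu\,\dHaus^{n-1} \;=\; \tfrac{1}{(n-1)!}(x_{i_n}-x_{i_{n-1}})\wedge\cdots\wedge(x_{i_2}-x_{i_1}) \;=\; \nu(\mathbf{x}_I),
\end{equation*}
imposed by the orientation convention. The right-hand side is polynomial in $\mathbf{x}_I$ and independent of the curvature of the curvilinear simplex. Averaging against the product measure $\mu^I$ therefore produces precisely the flat normal $\nu(\overline{\mathbf{x}}_I)$ on the cube centres appearing in the convex proof. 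Decomposing
\begin{equation*}
\E_U[u](y) = (-1)^{n-1}\mathcal{T}(y) + (-1)^{n-1}\sum_{I\in\N^n} \phi_{i_n}\wedge\dif\phi_{i_{n-1}}\wedge\cdots\wedge\dif\phi_{i_1}(y)\,\bigl(v(y)\,\nu(\overline{\mathbf{x}}_I)\bigr),
\end{equation*}
the term containing $\mathcal{T}$ contributes $h_0 := -\dif\mathcal{T} \in \lebe^1$ by Step 1. The remaining main term is syntactically identical to the left-hand side of \eqref{claim:sb}, so the inductive identity $(\mathrm{D}_r)$, together with the coordinate-wise formula $v(y)\dif^{\ast}\nu_r(\overline{\mathbf{x}}_I) = \tfrac{n+1-r}{r-1}\,v(y)\,\nu_{r-1}(\overline{\mathbf{x}}_I)$, applies without change and terminates with $\int v\wedge \dif \psi$ plus an additional $\lebe^1$-term, which we absorb into $h$.

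\emph{Main obstacle.} The hard conceptual point is Step 2. In the convex case, $\nu(\mathbf{x}_I)$ was an \emph{explicit polynomial} in the vertices, which drove the induction through the tidy formula for $\dif^{\ast}\nu_r$. In the curvilinear setting, the unit normal $\nu$ varies along $M(\mathbf{x}_I)$ and has no closed form, so it is a priori unclear how to preserve this polynomial structure. The orientation convention $\int_{M(\mathbf{x}_I)}\nu\,\dHaus^{n-1} = \nu(\mathbf{x}_I)$ is precisely the bridge that resolves this difficulty: after integrating against $\mu^I$, the curvilinear problem collapses to the flat one, the curvature of $M(\mathbf{x}_I)$ becomes invisible to the argument, and the whole inductive scheme of the convex proof transfers without modification.
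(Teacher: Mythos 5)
Your Step 1 is sound and tracks the paper's opening move of peeling off an $\lebe^1$-representable piece coming from the $z$-dependence of $u$. The gap is in Step 2, and it is decisive: the ``orientation identity''
\[
\int_{M(\mathbf{x}_I)}\nu\,\dHaus^{n-1} \;=\; \tfrac{1}{(n-1)!}(x_{i_n}-x_{i_{n-1}})\wedge\cdots\wedge(x_{i_2}-x_{i_1})
\]
is \emph{not} true for curvilinear simplices, and it cannot be forced by an orientation convention (only the sign of $\nu$ is free to choose; the magnitude of the left-hand side is determined by the geometry of $M(\mathbf{x}_I)$). By the constancy principle, the vector area $\int_M\nu\,\dHaus^{n-1}$ depends on the boundary \emph{current} $\partial M$ --- not merely on the vertices. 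Since the faces of $M(\mathbf{x}_I)$ are themselves curvilinear, $\partial M(\mathbf{x}_I)\neq\partial\conv(\mathbf{x}_I)$ as currents, and the two vector areas generically differ. A concrete counterexample in $\R^3$: take three coplanar vertices and let one curvilinear edge bulge out of their plane; the vector area then acquires components orthogonal to the plane, while the flat wedge $\nu(\mathbf{x}_I)$ is purely in-plane. Consequently the decomposition $\E_U[u]=(-1)^{n-1}\mathcal{T}+(-1)^{n-1}\sum_I\phi_{i_n}\wedge\cdots\wedge\dif\phi_{i_1}\cdot v(y)\nu(\overline{\mathbf{x}}_I)$ is false, and the subsequent ``rerun $(\mathrm{D}_r)$ verbatim'' step collapses --- the polynomial formula $v(y)\,\dif^{\ast}\nu_r(\overline{\mathbf{x}}_I)=\tfrac{n+1-r}{r-1}\,v(y)\,\nu_{r-1}(\overline{\mathbf{x}}_I)$ that drives the induction simply no longer appears.

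The paper circumvents this precisely by \emph{not} reducing to the flat picture. After replacing $u(z)$ by $u(y)$, it treats $u(y)$ as a constant $(n-1)$-covector and invokes the radial (cone) Stokes formula on the curved manifold: for a constant form $\alpha$ and any fixed $y$, one has $\int_{M_I}\alpha=\tfrac{1}{n-1}\int_{\partial M_I}\iota_{z-y}\alpha$, which reduces the dimension of integration by one without ever approximating $M_I$ by a flat simplex. The cancellation then comes from the \emph{compatibility} property (Theorem \ref{thm:prop:simplex}\ref{def:prop:5}): the face $M_I^s$ does not depend on $x_{i_s}$, so the factor $\dif\phi_{i_s}$ sums to zero for $s\in\{1,\ldots,n-1\}$, while the $\phi_{i_n}$-factor sums to one. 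Iterating yields the descent that your $(\mathrm{D}_r)$ recursion was meant to provide, but intrinsically on the curvilinear simplices. If you want to salvage your approach along the lines you sketch, you would need to quantify the discrepancy between $\int_{M(\mathbf{x}_I)}\nu$ and $\nu(\mathbf{x}_I)$ and show the error terms are $\lebe^1$-summable, which is substantial additional work; the paper's intrinsic Stokes argument avoids this entirely.
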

\noindent Given the validity of this lemma and Lemma \ref{lemma:pointwise:solenoidal:Lip}, we can directly infer global solenoidality of $\E_{\Omega}$ in $U^{\varepsilon}$, i.e.
\begin{corollary}
    Let $u \in \hold^1(\bar{U};{\wedgeq}^{n-1}(\R^n)^{\ast})$ satisfying $d u =0$. Then $\dif \E_{U} u=0$ in the sense of distributions in $U^{\varepsilon}$.
\end{corollary}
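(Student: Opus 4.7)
The plan is to combine the pointwise vanishing of $\dif \mathscr{E}_U[u]$ on both sides of $\partial U$ with the $\lebe^1$-regularity of the distributional differential, exactly as was done in Corollary \ref{coro:divfree} for the convex case. The present setting only differs in that the ambient space is the neighbourhood $U^{\varepsilon}$ rather than all of $\R^{n}$, so the topological obstructions flagged in Remark \ref{remark:thomasmueller} never come into play.

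First, on $U$ itself we have $\mathscr{E}_{U}[u]|_{U}=u$ by definition, and thus $\dif \mathscr{E}_{U}[u]=\dif u=0$ pointwise $\mathscr{L}^{n}$-a.e.\ in $U$ by hypothesis. Second, on $U^{\varepsilon}\setminus\overline{U}$ the preceding Lemma \ref{lemma:pointwise:solenoidal:Lip} yields $\dif \mathscr{E}_{U}[u]=0$ pointwise. Putting these together, $\dif \mathscr{E}_{U}[u]=0$ pointwise $\mathscr{L}^{n}$-a.e.\ in $U^{\varepsilon}\setminus\partial U$; since $U$ has Lipschitz boundary, $\mathscr{L}^{n}(\partial U)=0$, and so $\dif \mathscr{E}_{U}[u]=0$ $\mathscr{L}^{n}$-a.e.\ in $U^{\varepsilon}$.

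The final step invokes Lemma \ref{lemma:globalsolenoidality:Lip}: this lemma furnishes an $\lebe^{1}(U^{\varepsilon};\wedgeq^{n}(\R^{n})^{*})$-function $h$ that represents the distribution $\dif \mathscr{E}_{U}[u]$ on $U^{\varepsilon}$. By the pointwise a.e.\ identity established above, $h=0$ $\mathscr{L}^{n}$-a.e.\ in $U^{\varepsilon}$, and so $\dif \mathscr{E}_{U}[u]=0$ in $\mathscr{D}'(U^{\varepsilon};\wedgeq^{n}(\R^{n})^{*})$, as claimed.

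There is no real obstacle here once Lemmas \ref{lemma:pointwise:solenoidal:Lip} and \ref{lemma:globalsolenoidality:Lip} are in hand; the content of the corollary is precisely that the pointwise computation in the interior and exterior, combined with the fact that the (potentially singular) boundary contribution is absorbed into an $\lebe^{1}$-function and therefore carries no mass on the $\mathscr{L}^{n}$-null set $\partial U$, suffices to rule out any surface-supported distributional divergence concentrated on $\partial U$.
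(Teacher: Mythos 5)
Your argument is correct and coincides with the paper's own (implicit) reasoning: the paper states that the corollary follows directly from Lemmas \ref{lemma:pointwise:solenoidal:Lip} and \ref{lemma:globalsolenoidality:Lip}, and in the convex case (Corollary \ref{coro:divfree}) spells out exactly the same three-step argument you give — pointwise vanishing of $\dif\mathscr{E}_U[u]$ on $U$ and on $U^{\varepsilon}\setminus\overline{U}$, $\lebe^1$-representability of the distributional derivative, and $\mathscr{L}^n(\partial U)=0$.
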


\begin{proof}[Proof of Lemma \ref{lemma:globalsolenoidality:Lip}]
   Recall that 
    \begin{align*}
    \int_{U} \E_{\Omega} u \wedge d\psi \dif x = \int &\sum_{I=(i_1,...,i_n)\in\mathbb{N}^{n}} \Big(\phi_{i_n} \wedge \dif \phi_{i_{n-1}} \wedge \dots \wedge \dif \phi_{i_1} \Big. \\ 
    &\int_{(\R^{n})^{n}} \int_{M_I} u(z) \nu^{n-1} \dHaus^{n-1}(z) ~\textup{d}\mu^I(\mathbf{x}_{I})\Big) \wedge d \psi \dif x. 
      \end{align*}
As $u \in \hold^{1}(\bar{\Omega};{\wedgeq}^{n-1}(\R^n)^{\ast})$ we may infer the existence of $h_1 \in \lebe^1(U;{\wedgeq}^{n}(\R^n)^{\ast})$, such that 
    \begin{align*}
        \int_{U} \E_{\Omega} u \wedge d\psi \dif x = \int h_1 \dif \psi \dif x + \int  &\sum_{I=(i_1,...,i_n)\in\mathbb{N}^{n}} \Big(\phi_{i_n} \wedge \dif \phi_{i_{n-1}} \wedge \dots \wedge \dif \phi_{i_1} \Big. \\ 
    &\int_{(\R^{n})^{n}} \int_{M_I} u(y)  \nu^{n-1} \dHaus^{n-1}(z) ~\textup{d}\mu^I(\mathbf{x}_{I})\Big) \wedge d \psi \dif x, 
    \end{align*}
i.e. we replaced the dependence of $u$ in $z$ by a dependence in $y$. We now may apply Stokes' theorem backwards and write 
    \begin{align*}
        \int_{M_I} u(y)  \nu^{n-1} \dHaus^{n-1}(z) &= \int_{\partial M_I} \langle (z-y), u(y) \rangle  \nu^{n-2} \dHaus^{n-2}(z) \\
        &= \sum_{i=1}^n \int_{M_I^s}   \langle (z-y), u(y) \rangle  \nu^{n-2} \dHaus^{n-2}(z),
    \end{align*}
where $M_I^s$ denotes the face of $M_I$ that is opposite to $x_{i_s}$. With the same argumentation as in Lemma \ref{lemma:globalsolenoidality}, we observe that for $s=1,...,n-1$      
\begin{align*}
    \sum_{I=(i_1,...,i_n)\in\mathbb{N}^{n}} &\Big(\phi_{i_n} \wedge \dif \phi_{i_{n-1}} \wedge \dots \wedge \dif \phi_{i_1} \Big.  \\
    &\cdot \int_{(\R^{n})^{n}} \int_{M_I^s}  \langle (z-y), u(y) \rangle  \nu^{n-2} \dHaus^{n-2}(z) ~\textup{d}\mu^I(\mathbf{x}_{I})\Big)=0
\end{align*}
    as $\phi_{i_s}$ is a partition of unity, the integral over $M_i^s$ \emph{does not} depend on $i_s$ and $\int d\mu_{i_s} =1$.

    With the same argument
    \begin{align*}
       \sum_{I=(i_1,...,i_n)\in\mathbb{N}^{n}} &\Big(\phi_{i_n} \wedge \dif \phi_{i_{n-1}} \wedge \dots \wedge \dif \phi_{i_1} \Big.  \\
    &\cdot \int_{(\R^{n})^{n-1}} \int_{M_I^n}  \langle (z-y), u(y) \rangle  \nu^{n-2} \dHaus^{n-2}(z) ~\textup{d}\mu_I(\mathbf{x}_{I})\Big)\\
        = \sum_{I=(i_1,...i_{n-1})\in\mathbb{N}^{n-1}} &\Big(\dif \phi_{i_{n-1}} \wedge \dots \wedge \dif \phi_{i_1} \Big.  \\
    &\cdot \int_{(\R^{n})^{n-1}} \int_{M_I^n}  \langle (z-y), u(y) \rangle  \nu^{n-2} \dHaus^{n-2}(z) ~\textup{d}\mu_I(\mathbf{x}_{I})\Big)     
    \end{align*}
and we conclude
    \begin{align*}
    \int_{U} \E_{\Omega} u \wedge d\psi \dif x = \int h_1& \psi \dif x +  \int \sum_{I=(i_1,...,i_{n-1})\in\mathbb{N}^{n-1}} \Big(\dif \phi_{i_{n-1}} \wedge \dots \wedge \dif \phi_{i_1} \Big. \\ 
    &\cdot \int_{(\R^{n})^{n-1}} \int_{M_I} \langle(z-y),u(y)\rangle  \nu^{n-2} \dHaus^{n-2}(z) ~\textup{d}\mu_I(\mathbf{x}_{I})\Big) \wedge d \psi \dif y.
      \end{align*}
    Now integrating by parts yields for each summand
    \begin{align*}
        \int & \dif \phi_{i_{n-1}} \wedge \dots \wedge \dif \phi_{i_1} \cdot 
    \int_{(\R^{n})^{n-1}} \int_{M_I} \langle(z-y),u(y)\rangle  \nu^{n-2} \dHaus^{n-2}(z) ~\textup{d}\mu_I(\mathbf{x}_{I})\Big) \wedge d \psi \dif y \\
    &=(-1) \int \ \phi_{i_{n-1}} \wedge \dots \wedge \dif \phi_{i_1} \cdot 
    \int_{(\R^{n})^{n-1}} \int_{M_I} u(y)  \nu^{n-2} \dHaus^{n-2}(z) ~\textup{d}\mu_I(\mathbf{x}_{I})\Big) \wedge d \psi \dif x \\
     &+ \int \ \phi_{i_{n-1}} \wedge \dots \wedge \dif \phi_{i_1} \cdot 
    \int_{(\R^{n})^{n-1}} \int_{M_I} -\langle (z-y),\dif u(y) \rangle  \nu^{n-2} \dHaus^{n-2}(z) ~\textup{d}\mu_I(\mathbf{x}_{I})\Big) \wedge d \psi \dif y \\
    \end{align*}
As in the case of convex domains (cf. \eqref{w11} ff.) we may argue that the latter summand may be written as 
\[
\int h_2 \psi \dif y 
\]
for some $h_2 \in \lebe^1(\R^n;{\wedgeq}^n(\R^n)^{\ast})$. The first term now has the same form as the term we started with. An inductive argument (as in the proof of Lemma \ref{lemma:globalsolenoidality}) then yields that there is an $\lebe^1$-function $h$, such that 
\[
\int_{U} \E_{\Omega} u \wedge d \psi \dif y = -\int_{\Omega} u \wedge d \psi \dif y + \int_{U} h \psi \dif y.
\]
\end{proof}

\subsection{(Uniform) $\lebe^1$-bounds} \label{sec:Lip:L1bounds}
This also roughly follows the same treatment as for the convex case. For a rigorous proof, it is at this point where we need to have clever choice for the definition of simplices, cf. Remark \ref{rem:simplex:choice}. We begin with the analogue of Lemma \ref{lem:convhullL1bound} that is adjusted to the setting at hand:

\begin{lemma} \label{lem:LipschitzL1boonhund}
There exists a constant $c>0$ depending on the dimension $n$ and $\Lip(\Omega,\mathcal{O})$ such that the following holds true. Let $I=(i_{1},...,i_{n})\in\mathbb{N}^{n}$ such that all the cubes touch and are contained in $\Omega_{\delta}$. Then for any $v\in\hold^{\infty}(\overline{\Omega};\R^{n})$ there holds 
\begin{align}\label{eq:L1bound}
\left\vert \int_{(\R^{n})^{n}} \int_{M(\mathbf{x}_{I})} v(z) (\nu) \dHaus^{n-1}(z) ~\textup{d}\mu^I(\mathbf{x}_{I}) \right\vert \leq \frac{c}{\delta_{I}}\int_{\tilde{M}_I}|v(x)|\dif x, 
\end{align}
where $\tilde{M}_I$ is the union of all manifolds $M(\mathbf{x}_I)$ for $x_{i_j} \in Q^{i_j}$ and
\begin{align}\label{eq:deltadef0}
\delta_{I}:=\min\{\ell(Q_{i_{j}})\colon\;j\in\{1,...,n\}\}.
\end{align}
\end{lemma}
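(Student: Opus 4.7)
The plan is to mirror the proof of Lemma \ref{lem:convhullL1bound} from the convex case, replacing the affine parametrisation of $\conv(\mathbf{x}_I)$ by the smooth parametrisation $S_{\bar x} : \mathbb{D}^{n-1} \to M(\mathbf{x}_I)$ from Theorem \ref{thm:prop:simplex}. Because the cubes $Q_{i_1},\ldots,Q_{i_n}$ touch, properties \ref{item:Whitney3} and \ref{item:Jonesrefined2:v2} imply that all $\ell(Q_{i_j})$ and $\ell(Q^{i_j})$ are uniformly comparable to $\delta_I$; in particular any choice $x_{i_j}\in \tfrac12 Q^{i_j}$ satisfies Theorem \ref{thm:prop:simplex}~\ref{ass1}--\ref{ass3} with $\eta\sim\delta_I$. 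Using $|\nu|=1$ and the bound $\mathscr{H}^{n-1}(M(\mathbf{x}_I))\leq c\delta_I^{n-1}$ from \ref{def:prop:35}, the area-formula change of variables by $S_{\bar x}$ gives
\[
\Big|\int_{M(\mathbf{x}_I)} v(z)\,\nu \,\dHaus^{n-1}(z)\Big| \leq c\,\delta_I^{n-1}\int_{\mathbb{D}^{n-1}} |v(S_{\bar x}(t))|\,\dif t,
\]
and it then remains to integrate over $(\R^n)^n$ against $\dif\mu^I$.

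Next I will write the barycentric coordinates of $t\in\mathbb{D}^{n-1}$ as $\lambda_0(t)=1-\sum_{j=1}^{n-1}t_j$ and $\lambda_j(t)=t_j$, $j=1,\ldots,n-1$, and decompose $\mathbb{D}^{n-1}=\bigcup_{k=1}^n\mathbb{D}^{n-1}_k$ with $\mathbb{D}^{n-1}_k:=\{t\in\mathbb{D}^{n-1}\colon \lambda_{k-1}(t)\geq 1/n\}$. On each piece I keep $t$ and all $x_{i_j}$, $j\neq k$, fixed and change variables from $x_{i_k}$ to $z=S_{\bar x}(t)$. The crucial estimate is that the map $F_k : x_{i_k} \mapsto S_{\bar x}(t)$ is a local diffeomorphism with Jacobian determinant bounded below by $c\,\lambda_{k-1}(t)^n\geq c n^{-n}$, uniformly in the other parameters.

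This Jacobian bound is the main obstacle, and is where the specific recipe for the simplex from Section \ref{sec:def:simp} enters crucially (cf.\ Remark \ref{rem:simplex:choice}). Indeed, $S_{\bar x}(t)=T^{-1}(x(t),s(t))$ with $T=(T_1,T_2)$ Bi-Lipschitz and smooth in the interior, $x(t)=\mathbb{P}(\sum_j \lambda_j(t) T_1(x_{i_j}))$, and $s(t)=\sum_j \lambda_j(t) T_2(x_{i_j})$. Differentiating with respect to $x_{i_k}$ yields the factorisation
\[
\D F_k(x_{i_k}) \;=\; \lambda_{k-1}(t)\cdot \D T^{-1}\big|_{(x(t),s(t))} \cdot\bigl(\D\mathbb{P}\oplus\mathrm{id}_{\R}\bigr) \cdot \D T\big|_{x_{i_k}}.
\]
Each of $\D T$, $\D T^{-1}$ is invertible with uniform bounds by the Bi-Lipschitz property (with constants controlled by $\Lip(\Omega,\Ocal)$), and $\D\mathbb{P}$ restricted to the tangent space of $\partial V$ is a near-identity linear isomorphism in the collar neighbourhood whose derivative is uniformly controlled (cf.\ Corollary \ref{coro:diffeo} and the proof of Lemma \ref{lemma:collar}). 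Taking determinants therefore gives $|\det \D F_k|\geq c\,\lambda_{k-1}(t)^n$ as asserted; the restriction $\lambda_{k-1}(t)\geq 1/n$ on $\mathbb{D}^{n-1}_k$ then supplies a uniform positive lower bound.

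The proof finishes by Fubini. For $t\in\mathbb{D}^{n-1}_k$ fixed, I integrate $x_{i_k}$ against $\mu^{i_k}$, which by \eqref{eq:mudefMAIN} has $\lebe^\infty$-density at most $c\delta_I^{-n}$ on $\tfrac12 Q^{i_k}$; changing variables by $F_k$ (whose image is contained in $\tilde M_I$) and finally integrating the remaining probability measures $\mu^{i_j}$ ($j\neq k$) yields
\[
\int_{(\R^n)^n}\int_{\mathbb{D}^{n-1}_k} |v(S_{\bar x}(t))|\,\dif t\,\dif\mu^I(\mathbf{x}_I)\;\leq\; \frac{c}{\delta_I^n}\int_{\tilde M_I}|v(z)|\,\dz.
\]
Multiplying this by the prefactor $c\delta_I^{n-1}$ from the first display and summing over $k\in\{1,\ldots,n\}$ yields \eqref{eq:L1bound}, with the constant depending only on $n$ and $\Lip(\Omega,\Ocal)$.
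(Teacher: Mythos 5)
Your proof is correct and follows the same strategy as the paper's: both mirror the convex-case Lemma \ref{lem:convhullL1bound} by rewriting the inner integral through the barycentric parametrisation, splitting the simplex into the regions where one barycentric coordinate exceeds $1/n$, and controlling the Jacobian of the map $x_{i_k}\mapsto S_{\bar x}(t)$ from below there; your explicit factorisation of $\D F_k$ makes precise what the paper only gestures at with ``can be calculated by hand when writing $\partial V=\{(x',g(x'))\}$''. One caveat worth flagging: the lower bound $|\det\D F_k|\geq c\,\lambda_{k-1}(t)^n$ relies on $\D\mathbb{P}$ restricted to the range of $\D T_1|_{x_{i_k}}$ (a copy of $T_{T_1(x_{i_k})}\partial V$) being a near-isomorphism onto $T_{\mathbb{P}(\sum_j\lambda_j T_1(x_{i_j}))}\partial V$, and that ``nearness'' is quantified by the curvature of $\partial V$ and the smallness threshold $\delta_I\lesssim\franz$ from the collar construction, not by $\Lip(\Omega,\mathcal{O})$ alone --- a dependence the paper itself acknowledges in its proof (``and also the curvature'') even though it is left implicit in the constant of the lemma statement.
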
 
\begin{proof}[Proof]
    First of all, observe by transformation rule and it suffices to show this lemma whenever $\Omega$ has smooth boundary; cf. Lemma \ref{lemma:BZ}. 
    Comparable to the proof of Lemma \ref{lem:convhullL1bound} we may use the given parametrisation of the manifold $M(x_I)$ via $t_i$ and rewrite the inner integral in terms of $t_i$. In particular, recall that, given $x_1,\ldots,x_n$ in $\Omega$, the variable $z$ is recovered as follows. Write $x_i=(y_i,h_i) \in \partial \Omega \times (0,\eta)$ in variables of the collar domain (i.e. $x_i = y_i + h_i \nu_{\partial \Omega}(y_i)$). Then $\tilde{z}(t)= \sum_{i=1}^n t_i x_i$ is the convex combination and 
    \[
    z(t,x) = \mathbb{P} \tilde{z} + \left(\sum_{i=1}^n t_i h_i\right) \nu_{\partial \Omega}(\mathbb{P} \tilde{z}).
    \]
    Recalling the definition of $\tilde{\mathbb{D}}^{n-1} = \{t \in [0,1]^n \colon \sum_{i=1}^n t_i=1\}$ from the proof of Lemma \ref{lem:convhullL1bound}, we obtain with transfomation rule
    \[
    \int_{M(\mathbf{x}_{I})} v(z) (\nu) \dHaus^{n-1}(z) \leq 
    C \int_{\tilde{\mathbb{D}}^{n-1}} \vert v( z(t,x)) \vert \dHaus^{n-1}(t).
    \]
    Let us shortly remark that the constant $C>0$ only depends on properties of $\Omega$ that we fix beforehand (i.e. $\Lip(\Omega,\mathcal{O})$ and also the curvature), which may be verified by locally writing $\partial \Omega =\{(x',g(x'))\}$.
    Now we can again subdivide the integration domain into $\mathbb{D}^{n-1}_i$ and consider
    \[
    \mathrm{(I)}= \int_{\R^n} \int_{\tilde{\mathbb{D}}^{n-1}_1} \vert v( z(t,x)) \vert \dHaus^{n-1}(t) \dif \mu^1(x_1) 
    \]
    Again usage of Fubini and transformation rule (again, this can be calculated by hand when writing $\partial \Omega  =\{(x',g(x'))\}$) yields a bound
    \[
    \mathrm{(I)} \leq C(n,\Omega) \int_{\tilde{M}_I} \vert v \vert \dif z
    \]
    and similarly to Lemma \ref{lem:convhullL1bound} we conclude that \eqref{eq:L1bound} holds.
\end{proof}
Now observe the following from the construction: If all cubes $Q_{i_1}, \ldots Q_{i_n}$ touch, their distance to the boundary and their sidelength is comparable to some $\eta>0$. Consequently, for the interior cubes $Q^{i_k}=\Psi(Q_{i_k})$ we have due to \ref{item:Jonesrefined3a:v2}
\[
\tfrac{1}{16}\eta \leq \ell(Q^{i_k}) \leq 16 \eta, \quad c^{-1} \eta \leq \dist(Q^{i_k},\partial \Omega) \leq c\eta,
\]
and, due to \ref{item:Jonesrefined4:v2}, we further have an $\rho=\rho(n,\Omega)>0$ such that
\[
Q^{i_{k_1}} \subset B_{\rho \eta}(Q^{i_{k_2}}), \quad 1 \leq k_1,k_2 \leq n.
\]
Due to construction of the manifold $M_I$ we further know that for any $y \in \tilde{M}_I$ (cf. Lemma \ref{lem:LipschitzL1boonhund})
\[
c^{-1} \inf_{k} \inf_{y' \in Q^{i_k}} \dist(y',\partial \Omega) \leq \dist(y,\partial \Omega) \leq c \sup_{k} \inf_{y' \in Q^{i_k}} \dist(y',\partial \Omega),
\]
where $c$ hails from Bi-Lipschitz constant of the diffeomorphism of $\Omega$ to a smooth domain.

We infer that
\[
\tilde{M}_I \subset B_{\rho \eta}/Q^{i_1} \cap \{y \in \Omega \colon c^{-1} \eta \leq \dist(y,\partial \Omega) \leq c\eta \},
\]
i.e. this set is contained in the intersection of a ball and a strip.
This observation enables us to repeat the proof of Lemma \ref{lemma:L1bound:c1}
\begin{lemma}[Uniform $\lebe^{1}$-bounds] \label{lemma:L1bound:c1:Lip}
    Let $u \in \hold^1(\overline{\Omega};\wedgeq^{n-1}(\R^{n})^{*})$. Then we have 
    \begin{align*}
    \mathscr{E}_{\Omega}[u] \in \lebe^1(\R^n;{\wedgeq}^{n-1}(\R^{n})^{*}).
    \end{align*}
In particular, the following holds:
\begin{enumerate}
\item\label{item:L1bound1:v2}  $\displaystyle 
            \Vert \mathscr{E}_{\Omega}[u] \Vert_{\lebe^1(\R^n)} \leq c \Vert u \Vert_{\lebe^1(\Omega)};
         $
\item\label{item:L1bound2:v2} there exists a constant $\mathtt{c}>0$ and $\delta_{0}>0$  such that $0<\delta<\delta_{0}$ implies 
\begin{align}\label{eq:stripsum:v2}
\int_{\{x\in\R^{n}\setminus\overline{\Omega}\colon\;\delta/2<\mathrm{dist}(x,\partial\Omega)<\delta\}}|\mathscr{E}_{\Omega}[u]|\dif x \leq c \int_{\{x\in\Omega\colon\; \mathtt{c}^{-1} \delta<\mathrm{dist}(x,\partial\Omega)<\mathtt{c}\delta\}}|u|\dif x.
\end{align}
\end{enumerate}
\end{lemma}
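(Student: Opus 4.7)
The plan is to mirror the strategy of Lemma \ref{lemma:L1bound:c1} for the convex case, with the principal modifications being (i) the use of the curvilinear simplices $M(\mathbf{x}_I)$ from Section \ref{sec:def:simplex} in place of flat convex hulls and (ii) the replacement of Lemma \ref{lem:convhullL1bound} by Lemma \ref{lem:LipschitzL1boonhund}. Since the Whitney properties \ref{item:Jonesrefined1:v2}--\ref{item:Jonesrefined4:v2} and the partition-of-unity bounds \ref{item:POU1}--\ref{item:POU3} are identical to the convex case, and their constants depend only on $n$ and $\Lip(\Omega,\mathcal{O})$, the same dyadic-strip scheme should go through.

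Concretely, I would first establish the compact-support analogue of Lemma \ref{lem:compactsupport}: letting $\vartheta>0$ be chosen as in \eqref{eq:michaeljackson}, whenever $y\in \R^n\setminus\overline{\ball}_{\vartheta}(\Omega)$ and $\phi_{i_n}\wedge\dif\phi_{i_{n-1}}\wedge\dots\wedge\dif\phi_{i_1}(y)\neq 0$, all cubes $Q_{i_k}$ are mapped by $\Psi$ to the same interior cube, so the sum collapses via $\sum_{i}\dif\phi_i=0$. This reduces me to estimating $\|\mathscr{E}_{\Omega}[u]\|_{\lebe^1(\ball_\vartheta(\Omega)\setminus\overline\Omega)}$. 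I then introduce the dyadic exterior strips $\mathtt{S}_k$ as in \eqref{eq:strips} which cover $\ball_\vartheta(\Omega)\setminus\overline\Omega$ with bounded overlap. For an index tuple $I=(i_1,\dots,i_n)$ with $\mathtt{S}_k\cap \frac{7}{6}Q_{i_1}\cap\dots\cap \frac{7}{6}Q_{i_n}\neq\emptyset$, the chain \eqref{eq:stereolove}--\eqref{eq:kiss} transfers verbatim, yielding $\delta_I\sim 2^{-k}\vartheta$ and $\dist(\Psi(Q_{i_j}),\partial\Omega)\sim 2^{-k}\vartheta$. Invoking property (S5') from Section \ref{sec:def:simp} then gives $\tilde M_I\subset\mathtt{S}'_k:=\{x\in\Omega:\mathtt{c}_0 2^{-k}\vartheta\leq\dist(x,\partial\Omega)\leq \mathtt{c}_1 2^{-k}\vartheta\}$.

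Combining the partition-of-unity bound $|\phi_{i_n}\wedge\dif\phi_{i_{n-1}}\wedge\dots\wedge\dif\phi_{i_1}(y)|\leq c\delta_I^{-(n-1)}$ with Lemma \ref{lem:LipschitzL1boonhund} yields a pointwise bound of the form
\[
|\mathscr{E}_{\Omega}[u](y)|\leq c\sum_{I\in\mathcal{I}(y)}\frac{1}{\delta_I^{n}}\int_{\tilde M_I}|u|\,\dif x
\]
on $\mathtt{S}_k$, where $\mathcal{I}(y)$ is the index set of $I$'s with $y\in \tfrac{7}{6}Q_{i_1}\cap\dots\cap \tfrac{7}{6}Q_{i_n}$ and has cardinality uniformly bounded by \ref{item:Whitney2}. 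Integrating over $\mathtt{S}_k$ and exchanging sum and integral produces $\|\mathscr{E}_{\Omega}[u]\|_{\lebe^1(\mathtt{S}_k)}\leq c\|u\|_{\lebe^1(\mathtt{S}'_k)}$. Part \ref{item:L1bound2:v2} is then immediate (pick the strip scale $\sim\delta$), and part \ref{item:L1bound1:v2} follows by summing over $k$ and using the finite overlap of the $\mathtt{S}'_k$'s.

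The main obstacle is the last counting step: one must rule out a pathological accumulation of curvilinear simplices through a fixed interior point, i.e.\ uniformly bound the number of tuples $I$ for which a given $x$ lies in $\tilde M_I$. In the convex case the flat simplices $\mathrm{conv}_I$ permit an elementary geometric count (as in the treatment of $I_k$ in the proof of Lemma \ref{lemma:L1bound:c1}). For curvilinear $M_I$ this requires exploiting the derivative bound \ref{def:prop:35} together with the Bi-Lipschitz control from Corollary \ref{coro:diffeo}, which collectively ensure $\mathscr{H}^n(\tilde M_I)\lesssim \delta_I^n$ and that varying a single vertex actually moves the simplex by an amount comparable to the vertex displacement. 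This non-degeneracy---anticipated in Remark \ref{rem:simplex:choice} and the very reason for the projection-based definition of $S_{\bar x}$ rather than a geodesic one---is where the chief technical effort lies; once it is in hand, the count proceeds exactly as in Lemma \ref{lemma:L1bound:c1}.
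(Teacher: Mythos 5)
Your plan is correct and follows essentially the same route as the paper: the paper records the containment
\[
\tilde M_I \subset B_{\rho\eta}(Q^{i_1}) \cap \{y\in\Omega\colon c^{-1}\eta\leq\dist(y,\partial\Omega)\leq c\eta\}
\]
in the paragraph immediately preceding the lemma and then declares the argument of Lemma \ref{lemma:L1bound:c1} repeatable, which is precisely the dyadic-strip scheme you describe. Your chain of reductions (compact support, dyadic strips, \ref{item:POU3}, Lemma \ref{lem:LipschitzL1boonhund}, finite overlap) is the right one, and the constants only depend on $n$, $\Lip(\Omega,\mathcal{O})$ and the fixed data from Section \ref{sec:simplices}.

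One small re-weighting of emphasis: the multiplicity count you flag as the ``main obstacle'' is actually handled in the Lipschitz case by the \emph{ball} half of the containment above, which is an immediate consequence of the diameter bound \ref{def:prop:35} and the Bi-Lipschitz control in Corollary \ref{coro:diffeo} (the simplices have diameter $\lesssim\delta_I$ and sit within $\lesssim\delta_I$ of the interior cube $Q^{i_1}$, whence for fixed $x$ only $O(1)$ many scale-$\delta_I$ Whitney cubes can serve as $Q_{i_1}$). The non-concentration property discussed in Remark \ref{rem:simplex:choice} --- varying one vertex of $M(\mathbf{x}_I)$ moves the whole simplex proportionally --- is not what the count uses; it is the mechanism behind Lemma \ref{lem:LipschitzL1boonhund} itself (the Fubini/Jensen step yielding the $\delta_I^{-1}$ weight). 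Similarly, a measure bound $\mathscr{L}^n(\tilde M_I)\lesssim\delta_I^n$ is not by itself enough for the count, since a set of small measure need not be geometrically localised; it is the diameter bound that does the work. With that replaced, your argument is the paper's proof.
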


\subsection{Finishing the proof of Theorem \ref{thm:main:Lip}} \label{sec:Lip:final}
In this section, we finish the proof of Theorem \ref{thm:main:Lip}. In particular, we make the transition between a $\hold^1$-extension and the final $\lebe^1$-extension.

To this end, recall that $\Omega_{\delta}$ is a domain compactly contained in $\Omega$ and that for each $x \in \partial \Omega_{\delta}$ we have (for $c$ being a Bi-Lipschitz constant for $T \colon \bar{\Omega} \supset W \to \partial V \times [0,1)$)
\[
c^{-1} \delta \leq \dist(x,\partial \Omega) \leq c \delta.
\]
Moreover, in Section \ref{sec:Lip:L1bounds} we have shown that the $\lebe^1$-bound \emph{does not} depend on the choice of $\varepsilon$, i.e.
\begin{corollary}
    Let $\delta < \varepsilon_2/2$. There is an extension operator $\E_{\delta} \colon \hold^1(\Omega_{\delta};\R^n) \to \lebe^1(\Omega^{\varepsilon_2/2};\R^n)$, such that 
    \begin{enumerate}
        \item $\Vert \E_{\delta} u\Vert_{\lebe^1(\Omega^{\varepsilon_2/2})} \leq C \Vert u \Vert_{\lebe^1(\Omega_{\delta})}$ with a constant $C>0$ not depending on $\delta>0$;
        \item $\diver u =0$ in $\mathcal{D}'(\Omega_{\delta})$ $\Longrightarrow$ $\diver \E_{\delta} u =0$ in $\mathcal{D}'(\Omega^{\varepsilon_2/2})$.
    \end{enumerate}
\end{corollary}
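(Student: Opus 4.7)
The plan is to take $\E_{\delta}$ to be precisely the operator $\mathscr{E}_{\Omega_{\delta}}$ from Definition~\ref{def:extension:Lip} applied to the domain $U = \Omega_{\delta}$, and then to verify that (i)~its natural domain of validity contains the fixed neighbourhood $\Omega^{\varepsilon_{2}/2}$ of $\Omega$, and (ii)~the relevant constants are uniform in $\delta$. First, since $T = \Gamma_{2}\circ\Gamma_{1}$ is a fixed Bi-Lipschitz map (smooth in the interior) with constants independent of $\delta$, the map $T$ identifies a neighbourhood of $\partial\Omega_{\delta}$ with $\partial V \times (\delta/\franz - \sigma, \delta/\franz + \sigma)$ for some uniform $\sigma>0$, and hence the simplex construction from Section~\ref{sec:def:simp} applies to $\Omega_{\delta}$ with the \emph{same} threshold parameter $\varepsilon_{2}$ as for $\Omega$. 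Consequently, for $\delta < \varepsilon_{2}/2$ the $\varepsilon$-neighbourhood $(\Omega_{\delta})^{\varepsilon}$ on which $\mathscr{E}_{\Omega_{\delta}}u$ is defined can be chosen to contain $\Omega^{\varepsilon_{2}/2}$.

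Next, I would verify the uniform $\lebe^{1}$-bound. The constant in Lemma~\ref{lemma:L1bound:c1:Lip} depends only on $n$ and on $\mathrm{Lip}(\Omega_{\delta},\mathcal{O}_{\delta})$ for a chosen cover $\mathcal{O}_{\delta}$ of $\partial\Omega_{\delta}$. Using the identification with $\partial V \times (0,1)$ through $T$, one transfers a fixed finite cover of $\partial V$ to $\partial\Omega_{\delta}$; since the Lipschitz constants of $T$ and $T^{-1}$ do not depend on $\delta$, the quantity $\mathrm{Lip}(\Omega_{\delta},\mathcal{O}_{\delta})$ is uniformly bounded as $\delta\searrow 0$. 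Similarly, the map $\Psi_{\delta}\colon \mathcal{W}_{2}^{\delta}\to\mathcal{W}_{1}^{\delta}$ from \ref{item:Jonesrefined1:v2}--\ref{item:Jonesrefined4:v2} has constants depending only on this Lipschitz constant (as in \eqref{eq:Lipindependent} for the convex case); in particular, the threshold $\eta$ from \ref{item:Jonesrefined1:v2} may be chosen uniformly from below. Plugging this into Lemma~\ref{lemma:L1bound:c1:Lip}\ref{item:L1bound1:v2} yields
\[
\|\E_{\delta}u\|_{\lebe^{1}(\Omega^{\varepsilon_{2}/2})} \leq \|\E_{\delta}u\|_{\lebe^{1}((\Omega_{\delta})^{\varepsilon})} \leq C\|u\|_{\lebe^{1}(\Omega_{\delta})}
\]
with $C>0$ independent of $\delta$.

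Finally, the solenoidality statement: if $\diver u = 0$ in $\mathscr{D}'(\Omega_{\delta})$, then Lemma~\ref{lemma:pointwise:solenoidal:Lip} ensures $\dif\,\mathscr{E}_{\Omega_{\delta}}[u] = 0$ pointwise in $(\Omega_{\delta})^{\varepsilon}\setminus\overline{\Omega_{\delta}}$, whereas Lemma~\ref{lemma:globalsolenoidality:Lip} guarantees that the distributional expression $\dif\,\mathscr{E}_{\Omega_{\delta}}[u]$ is represented by an $\lebe^{1}$-function. Combining these with $\mathscr{L}^{n}(\partial\Omega_{\delta})=0$ (which holds because $\Omega_{\delta}$ inherits Lipschitz boundary via $T$), we deduce $\dif\,\mathscr{E}_{\Omega_{\delta}}[u]=0$ in $\mathscr{D}'((\Omega_{\delta})^{\varepsilon})$, and hence a fortiori in $\mathscr{D}'(\Omega^{\varepsilon_{2}/2})$.

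The main obstacle is the uniformity-in-$\delta$ claim: one must ensure that none of the constants entering the construction (the Lipschitz norms of the charts, the threshold $\eta$ from Jones' map, the simplex parameter $\varepsilon$, and the bounds in Lemma~\ref{lem:LipschitzL1boonhund}) degenerate as $\delta\searrow 0$. This is precisely handled by transporting the geometric data from the fixed smooth domain $V$ back via the $\delta$-independent diffeomorphism $T$, which was the very reason for introducing the Ball--Zarnescu identification in Section~\ref{sec:approxsmooth}; everything else is a direct invocation of the already established Lemmas~\ref{lemma:pointwise:solenoidal:Lip}, \ref{lemma:globalsolenoidality:Lip} and \ref{lemma:L1bound:c1:Lip}.
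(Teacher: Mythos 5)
Your proposal is correct and matches the paper's (implicit) argument. The paper states this corollary essentially as a bookkeeping consequence of the construction in Section~\ref{sec:Lip}: it sets $\E_{\delta}=\mathscr{E}_{\Omega_{\delta}}$ from Definition~\ref{def:extension:Lip}, invokes Lemmas~\ref{lemma:pointwise:solenoidal:Lip}, \ref{lemma:globalsolenoidality:Lip} and \ref{lemma:L1bound:c1:Lip} for the two claimed properties, and relies on the observation (made explicitly just before the corollary, and in Section~\ref{sec:Lip:L1bounds}) that all constants depend only on the fixed, $\delta$-independent data of $T=\Gamma_{2}\circ\Gamma_{1}$ and of $\Lip(\Omega_{\delta},\mathcal{O})$ — which is exactly the uniformity point you flag and resolve via the Ball--Zarnescu identification.
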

With this statement we may now finish the proof of Theorem \ref{thm:main:Lip}.

\begin{proof}[Proof of Theorem \ref{thm:main:Lip}]
   First of all, fix $u \in \lebe^1(\Omega;\R^n)$ that obeys $\diver u =0$ in $\mathcal{D}'(\Omega)$ and $\epsilon_2$ as before. For each $\delta < \epsilon_2$ we may define $\E_{\delta} \phi_{\gamma} \ast u$ for $\gamma \leq C^{-1} \delta$, such that $\phi_\gamma \ast u$ is solenoidal in $\Omega_{\delta}$.
    As in the convex framework we conclude that 
    \[
    \hold^1(\bar{\Omega};\R^n) \cap \{u \in \lebe^1(\Omega;\R^n) \colon \diver u =0\}
    \]
    is dense in the space divergence-free $\lebe^1$-functions of $\Omega$.

    Hence, we can define the extension for general $u \in \lebe^1(\Omega;\R^n)$ as follows: Consider $u_j \in \hold^{1}_{\diver}(\bar{\Omega};\R^n)$ with $\Vert u_j - u \Vert_{\lebe^1(\Omega)} \to 0$. Then $\E_{\Omega}u_j$ is a Cauchy sequence in $\lebe^1_{\diver}(\Omega^{\varepsilon_2/2};\R^n)$ and define $\E_{\Omega} u$ as the limit of $(\E_{\Omega}[u_j])$. This limit is unique (and independent of the choice of $u_j$), forms a linear map and maps divergence-free functions to divergence-free functions, as $\diver \E_{\Omega}[u_j]=0$ for any $j$. This completes the proof.
\end{proof}

\subsection{Global extension} \label{sec:globalext}
    In this section, we shortly discuss how, under specific circumstances, one can get a global solenoidal extension. 
    As already discussed in Remark \ref{remark:thomasmueller}, the method of defining simplices has its limitations to regions close to the boundary. In fact, for geometrically non-trivial domains (i.e. domains not homeomorphic to the ball), an extension generally fails, for example consider the annulus and $u = \tfrac{x}{\vert x \vert^n}$.

    We recall the following extension result for $\lebe^p$ functions in the case $1<p<\infty$, \cite[Proposition 2.1]{HKMT}. Suppose that $\Omega$ is smoothly bounded and that its boundary consists of $J+1$ connected components $\Gamma^0$, $\Gamma^1$,...,$\Gamma^J$. We may write $\Gamma^j = \partial \mathcal{O}^j$ for $\mathcal{O}^j \subset \Omega^{\complement}$ open, where $\mathcal{O}^0$ is unbounded and $\mathcal{O}^j$ is bounded for $j \geq 1$.

    Pick $y_j \in \mathcal{O}^j$. Then
        \begin{equation}\label{def:gj}
            g_j := \tfrac{x-y_j}{\vert x - y_j \vert^n}.
        \end{equation}
        
    \begin{theorem}[\cite{HKMT}] \label{thm:Lp}
        Let $1<p<\infty$ and let $\Omega \subset \R^n$ be a domain with smooth boundary and $U$ be bounded and connected such that $\Omega \subset \subset U$. Suppose that $u \in \lebe^p(\Omega;\R^n)$ with $\diver u=0$ . Then there is $\lambda_j \in \R$, $j=1,...,J$ and $\bar{u} \in \lebe^p(U;\R^n)$ with $\diver \bar{u}=0$ in $\mathcal{D}'(U)$ and
        \begin{equation*}
            \bar{u} - \sum_{j=1}^J \lambda_j g_j =u \quad \text{on } \Omega.
        \end{equation*}
        Moreover, we can define a linear and bounded map $\tilde{\E} \colon \lebe^p(\Omega;\R^n) \to \lebe^p(U;\R^n)$ with $\bar{u} =\tilde{\E}u$.
    \end{theorem}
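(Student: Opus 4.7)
The proof follows the Neumann-problem approach sketched in Section \ref{sec:context}, now adapted to the multiply-connected geometry. The plan is: first identify the scalars $\lambda_j$ via flux-balancing so that $w := u + \sum_{j=1}^J \lambda_j g_j$ has vanishing normal-trace integral on each boundary component of $\Omega$; then solve a Neumann problem in each connected component of $U \setminus \overline{\Omega}$ to produce a divergence-free extension; finally glue the pieces across $\partial\Omega$.

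\textbf{Step 1 (Flux balance).} For $u \in \lebe^p(\Omega;\R^n)$ with $\diver u = 0$ and $1 < p < \infty$, the normal trace $u \cdot \nu|_{\partial\Omega}$ is well defined as an element of $\besov_{p,p}^{-1/p}(\partial\Omega)$ by duality, so the fluxes $\alpha_j := \langle u \cdot \nu, 1 \rangle_{\Gamma^j}$ exist for $j = 0, \ldots, J$ and satisfy $\sum_{j=0}^J \alpha_j = 0$ by Stokes' theorem. The Gauss theorem applied to $g_k$ on each hole $\mathcal{O}^j$, $j \geq 1$, combined with $\diver g_k = \omega_{n-1}\,\delta_{y_k}$ and the fact that $y_k \in \mathcal{O}^k$ only, yields
\[
M_{jk} := \int_{\Gamma^j} g_k \cdot \nu \, \dHaus^{n-1} = \pm \omega_{n-1}\,\delta_{jk} \qquad (1 \leq j, k \leq J),
\]
where $\nu$ is the outer normal to $\Omega$. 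Hence the $J \times J$ submatrix is a nonzero multiple of the identity, and the linear system $\alpha_j + \sum_k \lambda_k M_{jk} = 0$ for $j = 1, \ldots, J$ has a unique solution $(\lambda_1, \ldots, \lambda_J)$ depending linearly and boundedly on $u$. That the flux of $w := u + \sum_j \lambda_j g_j$ across $\Gamma^0$ automatically vanishes follows from $\sum_j \alpha_j = 0$ combined with $\sum_{j=0}^J M_{jk} = \int_{\partial \Omega} g_k \cdot \nu = 0$ (as $g_k$ is divergence-free on $\Omega$, since $y_k \notin \overline{\Omega}$).

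\textbf{Step 2 (Neumann problems and assembly).} Decompose $U \setminus \overline{\Omega}$ into its connected components: the ``shell'' $A_0 := U \cap \mathcal{O}^0$ and the holes $A_j := \mathcal{O}^j$ for $j \geq 1$. In each $A_j$ with $j \geq 1$ solve $-\Delta v_j = 0$ in $A_j$ with $\partial_\nu v_j = w \cdot \nu$ on $\Gamma^j$; in $A_0$ solve the analogous mixed Neumann problem with datum $w \cdot \nu$ on $\Gamma^0$ and homogeneous datum on $\partial U$. Step~1 delivers the requisite compatibility conditions, and \cite[Thm.~4.3.3]{Triebel} furnishes linear bounded solution operators $w \cdot \nu \mapsto v_j \in \sobo^{1,p}(A_j)$ (unique modulo constants). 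Setting
\[
\bar u := \begin{cases} u + \sum_{j=1}^J \lambda_j g_j & \text{in } \Omega, \\ \nabla v_j & \text{in } A_j, \end{cases}
\]
yields $\bar u \in \lebe^p(U;\R^n)$ with the prescribed identity $\bar u - \sum \lambda_j g_j = u$ on $\Omega$. An integration by parts against $\varphi \in \hold_c^\infty(U)$ verifies $\diver \bar u = 0$ in $\mathscr{D}'(U)$: the bulk contributions from $\Omega$ and each $A_j$ vanish by construction, while the boundary contributions across each $\Gamma^j$ cancel because the outward normal trace of $w$ from $\Omega$ matches $\partial_\nu v_j$ from the $A_j$-side. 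Linearity and $\lebe^p$-continuity of $\tilde{\mathscr{E}} \colon u \mapsto \bar u$ follow by composition of the linear bounded maps assembled in the three steps.

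The main obstacle is technical rather than conceptual: handling $u \cdot \nu$ at the $\lebe^p$-regularity level forces one to work with the distributional normal trace in $\besov_{p,p}^{-1/p}(\partial\Omega)$ and to invoke sharp Neumann theory for data in negative Besov scales. This is precisely the ingredient that collapses at $p = 1$---where $u \cdot \nu|_{\partial\Omega}$ is at best a distribution and the associated Neumann problem becomes ill-posed---and thereby motivates the differential-forms machinery developed in the body of the present paper.
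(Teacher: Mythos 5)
Your proof is correct and follows essentially the same approach as the paper, which for this cited result from \cite{HKMT} only gives a brief sketch (solving the divergence equation in each component of the complement with prescribed normal trace, with the compatibility condition determining the $\lambda_j$'s). You realise that divergence equation concretely through the Neumann problem for $-\Delta$, exactly as the paper itself does in its Section 1.2 context discussion, and you supply the flux-balance bookkeeping (including the automatic vanishing of the flux across $\Gamma^0$ and the handling of the outer shell $A_0 = U\cap\mathcal{O}^0$) that the sketch leaves implicit.
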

    This theorem is proven by solving the equation
        \[
        \begin{cases} \diver \bar{u} =0 & \text{in } \mathcal{O}_j, \\
        \bar{u} \cdot \nu = (u-\lambda_j g_j) \cdot \nu & \text{on } \partial \mathcal{O}_j,
        \end{cases}
        \]
    Observe this only works, as the normal trace operator $\lebe^p_{\diver}(\Omega;\R^n) \to B^{-1/p}_{p,p}(\partial\Omega)$ exists for $1<p<\infty$ and above equation may be solved provided that
        \[
        \int_{\partial \Ocal_j} (u - \lambda g_j) \cdot \nu =0,
        \]
    which implicitly gives $\lambda_j$.
    In Theorem \ref{thm:main:Lip} we proved a local extension provided that $\Omega$ is Lipschitz and $p=1$, which makes solution theory for the divergence-equation unavailable. Theorem \ref{thm:Lp} may be however combined with Theorem \ref{thm:main:Lip} to give a global extension.

    Again, subdivide the complement of $\Omega$ into $J+1$ connected components $\mathcal{O}_j$ and define $g_j$ as in \eqref{def:gj}.
    \begin{theorem} \label{thm:global:extension}
        Let $\Omega \subset \R^n$ be a Lipschitz domain. Then there is a linear operator $\E' \colon \lebe^1_{\diver}(\Omega;\R^n) \to \lebe^1_{\diver}(\R^n;\R^n)$ such that 
        \[
        \E' u = u+ \sum_{j=1}^J \lambda_j g_j \quad \text{on } \Omega.
        \]
    \end{theorem}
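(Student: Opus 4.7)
The plan is to combine the local extension of Theorem~\ref{thm:main:Lip} with the flux-cancellation device from \cite{HKMT} and a Bogovski\u{\i}-type patching argument. The $\lebe^1$-obstruction---that distributional divergence-free fields need not admit a normal trace on $\partial\Omega$---is bypassed by first passing to the enlarged domain $\Omega^{\varepsilon}$, on which $\mathscr{E}_\Omega[u]$ is automatically classically smooth outside $\overline{\Omega}$.

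First, I apply Theorem~\ref{thm:main:Lip} to $u$ to obtain $v:=\mathscr{E}_\Omega[u]\in\lebe^1_{\diver}(\Omega^{\varepsilon})$ with $v|_\Omega=u$. Inspecting the formula \eqref{def:Eomega:Lip} and applying Fubini, each summand reduces to a smooth prefactor in $y$ (the product of $\phi_{i_n}$ and the $\mathrm{d}\phi_{i_k}$'s) times an integral of $u$ against a smooth kernel supported in a Whitney cube inside $\Omega$; hence $v\in\hold^\infty(\Omega^{\varepsilon}\setminus\overline{\Omega};\R^n)$. Shrinking $\varepsilon$ so that $y_j\notin\Omega^{\varepsilon}$ for every $j\geq 1$ and that each $\mathcal{A}_j:=\Omega^{\varepsilon}\cap\mathcal{O}^j$ is a Lipschitz shell with the correct topology, I pick smooth closed hypersurfaces $\Sigma_j\subset\mathcal{A}_j$ enclosing $y_j$ for $j\geq 1$, and an outer surface $\Sigma_0\subset\mathcal{A}_0$ enclosing $\overline{\Omega}$. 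Because $v$ is classically divergence-free and smooth on each $\mathcal{A}_j$, the flux $\Phi_j(u):=\int_{\Sigma_j}v\cdot\nu\dHaus^{n-1}$ is well-defined, independent of the choice of $\Sigma_j$ within $\mathcal{A}_j$, and linear in $u$.

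Setting $\lambda_j:=-\Phi_j(u)/\sigma_{n-1}$ for $j=1,\ldots,J$, where $\sigma_{n-1}:=\mathscr{H}^{n-1}(\mathbb{S}^{n-1})$, I define $\tilde v:=v+\sum_{j=1}^J\lambda_j g_j$, which lies in $\lebe^1_{\diver}(\Omega^{\varepsilon})\cap\hold^\infty(\Omega^{\varepsilon}\setminus\overline{\Omega};\R^n)$. Using the elementary identity $\int_{\Sigma_j}g_l\cdot\nu\dHaus^{n-1}=\sigma_{n-1}\delta_{jl}$, one checks $\int_{\Sigma_j}\tilde v\cdot\nu\dHaus^{n-1}=0$ for $j=1,\ldots,J$; the divergence theorem applied to $\tilde v$ in the smooth region bounded by $\Sigma_0$ and the $\Sigma_j$'s then forces $\int_{\Sigma_0}\tilde v\cdot\nu\dHaus^{n-1}=0$ as well, which is the global-conservation identity $\sum_{j\geq 0}\Phi_j(u)=0$ reflecting $\diver u = 0$.

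To close the extension to all of $\R^n$, I pick a smooth cut-off $\chi\in\hold_c^\infty(\Omega^{\varepsilon};[0,1])$ with $\chi\equiv 1$ on a neighbourhood of $\overline{\Omega}\cup\bigcup_{j\geq 1}B_j$, where $B_j$ denotes the bounded region with $\partial B_j=\Sigma_j$, and $\chi\equiv 0$ outside a slightly larger region bounded by parallel surfaces $\Sigma_j'$; in particular $\{\nabla\chi\neq 0\}$ lies in the smooth shells between $\Sigma_j$ and $\Sigma_j'$. Then $\chi\tilde v\in\lebe^1(\R^n;\R^n)$ satisfies $\diver(\chi\tilde v)=\nabla\chi\cdot\tilde v$ in $\mathscr{D}'(\R^n)$, a smooth function supported in these shells; Gauss' theorem applied on each shell yields $\int(\nabla\chi\cdot\tilde v)\dx=\int_{\Sigma_j}\tilde v\cdot\nu\dHaus^{n-1}-\int_{\Sigma_j'}\tilde v\cdot\nu\dHaus^{n-1}=0$. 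Applying the Bogovski\u{\i} operator on each shell therefore produces $w\in\hold_c^\infty(\R^n;\R^n)$ with $\diver w=\nabla\chi\cdot\tilde v$, and the compactly supported field $\E' u:=\chi\tilde v-w$ lies in $\lebe^1_{\diver}(\R^n;\R^n)$ and coincides with $u+\sum_{j=1}^J\lambda_j g_j$ on $\Omega$. Linearity and $\lebe^1$-boundedness are inherited from the corresponding properties of $\mathscr{E}_\Omega$, the flux functionals $\Phi_j$, and the Bogovski\u{\i} operator. The main obstacle is precisely the Bogovski\u{\i} compatibility on each shell---which is exactly what the flux-cancelling choice of $\lambda_j$ and the global-conservation identity are designed to enforce.
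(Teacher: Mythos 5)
Your proposal is correct in substance but takes a genuinely different route than the paper. Both arguments begin identically: apply Theorem~\ref{thm:main:Lip} to obtain $v=\mathscr{E}_\Omega[u]\in\lebe^1_{\diver}(\Omega^{\varepsilon})$ and exploit the fact that $v$ is $\hold^\infty$ on $\Omega^\varepsilon\setminus\overline{\Omega}$. From there the paper invokes Lemma~\ref{lemma:higherorder} to upgrade $v$ to $\sobo^{m,1}$ on a smoothly bounded shell $A\subset\Omega^\varepsilon\setminus\overline\Omega$, hence to $\lebe^p$ for $p>1$, and then applies the $\lebe^p$-extension theorem of Kato et al.\ (Theorem~\ref{thm:Lp}) as a black box, observing $\lambda_{J+1}=0$ and patching. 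You instead construct the global extension directly: compute the fluxes $\Phi_j(u)$ through smooth separating hypersurfaces, subtract $\sum_j\lambda_j g_j$ to kill them, cut off, and correct the divergence error $\nabla\chi\cdot\tilde v$ (which has zero total mass on each shell) with the Bogovski\u{\i} operator. This is more self-contained and more elementary, at the cost of re-deriving some of the structure that Theorem~\ref{thm:Lp} delivers for free.

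Two points that your sketch passes over and that need to be said explicitly. First, when you write ``the divergence theorem applied to $\tilde v$ in the smooth region bounded by $\Sigma_0$ and the $\Sigma_j$'s'' you should note that this region \emph{contains} $\overline{\Omega}$, where $\tilde v$ is merely $\lebe^1$. The conclusion $\sum_{j\geq 0}\Phi_j(u)=0$ is still correct because $\tilde v$ is distributionally divergence-free on $\Omega^\varepsilon$ and $\partial W$ lies in the open smooth region, so one can mollify $\tilde v$ on a slightly larger set, apply the classical divergence theorem to the mollification, and pass to the limit uniformly on $\partial W$; but this is not ``the divergence theorem applied to $\tilde v$'' in the naive sense. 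Second, the boundedness claim ``$\lebe^1$-boundedness is inherited from \ldots the flux functionals $\Phi_j$, and the Bogovski\u{\i} operator'' is the crux and is not automatic. A flux over a measure-zero hypersurface is not controlled by an $\lebe^1$-norm, and the Bogovski\u{\i} operator is not $\lebe^1\to\sobo^{1,1}$ bounded. Both obstructions are resolved by exactly the quantitative regularity gain of Lemma~\ref{lemma:higherorder}: $\|\mathscr{E}_\Omega u\|_{\sobo^{m,1}(A^\delta)}\lesssim\delta^{-m}\|u\|_{\lebe^1(\Omega)}$, which gives uniform control of $v$ and its derivatives on a fixed shell. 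Alternatively, for the fluxes alone, one may average $\Phi_j(u)=\int_{\Sigma_j}v\cdot\nu\dHaus^{n-1}$ over a foliation of the shell by parallel hypersurfaces (the coarea formula) and use that $\Phi_j$ is independent of the choice of $\Sigma_j$ in the shell to get $|\Phi_j(u)|\lesssim\|v\|_{\lebe^1(\text{shell})}\lesssim\|u\|_{\lebe^1(\Omega)}$; but the Bogovski\u{\i} bound still requires the regularity gain. So your argument works, but it is not independent of Lemma~\ref{lemma:higherorder}.
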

    The key in proving this Theorem is the following observation:
    \begin{lemma} \label{lemma:higherorder}
        Let $\Omega \subset \R^n$ be a Lipschitz domain and $\mathscr{E}_{\Omega}$ be the extension operator from Definition \ref{def:extension:Lip} mapping to $\lebe^1_{\diver}(\Omega^{\varepsilon};\R^n)$, $\Omega^{\varepsilon} = \{ x \in \R^n \colon \dist(x, \Omega) < \varepsilon\}$. Consider the strip $A^{\delta} = \{ x \in \Omega^{\complement} \colon \delta < \dist(x, \Omega) < \varepsilon\}$. Then, for any $m \in \N$, $\E_{\Omega} u \in \sobo^{m,1}(A^{\delta};\R^n)$ and 
        \[
            \Vert \E_{\Omega} u \Vert_{\sobo^{m,1}(A^{\delta})} \leq C \delta^{-m} \Vert u \Vert_{\lebe^1(\R^n)}.
        \]
    \end{lemma}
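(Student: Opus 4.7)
My strategy is to exploit the fact that in Definition \ref{def:extension:Lip}, the spatial dependence of $\mathscr{E}_{\Omega}[u]$ on $\R^{n}\setminus\overline{\Omega}$ is concentrated entirely in the partition-of-unity factors $\phi_{i_n}\wedge \dif\phi_{i_{n-1}}\wedge\cdots\wedge\dif\phi_{i_1}$, while the double integral $\int_{(\R^n)^n}\int_{M(\mathbf{x}_I)} u(z)\nu\,\dif\mathscr{H}^{n-1}\dif\mu^I(\mathbf{x}_I)$ is a scalar coefficient depending only on the multi-index $I$. Hence, for any multi-index $\alpha$ with $|\alpha|\leq m$, the Leibniz rule gives
\begin{equation*}
D^{\alpha}\mathscr{E}_{\Omega}[u](y) = (-1)^{n-1}\sum_{I} D^{\alpha}\bigl(\phi_{i_n}\wedge \dif\phi_{i_{n-1}}\wedge\cdots\wedge\dif\phi_{i_1}\bigr)(y) \cdot \int_{(\R^n)^n}\int_{M(\mathbf{x}_I)} u(z)\nu\,\dif\mathscr{H}^{n-1}\dif\mu^I(\mathbf{x}_I),
\end{equation*}
so the entire proof reduces to pointwise/sum estimates of a form already handled for $|\alpha|=0$ in Lemma \ref{lemma:L1bound:c1:Lip}, now with an additional factor coming from taking $|\alpha|$ derivatives on the partition of unity. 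As usual, I would first prove the bound on the dense class $\hold^{1}(\overline{\Omega};\R^n)$ and then pass to general $u\in\lebe^{1}(\Omega;\R^{n})$ by density (cf. Section \ref{sec:Lip:final}), using that the bound depends only on the $\lebe^{1}$-norm of $u$.

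The key quantitative input is two-fold. First, iterating property \ref{item:POU3} gives $\|D^{k}\phi_{i}\|_{\lebe^\infty(\R^{n})}\leq c\,\ell(Q_{i})^{-k}$ for all $k\leq m$, so by a direct Leibniz-type computation
\begin{equation*}
\bigl|D^{\alpha}\bigl(\phi_{i_n}\wedge \dif\phi_{i_{n-1}}\wedge\cdots\wedge\dif\phi_{i_1}\bigr)(y)\bigr|\leq c\,\ell(Q_{i_n})^{-(n-1)-|\alpha|}\mathbbm{1}_{\bigcup\{Q\colon Q\in\mathcal{N}(i_n)\}}(y),
\end{equation*}
using that all cubes $Q_{i_1},\ldots,Q_{i_n}$ have sidelength comparable to $\ell(Q_{i_n})$ whenever their $\frac{7}{6}$-enlargements intersect (property \ref{item:Whitney3}). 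Second, the crucial geometric observation is that if $y\in A^{\delta}$ satisfies $\varphi_{i_n}\wedge\dif\varphi_{i_{n-1}}\wedge\cdots\wedge\dif\varphi_{i_1}(y)\neq 0$, then $\mathrm{dist}(y,\partial\Omega)\geq \delta/2$, so by \ref{item:Whitney3} we have $\ell(Q_{i_n})\gtrsim \delta$. Consequently,
\begin{equation*}
\bigl|D^{\alpha}\bigl(\phi_{i_n}\wedge \dif\phi_{i_{n-1}}\wedge\cdots\wedge\dif\phi_{i_1}\bigr)(y)\bigr|\leq c\,\delta^{-|\alpha|}\ell(Q_{i_n})^{-(n-1)}\mathbbm{1}_{\bigcup\{Q\colon Q\in\mathcal{N}(i_n)\}}(y).
\end{equation*}

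Combining this with Lemma \ref{lem:LipschitzL1boonhund} (which bounds the simplex integral by $c\,\delta_I^{-1}\int_{\widetilde{M}_I}|u|\dif x$, where $\delta_I\sim \ell(Q_{i_n})$), and then integrating over $A^{\delta}$ using $\mathscr{L}^{n}(\bigcup\{Q\colon Q\in\mathcal{N}(i_n)\})\leq c\,\ell(Q_{i_n})^{n}$ and the uniformly finite overlap of the image sets $\widetilde{M}_I$ (exactly as at the end of the proof of Lemma \ref{lemma:L1bound:c1:Lip}), yields
\begin{equation*}
\bigl\|D^{\alpha}\mathscr{E}_{\Omega}[u]\bigr\|_{\lebe^{1}(A^{\delta})}\leq c\,\delta^{-|\alpha|}\|u\|_{\lebe^{1}(\Omega)}.
\end{equation*}
Summing over $|\alpha|\leq m$ and using $\sum_{k=0}^{m}\delta^{-k}\leq C\delta^{-m}$ for all sufficiently small $\delta>0$ completes the proof. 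The only mild obstacle here is careful bookkeeping — that the Whitney constants, the overlap constant $\mathtt{N}$, and the constants from Lemma \ref{lem:LipschitzL1boonhund} are all independent of $\delta$ — but conceptually the argument is a direct iteration of Lemma \ref{lemma:L1bound:c1:Lip}: each derivative costs exactly one factor of $\delta^{-1}$ because each derivative of the partition of unity costs one factor $\ell(Q_{i_n})^{-1}\sim \delta^{-1}$ on the strip $A^{\delta}$.
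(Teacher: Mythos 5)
Your proposal is correct and is essentially the same argument the paper sketches in its (very terse) proof: taking derivatives of $\mathscr{E}_\Omega[u]$ only lands on the partition-of-unity factors, each derivative of $\phi_i$ costs $\ell(Q_i)^{-1}$, and on the strip $A^\delta$ the relevant cubes have sidelength $\gtrsim\delta$, so each derivative costs $\delta^{-1}$; the rest is the same $\lebe^1$ bookkeeping as in Lemma \ref{lemma:L1bound:c1:Lip}. Your write-up just spells out the Leibniz-rule details and the reduction to the already-proved case $m=0$ more explicitly than the paper does.
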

    \begin{proof}
        The proof can be achieved by the same means as Lemma \ref{lemma:L1bound:c1} and Lemma \ref{lemma:L1bound:c1:Lip}. In particular, taking additional derivatives only indroduces derivatives of the terms $\phi_i$. But these derivatives scale like $l(Q^i)^{-1}$, i.e. like $C(\Omega) \delta^{-1}$.
    \end{proof}
    This now tells us that we can extend $u$ by a small region in $\lebe^1$ and then apply Theorem \ref{thm:Lp}.

    \begin{proof}[Proof of Theorem \ref{thm:global:extension}]
      Consider the extension $\E_{\Omega}$ of $u$ to $\E u \in \lebe^1_{\diver}(\Omega^{\varepsilon};\R^n)$. There is a smoothly bounded domain $A$ that is contained in the strip $A^{\varepsilon/2}$. There are $J+2$ connected components of $(A^{\varepsilon/2})^{\complement}$, namely an unbounded set $\mathcal{O}_0$, $J$ sets $\mathcal{O}_j$ with $\mathcal{O}_j \cap \Omega = \emptyset$ and $\mathcal{O}_{J+1}$ that satisfies $\Omega \setminus \R^n$. By Theorem \ref{thm:Lp} there is a linear and bounded  map $\bar{E} \colon \lebe^p_{\diver}(A;\R^n) \to \lebe^p_{\diver}(B_R(0);\R^n)$ with $\bar{E} u= \E u +\sum_{j=1}^{J+1} g_j$ on $A$. Consequently, for suitable $m\in \N$
      \[
        \Vert \bar{E} (\E u) \Vert_{\lebe^p(B_R(0))} \leq C \Vert \E u \Vert_{\lebe^p(A)} \leq C \Vert \E u \Vert_{\sobo^{m,1}(A)} \leq C \Vert u \Vert_{\lebe^1(\Omega)}
      \]     
    As the function $\E u$ is however already divergence-free on $A \cup \mathcal{O}_{J+1} \subset \Omega^{\varepsilon}$, we conclude that $\lambda_{J+1}=0$. Moreover, as the divergence is a local operator, one can see that
        \[
            \E' u = \begin{cases}
                        \E u + \sum_{j=1}^J \lambda_j g_j & \text{on } A \cup \mathcal{O}_{J+1}, \\
                        \bar{E} u & \text{on } \bigcup_{j=0}^J \mathcal{O}_J, 
                    \end{cases}
        \]
    already is divergence-free. This finishes the proof.
    \end{proof}


\section{Divergence-free extensions in the Sobolev case}\label{sec:sobolev}
In this section we deal with the extension result in the Sobolev space setting. While this task differs from the zeroth order regularity regime $\lebe^{1}:=\sobo^{0,1}$ discussed in the preceding sections, the underlying strategy is analogous for all spaces $\sobo^{m,1}$ with $m\in\mathbb{N}$. Hence, we will focus on $m=1$ and confine ourselves to commenting on $m\geq 2$ throughout.

Throughout, $\Omega'$ denotes a domain such that $\Omega \Subset \Omega'$. First, once a formula for the requisite extension operator $\mathscr{E}\colon\sobo^{1,1}(\Omega;\R^{n})\to\sobo^{1,1}(\Omega';\R^{n})$ is established, it is substantially easier to verify $\mathrm{div}(\mathscr{E}u)=0$. Indeed, in this case, $\mathscr{E}u\in\sobo^{1,1}(\Omega';\R^{n})$ allows to reduce the claimed solenoidality of $\mathscr{E}u$ to establishing $\mathrm{div}(\mathscr{E}u)=0$ $\mathscr{L}^{n}$-a.e. in $\Omega'$. On the contrary, the $\lebe^{1}$-case moreover required to establish $\mathrm{div}(\mathscr{E}u)\in\lebe^{1}(\Omega')$ so that, e.g., $\mathrm{div}(\mathscr{E}u)$ in particular is not a measure concentrated at $\partial\Omega$. On a formal level, this corresponds to verifying a (very weak form of a) Neumann-type condition at the boundary. From a technical perspective, this is linked to the fact that there is no boundary trace operator on $\lebe^{1}(\Omega;\R^{n})$.

In the higher regularity regime, however, there is a boundary trace operator indeed, and so the conditions for the extension to be matched at the boundary $\partial\Omega$ are stronger. Formally, this corresponds to additional (weak) Dirichlet boundary conditions to be fulfilled, and hence it is harder to come up with a formula for the extension. In fact, the construction employed for the $\lebe^1$-case satisfies a weak form of a Neumann boundary condition, but \emph{cannot} satisfy a Dirichlet boundary condition. Especially, the (relatively straightforward) extension formula \eqref{def:Eomega} fails in the $\sobo^{1,1}$ case.

Following the outline from Section \ref{sec:strategy}, the basic idea is to take a specific extension $\mathscr{E}_{0}$ of $\sobo^{1,1}$-functions to full space which does not  necessarily preserves solenoidality. We subsequently construct a bounded linear corrector $\mathscr{R}\colon \sobo^{1,1}(\Omega;\R^{n}) \to \sobo^{1,1}_0(\overline{\Omega'}\setminus \Omega;\R^n)$ that obeys
 \[
(u\in\sobo^{1,1}(\Omega;\R^{n})\;\text{and}\;\mathrm{div}(u)=0\;\text{in}\;\mathscr{D}'(\Omega))\Longrightarrow \diver(\mathscr{R} u) = - \diver(\mathscr{E}_{0}u)\;\text{in}\;\lebe^{1}(\Omega'). 
 \]
and define $\mathscr{E}= \mathscr{E}_{0} + \mathscr{R}$. Based on this construction, the main result of this section is as follows:
\begin{theorem}[Divergence-free extensions in the Sobolev case] \label{thm:W11}
    Let $\Omega \subset \R^n$ be an open and bounded set with Lipschitz boundary. 
    \begin{enumerate}
        \item If $\Omega$ is in addition convex, there exists a \emph{bounded and linear  extension operator} \[
        \mathscr{E}\colon \sobo^{1,1}(\Omega;\R^n) \longrightarrow \sobo^{1,1}(\R^n;\R^n)
        \]
    such that
        \[
        \diver(u) =0\;\text{$\mathscr{L}^{n}$-a.e. in } \Omega \quad \Longrightarrow \quad \diver(\mathscr{E}u) =0\; \text{$\mathscr{L}^{n}$-a.e. in } \R^n.
        \]
        \item There exists $\Omega'$ with $\Omega \Subset \Omega'$ and a \emph{bounded and linear extension operator} 
        \[
        \mathscr{E}\colon \sobo^{1,1}(\Omega;\R^n) \longrightarrow \sobo^{1,1}(\Omega';\R^n)
        \]
    such that
        \[
        \diver(u) =0\;\text{$\mathscr{L}^{n}$-a.e. in } \Omega \quad \Longrightarrow \quad \diver(\mathscr{E}u) =0\; \text{$\mathscr{L}^{n}$-a.e. in } \Omega'.
        \]
    \end{enumerate}
    
\end{theorem}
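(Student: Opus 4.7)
The plan is to implement the two-step strategy of Section \ref{sec:W1ext}: superpose an unconstrained Sobolev extension with a divergence-killing corrector that carries zero trace on $\partial\Omega$. Fix $\mathscr{E}_{0}\colon\sobo^{1,1}(\Omega;\R^{n})\to\sobo^{1,1}(\R^{n};\R^{n})$ to be \textsc{Jones}' extension, post-composed with a smooth cutoff localising the support in a fixed neighbourhood $\Omega'\Supset\Omega$; in the convex case of part~(a) the cutoff is not needed and one may take $\Omega'=\R^{n}$. Whenever $u\in\sobo^{1,1}_{\diver}(\Omega;\R^{n})$, the identity $\mathscr{E}_{0}u|_{\Omega}=u$ forces $\diver(\mathscr{E}_{0}u)=0$ a.e.\ in $\Omega$, so $f:=-\diver(\mathscr{E}_{0}u)\in\lebe^{1}(\Omega';\R)$ has support in $\overline{\Omega'\setminus\Omega}$ and, by the divergence theorem applied to the compactly supported $\mathscr{E}_{0}u$, total integral zero.

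The centrepiece of the proof is a bounded linear corrector $\mathscr{R}\colon\sobo^{1,1}(\Omega;\R^{n})\to\sobo_{0}^{1,1}(\Omega'\setminus\overline{\Omega};\R^{n})$ satisfying $\diver(\mathscr{R}u)=f$ a.e.\ on the solenoidal subspace. The construction exploits the concrete form of \textsc{Jones}' extension: on each exterior Whitney cube $Q_{i}\in\mathcal{W}_{2}$ one has $\mathscr{E}_{0}u=\sum_{j}\varphi_{j}\bar{u}_{j}$, where $\varphi_{j}$ is the partition of unity of Section \ref{sec:Whitney} and $\bar{u}_{j}:=\dashint_{Q^{j}}u\dx$ with $Q^{j}=\Psi(Q_{j})$. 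Using $\sum_{j}\nabla\varphi_{j}=0$ one obtains the telescoping identity
\begin{equation*}
\diver(\mathscr{E}_{0}u)(x) \;=\; \sum_{j}\nabla\varphi_{j}(x)\cdot(\bar{u}_{j}-\bar{u}_{i})\qquad\text{for $x\in Q_{i}$,}
\end{equation*}
which localises the divergence to the partition-of-unity overlap and controls each summand pointwise by $C\ell(Q_{i})^{-1}|\bar{u}_{j}-\bar{u}_{i}|\leq C\dashint_{Q^{i}\cup Q^{j}}|\nabla u|\dx$ via a Poincar\'{e}-type inequality along the Whitney chain joining $Q^{i}$ and $Q^{j}$ inside $\Omega$.

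Building on this structural identity, $\mathscr{R}u$ is assembled as a locally finite sum $\sum_{i,j}w_{ij}$ of pairwise correctors indexed by neighbouring exterior Whitney cubes. Each $w_{ij}$ is designed to be linear in $u$, supported in a fixed dilate of $Q_{i}\cup Q_{j}$ at positive distance from $\partial\Omega$, and to satisfy $\diver(w_{ij})=-\nabla\varphi_{j}\cdot(\bar{u}_{j}-\bar{u}_{i})\mathbf{1}_{Q_{i}}$ together with the scale-invariant bound $\|w_{ij}\|_{\sobo^{1,1}}\leq C\ell(Q_{i})\dashint_{Q^{i}\cup Q^{j}}|\nabla u|\dx$. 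Such a $w_{ij}$ is produced by a \textsc{Bogovski\u{\i}}-type solve on a shape-regular reference domain: the right-hand side $\nabla\varphi_{j}\cdot(\bar{u}_{j}-\bar{u}_{i})\mathbf{1}_{Q_{i}}$ is a fixed smooth bump times a constant vector, and is already the divergence of $\varphi_{j}(\bar{u}_{j}-\bar{u}_{i})$, so solvability is straightforward up to enforcing the zero-trace condition by a further local correction on the fixed reference domain. Any accumulated integral mass from these steps is cascaded along chains of Whitney cubes to a single reference cube in $\Omega'\setminus\overline{\Omega}$ at a uniform distance from $\partial\Omega$, where a final \textsc{Bogovski\u{\i}} step absorbs it. Summation over pairs together with the finite overlap of $\mathcal{W}_{2}$ yields $\|\mathscr{R}u\|_{\sobo^{1,1}(\Omega'\setminus\overline{\Omega})}\leq C\|u\|_{\sobo^{1,1}(\Omega)}$, with $\mathscr{R}u\in\sobo_{0}^{1,1}$ and smoothness in $\Omega'\setminus\overline{\Omega}$ inherited from the partition of unity.

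Setting $\mathscr{E}:=\mathscr{E}_{0}+\mathscr{R}$ gives a bounded linear operator $\sobo^{1,1}(\Omega;\R^{n})\to\sobo^{1,1}(\Omega';\R^{n})$ with $\mathscr{E}u|_{\Omega}=u$ (as $\mathscr{R}u|_{\Omega}=0$) and $\diver(\mathscr{E}u)=0$ a.e.\ in $\Omega'$ whenever $u\in\sobo^{1,1}_{\diver}(\Omega;\R^{n})$. In contrast with Section \ref{sec:density}, no density argument is needed: $\mathscr{E}$ is defined linearly on all of $\sobo^{1,1}(\Omega;\R^{n})$, and the divergence-matching property is verified directly from the telescoping identity. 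For part~(a), convexity removes the topological obstruction of Example \ref{ex:topology}: the cascade underlying $\mathscr{R}$ terminates at a fixed reference cube sent to infinity, $\mathscr{E}_{0}$ can be made compactly supported in $\R^{n}$, and the resulting operator maps $\sobo^{1,1}(\Omega;\R^{n})\to\sobo^{1,1}(\R^{n};\R^{n})$. The main obstacle throughout is retaining $\sobo^{1,1}$-control of the local correctors at the endpoint $p=1$, exploited precisely via the explicit structure of $\diver(\mathscr{E}_{0}u)$ as a sum of divergences of partition-of-unity pieces, thereby bypassing the generic failure of the divergence equation in $\lebe^{1}$.
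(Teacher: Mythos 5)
Your proposal correctly identifies the broad template of Section~\ref{sec:W1ext} -- take a Jones extension $\mathscr{E}_{0}$ and add a corrector $\mathscr{R}$ supported outside $\Omega$ with zero trace -- and your computation of the telescoping identity $\diver(\mathscr{E}_{0}u)(x)=\sum_{j}\nabla\varphi_{j}(x)\cdot(\bar{u}_{j}-\bar{u}_{i})$ on $Q_{i}$ is exactly the starting point of the paper's Lemma~\ref{lemma:w11:2}\ref{w:11:2a}. Beyond that point, however, the route you take is genuinely different from the paper's, and the difference is precisely where the difficulty lies.

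The paper never performs a mass cascade. Instead it introduces a hierarchy of correctors $\mathscr{R}_{k}$, $k=1,\dots,n-1$ (Definition in \eqref{def:Rcal}--\eqref{def:Rcal2}), each defined by pairing $\mathrm{D}u$ against $k$-dimensional simplices near $\partial\Omega$ and against wedge products of $\phi_{i_{k+1}}\wedge\dif\phi_{i_k}\wedge\cdots\wedge\dif\phi_{i_1}$. Lemma~\ref{lemma:w11:2} shows $\dif\mathscr{E}_{0}u=\mathcal{S}_{1}$, $\dif\mathscr{R}_{k}u=(n-k)(\mathcal{S}_{k}-\mathcal{S}_{k+1})$, and crucially $\mathcal{S}_{n}=0$ when $\dif u=0$; the terminal object $\mathcal{S}_{n}$ vanishes \emph{because of} the constraint, via Stokes' theorem on the $n$-dimensional simplices. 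Choosing the coefficients in \eqref{eq:TheSobolevAxe} so that $\mathcal{S}_{1},\dots,\mathcal{S}_{n-1}$ cancel exactly, one kills the divergence pointwise with no residual mass to dispose of. The $\sobo^{1,1}$-bound (Lemma~\ref{lemma:w11:1}) then follows from purely local scaling estimates \eqref{est:1}--\eqref{est:4} plus the finite overlap \ref{item:Whitney2}, in the same spirit as Lemmas~\ref{lem:convhullL1bound} and \ref{lemma:L1bound:c1}.

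Your proposal instead builds the corrector from per-pair Bogovski\u{\i}-type solves, and you acknowledge that the individual right-hand sides $\nabla\varphi_{j}\cdot(\bar{u}_{j}-\bar{u}_{i})\mathbf{1}_{Q_{i}}$ carry nonzero net mass which must be "cascaded along chains of Whitney cubes to a single reference cube." This is the genuine gap. Cascading mass along a Whitney tree to a fixed root is exactly the construction of Bogovski\u{\i}-type solution operators on John domains, and it is well known -- and indeed emphasised in the paper's introduction in the discussion after Example~\ref{ex:cube} -- that such weakly singular integral operators are \emph{not} bounded $\lebe^{1}\to\sobo^{1,1}_{0}$. Concretely: a cube $Q_{i}$ at distance $\sim\ell(Q_{i})$ from $\partial\Omega$ must send its residual mass $m_{i}$ through $\sim\log(\ell(Q_{i})^{-1})$ Whitney generations to reach the reference cube, and each generation step contributes $\sim m_{i}$ to $\|\nabla(\text{transport field})\|_{\lebe^{1}}$. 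Summing, one picks up $\sum_{i}m_{i}\log(\ell(Q_{i})^{-1})$, which diverges for admissible $u\in\sobo^{1,1}_{\diver}(\Omega;\R^{n})$ whose gradient concentrates near the boundary; the bound $\sum_i m_i\lesssim\|\nabla u\|_{\lebe^{1}}$ does not control the weighted sum. Your proposal does not invoke the solenoidality of $u$ at any stage where it could cure this divergence; it only enters in the trivial observation $f=-\diver(\mathscr{E}_{0}u)$ has support in $\Omega'\setminus\Omega$ with zero integral, which is a necessary but far from sufficient condition for an $\lebe^{1}\to\sobo^{1,1}_{0}$ divergence solve. (As a smaller point, the per-pair bound you state, $\|w_{ij}\|_{\sobo^{1,1}}\leq C\ell(Q_{i})\dashint_{Q^{i}\cup Q^{j}}|\nabla u|\dx$, is dimensionally inconsistent with the summation you then carry out -- it equals $C\ell(Q_i)^{1-n}\int_{Q^i\cup Q^j}|\nabla u|\dx$, which does not sum to $\|\nabla u\|_{\lebe^1(\Omega)}$ as $\ell(Q_i)\to 0$ -- though this is fixable and secondary to the cascade problem.) To make the corrector work at $p=1$ one must use $\diver u=0$ not merely as a zero-mean condition on $f$ but as a structural cancellation inside the corrector itself, which is precisely what the simplex/Stokes mechanism of Section~\ref{sec:sobolev} accomplishes.
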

As a consequence of this theorem, a bound on the $\sobo^{1,1}$-norm away from the boundary and the result of \textsc{Kato et al.} \cite{HKMT}, we also get the counterpart to Theorem \ref{thm:global:extension}.
\begin{corollary}[Theorem \ref{thm:global:extension}, Sobolev case]\label{coro:global:extension}
     Let $\Omega \subset \R^n$ be an open and bounded set with Lipschitz boundary. Then there exists a finite dimensional space $X \subset \sobo^{1,1}(\Omega;\R^n)$ and a bounded and linear operator
        \[
        \widetilde{\mathscr{E}} \colon \sobo^{1,1}(\Omega;\R^n) \longrightarrow \sobo^{1,1}(\R^n;\R^n)
        \]
    such that $(\widetilde{\mathscr{E}} u - u)|_{\Omega}\in X$ and 
        \[ 
         \diver(u)=0\; \text{$\mathscr{L}^{n}$-a.e. in } \Omega \quad \Longrightarrow \quad \diver (\widetilde{\mathscr{E}} u) =0\; \text{$\mathscr{L}^{n}$-a.e. in } \R^n.
        \]
\end{corollary}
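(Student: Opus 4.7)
The plan is to mirror the proof of Theorem \ref{thm:global:extension} in the $\sobo^{1,1}$ setting, using the local Sobolev extension of Theorem \ref{thm:W11} in place of the local $\lebe^1$-extension, and invoking the Sobolev version of \textsc{Kato} et al.'s global result \cite{HKMT} on an intermediate annular region where the local extension exhibits improved regularity.

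Concretely, I would first apply Theorem \ref{thm:W11}(b) to obtain $v:=\mathscr{E} u\in\sobo^{1,1}_{\diver}(\Omega')$ for some $\Omega\Subset\Omega'$, with $\Vert v\Vert_{\sobo^{1,1}(\Omega')}\leq C\Vert u\Vert_{\sobo^{1,1}(\Omega)}$. Paralleling Lemma \ref{lemma:higherorder}, I expect the bound $\Vert v\Vert_{\sobo^{m,1}(A^\delta)}\leq C_m\delta^{-(m-1)}\Vert u\Vert_{\sobo^{1,1}(\Omega)}$ for each $m\in\mathbb{N}$ on strips $A^\delta:=\{x\in\Omega'\setminus\overline{\Omega}\colon\delta<\dist(x,\partial\Omega)<\varepsilon\}$. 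This should be extracted by inspecting the decomposition $\mathscr{E}=\mathscr{E}_{0}+\mathscr{R}$ of Section \ref{sec:W1ext}: the corrector $\mathscr{R}$ is smooth in the complement with the expected Whitney-scale derivative bounds, and the Jones-type base $\mathscr{E}_{0}$ admits analogous smoothing estimates on strips. By Sobolev embedding, this yields $v|_A\in\sobo^{1,p}(A)$ for some $p=p(n)>1$ on any smoothly bounded domain $A\Subset A^{\varepsilon/2}$.

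I would next decompose $\R^n\setminus\overline{A}$ into its connected components: an unbounded $\mathcal{O}_0$, bounded components $\mathcal{O}_1,\ldots,\mathcal{O}_J$ disjoint from $\Omega$, and a component $\mathcal{O}_{J+1}\supset\Omega$. Fixing base points $y_j\in\mathcal{O}_j$ and the fields $g_j(x):=(x-y_j)/|x-y_j|^n$, applying the Sobolev-space version of Theorem \ref{thm:Lp} from \cite{HKMT} to $v|_A\in\sobo^{1,p}_{\diver}(A)$ produces scalars $\lambda_1,\ldots,\lambda_{J+1}$ and $\overline{v}\in\sobo^{1,p}_{\diver}(B_R(0))$ with $\overline{v}=v+\sum_{j=1}^{J+1}\lambda_j g_j$ on $A$. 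Since $v$ is already divergence-free across $A\cup\mathcal{O}_{J+1}$, the same local argument used in Theorem \ref{thm:global:extension} forces $\lambda_{J+1}=0$. The global extension is then defined piecewise by $\widetilde{\mathscr{E}} u:=v+\sum_{j=1}^J\lambda_j g_j$ on $\mathcal{O}_{J+1}\cup A$ and $\widetilde{\mathscr{E}} u:=\overline{v}$ on $\bigcup_{j=0}^J\mathcal{O}_j$. These two definitions agree on the overlap $A$, so the glued field lies in $\sobo^{1,1}(\R^n)$ (using $\sobo^{1,p}\hookrightarrow\sobo^{1,1}$ on the bounded pieces, and that $\overline{v}$ is compactly supported) and is globally divergence-free, while the finite-dimensional correction space is $X:=\linhull\{g_j|_\Omega\colon 1\leq j\leq J\}$.

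The principal obstacle will be establishing the higher-regularity bound on strips, i.e.\ the $\sobo^{1,1}$-analogue of Lemma \ref{lemma:higherorder}. Unlike the $\lebe^1$-extension of Theorem \ref{thm:L1}, which is manifestly smooth in the exterior by construction, the Sobolev extension $\mathscr{E}_{0}+\mathscr{R}$ contains a Jones-type base piece $\mathscr{E}_{0}$ whose smoothness on exterior strips must be verified by hand via the underlying Whitney-cube structure. A secondary point is to check that $\widetilde{\mathscr{E}}$ is a bounded linear operator; this follows from linearity of $\mathscr{E}$, linearity of the $\lambda_j$ in $v$ (hence in $u$) through the \textsc{Kato} et al.\ construction, and Sobolev embedding on the bounded exterior pieces.
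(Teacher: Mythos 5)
Your proposal follows essentially the same route as the paper: apply Theorem \ref{thm:W11} to obtain a local solenoidal $\sobo^{1,1}$-extension, exploit the improved regularity of the extension on exterior strips (the paper records this in the unnamed lemma in Section \ref{sec:higherorder}, the $\sobo^{1,1}$-analogue of Lemma \ref{lemma:higherorder}), and then invoke the global $\sobo^{1,p}$-extension of \textsc{Kato} et al.\ on an intermediate annular region, gluing as in the proof of Theorem \ref{thm:global:extension}. One small clarification on your stated ``principal obstacle'': the higher-regularity strip bound for the Jones base piece $\mathscr{E}_0$ is in fact immediate, since $\mathscr{E}_0 u = \rho\sum_i\phi_i(u)_{\widetilde{Q}_i}$ is on each exterior strip a locally finite sum of smooth cutoffs multiplied by constants, with derivatives controlled by $\ell(Q_i)^{-1}\sim\delta^{-1}$; this is the same mechanism that makes the $\lebe^1$-extension smooth in the exterior, so the two cases are not as different as you suggest.
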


\subsection{Definition of the extension operator}
We now introduce the extension operator underlying Theorem \ref{thm:W11}. Here, we focus on the geometric set-up for sets with Lipschitz boundaries as displayed in Section \ref{sec:Lip}. The corresponding statements in the case of convex sets $\Omega\subset\R^{n}$ are  accomplished with the natural modifications subject to the set-up given in Section \ref{sec:L1}.

We remind the reader of the construction undertaken in the previous section including exterior and interior Whitney cover $\mathcal{W}_2$ and $\mathcal{W}_1$, a map $\Psi$ between those and a partition of unity $(\phi_i)_{i \in \N}$ with the properties \ref{item:POU1}--\ref{item:POU3}. 
\smallskip 

For the following, we recall that a robust way of extending $\sobo^{1,1}(\Omega;\R^{n})$-maps without taking care of the divergence constraint is given by \textsc{Jones}' extension operator
\begin{equation} \label{def:E0}
    \mathscr{E}_0 u = \rho\sum_{i \in \N} \phi_{i}(u)_{\widetilde{Q}_{i}},\qquad u\in\sobo^{1,1}(\Omega;\R^{n}),
\end{equation}
where $\rho\in\hold_{c}^{\infty}(\R^{n};[0,1])$ satisfies $\rho=1$ in a neighbourhood of $¸\overline{\Omega}$ and $(u)_{\tilde{Q}_i}$ is the average of $u$ on $\tilde{Q}_i=\tfrac{1}{2}Q_i$. For completeness, we record the following result:
\begin{proposition}[Jones {\cite{Jones}}]
    Let $\Omega \subset \R^n$ be a bounded Lipschitz domain and $1 \leq p \leq \infty$. Then the extension $\mathscr{E}_0 \colon \sobo^{1,p}(\Omega;\wedgeq^{n-1}(\R^n)^{\ast}) \to \sobo^{1,p}(\R^n;\wedgeq^{n-1}((\R^n)^{\ast}))$ is a bounded linear operator.
\end{proposition}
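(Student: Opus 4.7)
Since $\wedgeq^{n-1}((\R^n)^*)$ is a finite-dimensional vector space, $\sobo^{1,p}(\Omega;\wedgeq^{n-1}((\R^n)^*))$ is isomorphic to $[\sobo^{1,p}(\Omega)]^{n}$, and the formula \eqref{def:E0} acts component-wise. Hence I would reduce to scalar $\sobo^{1,p}$-functions and verify the standard Jones-type estimates \cite{Jones} for the operator $\mathscr{E}_0$. The definition of $\mathscr{E}_{0}u$ patches $u$ on $\Omega$ with $\rho\sum_{i}\varphi_{i}(u)_{\widetilde{Q}_{i}}$ on $\R^{n}\setminus\overline{\Omega}$; the plan is to check that this yields an element of $\sobo^{1,p}(\R^{n})$ with norm controlled by $\|u\|_{\sobo^{1,p}(\Omega)}$.

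For the $\lebe^{p}$-bound, I would argue that on $\mathrm{supp}(\rho)\setminus\overline{\Omega}$ only finitely many $\varphi_{i}$'s overlap by \ref{item:Whitney2}, so H\"older yields $|\mathscr{E}_{0}u(x)|^{p}\leq c\sum_{i:\,x\in\spt\varphi_{i}}|(u)_{\widetilde{Q}_{i}}|^{p}$. Since $|(u)_{\widetilde{Q}_{i}}|^{p}\leq\dashint_{\widetilde{Q}_{i}}|u|^{p}\dif y$ and each interior Whitney cube $\widetilde{Q}_{i}=\tfrac{1}{2}\Psi(Q_{i})$ appears boundedly often via the map $\Psi$ from Lemma \ref{cor:Jones}, integrating and summing produces $\|\mathscr{E}_{0}u\|_{\lebe^{p}(\R^{n})}\leq c\|u\|_{\lebe^{p}(\Omega)}$ by Fubini and \ref{item:Whitney2}.

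The gradient estimate is the main obstacle and is where Jones' construction truly enters. The crucial observation is that $\sum_{i}\dif\varphi_{i}=0$ on $\R^{n}\setminus\overline{\Omega}$, so on the support of a given $\varphi_{i_{0}}$ one may write
\begin{align*}
\D\mathscr{E}_{0}u(x)=\rho(x)\sum_{i:\,x\in\spt\varphi_{i}}\D\varphi_{i}(x)\,\big((u)_{\widetilde{Q}_{i}}-(u)_{\widetilde{Q}_{i_{0}}}\big)+\D\rho(x)\sum_{i}\varphi_{i}(x)(u)_{\widetilde{Q}_{i}}
\end{align*}
for any reference index $i_{0}$ with $x\in\spt\varphi_{i_{0}}$. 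The second term is harmless (it is controlled as in the $\lebe^{p}$-bound). For the first term, using \ref{item:POU3} one gets $|\D\varphi_{i}(x)|\leq c\,\ell(Q_{i})^{-1}$, and the task reduces to controlling $|(u)_{\widetilde{Q}_{i}}-(u)_{\widetilde{Q}_{i_{0}}}|$ whenever $\tfrac{7}{6}Q_{i}\cap\tfrac{7}{6}Q_{i_{0}}\neq\emptyset$. Here $\widetilde{Q}_{i},\widetilde{Q}_{i_{0}}$ are \emph{interior} Whitney sub-cubes provided by $\Psi$, and although their direct Euclidean distance is $\sim\ell(Q_{i})$ by \ref{item:Jonesrefined3}, they need not be adjacent inside $\Omega$. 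Following Jones, I would connect them by a chain of interior Whitney cubes $Q'_{1},\ldots,Q'_{K}$ inside $\Omega$ of uniformly bounded length $K\leq K(\Omega)$ and with $\ell(Q'_{j})\sim\ell(Q_{i})$. Consecutive cubes $Q'_{j},Q'_{j+1}$ either overlap or share a larger common Whitney cube, and a standard Poincar\'{e} estimate on the union gives
\begin{align*}
|(u)_{Q'_{j}}-(u)_{Q'_{j+1}}|^{p}\leq c\,\ell(Q_{i})^{p-n}\int_{Q'_{j}\cup Q'_{j+1}}|\D u|^{p}\dx.
\end{align*}
Telescoping along the chain then bounds $|(u)_{\widetilde{Q}_{i}}-(u)_{\widetilde{Q}_{i_{0}}}|^{p}$ by a sum of such integrals over interior cubes in a neighbourhood of $\Psi(Q_{i})$.

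Inserting this into the pointwise identity and integrating over $\R^{n}\setminus\overline{\Omega}$, the factors $\ell(Q_{i})^{-p}\cdot\ell(Q_{i})^{p-n}\cdot\mathscr{L}^{n}(\spt\varphi_{i})\sim 1$ cancel, and the bounded overlap properties \ref{item:Whitney2} together with the fact that each interior cube serves as an image $\Psi(Q)$ only boundedly often (depending only on $\mathrm{Lip}(\Omega,\mathcal{O})$) yield the desired estimate $\|\D\mathscr{E}_{0}u\|_{\lebe^{p}(\R^{n})}\leq c\|u\|_{\sobo^{1,p}(\Omega)}$. The case $p=\infty$ follows by the same pointwise argument, replacing $\lebe^{p}$-averages by $\lebe^{\infty}$-bounds. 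The hardest part is the construction and uniform control of the interior chain connecting $\Psi(Q_{i})$ to $\Psi(Q_{i_{0}})$ for neighbouring exterior cubes; this is precisely the step where the $(\varepsilon,\infty)$-domain property of bounded Lipschitz domains (cf. Remark \ref{remark:Lipschitzepsdel}) is indispensable, and without it the telescoping sum cannot be made to have $K$ bounded uniformly in $i$.
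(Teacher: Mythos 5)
The paper records this result for completeness and cites Jones directly without reproducing the proof, so there is no in-paper argument to compare against; your sketch is a reconstruction of Jones' original chaining argument, and it contains the right core ingredients: the $\lebe^p$-bound via bounded overlap \ref{item:Whitney2}, the partition-of-unity trick $\sum_i\dif\varphi_i=0$ to introduce a reference average, the telescoping along an interior Whitney chain, and a Poincar\'e inequality on overlapping cubes. Your side remark that the chain length is uniformly bounded for Lipschitz domains is correct and deserves a line of justification: for neighbouring exterior cubes $Q_i,Q_{i_0}$ of comparable side $\ell$, the images $\Psi(Q_i),\Psi(Q_{i_0})$ have side $\sim\ell$, lie at distance $\sim\ell$ from $\partial\Omega$ and at distance $\lesssim\ell$ from each other, and the $(\varepsilon,\infty)$-curve connecting their centres stays at distance $\gtrsim\ell$ from $\partial\Omega$; hence it is covered by $\lesssim C(\varepsilon,n)$ Whitney cubes of size $\gtrsim\ell$.

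There is, however, a genuine gap. You establish that the two pieces $u|_{\Omega}$ and $\E_0 u|_{\R^n\setminus\overline{\Omega}}$ each lie in $\sobo^{1,p}$ of their respective domains with norm controlled by $\|u\|_{\sobo^{1,p}(\Omega)}$, but you never verify that the patched map belongs to $\sobo^{1,p}(\R^n)$, i.e.\ that the distributional gradient of $\E_0 u$ on all of $\R^n$ has no singular part concentrated on $\partial\Omega$. Membership in $\sobo^{1,p}(\Omega)\cap\sobo^{1,p}(\R^n\setminus\overline{\Omega})$ is strictly weaker than membership in $\sobo^{1,p}(\R^n)$, even when the gradients on the two pieces are in $\lebe^p$; one must additionally rule out a jump across $\partial\Omega$. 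In Jones' paper this takes a separate, nontrivial argument (essentially, showing via the ACL characterisation or via an approximation by Lipschitz fields that the boundary values from inside and outside agree for $\mathscr{H}^{n-1}$-a.e.\ boundary point, with quantitative control). Notice that this is exactly the same qualitative difficulty that the present paper must overcome for the divergence constraint in the $\lebe^1$-setting: there, the analogue of this step is Lemma \ref{lemma:globalsolenoidality}, which shows that $\dif\,(\mathscr{E}_{\Omega}[u])$ is a genuine $\lebe^1$-function and not a measure charging $\partial\Omega$. Your sketch should either invoke Jones' verification of the no-jump condition explicitly, or supply a substitute (for instance: extend first for Lipschitz $u$, for which $\E_0u$ is Lipschitz on $\R^n$ because the averages $(u)_{\widetilde{Q}_i}$ converge uniformly to the boundary values, and then pass to the limit using density of Lipschitz fields in $\sobo^{1,p}(\Omega)$ for $p<\infty$ and completeness of $\sobo^{1,p}(\R^n)$).
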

Based on the results from Section \ref{sec:simplices}, we now define $\Omega'$ to be a slightly larger domain, such that the reflection operator $\Psi$ is well-defined as well as simplices $M(\mathbf{x}_I)$ for $x_{i_j}$ in $\Omega'$  $\sup \vert x_{i_j} - x_{i_{j'}} \vert \leq \sup \dist(x_{i_j}, \partial \Omega)$.

For $k\in\{1,...,n-1\}$ we define correctors $\Rcal_k$ as follows. Recalling the notation $Du \cdot \nu$ and $[Du\cdot\nu]$ as introduced  \eqref{eq:Skmeaning} and \eqref{eq:inner}, we set
\begin{equation} \label{def:Rcal}
    \Rcal_k(y)= \sum_{I=(i_{1},...,i_{k+1}) \in \N^{k+1}} \phi_{i_{k+1}} \wedge \dif \phi_{i_k} \wedge \ldots \wedge \dif \phi_{i_1} \fint_{(\R^{n})^{k+1}} R(\mathbf{x}_I) \dif \mu^I,
\end{equation}
where $R(\mathbf{x}_I)$ is given by 
\begin{equation} \label{def:Rcal2}
    R(\mathbf{x}_I) := \int_{M(\mathbf{x}_{I})} [Du(z) \cdot \nu] (y-z) \dif\mathscr{H}^{k}(z).
\end{equation}
For future reference, we moreover define 
\begin{equation} \label{def:Scal}
    \Scal_k (y) = \sum_{i \in \N^{k+1}} \phi_{i_{k+1}} \wedge \dif \phi_{i_k} \wedge \ldots \wedge \dif \phi_{i_1} \fint_{(\R^{n})^{k+1}} S(\mathbf{x}_I) \dif \mu^I,
\end{equation}
where $S(\mathbf{x}_I)$ is given by 
\begin{equation} \label{def:Scal2}
    S(\mathbf{x}_I) := \int_{M(\mathbf{x}_{I})} [Du(z) \cdot \nu] \dif\mathscr{H}^{k}(z).
\end{equation}
Following our convention \eqref{eq:Skmeaning}ff., we note that $\Rcal_{k} u$ and $\Scal_{k}u$ take pointwise values in the correct space $\wedgeq^{n-1}(\R^n)^{\ast}$ and $\wedgeq^n(\R^n)^{\ast}$, respectively: We have $\phi_{i_{k+1}} \wedge \ldots \dif \phi_{i_1} \in \wedgeq^k(\R^n)^{\ast}$ and, by definition of $[Du \cdot \nu](y-z)$ and $Du \cdot \nu$ (see  \eqref{eq:inner} and \eqref{eq:Skmeaning}), we have that $R(\mathbf{x}_I) \in \wedgeq^{n-1-k}(\R^n)^{\ast}$
and $S(\mathbf{x}_I) \in \wedgeq^{n-k}(\R^n)^{\ast}$. Lastly, we put 
 \begin{align*}
            \widetilde{\mathcal{R}}_k u := \begin{cases} \Rcal_k u & \text{in } \Omega' \setminus \Omega, \\
            0 &  \text{in } \Omega,
            \end{cases}
\end{align*}
and then define 
\begin{align}\label{eq:TheSobolevAxe}
\mathscr{E}_{\Omega}[u] := \mathscr{E}_0 [u] + \mathscr{R}_{\Omega}[u] := \mathscr{E}_{0}[u] + \sum_{k=1}^{n-1} (-1)^k \frac{(n-k-1)!}{(n-1)!}\widetilde{\Rcal}_k u. 
\end{align}

\subsection{Boundedness of $\mathscr{E}_{\Omega}$} 
We now establish that $\mathscr{E}_{\Omega}$ defines a $\sobo^{1,1}$-bounded, solenoidality preserving extension operator on $\hold^{\infty}(\overline{\Omega};\wedgeq^{n-1}(\R^{n})^{*})$. In particular, we first show that $\E_{\Omega}$ is bounded from $\sobo^{1,1}(\Omega;\wedgeq^{n-1}(\R^{n})^{*})$ to $\sobo^{1,1}(\Omega';\wedgeq^{n-1}(\R^{n})^{*})$ for \emph{any} $u\in\hold^{\infty}(\Omega;\wedgeq^{n-1}(\R^{n})^{*})$, and satisfies $\di(\mathscr{E}_{\Omega}[u])=0$ in $\mathscr{D}'(\Omega')$ whenever $\di(u)=0$ in $\Omega$. Similar to the previous section, a density argument allows to extend $\mathscr{E}_{\Omega}$ to $\sobo^{1,1}(\Omega;\wedgeq^{n-1}(\R^{n})^{*})$ by continuity (also see Remark \ref{rem:techdetail} below). The key features for this programme are summarised in the following lemma: 
\begin{lemma} \label{lemma:w11:1}
    In the above situation, the following hold for each $k\in\{1,...,n-1\}$:
    \begin{enumerate}
        \item \label{w11:1:a} If $u \in \hold^1(\overline{\Omega};\wedgeq^{n-1}(\R^n)^{\ast})$, then $\widetilde{\Rcal}_k u \in \sobo^{1,\infty}(\Omega';\wedgeq^{n-1}(\R^n)^{\ast})$.
        \item \label{w11:1:b} There exists a constant $c>0$ such that we have 
        \begin{align*}
        \|\widetilde{\mathcal{R}}_{k}[u]\|_{\sobo^{1,1}(\Omega')}\leq c\|u\|_{\sobo^{1,1}(\Omega)}\qquad\text{for all}\;u\in\hold^{1}(\overline{\Omega};\R^{n}). 
        \end{align*}
        Hence, $\widetilde{\Rcal}_k$ extends to a (non-relabeled) bounded linear operator that maps $\sobo^{1,1}(\Omega;\wedgeq^{n-1}(\R^n)^{\ast})$ to $\sobo^{1,1}(\Omega';\wedgeq^{n-1}(\R^n)^{\ast})$.
    \end{enumerate}
\end{lemma}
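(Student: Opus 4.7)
My plan is to treat parts \ref{w11:1:a} and \ref{w11:1:b} by mirroring the strip-by-strip strategy from Lemmas \ref{lem:convhullL1bound}, \ref{lem:LipschitzL1boonhund} and \ref{lemma:L1bound:c1}, but exploiting the additional factor $(y-z)$ appearing in $R(\mathbf{x}_I)$ to gain the extra derivative order. Fix $k\in\{1,\ldots,n-1\}$, let $y\in\Omega'\setminus\overline{\Omega}$, and denote $\delta_I=\min_j\ell(Q_{i_j})$ for a multi-index $I=(i_1,\ldots,i_{k+1})$ with $\tfrac{7}{6}Q_{i_1}\cap\cdots\cap\tfrac{7}{6}Q_{i_{k+1}}\ni y$. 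By \ref{item:Whitney3} and the properties \ref{item:Jonesrefined1:v2}--\ref{item:Jonesrefined4:v2} of $\Psi$, the simplex $M(\mathbf{x}_I)$ lies within distance $\sim\delta_I$ of $y$, has $\mathscr{H}^k$-measure at most $C\delta_I^k$, and satisfies $|y-z|\leq C\delta_I$ for every $z\in M(\mathbf{x}_I)$. Combined with \ref{item:POU3}, these give the pointwise bounds
\begin{align*}
\bigl|\phi_{i_{k+1}}\wedge \dif\phi_{i_k}\wedge\cdots\wedge\dif\phi_{i_1}(y)\bigr|\leq\frac{C}{\delta_I^{k}},\qquad|R(\mathbf{x}_I)|\leq C\,\delta_I^{k+1}\,\sup_{z\in M(\mathbf{x}_I)}|\D u(z)|.
\end{align*}

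For part \ref{w11:1:a}, these estimates immediately give $|\Rcal_k u(y)|\leq C\delta_I\|\D u\|_{\lebe^\infty(\Omega)}\leq C\,\mathrm{dist}(y,\partial\Omega)\|\D u\|_{\lebe^\infty(\Omega)}$ together with an analogous bound $|\D \Rcal_k u(y)|\leq C\|\D u\|_{\lebe^\infty(\Omega)}$, the latter obtained by applying the Leibniz rule and noting that differentiating either the product of cut-offs (which produces an additional factor $1/\delta_I$ compensated by the $(y-z)$) or the explicit $y$ dependence in $R(\mathbf{x}_I)$ yields terms of uniformly bounded $\lebe^\infty$-size. Because $\Rcal_k u(y)\to 0$ uniformly as $y\to\partial\Omega$ and the extension to $\Omega$ is taken to be zero, the classical characterisation of $\sobo^{1,\infty}$ across the interface (i.e.\ absolute continuity on lines) gives $\widetilde{\Rcal}_k u\in\sobo^{1,\infty}(\Omega';\wedgeq^{n-1}(\R^n)^\ast)$.

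For part \ref{w11:1:b}, I would rerun the strip argument of Lemma \ref{lemma:L1bound:c1}. The analogue of Lemma \ref{lem:LipschitzL1boonhund} that I plan to prove is
\begin{align*}
\Bigl|\fint_{(\R^n)^{k+1}} R(\mathbf{x}_I)\dif\mu^I\Bigr|\leq C\,\delta_I\fint_{\widetilde M_I}|\D u(x)|\dif x,\qquad \Bigl|\D_y\!\fint_{(\R^n)^{k+1}} R(\mathbf{x}_I)\dif\mu^I\Bigr|\leq C\fint_{\widetilde M_I}|\D u(x)|\dif x,
\end{align*}
where $\widetilde M_I$ is the union of simplices $M(\mathbf{x}_I)$ with $x_{i_j}\in\tfrac12 Q^{i_j}$; the proof is parallel to Lemma \ref{lem:LipschitzL1boonhund}, using the parametrisation of $M(\mathbf{x}_I)$ by $\widetilde{\mathbb{D}}^{k}$, splitting into subsimplices $\widetilde{\mathbb{D}}_s^k$ on which one coordinate is bounded below, Fubini, and the change of variables $\mu^{i_s}\mapsto z$. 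The extra factor $(y-z)\sim\delta_I$ in $R$ gives the prefactor $\delta_I$, while differentiation in $y$ either removes this factor or converts a cut-off factor $1/\delta_I$ into boundedness. Combining these pointwise bounds on $|\D\Rcal_k u|$ and $|\Rcal_k u|$ with the Leibniz expansion of $\D\!\bigl(\phi_{i_{k+1}}\wedge \dif\phi_{i_k}\wedge\cdots\wedge\dif\phi_{i_1}\bigr)$ and decomposing $\Omega'\setminus\overline{\Omega}$ into the dyadic strips $\mathtt S_k$ of \eqref{eq:strips}, the argument \eqref{eq:jarrett} gives
\begin{align*}
\|\D\widetilde{\Rcal}_k u\|_{\lebe^1(\Omega'\setminus\overline{\Omega})}+\|\widetilde{\Rcal}_k u\|_{\lebe^1(\Omega'\setminus\overline{\Omega})}\leq c\sum_{k\in\mathbb{N}}\|\D u\|_{\lebe^1(\mathtt S_k')}\leq c\|\D u\|_{\lebe^1(\Omega)},
\end{align*}
thanks to the uniformly finite overlap of the strips $\mathtt S_k'$ inside $\Omega$. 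Since $\widetilde{\Rcal}_k u\equiv 0$ on $\Omega$ and its traces from both sides vanish at $\partial\Omega$ (as in part \ref{w11:1:a}), this yields the claimed $\sobo^{1,1}(\Omega')$-bound, and $\widetilde{\Rcal}_k$ then extends by density to a bounded linear operator on $\sobo^{1,1}(\Omega;\wedgeq^{n-1}(\R^n)^\ast)$.

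I expect the main technical obstacle to lie in the Leibniz-rule bookkeeping for $\D\Rcal_k u$, namely tracking how the derivative distributes among the factors $\phi_{i_{k+1}}\wedge\dif\phi_{i_k}\wedge\cdots\wedge\dif\phi_{i_1}$ and the $y$-dependence hidden in $(y-z)$, and in verifying that the resulting pointwise and integral inequalities are uniform in the multi-index $I$ with constants depending only on $n$ and $\Lip(\Omega,\mathcal{O})$; once these are in place, the strip summation is routine.
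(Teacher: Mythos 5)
Your overall strategy matches the paper's: establish pointwise bounds on each summand exploiting the extra factor $(y-z)\sim\delta_I$ in $R(\mathbf{x}_I)$, prove a strip-level $\lebe^1$ estimate in the spirit of Lemma \ref{lem:LipschitzL1boonhund}, and sum over dyadic strips. Two points of divergence are worth noting. First, for part \ref{w11:1:b} the paper only proves the strip estimate for $\D\widetilde{\Rcal}_k u$ (this is Lemma \ref{lemma:ManuelNeuer}), deduces $\|\D\widetilde{\Rcal}_k u\|_{\lebe^1(\Omega')}\leq c\|u\|_{\sobo^{1,1}(\Omega)}$, and then bootstraps the $\lebe^1$-bound on $\widetilde{\Rcal}_k u$ itself via Poincar\'e's inequality, exploiting that $\widetilde{\Rcal}_k u\equiv 0$ on the large subset $\Omega\subset\Omega'$; you instead propose to prove strip estimates for both $\widetilde{\Rcal}_k u$ and $\D\widetilde{\Rcal}_k u$ directly. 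Both work, but the Poincar\'e route is leaner. Second, for part \ref{w11:1:a} the paper first shows the series converges absolutely in $\sobo^{1,1}_0(\Omega'\setminus\overline\Omega)$ (each summand contributes $\lesssim\ell(Q_{i_1})^n\|u\|_{\hold^1}$ to the $\sobo^{1,1}$-norm, and $\sum_j\ell(Q_j)^n\lesssim\mathscr{L}^n(\Omega'\setminus\overline\Omega)$), so that $\widetilde{\Rcal}_k u\in\sobo^{1,1}(\Omega')$ is automatic, and then reads off $\sobo^{1,\infty}$-membership from the uniform pointwise bound; your route via the ACL characterisation and continuity across $\partial\Omega$ reaches the same conclusion, with slightly more care needed to check the zero trace is genuinely matched (which your bound $|\Rcal_k u(y)|\lesssim\dist(y,\partial\Omega)\|\D u\|_{\lebe^\infty}$ provides). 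One concrete correction: your claimed analogue of Lemma \ref{lem:LipschitzL1boonhund} is misscaled. Since $M(\mathbf{x}_I)$ is $k$-dimensional (not $(n-1)$-dimensional), the bound in Lemma \ref{lem:W11bound} with $r=k+1$ reads $\tfrac{c}{\delta_I^{n-k}}\int_{\widetilde M_I}|\!\D u|$ for the integral without the $(y-z)$ factor; including that factor, the left-hand side is $\leq\tfrac{c}{\delta_I^{n-k-1}}\int_{\widetilde M_I}|\!\D u|\dif x$, i.e.\ $c\,\delta_I^{k+1}\fint_{\widetilde M_I}|\!\D u|$, not $c\,\delta_I\fint_{\widetilde M_I}|\!\D u|$. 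With your stated power the product with $|\phi_{i_{k+1}}\wedge\dif\phi_{i_k}\wedge\cdots\wedge\dif\phi_{i_1}|\lesssim\delta_I^{-k}$ would give $\delta_I^{1-k}$, which blows up as $\delta_I\to 0$ for $k\geq 2$; the corrected power $\delta_I^{k+1}$ yields the uniform bound $\delta_I$ that the argument needs.
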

The proof of Lemma \ref{lemma:w11:1} requires several steps, some of which are parallel to the $\lebe^{1}$-analogues from Sections \ref{sec:L1} and \ref{sec:Lip}.
To establish the requisite $\sobo^{1,1}$-bounds, we need to formulate the following counterpart of Lemma \ref{lem:convhullL1bound} or Lemma \ref{lem:LipschitzL1boonhund}, respectively:
\begin{lemma}[Boundedness] \label{lem:W11bound}
Let $r\in\{1,...,n\}$. There exists a constant $c>0$ such that for any $I=(i_{1},...,i_{r})\in\mathbb{N}^{r}$ and any $v\in\hold^{\infty}(\overline{\Omega};\bigwedge^{n-1}(\R^{n})^{*})$ there holds 
\begin{align}\label{eq:W11bound}
\left\vert \int_{(\R^{n})^{r}} \int_{M(\mathbf{x}_{I})} v(z) \nu \dHaus^{r-1}(z) ~\textup{d}\mu^I(\mathbf{x}_{I}) \right\vert \leq \frac{c}{\delta_{I}^{n+1-r}}\int_{\widetilde{M}_I}|v(x)|\dif x, 
\end{align}
where $\widetilde{M}_I$ is the union of all manifolds $M(\mathbf{x}_I)$ for $x_{i_j} \in Q^{i_j}$, $j\in\{1,...,n\}$, and
\begin{align}\label{eq:deltadef1}
\delta_{I}:=\min\{\ell(Q_{i_{j}})\colon\;j\in\{1,...,n\}\}.
\end{align}
\end{lemma}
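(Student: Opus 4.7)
The plan is to follow the blueprint of Lemma \ref{lem:convhullL1bound} (respectively Lemma \ref{lem:LipschitzL1boonhund}), adapted to a general dimension $r$ of the simplex. First I verify the scaling: the manifold $M(\mathbf{x}_I)$ is $(r-1)$-dimensional with $\mathscr{H}^{r-1}(M(\mathbf{x}_I))\lesssim\delta_I^{r-1}$, the swept-out set $\widetilde{M}_I$ is essentially $n$-dimensional with $\mathscr{L}^n(\widetilde{M}_I)\lesssim\delta_I^n$, and the combination $\delta_I^{r-1}\cdot\frac{\delta_I^n}{\delta_I^{n+1-r}}=\delta_I^{2(r-1)-1+\ldots}$ matches if we account for the concentration of the probability measures $\mu^{i_k}$, each of which obeys $\mu^{i_k}\leq c\delta_I^{-n}\mathscr{L}^n\mres Q^{i_k}$. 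This confirms that the inequality \eqref{eq:W11bound} has the right homogeneity in $\delta_I$.

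For the proof itself I would parametrize $M(\mathbf{x}_I)$ by the flat parameter simplex $\widetilde{\mathbb{D}}^{r-1}:=\{t\in[0,1]^r:\sum_j t_j=1\}$ via the map $\Phi_{\mathbf{x}_I}:\widetilde{\mathbb{D}}^{r-1}\to M(\mathbf{x}_I)$ obtained from Section \ref{sec:def:simplex}; in the convex case $\Phi_{\mathbf{x}_I}(t)=\sum_j t_jx_{i_j}$, while in the Lipschitz case $\Phi_{\mathbf{x}_I}$ is the composition with the Bi-Lipschitz maps $\Gamma_1,\Gamma_2$ of Section \ref{sec:approxsmooth}, whose derivatives are controlled in terms of $\mathrm{Lip}(\Omega,\mathcal{O})$. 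In either setting, the tangential Jacobian satisfies $|J\Phi_{\mathbf{x}_I}(t)|\leq C\delta_I^{r-1}$ uniformly in $t$, because all the separations $|x_{i_j}-x_{i_l}|$ are $\lesssim\delta_I$ and $|\nu|=1$. Hence
\[
\Big|\!\int_{M(\mathbf{x}_I)}\!v(z)\nu\,d\mathscr{H}^{r-1}(z)\Big|\leq C\delta_I^{r-1}\!\int_{\widetilde{\mathbb{D}}^{r-1}}\!|v(\Phi_{\mathbf{x}_I}(t))|\,d\mathscr{H}^{r-1}(t).
\]

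The next step is to decompose $\widetilde{\mathbb{D}}^{r-1}=\bigcup_{k=1}^r\widetilde{\mathbb{D}}^{r-1}_k$ with $\widetilde{\mathbb{D}}^{r-1}_k=\{t:t_k\geq 1/r\}$ and, for each $k$, to apply Fubini so as to integrate first in the variable $x_{i_k}$. Exactly as in Lemma \ref{lem:convhullL1bound}, use $\mu^{i_k}\leq c\delta_I^{-n}\mathscr{L}^n\mres Q^{i_k}$ and then perform the change of variables $x_{i_k}\mapsto z=\Phi_{\mathbf{x}_I}(t)$ for $t$ fixed and the remaining $x_{i_j}$, $j\neq k$, frozen. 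In the flat case the Jacobian of this affine substitution is exactly $t_k^n$, while in the curvilinear case the identical bound holds up to a multiplicative constant thanks to the Bi-Lipschitz structure; this yields
\[
\int_{\R^n}|v(\Phi_{\mathbf{x}_I}(t))|\,d\mu^{i_k}(x_{i_k})\leq\frac{c}{\delta_I^n\,t_k^n}\int_{\widetilde{M}_I}|v(z)|\,dz.
\]
Since $t_k\geq 1/r$ on $\widetilde{\mathbb{D}}^{r-1}_k$, integration in $t$ and then in the remaining (probability) measures $\mu^{i_j}$, $j\neq k$, contributes only a constant depending on $n$ and $r$. Summing the $r$ estimates yields
\[
\int_{(\R^n)^r}\!\int_{\widetilde{\mathbb{D}}^{r-1}}|v(\Phi_{\mathbf{x}_I}(t))|\,d\mathscr{H}^{r-1}(t)\,d\mu^I\leq\frac{C}{\delta_I^n}\int_{\widetilde{M}_I}|v(z)|\,dz,
\]
which combined with the Jacobian bound gives the desired $\delta_I^{r-1}/\delta_I^n=\delta_I^{-(n+1-r)}$ prefactor.

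The main obstacle, as in the Lipschitz $\lebe^1$ estimate, is to certify that in the curvilinear setting the change of variables $x_{i_k}\mapsto z$ has Jacobian uniformly comparable to $t_k^n$ and that the preimage stays inside $\widetilde{M}_I$; for this one reduces, via the collar diffeomorphism and the Ball--Zarnescu homeomorphism, to the graph situation, where the computation is elementary. Once this is in place, summing the bounds over $k\in\{1,\ldots,r\}$ produces \eqref{eq:W11bound} with a constant depending only on $n$ and $\mathrm{Lip}(\Omega,\mathcal{O})$.
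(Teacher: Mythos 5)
Your plan is correct and is exactly what the paper intends: the paper gives no independent proof here beyond the one-line remark that the argument is ``the same as for Lemma~\ref{lem:LipschitzL1boonhund}, just in some dimension lower,'' and your unfolding of that remark---parametrize $M(\mathbf{x}_I)$ over $\widetilde{\mathbb{D}}^{r-1}$ with Jacobian $\lesssim\delta_I^{r-1}$ (using $|\nu|=1$), decompose into the sectors $\{t_k\geq 1/r\}$, and for each $k$ push the bound $\mu^{i_k}\leq c\,\delta_I^{-n}\mathscr{L}^n\mres Q^{i_k}$ through the affine-up-to-Bi-Lipschitz change of variables $x_{i_k}\mapsto z$ with Jacobian $\sim t_k^n$---is the right adaptation of the proofs of Lemmas~\ref{lem:convhullL1bound} and~\ref{lem:LipschitzL1boonhund} and yields precisely the claimed $\delta_I^{-(n+1-r)}$ prefactor. (A minor slip in the preamble: the homogeneity check should read $\delta_I^{r-1}\cdot\delta_I^{n}/\delta_I^{n+1-r}=\delta_I^{2(r-1)}$, but this does not affect the subsequent argument.)
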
 
The proof of this lemma is the same as for Lemma \ref{lem:LipschitzL1boonhund}, just in some dimension lower (note that we had to adjust the scaling to fit the higher codimension).
Observe that the set $\widetilde{M}_I$ has the following properties: If $Q^{i_1},...,Q^{i_r}$ are contained in the set close to each other, {meaning that their distance is bounded by $c\delta_I$ for some suitably small, yet universal $c>0$ and thus have roughly the same diameter, then 
\begin{itemize}
\item $\widetilde{M}_{I} \subset\ball_{C\delta_{I}}(Q^{i_{1}})$ and 
\item $A^{\delta} = \{x \in \Omega \colon c^{-1} \delta \leq \dist(x,\partial \Omega) \leq c\delta\}$
\end{itemize} 
for some constants $c,C>0$ that only depend on $\Lip(\Omega,\mathcal{O})$. This allows to prove the following result by analogous means as those for Lemma \ref{lemma:L1bound:c1}:
 \begin{lemma}\label{lemma:ManuelNeuer}
    Let $u \in \hold^1(\overline{\Omega};{\wedgeq}^{n-1}(\R^n)^{\ast})$. There exist  constants $c_1,c_2>0$, only depending on the underlying space dimension $n\in\mathbb{N}$, the constructions made for simplices on $\Omega$ from Section \ref{sec:simplices}) and
    $\delta_{0}>0$ such that 
\begin{align}\label{eq:stripsum:v3}
\int_{\{x\in\Omega' \setminus\overline{\Omega}\colon\;\mathrm{dist}\delta/2 <(x,\partial\Omega)<\delta\}} |\widetilde{\mathcal{R}}_k[u]|+|\!\D \widetilde{\mathcal{R}}_k[u]|\dif x \leq c_1 \int_{c_2^{-1}\delta <\{x\in\Omega\colon\;\mathrm{dist}(x,\partial\Omega)<c_2 \delta\}}|\!\D u| + \vert u \vert \dif x.
\end{align}
holds for all $0<\delta<\delta_{0}$.
\end{lemma}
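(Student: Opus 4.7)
The plan is to mirror the strip-wise argument from Lemma \ref{lemma:L1bound:c1}, with Lemma \ref{lem:W11bound} applied with $r=k+1$ taking the place of Lemma \ref{lem:convhullL1bound}, and with $|\!\D u|$ on the right-hand side in place of $|u|$. Three steps are needed: a pointwise bound on each summand, control of the derivative via Leibniz, and summation over the multi-indices $I$.

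Fix $y\in\Omega'\setminus\overline{\Omega}$ with $\delta/2<\dist(y,\partial\Omega)<\delta$. By \ref{item:Whitney2} and \ref{item:POU1}, only finitely many multi-indices $I=(i_1,\ldots,i_{k+1})\in\mathbb{N}^{k+1}$ contribute to $\widetilde{\mathcal{R}}_{k}[u](y)$, and for each such $I$ property \ref{item:Whitney3} yields that all cubes $Q_{i_j}$ have sidelength comparable to $\dist(y,\partial\Omega)\sim\delta$, hence $\delta_I\sim\delta$. Property \ref{item:POU3} then gives
\begin{align*}
|\phi_{i_{k+1}}\wedge \dif\phi_{i_k}\wedge\ldots\wedge \dif\phi_{i_1}(y)|\leq c\,\delta_I^{-k}.
\end{align*}
For the inner integral in \eqref{def:Rcal}--\eqref{def:Rcal2}, the pairing bound $|[\!\D u(z)\cdot\nu](y-z)|\leq |y-z|\,|\!\D u(z)|$ together with $|y-z|\leq C\delta_I$ for all $z\in\widetilde{M}_I$ (by construction of $\widetilde{M}_I$ and the simplex bound from Theorem \ref{thm:prop:simplex}\ref{def:prop:35}) and Lemma \ref{lem:W11bound} applied with $v=\!\D u$ and $r=k+1$ yield
\begin{align*}
\Big| \int_{(\R^{n})^{k+1}} R(\mathbf{x}_I) \dif\mu^I\Big| \leq \frac{c\,\delta_I}{\delta_I^{n-k}}\int_{\widetilde{M}_I}|\!\D u(z)|\dif z.
\end{align*}
Combining these two estimates produces a pointwise bound $\delta_I^{-n}\int_{\widetilde{M}_I}|\!\D u|$ on each summand of $\widetilde{\mathcal{R}}_k[u](y)$.

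For the derivative, the Leibniz rule produces two types of terms. Either $\!\D$ acts on the partition-of-unity factor, contributing an additional $c\,\delta_I^{-1}$ and leaving $R(\mathbf{x}_I)$ unchanged; or $\!\D$ acts on the $y$-dependence inside $R(\mathbf{x}_I)$, in which case a direct calculation yields $\partial_y R(\mathbf{x}_I)=S(\mathbf{x}_I)$ (cf. \eqref{def:Scal}--\eqref{def:Scal2}), whose size is bounded via Lemma \ref{lem:W11bound} by $c\,\delta_I^{-(n-k)}\int_{\widetilde{M}_I}|\!\D u|$. In both cases the resulting pointwise bound on a single summand of $|\!\D\widetilde{\mathcal{R}}_k[u](y)|$ is again $c\,\delta_I^{-n}\int_{\widetilde{M}_I}|\!\D u|$; the $(y-z)$ factor in $R$ is precisely what equalises the two scalings, and this bookkeeping is the only real subtlety.

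The final step is the strip summation. Proceeding exactly as in the passage leading to \eqref{eq:jarrett}, one integrates the pointwise bound over $\{\delta/2<\dist(y,\partial\Omega)<\delta\}\cap(\Omega'\setminus\overline{\Omega})$, uses \ref{item:Whitney2} to estimate the number of $I$ with a given $Q_{i_{k+1}}$, and exploits the Jones-type properties \ref{item:Jonesrefined2:v2}--\ref{item:Jonesrefined3a:v2} of the map $\Psi$ together with Theorem \ref{thm:prop:simplex}\ref{def:prop:4} (or its variant in Section \ref{sec:def:simp}) to conclude that every $\widetilde{M}_I$ arising from an $I$ with non-empty contribution at level $\delta$ is contained in the interior strip $\{c_2^{-1}\delta<\dist(x,\partial\Omega)<c_2\delta\}$ for a suitable constant $c_2=c_2(n,\mathrm{Lip}(\Omega,\mathcal{O}))>0$. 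The uniformly finite overlap of the simplices $\widetilde{M}_I$ (a consequence of \ref{item:Whitney2}) then gives \eqref{eq:stripsum:v3}; the $|u|$-term on the right-hand side is included purely for notational convenience and can be dropped in view of the chain of estimates, which only produces $|\!\D u|$.
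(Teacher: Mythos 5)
Your plan follows the route the paper intends: it says the proof is ``parallel to that for Lemma \ref{lemma:L1bound:c1}'', and you reproduce exactly that strip-wise argument with Lemma \ref{lem:W11bound} (applied with $r=k+1$) replacing Lemma \ref{lem:convhullL1bound}. The three-step decomposition (pointwise bound per summand, Leibniz for the derivative, strip summation and finite overlap) is the right one, and the key bookkeeping observation — that the $(y-z)$ factor in $R(\mathbf{x}_I)$ is what makes the derivative-on-$\phi$ term and the derivative-on-$R$ term scale identically — is precisely the point.

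Two small inaccuracies, neither of which breaks the argument. First, the arithmetic in the pointwise bound on $|\mathcal{R}_k[u](y)|$ is off by one power of $\delta_I$: multiplying $c\,\delta_I^{-k}$ by $c\,\delta_I/\delta_I^{n-k}$ gives $c\,\delta_I^{-(n-1)}$, not $c\,\delta_I^{-n}$. Since $\delta_I^{-(n-1)}\leq\delta_I^{-n}$ for $\delta_I\leq 1$, your overestimate is still a valid upper bound and the subsequent integration (support of size $\sim\delta_I^n$, finite overlap) still closes; but the extra power of $\delta_I$ is in fact a small gain, not a loss, and produces the extra factor $\delta_I\leq\delta_0$ that makes the $|\mathcal{R}_k|$-contribution small compared to the $|\D\mathcal{R}_k|$-contribution. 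Second, the identity $\partial_y R(\mathbf{x}_I)=S(\mathbf{x}_I)$ is imprecise: by \eqref{eq:dify} one has $\dif_y R(\mathbf{x}_I)=(n-k)\,S(\mathbf{x}_I)$, and the full derivative $\D_y R(\mathbf{x}_I)$ differs from $S(\mathbf{x}_I)$-type objects by a dimensional constant which you suppress. Again this is immaterial for the size estimate via Lemma \ref{lem:W11bound}. Your closing remark that the $|u|$-term on the right-hand side of \eqref{eq:stripsum:v3} is never actually produced is correct: $R(\mathbf{x}_I)$ and $S(\mathbf{x}_I)$ only involve $\D u$, so the estimate holds with $|\D u|$ alone on the right; the statement in the paper simply includes $|u|$ for uniformity with the way the lemma is invoked in Lemma \ref{lemma:w11:1}\ref{w11:1:b}.
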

Since the argument is parallel to that for Lemma \ref{lemma:L1bound:c1}, we omit the proof here. Based on Lemma \ref{lemma:ManuelNeuer}, the proof of Lemma \ref{lem:W11bound} follows along the lines of Lemma \ref{lemma:L1bound:c1}/Lemma \ref{lemma:L1bound:c1:Lip}. We now come to the:
\begin{proof}[Proof of Lemma \ref{lemma:w11:1}] Ad \ref{w11:1:a}. 
    Recall that every summand in the definition of $\Rcal_k$ is an element of $\hold_c^{\infty}(\Omega'\setminus\overline{\Omega};{\wedgeq}^{n-1}(\R^{n})^{*})$. Moreover, suppose that $i_1,...,i_k,i_{k+1}$ are such that the (exterior) cubes $Q_{i_1},...,Q_{i_k},Q_{i_{k+1}}$ touch. Then, by \ref{item:Whitney2} and \ref{item:Whitney3}, their sidelengths are uniformly comparable. Together with \ref{item:POU3}, this implies for a universal dimensional constant $c>0$
    \begin{align} \label{est:1}
         \left \Vert \phi_{i_{k+1}} \wedge \dif \phi_{i_k} \wedge \ldots \wedge \dif \phi_{i_1} \right \Vert_{\lebe^{\infty}(\Omega')} \leq c \ell(Q_{i_1})^{-k}\;\;\text{and} \\
         \label{est:2}
         \left  \Vert \phi_{i_{k+1}} \wedge \dif \phi_{i_k} \wedge \ldots \wedge \dif \phi_{i_1} \right \Vert_{\sobo^{1,\infty}(\Omega)'} \leq c \ell(Q_{i_1})^{-k-1},
    \end{align}
   where we used that it is no loss of generality to assume that $\ell(Q_{i_{1}})\leq 1$. Moreover, we observe that the $\mathscr{H}^k$-measure of the manifold $M(\mathbf{x}_{I})$ is bounded by $c\ell(Q_{i_1})^{k}$ and that $|y-z|$ is bounded by $c\ell(Q_{i_1})$ for $y \in Q_{i_1}$ and $z \in M(\mathbf{x}_{I})$. As a consequence, we obtain the estimates
   \begin{align}
       &\left \Vert R_k(\mathbf{x}_I) \right \Vert_{\lebe^{\infty}(\Omega')} \leq C\ell(Q_{i_1})^{k+1} \Vert u \Vert_{\hold^{1}(\Omega)}, \label{est:3} \\
       &\left \Vert R_k(\mathbf{x}_I) \right \Vert_{\sobo^{1,\infty}(\Omega')} \leq C\ell(Q_{i_1})^k \Vert u \Vert_{\hold^{1}(\Omega)}. \label{est:4}
   \end{align}
   Based on \eqref{est:1}--\eqref{est:4}, we conclude that any the $\sobo^{1,\infty}$-norm of every summand is uniformly bounded, so 
    \begin{align} \label{w1infty}
        \left \Vert \phi_{i_{k+1}} \wedge \dif \phi_{i_k} \wedge \ldots \wedge \dif \phi_{i_1} \fint_{(\R^{n})^{k+1}} R(\mathbf{x}_I) \dif \mu^I \right \Vert_{\sobo^{1,\infty}} \leq c \Vert u \Vert_{\hold^{1}(\Omega)},
    \end{align}
    and, therefore,
     \begin{align*}
        \left \Vert \phi_{i_{k+1}} \wedge \dif \phi_{i_k} \wedge \ldots \wedge \dif \phi_{i_1} \fint_{(\R^{n})^{k+1}} R(\mathbf{x}_I) \dif \mu^I \right \Vert_{\sobo^{1,1}} \leq c\ell(Q_{i_1})^n \Vert u \Vert_{\hold^{1}(\Omega)}.
    \end{align*}
    Now fix $j \in \N$. As only finitely many cubes intersect with $Q_{j}$, we obtain that 
    \begin{align*}
        \sum_{I \in \N^{k+1} \colon i_1 =j}  \left \Vert \phi_{i_{k+1}} \wedge \dif \phi_{i_k} \wedge \ldots \wedge \dif \phi_{i_1} \fint_{(\R^{n})^{k+1}} R(\mathbf{x}_I) \dif \mu^I \right \Vert_{\sobo^{1,1}(\Omega')} \leq c \ell(Q_j)^{n}\Vert u \Vert_{\hold^{1}(\Omega)}.
    \end{align*}
    Summing over all $j \in \N$ then yields that 
    \begin{align*}
         \sum_{I \in \N^{k+1}}  \left \Vert \phi_{i_{k+1}} \wedge \dif \phi_{i_k} \wedge \ldots \wedge \dif \phi_{i_1} \fint_{(\R^{n})^{k+1}} R(\mathbf{x}_I) \dif \mu^I \right \Vert_{\sobo^{1,1}(\Omega')} & \leq c\, \sum_{j \in \N} \ell(Q_j)^n \Vert u \Vert_{\hold^{1}(\Omega)} \\ & \leq c\, \mathscr{L}^n(\Omega' \setminus \overline{\Omega}) \Vert u \Vert_{\hold^{1}(\Omega)}.
    \end{align*}
    Hence, the sum defining $\mathcal{R}_{k}$ converges absolutely in $\sobo^{1,1}(\Omega';\wedgeq^{n-1}(\R^{n})^{*})$, and therefore we also have $\widetilde{\Rcal}_k \in \sobo^{1,1}(\Omega';{\wedgeq}^{n-1}(\R^n)^{\ast})$. Recalling \eqref{w1infty}, we then find a pointwise bound on $\widetilde{\Rcal}_k$ and, thus, $\widetilde{R}_k \in \sobo^{1,\infty}(\Omega';{\wedgeq}^{n-1}(\R^n)^{\ast})$.

Ad \ref{w11:1:b}. By construction, $u\mapsto \widetilde{R}_{k}u$ is linear, and we need to show its boundedness. Assuming that $u \in \hold^1(\Omega;{\wedgeq}^{n-1}((\R^n)^{\ast}))$, one sees that due to the part \ref{w11:1:a}, $\E u \in \sobo^{1,1}(\Omega';{\wedgeq}^{n-1}((\R^n)^{\ast}))$. Moreover, due to Lemma \ref{lemma:ManuelNeuer} we get that the $\lebe^{1}$ norm of $Du$ on $\Omega'$ is bounded by the $\sobo^{1,1}$ norm of $u$ in $\Omega$, i.e. 
    \begin{equation*}
        \Vert D\tilde{\Rcal}_k u \Vert_{\lebe^{1}(\Omega')} \leq \Vert u \Vert_{\sobo^{1,1}(\Omega)}. 
    \end{equation*}
    The fact that $\tilde{\Rcal}_k \equiv 0$ on $\Omega$ allows us to use Poincar\'es inequality and conclude that $\Vert \tilde{\Rcal}_k \Vert_{\sobo^{1,1}(\Omega')} \leq C \Vert u \Vert_{\sobo^{1,1}}$. Using a density arugment one then can show this for any $u \in \sobo^{1,1}(\Omega;{\wedgeq}^{n-1}(\R^n)^{\ast})$.
\end{proof}

\subsection{Solenoidality of the extension}

Lemma \ref{lemma:w11:1} allows us to verify solenoidality by pointwise considerations. We have:
\begin{lemma} \label{lemma:w11:2}
Let $u \in \sobo^{1,1}(\Omega;\wedgeq^{n-1}(\R^n)^{\ast})$. Then, the following identities hold for $\mathscr{L}^{n}$-a.e. $y \in \Omega' \setminus \bar{\Omega}$:
    \begin{enumerate}
        \item \label{w:11:2a} We have 
            \begin{equation} \label{difE0}
                \dif \E_0 u(y) = \Scal_1(y);
            \end{equation}
        \item \label{w:11:2b} for $k=1,\ldots,n-1$ we have 
            \begin{equation} \label{difEk}
                \dif \E_k u(y) =(n-k) \left[ \Scal_k(y) - \Scal_{k+1}(y) \right];
            \end{equation}
        \item \label{w:11:2c} if $u \in \sobo^{1,1}(\R^n;{\wedgeq}^{n-1}(\R^n)^{\ast})$ obeys $\dif u=0$, then 
            \begin{equation} \label{Sn}
                \Scal_n(y) =0.
            \end{equation}
    \end{enumerate}
\end{lemma}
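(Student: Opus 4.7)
The three statements can be attacked independently; all three amount to direct computations of an exterior derivative combined with (i) Stokes' theorem on the curvilinear simplex $M(\mathbf{x}_I)$ and (ii) the partition-of-unity identities $\sum_{i} \phi_i \equiv 1$ and $\sum_{i} \dif\phi_i \equiv 0$ in $\Omega'\setminus\overline{\Omega}$.

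For part \ref{w:11:2a}, I would first use that $\rho \equiv 1$ on $\overline{\Omega'}$, so that in $\Omega'\setminus\overline{\Omega}$ we have $\E_0 u(y) = \sum_{i}\phi_i(y)(u)_{\widetilde{Q}_i}$ and hence $\dif \E_0 u(y) = \sum_i \dif\phi_i(y)\wedge(u)_{\widetilde{Q}_i}$. On the other side, for $I=(i_1,i_2)$ the simplex $M(\mathbf{x}_I)$ is one-dimensional with endpoints $x_{i_1}, x_{i_2}$ and unit tangent $\nu$, so the fundamental theorem of calculus yields $\int_{M(\mathbf{x}_I)}[Du(z)\cdot\nu]\dif\mathscr{H}^1(z) = u(x_{i_2})-u(x_{i_1})$. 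Integrating against $\mu^{i_1}\otimes\mu^{i_2}$ produces $(u)_{\widetilde{Q}_{i_2}} - (u)_{\widetilde{Q}_{i_1}}$, and the double sum then splits into two pieces: applying $\sum_{i_1}\dif\phi_{i_1}=0$ kills the $(u)_{\widetilde{Q}_{i_2}}$-piece, while $\sum_{i_2}\phi_{i_2}=1$ reduces the $(u)_{\widetilde{Q}_{i_1}}$-piece to $\dif\E_0 u$ (modulo the wedge sign convention that fixes the orientation of $\nu$).

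For part \ref{w:11:2b}, the strategy is to differentiate
\[
\widetilde{\Rcal}_k u(y) = \sum_{I\in\mathbb{N}^{k+1}} \phi_{i_{k+1}}\wedge\dif\phi_{i_k}\wedge\ldots\wedge\dif\phi_{i_1}\int_{(\R^n)^{k+1}} R(\mathbf{x}_I)(y)\dif\mu^I
\]
by Leibniz, noting that the sum is locally finite in $\Omega'\setminus\overline{\Omega}$. Two contributions arise:
\begin{enumerate}[label=(\roman*)]
\item $\dif$ hits the leading factor $\phi_{i_{k+1}}$, turning it into $\dif\phi_{i_{k+1}}$ and producing a $(k+1)$-fold wedge of differentials of partition functions; after relabelling the summation index this is recognised as a $\Scal_{k+1}$-type sum, where the $y$-linear factor $(y-z)$ in $R(\mathbf{x}_I)$ is cleared by the identity $\sum_{i}\phi_i\equiv 1$ applied to one of the $k+1$ remaining indices (the other $k$ indices being tied up in the wedge).
\item $\dif$ hits $R(\mathbf{x}_I)(y) = \int_{M(\mathbf{x}_I)}[Du(z)\cdot\nu](y-z)\dif\mathscr{H}^k(z)$, which depends linearly on $y$ through $(y-z)$, so that $\dif_y R(\mathbf{x}_I)$ collapses to $\int_{M(\mathbf{x}_I)}[Du(z)\cdot\nu]\dif\mathscr{H}^k(z) = S(\mathbf{x}_I)$; this immediately yields a $\Scal_k$-type sum.
\end{enumerate}
The main obstacle will be the combinatorial bookkeeping: the multiplicity $(n-k)$ arises by counting the distinct ways a given index tuple can be matched to the index tuples of $\Scal_k$ and $\Scal_{k+1}$ (roughly, the number of positions where the "differentiated" factor can sit), after carefully tracking the anticommutation signs generated by reordering the wedge factors and using $\sum \dif\phi = 0$ to kill the diagonal contributions. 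This is where the delicate choice of coefficients $(-1)^k(n-k-1)!/(n-1)!$ in the definition \eqref{eq:TheSobolevAxe} will be matched.

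For part \ref{w:11:2c}, I would observe that for $\Scal_n$ the simplex $M(\mathbf{x}_I)$ with $I\in\mathbb{N}^{n+1}$ is an $n$-dimensional region in $\R^n$ whose orientation multi-vector $\nu$ is the full volume $n$-vector. Under the identifications \eqref{eq:differentialconvention1}--\eqref{eq:differentialconvention2} together with the contraction convention \eqref{eq:Skmeaning}, the integrand $[Du(z)\cdot\nu]$ equals (up to a normalisation) the top form $\dif u(z)$. Consequently
\[
S(\mathbf{x}_I) = \int_{M(\mathbf{x}_I)} \dif u(z)\dif\mathscr{L}^n(z),
\]
which vanishes identically because $\dif u = 0$ by hypothesis. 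Summing over $I$ immediately gives $\Scal_n \equiv 0$.
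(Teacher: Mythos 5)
Parts~\ref{w:11:2a} and~\ref{w:11:2c} of your plan are sound and essentially coincide with the paper's argument: for~\ref{w:11:2a} the one-dimensional simplex plus the fundamental theorem of calculus, and for~\ref{w:11:2c} the identification of $[Du(z)\cdot\nu]$ with $\dif u(z)$ when $\nu$ is the full volume $n$-vector.

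Part~\ref{w:11:2b}, however, has a genuine gap. Your item~(ii) asserts that because $R(\mathbf{x}_I)(y)=\int_{M(\mathbf{x}_I)}[Du(z)\cdot\nu](y-z)\dHaus^k(z)$ is linear in $y$, the exterior derivative $\dif_y R(\mathbf{x}_I)$ ``collapses to'' $S(\mathbf{x}_I)$. This is off by the factor $(n-k)$: the paper's coordinatewise computation \eqref{eq:dify} shows $\dif_y R(\mathbf{x}_I)=(n-k)\,S(\mathbf{x}_I)$. The factor is \emph{not} combinatorial bookkeeping about index positions (as you suggest at the end); it has a concrete analytic origin. The object $[Du(z)\cdot\nu](y-z)$ is an $(n-1-k)$-form whose $y$-dependence sits in exactly $n-k$ complementary coordinate slots of a wedge expansion (see \eqref{coord}), and applying $\dif_y$ contributes one term for each of those slots, hence the multiplicity $n-k$. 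Likewise, your item~(i) proposes to turn the term with the full wedge $\dif\phi_{i_{k+1}}\wedge\cdots\wedge\dif\phi_{i_1}\cdot R(\mathbf{x}_I)$ into a $\Scal_{k+1}$-type sum by invoking only $\sum_i\phi_i\equiv 1$ to ``clear'' the $(y-z)$-factor. That cannot work as stated: $R(\mathbf{x}_I)$ is an integral over a $k$-dimensional simplex while $\Scal_{k+1}$ is built from integrals over $(k+1)$-dimensional simplices, and a partition-of-unity identity does not change the dimension of the integration domain. The missing idea is Stokes' theorem: after introducing the extra index $i_{k+2}$ via $\sum\phi=1$ and $\sum\dif\phi=0$, the sum over faces $M_{\tilde I_1},\ldots,M_{\tilde I_{k+2}}$ becomes the boundary of the $(k+1)$-simplex $M_{\tilde I}$, and Stokes converts this to an interior integral. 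In that interior integral, the $z$-derivative hitting $Du(z)$ cancels by antisymmetry while the $z$-derivative hitting $(y-z)$ produces again the factor $-(n-k)$. Without the Stokes step and without the correct $(n-k)$ coefficient in $\dif_y R$, the prefactors $(-1)^k(n-k-1)!/(n-1)!$ in \eqref{eq:TheSobolevAxe} will not telescope correctly.
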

Combining these two lemmas \ref{lemma:w11:1} and \ref{lemma:w11:2} together yields Theorem \ref{thm:W11}.

Observe that $\Rcal_k$ is given by a sum, that is locally finite and consists of smooth summands. Therefore, $\Rcal_k \in \hold^{\infty}(\Omega' \setminus \bar{\Omega})$. This allows a pointwise computation of the exterior derivative.

We proceed with the proof of Lemma \ref{lemma:w11:2}.
\begin{proof}[Proof of Lemma \ref{lemma:w11:2}]
    \textbf{\ref{w:11:2a}:} We use the trick of introducing a new index $i_2$ that we already used in the $\lebe^1$-case and the fundamental theorem of calculus (i.e. Stokes' theorem for one-dimensional manifolds with boundary) \footnote{In this case, the vector $\nu$ may be seen as the tangent to the 1D manifold $M(x_{i_1},x_{i_2})$, i.e. the path from $x_{i_1}$ to $x_{i_2}$. The letter $\nu$ just unifies the notation.}
    \begin{align*}
        \dif \E_0 u &= \sum_{i \in \N} \dif \phi_i \fint u(z) \dif \mu^i \\
        &= \sum_{i_1,i_2 \in \N} \phi_{i_2} \wedge \dif \phi_{i_1} \left( \fint u(z) \dif \mu^i - \fint u(\tilde{z}) \dif \mu^j(\tilde{z}) \right) \\
        &= \sum_{i_1,i_2 \in \N} \phi_{i_2} \wedge \dif \phi_{i_1}  \left( \fint \int_{M(x_{i_1},x_{i_2})} 
        D u(z) \cdot \nu \dHaus^1(z) \mu^{i_1}(x_{i_1}) \mu^{i_2}(x_{i_2}) \right) \\
        &= \Scal_1(y).
    \end{align*}
~
\noindent \textbf{\ref{w:11:2b}: } Observe that we calculate the exterior derivative with respect to the variable $y$, i.e. the only 'interesting' term in the integrand is $(y-z)$. In particular,
    \begin{align*}
        \dif \Rcal_k u &= \sum_{I \in \N^{k+1}} \dif \phi_{i_{k+1}} \wedge \dif \phi_{i_k} \wedge \ldots \wedge \dif \phi_{i_1} \fint R(\mathbf{x}_I) \dif \mu^I
         \\
        & + \sum_{I \in \N^{k+1}} \phi_{i_{k+1}} \wedge \dif \phi_{i_k} \wedge \ldots \wedge \dif \phi_{i_1} \fint \dif_y( R(\mathbf{x}_I)) \dif \mu^I = \mathrm{(I)} + \mathrm{(II)}.
    \end{align*}
    First, let us handle the second term. Utilising a coordinate-wise computation one can show that 
    \begin{equation} \label{eq:dify}
    \dif_y(R(x_I)) = \int_{M(\mathbf{x}_I)} \dif_y[Du(z) \cdot \nu](y-z) \dif z = (n-k)\int_{M(\mathbf{x}_I)} [Du(z) \cdot \nu] \dif z.
    \end{equation}
   Indeed, let us give a short explicit argument why \eqref{eq:dify} is true. For simplicity, assume that for $\omega \colon \Omega \to \R$
    \[
    u(z) = \omega(z) \dif x_1 \wedge \ldots \wedge \dif x_{n-1}.
    \]
    Let $\alpha \in \{1,...,n\}$ and $\beta \in \{1,...,n-1\}^{k-1}$ with $\alpha \neq \beta_j$ and $\beta_j < \beta_{j+1}$, $j=1,...,k-1$. Denote by $\beta^{\complement} \in \{1,...,n-1\}^{n-k}$ the coordinate directions which are \emph{not} in $\beta$. Denote by 
    \[
    \nu_{\alpha \beta} = \nu \cdot (e_{\alpha} \wedge e_{\beta_1} \wedge \ldots \wedge e_{\beta_n}).
    \]
    Then, (for $m(\alpha,\beta) \in \N$ appropriately chosen), 
    \[
    Du(z) \cdot \nu = \sum_{\alpha=1}^n \sum_{\beta} (-1)^{m(\alpha,\beta)} \omega(z) \nu_{\alpha \beta} \dif x_{\beta^{\complement}_1} \wedge \ldots \wedge \dif x_{\beta^{\complement}_{n-k}}.
    \]
    Therefore, in a coordinate-wise representation
    \begin{align}
    (Du(z) \cdot \nu)(y-z) = \sum_{\alpha=1}^n \sum_{\beta} \sum_{j=1}^{n-k}
    (-1)^{m(\alpha,\beta) + (j-1)} \omega(z) \nu_{\alpha \beta} (y-z)_{\beta^{\complement}_{j}} \dif x^j_{\beta^{\complement}} \label{coord} \\
    \dif x^j_{\beta^{\complement}}  = \dif x_{\beta^{\complement}_1} \wedge \ldots \wedge \dif x_{\beta^{\complement}_{j-1}}\wedge \dif x_{\beta^{\complement}_{j+1}} \wedge \ldots \wedge \dif x_{\beta^{\complement}_{n-k}}. \nonumber
     \end{align}
    When computing the exterior derivative in the variable $y$, the only reasonable relevant term is $(y-z)_{\beta^{\complement}_j}$. Summing over all possible $j$ yields
    \[
    \dif_y \bigl (Du(z) \cdot \nu)(y-z) \bigr)= (n-k) \sum_{\alpha=1}^n \sum_{\beta} (-1)^{m(\alpha,\beta)} \omega(z) \dif x_{\beta^{\complement}_1} \wedge \ldots \wedge \dif x_{\beta^{\complement}_{n-k}} =(n-k) Du(z) \cdot \nu.
    \]
    The first term is more complicated as it involves an additional application of Stokes' theorem. In particular, introducing a multiindex $\tilde{I} \in \N^{k+2}$ and $\tilde{I}_r = (i_1,...,i_{r-1},i_{r+1},...,i_{k+1})$, $\tilde{I}_{k+2} = I$, we get by
    \begin{align*}
        \mathrm{(I)} = \sum_{\tilde{I} \in \N^{k+2}} \phi_{i_{k+2}} \wedge \dif \phi_{i_{k+1}} \wedge \dif \phi_{i_k} \wedge \ldots \wedge \dif \phi_{i_1} \fint R(\mathbf{x}_{\tilde{I}_{k+2}})  \dif \mu^{\tilde{I}_{k+2}} - \sum_{r=1}^{k+1} \fint R(\mathbf{x}_{\tilde{I}_r}) \dif \mu^{\tilde{I}_r}
    \end{align*}
    Observe that $M_{\tilde{I}_{k+2}},\ldots,M_{\tilde{I}_1}$ form the boundary of the $k+1$-dimensional simplex $M_{\tilde{I}}$ and that we may apply Stokes' theorem, i.e. noting that 
    \[
    [Du(z) \cdot \nu](y-z) =F(y) \left[\nu \right],
    \]
    for some appropriately chosen $F$ (cf. \eqref{coord})  we obtain 
    \begin{align*}
        \mathrm{(I)} = \sum_{\tilde{I} \in \N^{k+2}} \phi_{i_{k+2}} \wedge \dif \phi_{i_{k+1}} \wedge \dif \phi_{i_k} \wedge \ldots \wedge \dif \phi_{i_1} \fint \int_{M_{\mathbf{x}_{\tilde{I}}}} \dif F [\tilde{\nu}]  \dif \mu^{\tilde{I}},
    \end{align*}
    where $\tilde{\nu}$ is the outer normal w.r.t. $M_{\mathbf{x}_{\tilde{I}}}$. A coordinate-wise computation (if the derivative in $z$ is applied to $Du$ it cancels out due to antisymmetry and if its applied to $(y-z)$ we get the same as in \eqref{coord} ff.) gives
    \[
    \dif F[\tilde{\nu}] =  -(n-k) Du(z) \cdot \tilde{\nu},
    \]
    and therefore we conclude
    \begin{equation}
        \dif \Rcal_k u(y)= (n-k) (S_k(y)-S_{k+1}(y)).
    \end{equation}

    \noindent \textbf{\ref{w:11:2c}: } $S_n$ reads as follows:
    \[
   S_n(\mathbf{x}_I) = \int_{M(\mathbf{x}_I)} Du(z) \cdot \nu \dif z.
    \]
    We simply show that $Du(z) \cdot \nu$ is the same object as $\dif u(\nu)$. $u \colon \Omega \to {\wedgeq}^{n-1}((\R^n)^{\ast})$ and $\nu$ actually is a vector in $\wedgeq^{n}(\R^n)$ (which is isomorphic to $\R$). Therefore, up to sign, we might assume $\nu = e_1 \wedge e_2 \wedge \ldots \wedge e_n$. Then
    \begin{align*}
    Du(z) \cdot \nu = \sum_{\alpha=1}^n (-1)^{\alpha+1} \partial_{\alpha} u \cdot (e_1 \wedge \ldots e_{\alpha-1} \wedge e_{\alpha+1} \wedge \ldots e_n),
    \end{align*}
    which coincides with the coordinate-wise definition of $\dif u(\nu)$. Now, using solenoidality of $u$ yields
    \[
    S_n(\mathbf{x}_I) =0 \quad \Longrightarrow \quad \mathcal{S}_n =0.
    \]
 \end{proof}
\begin{remark} 
This proves the second part of Theorem \ref{thm:W11}. The special case of \emph{convex} $\Omega$ might be treated similarily to the convex $\lebe^1$ case, cf. Section \ref{sec:L1}. In particular, one might show that there is an $R>0$, such that the extension $\E$ obeys $\E u= \mathrm{const} = (u)_{\tilde{Q}_0}$ outside of $B_R(0)$. Solving the boundary value problem
\[
\begin{cases}
    \diver u=0 & x \in B_{2R}(0) \setminus B_R(0), \\
    u= \mathrm{const} & x \in \partial B_R(0), \\
    u=0 & x \in \partial B_{2R}(0).
\end{cases}
\]
then gives a global extension in the convex case.
\end{remark}
\begin{remark}[A technical detail]\label{rem:techdetail} In the $\lebe^1$-case it was necessary to establish a density result for $\hold^1$-functions which are divergence-free. While it is possible to argue in a similar way here, it now however suffices to a construct a sequence of functions $(u_{j}) \subset \sobo^{1,1}(\Omega;\R^n) \cap \hold^2(\overline{\Omega};\R^n)$ converging to a solenoidal field $u$ in $\sobo^{1,1}$ and then argue that the $\lebe^{1}$-norms of $\diver(\mathscr{E}(u_{j}))$ tend to $0$ as $u_j \to u$ in $\sobo^{1,1}(\Omega;\R^{n})$, see the proof of Lemma \ref{lemma:w11:1}.
 \end{remark}
\subsection{Global extension and higher order} \label{sec:higherorder}
Finding a \emph{global} extension for a topologically non-trivial domain is the same story as in the $\lebe^1$-case. As in Lemma \ref{lemma:higherorder} one may now show
 \begin{lemma}
        Let $\Omega \subset \R^n$ be a Lipschitz domain and $\E_{\Omega}$ be the extension operator mapping the space $\sobo^{1,1}(\Omega;{\wedgeq}^{n-1}(\R^n)^{\ast})$ into $\sobo^{1,1}(\Omega;{\wedgeq}^{n-1}(\R^n)^{\ast})$.  Consider the strip $A^{\delta} = \{ x \in \Omega' \colon \delta < \dist(x, \Omega) < \varepsilon\}$. Then, for any $m \in \N$, $\E_{\Omega} u \in \sobo^{m,1}(A^{\delta};\R^n)$ and 
        \[
            \Vert \E_{\Omega} u \Vert_{\sobo^{m,1}(A^{\delta})} \leq C \delta^{-k+1} \Vert u \Vert_{\sobo^{m,1}(\Omega)}.
        \]
    \end{lemma}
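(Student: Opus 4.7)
My plan is to mirror the proof of Lemma \ref{lemma:higherorder} (the $\lebe^1$-version), adapting it to the Sobolev construction \eqref{eq:TheSobolevAxe}. The decomposition $\mathscr{E}_{\Omega}[u] = \mathscr{E}_{0}[u] + \sum_{k=1}^{n-1} c_{k}\widetilde{\mathcal{R}}_{k}u$ expresses the extension as a locally finite sum of smooth building blocks on $\Omega'\setminus\overline{\Omega}$, so the entire task is to quantify how the constants blow up in $\delta$. The central geometric input I would establish first: for $y\in A^{\delta}$, every cube $Q_{i}\in\mathcal{W}_{2}$ with $y\in\frac{7}{6}Q_{i}$ has $\ell(Q_{i})\sim\delta$ by \ref{item:Whitney3}, and by \ref{item:Jonesrefined2}, \ref{item:Jonesrefined3a} the image $Q^{i}=\Psi(Q_{i})$ satisfies $\ell(Q^{i})\sim\delta$ and $\dist(Q^{i},\partial\Omega)\sim\delta$. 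In particular, any non-degenerate simplex $M(\mathbf{x}_{I})$ entering the sum has $\mathscr{H}^{k}(M(\mathbf{x}_{I}))\lesssim\delta^{k}$, $|y-z|\lesssim\delta$ for $z\in M(\mathbf{x}_{I})$, and $\widetilde{M}_{I}$ is contained in an interior strip $A^{c\delta}_{\mathrm{int}}\subset\Omega$ of width $\sim\delta$ around $\partial\Omega$.

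For $\mathscr{E}_{0}[u]=\rho\sum_{i}\phi_{i}(u)_{\widetilde{Q}^{i}}$ I would pass to $A^{\delta}$ with $\rho\equiv 1$ (by shrinking $\delta_{0}$) and take any multi-index $\alpha$ with $|\alpha|=m$: every derivative falls on some $\phi_{i}$, each contributing a factor $\ell(Q_{i})^{-1}\sim\delta^{-1}$ by the higher-order analogue of \ref{item:POU3}. Inserting a telescoping constant lets one replace $(u)_{\widetilde{Q}^{i}}$ by averages of differences, which by a Jones/Poincar\'{e}-type chain-of-cubes estimate are bounded by local $\lebe^{1}$-averages of $\D u$ on a bounded collection of neighbouring interior cubes of size $\sim\delta$. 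This yields the pointwise bound $|\D^{\alpha}\mathscr{E}_{0}[u](y)|\leq c\delta^{-m+1}\dashint_{\mathcal{N}_{\mathrm{int}}(y)}|\D u|\dif x$; integration over $A^{\delta}$ together with the bounded overlap of the $\mathcal{N}_{\mathrm{int}}(y)$'s then gives $\|\D^{\alpha}\mathscr{E}_{0}[u]\|_{\lebe^{1}(A^{\delta})}\leq c\delta^{-m+1}\|u\|_{\sobo^{1,1}(\Omega)}$.

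For the correctors $\widetilde{\mathcal{R}}_{k}u$ I would use the crucial rigidity in $y$: the kernel $R(\mathbf{x}_{I})=\int_{M(\mathbf{x}_{I})}[\D u(z)\cdot\nu](y-z)\dif\mathscr{H}^{k}(z)$ depends on $y$ only through the linear factor $(y-z)$, so by the computation in the proof of Lemma \ref{lemma:w11:2} one has $\D_{y}R(\mathbf{x}_{I})=(n-k)S(\mathbf{x}_{I})$ and $\D^{j}_{y}R(\mathbf{x}_{I})=0$ for $j\geq 2$. Distributing $m$ derivatives over the wedge $\phi_{i_{k+1}}\wedge d\phi_{i_{k}}\wedge\ldots\wedge d\phi_{i_{1}}$ and the kernel therefore yields only two non-trivial types of summand, controlled pointwise by $\delta^{-k-m}|\fint R\,d\mu^{I}|$ and $\delta^{-k-m+1}|\fint S\,d\mu^{I}|$. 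Applying Lemma \ref{lem:W11bound} (with $r=k+1$ and $v=\D u$), together with the extra $\delta$ gained from $|y-z|\lesssim\delta$ in the $R$-term, produces $|\fint R\,d\mu^{I}|\lesssim\delta^{k+2-n}\|\D u\|_{\lebe^{1}(\widetilde{M}_{I})}$ and the analogous bound for $S$. Summing over the $\mathtt{N}^{k+1}$-bounded many non-vanishing indices near $y$ and integrating over $A^{\delta}$ as in the proof of Lemma \ref{lemma:L1bound:c1} gives $\|\D^{\alpha}\widetilde{\mathcal{R}}_{k}u\|_{\lebe^{1}(A^{\delta})}\leq c\delta^{-m+1}\|u\|_{\sobo^{1,1}(\Omega)}$.

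The principal obstacle will be the combinatorial bookkeeping in the third paragraph: tracking exactly how $m$ derivatives distribute over the $(k+1)$ factors of $\phi$'s and over the kernel, and ensuring that the use of $\D^{2}_{y}R=0$ really absorbs all derivatives beyond the first acting on $R$ (so we never require higher regularity of $u$ than $\sobo^{1,1}$). Once this is in place, the improvement of one power of $\delta$ over Lemma \ref{lemma:higherorder} has a transparent interpretation: since $u$ already furnishes one derivative in $\lebe^{1}$, only $m-1$ derivatives must be extracted from the Whitney scaling, which is precisely what the bound $\delta^{-m+1}$ records.
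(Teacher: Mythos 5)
Your proposal is correct and follows the same route the paper intends: the paper does not actually write out a proof of this lemma or of its $\lebe^{1}$-analogue Lemma \ref{lemma:higherorder}, merely indicating that extra derivatives must fall on the $\phi_{i}$'s and scale like $\ell(Q_{i})^{-1}\sim\delta^{-1}$, and you flesh this out by treating $\mathscr{E}_{0}$ and each corrector $\widetilde{\mathcal{R}}_{k}$ separately and by exploiting that $R(\mathbf{x}_{I})$ is affine in $y$, so that $\D^{j}_{y}R=0$ for $j\geq 2$ and no regularity of $u$ beyond $\sobo^{1,1}$ is ever needed. One bookkeeping slip worth fixing: with $r=k+1$, Lemma \ref{lem:W11bound} gives the factor $\delta_{I}^{-(n+1-r)}=\delta_{I}^{k-n}$, and the extra $|y-z|\lesssim\delta$ then yields $|\fint R\,\dif\mu^{I}|\lesssim\delta^{k+1-n}\|\D u\|_{\lebe^{1}(\widetilde{M}_{I})}$, not $\delta^{k+2-n}$; the corrected exponent is the one compatible with your (correct) final bound $\delta^{-m+1}$, which also silently repairs the typo $\delta^{-k+1}$ in the paper's displayed statement.
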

\noindent Consequently, one can also show Corollary \ref{coro:global:extension} in the same fashion as Theorem \ref{thm:global:extension}.
\begin{proof}[Proof of Corollary \ref{coro:global:extension}]
    Let $\Omega \subset \R^n$ be Lipschitz bounded. By extension theorem Theorem \ref{thm:W11} there exists a linear extension to some $\Omega'$; moreover close to the boundary of $\Omega'$ we actually have $\E u \in \sobo^{m,1}$, i.e. in some $\sobo^{1,p}$. We then may apply the extension of Kato et. al. to further extend to the full space by ignoring a finite dimensional subspace \cite[Corollary 3.2]{HKMT}.
\end{proof}

    We now further comment on the higher-regularity case.
    Given the formula for the divergence-free extension and the corrector terms for the Sobolev-case $\sobo^{1,p}$, we shortly discuss the analogous setup for $m > 1$. In that case, $\E_0 u$ is given through averaged Taylor-polynomials of $u$ of order $(m-1)$, i.e.
    \[
    \E_0 u = \sum_{i \in \N} \phi_i T^{m-1}_i u, \quad T^{m-1}_i u = \fint \sum_{\kappa=0}^{m-1} D^{\kappa} u(z) [(y-z)^{\kappa}] \dif \mu^i.
    \] 
    Now using that $\diver u=0$, one obtains that $\diver T^{m-1}_i u=0$. Therefore, when calculating the divergence of $\E_0 u$, the only nonzero terms are those, where the derivative falls on $\phi_i$. One can now construct suitable correctors as in \eqref{def:Rcal} ff. to obtain solenoidality of $\E_0 u + \mathcal{R}u$. These correctors are simimlar to $\mathcal{R}_k$ with featuring $D^m u(z)$ instead of $D u$. The tedious computational details are left to the reader. We remind the reader that a key argument lies in the scaling analysis \eqref{est:1}--\eqref{est:4}. On the other hand, as we have more derivatives at play, counterparts to \eqref{est:1} and \eqref{est:2} features an even higher degree of degeneracy if $l(Q_{i_1}) \to 0$. On the other hand, due to the higher order Taylor expansion, we are (also for the correctors) able to get better versions of \eqref{est:3} and \eqref{est:4}.

\section{$\lebe^p$ extensions and cuspidal domains} \label{sec:further}
In this section, we discuss some consequences of the construction undertaken in Sections \ref{sec:L1} and \ref{sec:Lip}.

First of all, we show that the extension constructed before does not only work for $\lebe^1$, but for any Lebesgue space.
We then consider domains $\Omega \subset \R^n$, which do \emph{not} have boundary of Lipschitz regularity and show, that a solenoidal $\lebe^p$-extension result is not possible for some $p$.  More precisely, we show that the assumption that $\Omega$ is Lipschitz bounded \emph{cannot} easily be replaced by slightly worse regularity assumptions, i.e. H\"older regularity.

\subsection{$\lebe^p$ extension of solenoidal fields} \label{sec:plargerone}
Revisiting Section \ref{sec:L1} and \ref{sec:Lip}, it is not challenging to repeat the proof of the extension theorem up to the $\lebe^p$-bound, i.e. the extension operator constructed definitely maps $\lebe^p_{\diver}(\Omega;\R^n) \to \lebe^1_{\diver}(\Omega^{\varepsilon};\R^n)$. The only issue is to show an $\lebe^p$ bound.

For $p= \infty$, this is quite simple. For the manifold $M_I$ we have the immediate bound 
\[ 
\int_{M_I} u(z) \nu(x_I) \dif \mathscr{H}^{n-1}(z) \leq \Vert u \Vert_{\lebe^{\infty}},
\quad
\mathscr{H}^{n-1}(M_I) \leq C \Vert u \Vert_{\lebe^{\infty}} \delta^{n-1}
\]
whenever cubes $Q_{i_1}$,...,$Q_{i_j}$ obey
\[
    \dist(Q_{i_k},Q_{i_j}) \leq C \delta, \quad \ell(Q_{i_k}) \leq C \delta.
\]
Therefore, using that $\Vert d \phi_i \Vert_{\lebe^{\infty}} \leq C \delta^{-1}$ and the formula for $\E_{\Omega}$ we directly arrive at the following statement.
\begin{corollary}
    Let $\Omega \subset \R^n$ be a Lipschitz domain. 
    The operator $\E_{\Omega}$ constructed in Section \ref{sec:Lip} is bounded and linear from $\lebe^{\infty}_{\diver}(\Omega;\R^n)$ to $\lebe^{\infty}_{\diver}(\Omega^{\varepsilon};\R^n)$.
\end{corollary}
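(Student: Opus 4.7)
The plan is to verify the $\lebe^{\infty}$-bound pointwise on the extension formula from Definition \ref{def:extension:Lip}, and then combine with the solenoidality already shown in Section \ref{sec:Lip} via a density argument.

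First I would fix $u \in \hold^{1}(\overline{\Omega};\R^{n})$ (identified with an $(n-1)$-form) and let $y \in \Omega^{\varepsilon}\setminus\overline{\Omega}$ with $\delta := \dist(y,\partial\Omega)$. The key observation is that only indices $I=(i_{1},\ldots,i_{n})\in\mathbb{N}^{n}$ with $y \in \tfrac{7}{6}Q_{i_{1}}\cap\cdots\cap\tfrac{7}{6}Q_{i_{n}}$ contribute to the sum defining $\E_{\Omega}[u](y)$. By the finite overlap property \ref{item:Whitney2}, there are at most $\mathtt{N}(n)^{n}$ such indices. For any such $I$, property \ref{item:Whitney3} forces $\ell(Q_{i_{j}})$ to be comparable to $\delta$ for every $j$, and hence $\mathscr{H}^{n-1}(M(\mathbf{x}_{I}))\leq c\,\delta^{n-1}$ for every $\mathbf{x}_{I}$ in the support of $\mu^{I}$, by property \ref{def:prop:35} of Theorem \ref{thm:prop:simplex}.

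Then I would estimate each summand of \eqref{def:Eomega:Lip} by combining $\|\phi_{i_{n}}\|_{\lebe^{\infty}}\leq 1$, the bound $\|\dif\phi_{i_{j}}\|_{\lebe^{\infty}}\leq c/\ell(Q_{i_{j}})\leq c/\delta$ from \ref{item:POU3}, the trivial bound $|\nu|=1$, and the $\lebe^{\infty}$-estimate on the inner integral:
\begin{equation*}
\left|\int_{(\R^{n})^{n}}\int_{M(\mathbf{x}_{I})} u(z)\,\nu\,\dif\mathscr{H}^{n-1}(z)\,\dif\mu^{I}(\mathbf{x}_{I})\right|\leq c\,\|u\|_{\lebe^{\infty}(\Omega)}\,\delta^{n-1},
\end{equation*}
where $\mu^{I}$ is a probability measure. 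Multiplying these bounds yields a contribution of order $\delta^{-(n-1)}\cdot\delta^{n-1}\cdot\|u\|_{\lebe^{\infty}(\Omega)}$, which is independent of $\delta$. Summing over the at most $\mathtt{N}(n)^{n}$ admissible indices gives $|\E_{\Omega}[u](y)|\leq c(n,\mathrm{Lip}(\Omega,\mathcal{O}))\,\|u\|_{\lebe^{\infty}(\Omega)}$, uniformly in $y$.

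To pass from $\hold^{1}(\overline{\Omega};\R^{n})$ to $\lebe^{\infty}_{\diver}(\Omega;\R^{n})$, I would use that the extension operator has already been defined on $\lebe^{1}_{\diver}(\Omega;\R^{n})\supset\lebe^{\infty}_{\diver}(\Omega;\R^{n})$ by Theorem \ref{thm:main:Lip}, so that $\E_{\Omega}[u]$ exists as an $\lebe^{1}_{\diver}(\Omega^{\varepsilon};\R^{n})$-function and is automatically divergence-free in $\mathscr{D}'(\Omega^{\varepsilon})$. Given $u\in\lebe^{\infty}_{\diver}(\Omega;\R^{n})$, an internal mollification on $\Omega_{\delta}$ (as in the density argument of Section \ref{sec:Lip:final}) produces a sequence $u_{j}\in\hold^{1}_{\diver}(\overline{\Omega};\R^{n})$ with $\|u_{j}\|_{\lebe^{\infty}}\leq\|u\|_{\lebe^{\infty}(\Omega)}$ and $u_{j}\to u$ in $\lebe^{1}(\Omega;\R^{n})$. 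Applying the pointwise bound to each $u_{j}$ and passing to the $\lebe^{1}$-limit (combined with Fatou's lemma on $\lebe^{\infty}$-seminorms, or by identifying the weak-$\ast$ limit in $\lebe^{\infty}(\Omega^{\varepsilon};\R^{n})$) yields $\|\E_{\Omega}[u]\|_{\lebe^{\infty}(\Omega^{\varepsilon})}\leq c\,\|u\|_{\lebe^{\infty}(\Omega)}$. The main obstacle is just ensuring the mollification on interior subdomains $\Omega_{\delta}$ preserves the $\lebe^{\infty}$-norm and converges correctly near $\partial\Omega$; this is straightforward since $\Omega$ has Lipschitz (hence $(\varepsilon,\infty)$) boundary and standard mollification of a bounded function is uniformly bounded by the original $\lebe^{\infty}$-norm.
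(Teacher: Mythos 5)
Your proposal is correct and takes essentially the same route as the paper: the paper's (terse) proof also multiplies the bound $\|d\phi_{i_j}\|_{\lebe^\infty}\leq C\delta^{-1}$ (applied $(n-1)$ times) against $\mathscr{H}^{n-1}(M_I)\leq C\delta^{n-1}$ and the $\lebe^\infty$-norm of $u$, using $|\nu|=1$ and finite overlap, so the $\delta$-powers cancel. Your additional care in spelling out the density step — mollifying to get $u_j\in\hold^1_{\diver}$ with $\|u_j\|_{\lebe^\infty}\leq\|u\|_{\lebe^\infty}$, then passing the uniform bound to the $\lebe^1$-limit — is a detail the paper omits but is needed, and you handle it correctly.
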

Due to interpolation theorems (or by doing the calculation associated to Lemma \ref{lem:convhullL1bound}) we may also show
\begin{corollary} \label{Coro:Lpextension}
     Let $\Omega \subset \R^n$ be a Lipschitz domain, $1<p<\infty$. The operator $\E_{\Omega}$ constructed in Section \ref{sec:Lip} is bounded and linear from $\lebe^{p}_{\diver}(\Omega;\R^n)$ to $\lebe^{p}_{\diver}(\Omega^{\varepsilon};\R^n)$.
\end{corollary}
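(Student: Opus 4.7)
The plan is to deduce the statement from the two endpoint bounds already at our disposal via the Riesz--Thorin interpolation theorem. The operator $\E_{\Omega}$ is initially defined by the explicit formula \eqref{def:Eomega:Lip} on the dense subspace $\hold^{1}(\overline{\Omega};\R^n)$, and by Theorem \ref{thm:main:Lip} together with the preceding $\lebe^{\infty}$-corollary it satisfies
\[
\|\E_\Omega u\|_{\lebe^1(\Omega^\varepsilon)} \leq C_1 \|u\|_{\lebe^1(\Omega)} \qquad\text{and}\qquad \|\E_\Omega u\|_{\lebe^\infty(\Omega^\varepsilon)} \leq C_\infty \|u\|_{\lebe^\infty(\Omega)}
\]
for all $u \in \hold^{1}(\overline{\Omega};\R^n)$. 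Riesz--Thorin then yields, for every $1<p<\infty$, an interpolated constant $C_p\leq C_1^{1/p}C_\infty^{1-1/p}$ such that $\|\E_\Omega u\|_{\lebe^p(\Omega^\varepsilon)} \leq C_p \|u\|_{\lebe^p(\Omega)}$ on this dense subspace. A density argument identical in structure to that of Section \ref{sec:Lip:final} -- approximating $u \in \lebe^p(\Omega;\R^n)$ by smooth compactly supported mollifications restricted to inner approximations $\Omega_\delta$ -- then extends $\E_\Omega$ uniquely to a bounded linear operator $\lebe^p(\Omega;\R^n)\to\lebe^p(\Omega^\varepsilon;\R^n)$.

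For the preservation of solenoidality, observe that since $\Omega$ is bounded, Hölder's inequality gives the inclusion $\lebe^p_{\diver}(\Omega;\R^n) \subset \lebe^1_{\diver}(\Omega;\R^n)$. Hence, for $u \in \lebe^p_{\diver}(\Omega;\R^n)$, Theorem \ref{thm:main:Lip} already yields $\diver(\E_\Omega u) = 0$ in $\mathscr{D}'(\Omega^\varepsilon)$. Since the definition of $\E_\Omega u$ as a limit in $\lebe^p$ coincides with its definition as a limit in $\lebe^1$ on the common dense subspace, the two extensions agree $\mathscr{L}^n$-a.e., so $\E_\Omega u \in \lebe^p_{\diver}(\Omega^\varepsilon;\R^n)$ as required.

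As an alternative to the interpolation argument, one can directly mirror the scaling analysis of Lemma \ref{lem:LipschitzL1boonhund} and Lemma \ref{lemma:L1bound:c1:Lip}. In the inner integral against the probability product measure $\mu^I$, one applies Jensen's inequality with exponent $p$ in place of the Fubini/transformation argument, obtaining a $p$-analogue of \eqref{eq:L1bound} with $\delta_I$-scaling $\delta_I^{-1}$ replaced by the $p$-appropriate scaling compatible with the bound $\|\dif\phi_{i_j}\|_{\lebe^\infty}\leq c/\ell(Q_{i_j})$ of \ref{item:POU3}. Summing the resulting strip-wise estimates as in \eqref{eq:stripsum:v2} and using the uniform finite overlap of neighbouring strips then yields the $\lebe^p$-bound directly. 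The only point to keep track of is balancing the $p$-power of the $\ell(Q_i)^{-k}$ factors from the partition of unity against the $p$-power of $|M(\mathbf{x}_I)|\sim \ell(Q_i)^{n-1}$; this is routine and introduces no geometric input beyond what is already used at the endpoints, which is precisely why the interpolation route is cleaner and which we adopt as the main proof.
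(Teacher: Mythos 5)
Your proposal is correct and follows essentially the same route the paper takes: the paper's own justification for Corollary \ref{Coro:Lpextension} is the one-line remark that it follows ``due to interpolation theorems (or by doing the calculation associated to Lemma \ref{lem:convhullL1bound})'', and you carry out precisely that plan -- Riesz--Thorin between the $\lebe^1$ and $\lebe^\infty$ endpoint bounds on the common dense subspace $\hold^1(\overline{\Omega};\R^n)$, followed by density and the observation that boundedness of $\Omega$ gives $\lebe^p_{\diver}(\Omega)\subset\lebe^1_{\diver}(\Omega)$ so that solenoidality is inherited from Theorem \ref{thm:main:Lip}. Your alternative sketch via the scaling analysis of Lemma \ref{lem:LipschitzL1boonhund} is likewise the second route the paper gestures at.
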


The same arguments also works in the higher regularity case, cf. the paragraph before lemma \ref{lem:W11bound}. Therefore, one can also show
\begin{corollary} \label{coro:W1pextension}
Let $\Omega \subset \R^n$ be a Lipschitz domain, $1<p \leq \infty$. The operator $\E$ constructed in Section \ref{sec:sobolev} is bounded from $\sobo^{1,p}_{\diver}(\Omega)$ to $\sobo^{1,p}_{\diver}(\Omega')$.
\end{corollary}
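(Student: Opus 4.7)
The plan is to follow the scheme of Section \ref{sec:sobolev} with $\lebe^p$-bounds in place of $\lebe^1$-bounds. Since Jones' extension $\mathscr{E}_0$ from \eqref{def:E0} is bounded from $\sobo^{1,p}(\Omega;\R^n)$ to $\sobo^{1,p}(\R^n;\R^n)$ for every $1\leq p\leq\infty$, the task reduces to establishing, for each $k=1,\ldots,n-1$, the boundedness of the corrector $\widetilde{\mathcal{R}}_k$ from \eqref{eq:TheSobolevAxe} as a linear map from $\sobo^{1,p}(\Omega;\R^n)$ to $\sobo^{1,p}(\Omega';\R^n)$. Once this is in place, the pointwise identities of Lemma \ref{lemma:w11:2} are unaffected and yield $\diver(\mathscr{E}u)=0$ in $\mathscr{D}'(\Omega')$ for $u\in\sobo^{1,p}_{\diver}(\Omega;\R^n)$, using that $\sobo^{1,p}_{\diver}(\Omega;\R^n)\subset\sobo^{1,1}_{\diver}(\Omega;\R^n)$ by the boundedness of $\Omega$.

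For the $\sobo^{1,p}$-boundedness of $\widetilde{\mathcal{R}}_k$ in the range $1<p<\infty$, I would establish an $\lebe^p$-analogue of the strip estimate from Lemma \ref{lemma:ManuelNeuer}. Starting from the expression \eqref{def:Rcal}--\eqref{def:Rcal2}, I would combine the $\lebe^\infty$-estimates \eqref{est:1}--\eqref{est:2} on the wedge of $\phi$'s with Holder's inequality applied to the inner integral $\int_{M(\mathbf{x}_I)}|\!\D u|\,\dif\mathscr{H}^k$ to derive a pointwise bound of the form
\begin{align*}
\big|\widetilde{\mathcal{R}}_k u(y)\big|^p + \big|\!\D\widetilde{\mathcal{R}}_k u(y)\big|^p \leq c\!\!\sum_{I\,:\,\phi_{i_{k+1}}(y)\neq 0}\!\fint_{\widetilde{M}_I}\!\big(|u|^p+|\!\D u|^p\big)\dif x.
\end{align*}
Here, the $\delta_I^{-n/p}$-factor generated by Holder's inequality is absorbed by the $\delta_I^{n/p}$-factor arising from the passage from the $\lebe^1$-average in Lemma \ref{lem:W11bound} to an $\lebe^p$-average, whereas the $\delta_I$-factor produced by $(y-z)$ in \eqref{def:Rcal2} cancels the $\delta_I^{-k-1}$ coming from differentiating the wedge of $\phi$'s. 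Integrating this pointwise bound over a strip $\{x\in\Omega'\setminus\overline{\Omega}\colon\delta/2<\dist(x,\partial\Omega)<\delta\}$ and exploiting the uniform finite-overlap property \ref{item:Whitney2} delivers the desired strip bound; summing over dyadic strips yields the global $\lebe^p$-bound on $\D\widetilde{\mathcal{R}}_k u$, and a Poincar\'e-type argument (using $\widetilde{\mathcal{R}}_k u=0$ on $\Omega$) then produces the full $\sobo^{1,p}$-bound.

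The endpoint $p=\infty$ is handled in parallel without the Holder step: the $\sobo^{1,\infty}$-bounds \eqref{est:1}--\eqref{est:4} combined with $\|u\|_{\sobo^{1,\infty}(\Omega)}<\infty$ yield uniform $\sobo^{1,\infty}$-bounds on each summand of \eqref{def:Rcal}, and the local finiteness of the Whitney cover then delivers the requisite bound on the whole sum, in the spirit of Section \ref{sec:plargerone}. Alternatively, once $\sobo^{1,1}$- and $\sobo^{1,\infty}$-boundedness of $\widetilde{\mathcal{R}}_k$ have both been obtained, complex interpolation supplies $\sobo^{1,p}$-boundedness for all $1<p<\infty$ without any further work. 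The main technical obstacle will be the careful bookkeeping of the $\delta_I$-powers in the $\lebe^p$-analogue of Lemma \ref{lem:W11bound}, ensuring that the scalings contributed by Holder's inequality, by the $(y-z)$-factor in \eqref{def:Rcal2} and by differentiating the wedge of $\phi$'s combine to a genuinely $\delta_I$-free bound for all $1<p\leq\infty$ uniformly; once this is verified, the remainder of the proof is a faithful transcription of Sections \ref{sec:Lip:L1bounds} and \ref{sec:sobolev}.
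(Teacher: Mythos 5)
Your proposal matches the paper's approach: the paper likewise observes that the $\sobo^{1,p}$-case follows by redoing the calculation of Lemma \ref{lem:W11bound} with the appropriate powers of $\delta_I$ (together with Jensen/H\"older) or, alternatively, by direct verification at $p=\infty$ via \eqref{est:1}--\eqref{est:4} and interpolation with the $p=1$ case, with solenoidality inherited from Lemma \ref{lemma:w11:2} since $\sobo^{1,p}(\Omega)\subset\sobo^{1,1}(\Omega)$ on bounded $\Omega$. Your bookkeeping of the $\delta_I$-powers is the right concern and resolves correctly, exactly as the paper's $r$-dependent exponent $\delta_I^{-(n+1-r)}$ in Lemma \ref{lem:W11bound} is designed to balance the contributions from $(y-z)$, from the wedge of $\dif\phi_{i_j}$'s, and from the simplex measure.
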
 
\subsection{On the regularity of $\partial\Omega$} \label{sec:regularityofOmega}
Until now, we assumed $\Omega\subset\R^{n}$ to be a bounded domain with Lipschitz boundary.  Theorem \ref{thm:main:Lip} establishes that this suffices to conclude the existence of a solendoiality-preserving $\lebe^{p}$-bounded extension operator to a superset of $\Omega$. Different from arbitrary $\lebe^{p}$-functions, which can be trivially extended to the entire $\R^{n}$ regardless of the regularity of $\partial\Omega$, preserving solenoidality comes with certain restrictions on $\partial\Omega$. We do not attempt to determine the optimal conditions on $\partial\Omega$ for a solenoidality-preserving extension operator, but will compare the underlying conditions below. In order to focus on the regularity of the boundary and to avoid topological intricacies, the domains considered in the sequel are homeomorphic to the open unit ball. 

The way in which we show below theorem, reveals that there is a fundamental regularity flaw of $\Omega$ which is not connected to its topology. In particular, as suggested by Theorem \ref{thm:Lp}, we could hope for a finite dimensional vector space $X$, such that an extension for all $u \in \lebe^p_{\diver}(\Omega;\R^n) /X$ in the quotient is possible. We, however, construct a whole class of functions that does not allow for an extension (and one may show that no nonzero finite linear combination of those may be extended).
\smallskip

Let $0< \gamma<1$. We write $x=(x',y) \in \R^n$ with $x' \in \R^{n-1}$ and $y \in \R$ for elements of $\R^{n}$ and define 
\[
    \phi(s) = \begin{cases}
                s^{\gamma} & \text{if } s >0 \\
                0 & \text{otherwise.}
            \end{cases}
\]
Based on $\varphi$, we then introduce the sets $\Omega_{+},\Omega_{-}\subset\R^{n}$ by
\begin{align}\label{eq:omegaplusminusdef}
\begin{split}
\Omega_+ =\{ x \in (-1,1)^n \colon y > \phi(\vert x' \vert)\}, \\
\Omega_- =\{ x \in (-1,1)^n \colon y < \phi(\vert x' \vert)\},
\end{split}
\end{align}
so that $\Omega_{+}$ has an outward and $\Omega_{-}$ has an inward cusp at the origin, see Figure \ref{fig:Omegapm}. Both domains have boundaries of class $\hold^{\gamma}$. Denoting 
\begin{align*}
\Omega_{\pm}^{\eta}:=\{x \in \R^n \colon \dist(x,\Omega_{\pm})< \eta\},\qquad\eta>0,
\end{align*}
the main result of the present section then is as follows: 

\begin{theorem}[Limitations on the regularity of $\partial\Omega$] \label{thm:counterexamples}
    Let $\Omega_+$ and $\Omega_-$ be as in \eqref{eq:omegaplusminusdef}. Then the following hold:
    \begin{enumerate} [label=(\alph*)]
        \item \label{counter:1} Let $n=2$. Given $0<\gamma<1$, there exists $p \in [1,\infty)$ such that for any $\eta>0$ there is no bounded linear extension operator $\E \colon \lebe^p_{\diver}(\Omega_+;\R^n) \to \lebe^p_{\diver}(\Omega_+^{\eta};\R^n)$.
        \item \label{counter:2} Let $n \geq 2$. Given $0<\gamma<1$, there is some $p \in (1,\infty)$ such that for any $\eta >0$ there is no bounded linear extension operator $\E \colon \lebe^p_{\diver}(\Omega_-;\R^n) \to \lebe^p_{\diver}(\Omega_-^{\eta};\R^n)$.
        \item \label{counter:3} Let $n \geq 3$. There is a $(\varepsilon,\delta)$-domain  $\Omega \subset \R^n$ and $1<p<\infty$ (see Remark \ref{remark:Lipschitzepsdel} for this terminology) such that for any $\eta>0$ there is no bounded linear extension operator $\E \colon \lebe^p_{\diver}(\Omega;\R^n) \to  \lebe^p_{\diver}(\Omega^{\eta};\R^n)$.
    \end{enumerate}
\end{theorem}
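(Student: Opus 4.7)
The strategy is to prove Theorem \ref{thm:counterexamples} by exhibiting, in each case, an explicit divergence-free $\lebe^{p}$-field on the underlying domain whose \emph{every} solenoidal extension must have infinite $\lebe^{p}$-norm in any neighbourhood; this in particular excludes the existence of any bounded linear extension operator. The common mechanism is a quantitative Gauss' theorem: whenever the complement of $\Omega$ possesses a thin cuspidal finger attached to $\partial\Omega$, a divergence-free extension is forced to carry a non-vanishing, scale-invariant flux across arbitrarily small cross-sections of this finger, whose $(n-1)$-measure decays faster than is compatible with $\lebe^{p}$-integrability.

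For part (b), I take $u(x)=\chi(x)\,x/|x|^{n}$ with $\chi$ a smooth cut-off supported near the origin. Then $u$ is divergence-free on $\Omega_{-}$, and a direct polar computation shows $u\in\lebe^{p}(\Omega_{-};\R^{n})$ for $p<n/(n-1)$ because $\Omega_{-}$ occupies almost the entire sphere $\partial\ball_{\rho}(0)$ for small $\rho$. Introducing the cuspidal finger $F_{\varepsilon_{0}}=\{(x',y):|x'|^{\gamma}<y<\varepsilon_{0}\}\Subset\Omega_{-}^{\eta}\setminus\overline{\Omega_{-}}$, the Gauss--Green theorem applied to any solenoidal extension $\tilde u$ on $\ball_{\rho}(0)$ together with $\tilde u=u$ on $\Omega_{-}$ forces
\begin{equation*}
\Bigl|\int_{\partial\ball_{\rho}(0)\cap F_{\varepsilon_{0}}}\tilde u\cdot\nu\,\dHaus^{n-1}\Bigr|\;=\;\Bigl|\int_{\partial\ball_{\rho}(0)\cap\Omega_{-}}u\cdot\nu\,\dHaus^{n-1}\Bigr|\;\gtrsim\;1
\end{equation*}
uniformly in $0<\rho<\varepsilon_{0}$. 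An angular computation gives $\mathscr{H}^{n-1}(\partial\ball_{\rho}(0)\cap F_{\varepsilon_{0}})\lesssim \rho^{(n-1)/\gamma}$, so Jensen's inequality and subsequent integration in $\rho$ yield $\|\tilde u\|_{\lebe^{p}(F_{\varepsilon_{0}})}=\infty$ for every $p>1+\gamma/(n-1)$. The range $p\in(1+\gamma/(n-1),n/(n-1))$ is nonempty since $\gamma<1$, and a single $u$ suffices to rule out any bounded linear extension.

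For part (a), I exploit that in two dimensions divergence-free fields on simply connected domains admit stream-function representations: $u=\nabla^{\perp}\psi$ with $\psi\in\sobo^{1,p}(\Omega_{+})$, and a solenoidal $\lebe^{p}$-extension of $u$ is equivalent (up to constants) to a $\sobo^{1,p}$-extension of $\psi$ to $\Omega_{+}^{\eta}$. I take the singular stream function $\psi(x,y)=\log(1/y)$ truncated away from the origin, which a polar computation identical to (b) places in $\sobo^{1,p}(\Omega_{+})$ precisely for $p<1+1/\gamma$. For $p>2$, however, Morrey's embedding $\sobo^{1,p}(\ball_{\eta}(0))\hookrightarrow \hold^{0}(\overline{\ball_{\eta}(0)})$ forces any extension to be continuous at the origin, contradicting the fact that $\psi\to+\infty$ along the cusp of $\Omega_{+}$. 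The range $p\in(2,1+1/\gamma)$ is nonempty precisely because $\gamma<1$.

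Part (c) is the most delicate point, as Remark \ref{remark:Lipschitzepsdel} explicitly rules out H\"older-type cusps within $(\varepsilon,\delta)$-domains; hence the mechanism of (b) cannot be grafted directly. I construct $\Omega\subset\R^{n}$, $n\geq 3$, by attaching to a Lipschitz base a carefully calibrated geometric defect—for instance, a sequence of smooth protrusions whose widths and mutual spacings are tuned so that the $(\varepsilon,\delta)$-constants remain uniform yet a localised flux obstruction of the form appearing in (b) can still be implemented on a selected protrusion. The principal difficulty—and the genuine technical core of the entire theorem—is to calibrate the geometric decay of the cross-sections against the integrability exponent so that the trial field sits in $\lebe^{p}(\Omega)$ while every solenoidal extension escapes $\lebe^{p}(\Omega^{\eta})$; this delicate balance is precisely the quantitative manifestation of the sharpness of the Lipschitz hypothesis underlying Theorem \ref{thm:main:Lip}.
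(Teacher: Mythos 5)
Your parts (a) and (b) are correct but take routes that differ from the paper's; your part (c) has a genuine gap.

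\textbf{Part (a).} The paper attacks the outward cusp with the same flux-through-a-shrinking-box mechanism as in (b): it picks $u=(y^{-\alpha},0)$, applies Gauss' theorem to rectangles $A_s=(s^{1/\gamma},s)\times(s,s^{\gamma})$, and uses Jensen plus Fubini to blow up the $\lebe^p$-norm across the cusp. Your argument is genuinely different: you pass to a stream function $\psi=\log(1/y)$ (equivalently $\alpha=1$), translate the solenoidal $\lebe^p$-extension problem into a $\sobo^{1,p}$-extension problem for $\psi$, and invoke Morrey's embedding $\sobo^{1,p}(\ball_\eta(0))\hookrightarrow \hold^0$ for $p>2$ to contradict the unboundedness of $\psi$ along the cusp. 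This is clean and conceptually attractive, though it only reaches $p\in(2,1+1/\gamma)$; the paper's parameter $\alpha$ can be pushed to include $p=1$ as well. Since the theorem is existential in $p$, both arguments prove (a).

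\textbf{Part (b).} Your flux argument across spheres $\partial\ball_\rho(0)\cap F_{\varepsilon_0}$ and the Jensen step are sound and parallel to the paper's cylinder-based version with the field $u=\bigl(\tfrac{x'}{|x'|^{n-1}}y^{-\alpha},0\bigr)$; you in fact reach a slightly wider $p$-range without the paper's simplifying restriction $p<\tfrac{n-1}{n-2}$. One slip: $u=\chi(x)\,x/|x|^n$ is \emph{not} divergence-free on $\Omega_-$, since $\di(\chi\,x/|x|^n)=\nabla\chi\cdot x/|x|^n\neq 0$ where $\nabla\chi\neq 0$. Simply drop the cutoff --- $\Omega_-$ is bounded and $0\in\partial\Omega_-$, so $u=x/|x|^n$ is already in $\lebe^p(\Omega_-;\R^n)$ for $p<n/(n-1)$ and is divergence-free in $\Omega_-$.

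\textbf{Part (c).} Here the proposal is wrong in its diagnosis and incomplete in its construction. You assert that Remark~\ref{remark:Lipschitzepsdel} ``explicitly rules out H\"older-type cusps within $(\varepsilon,\delta)$-domains'' and hence propose an unspecified calibration of smooth protrusions. But the remark says only that H\"older domains are ``in general'' not $(\varepsilon,\delta)$-domains, and lists the offending cusp types. The crucial point you miss --- and the entire content of the paper's proof of (c) --- is that the \emph{inward} cusp domain $\Omega_-$ in dimension $n\geq 3$ \emph{is} an $(\varepsilon,\delta)$-domain (in fact $(\varepsilon,\infty)$), because in dimension three and higher the cigar condition can be satisfied by curves that circle around the thin inward spike. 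Once this is observed, (c) is an immediate corollary of (b) with the very same $\Omega_-$ and $u$, and no new construction is needed. As it stands, your (c) is not a proof: it is a sketch of a harder problem you would have to solve, in place of the elementary reduction the paper makes.
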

 The counterexamples constructed for above theorem are quite elementary, but we have some restrictions on the exponents $p$. In particular, the case $p=1$ is not covered by every counterexample.
\begin{proof}[Proof of Theorem \ref{thm:counterexamples}
] {On \ref{counter:1}.}
We start by defining the requisite exponents. First, since $0<\gamma<1$, we have that $2<\frac{\gamma+1}{\gamma}$ and thus may choose $2<\alpha<\frac{\gamma+1}{\gamma}$. Then we have that $1<\frac{\gamma+1}{\alpha\gamma}$, and so may choose $1\leq p <\frac{\gamma+1}{\alpha\gamma}$. Because of $\alpha>2$, we have that $\frac{\gamma+1}{\gamma-1+\alpha}<1$ and so arrive at the overall estimate 
\begin{align}\label{eq:barschler}
\frac{\gamma+1}{\gamma-1+\alpha} < p <\frac{\gamma+1}{\alpha\gamma},\;\;\;\text{whereby}\;\;\;(1-p)\gamma + p (1-\alpha)<-1.
\end{align}
Subject to these exponent choices, we  consider the solenoidal field
    \[
    u(x',y) = (y^{-\alpha},0)
    \]
    Then we have that $u \in \lebe^p(\Omega_+;\R^2)$ because of $\alpha p<\frac{\gamma+1}{\gamma}$. For $0<s<1$, we now consider the rectangular domain $A_s=(s^{1/\gamma},s) \times (s, s^{\gamma})$ with two opposite vertices on $\partial\Omega_{+}$, see Figure \ref{fig:Omegapm}. Suppose towards a contradiction that there exists some $0<\eta<1$ such that there is a divergence-free extension  $\overline{u}$ of $u$ to some open neighbourhood of $\Omega_{+}$ which contains $\ball_{2\eta^\gamma}(0)$. It is no loss of generality to assume that 
\begin{align}\label{eq:etachoose}
0<\eta\leq 2^{-\frac{1}{(1-\alpha)(\gamma-1)}},\;\;\;\text{whereby}\;\;\;(0<s<\eta \Longrightarrow s^{(1-\alpha)\gamma}-s^{1-\alpha}\leq -\eta^{(\gamma-1)(1-\alpha)}s^{1-\alpha}).
\end{align}
For all sufficiently small $s>0$, $A_{s}\subset\ball_{2\eta^{\gamma}}(0)$. By Gauss' theorem, we then have for such $s$ that  
\begin{align}\label{eq:solenoidalex}
    \int_{\partial A_s} \overline{u} \cdot \nu_{\partial A_{s}} \dif\mathscr{H}^{1}= \int_{A_{s}}\di(\overline{u})\dif x = 0.
    \end{align}
Note that, even though $\overline{u}$ is a priori only an $\lebe^{p}$-map, the use of \eqref{eq:solenoidalex} in the following can be formally justified by routine mollification,  especially as we further integrate in $s$ at later stage. To keep our exposition at a reasonable length, however, we directly employ  \eqref{eq:solenoidalex}. Since we do not know $\overline{u}$, we will now derive the requisite contradiction based on \eqref{eq:solenoidalex}. 

Because of $\alpha>1$ and $\gamma <1$, we have that $(\gamma-1)(1-\alpha)>0$, and so the definition of $\overline{u}$ and $\nu_{\partial A_{s}}$ gives us
    \begin{align}\label{eq:dangerousgame}
    \begin{split}
    \int_{(\partial A_s ) \cap \Omega_+} \overline{u} \cdot \nu_{\partial A_{s}} \dif\mathscr{H}^{1} & =  \int_{(\partial A_s )\cap \Omega_+} u \cdot \nu_{\partial A_{s}} \dif\mathscr{H}^{1}= -\int_{s}^{s^{\gamma}} y^{-\alpha} \dif y \\ & = \frac{1}{\alpha-1}\Big(s^{(1-\alpha)\gamma}-s^{1-\alpha}\Big) \\ & \!\!\stackrel{\eqref{eq:etachoose}}{\leq} -\frac{1}{\alpha-1}\Big( \eta^{(\gamma-1)(1-\alpha)} \Big)s^{1-\alpha} =: - C s^{1-\alpha}
    \end{split}
    \end{align}
with an obvious definition of $C=C(\eta,\gamma,\alpha)>0$. Therefore, Gauss' theorem implies that 
     \begin{equation} 
     \label{estimate:As} 
     \begin{split} \int_{\partial A_s \cap \Omega_-} \vert \overline{u} \vert \dif \mathscr{H}^{1} & \geq \int_{\partial A_{s}\cap \Omega_{-}}\overline{u}\cdot\nu_{\partial A_{s}}\dif \mathscr{H}^{1} \\ 
     & =  \int_{\partial A_{s}}\overline{u}\cdot\nu_{\partial A_{s}}\dif \mathscr{H}^{1} - \int_{\partial A_{s}\cap \Omega_{+}}u\cdot\nu_{\partial A_{s}}\dif\mathscr{H}^{1} \stackrel{\eqref{eq:solenoidalex},\,\eqref{eq:dangerousgame}}{\geq} Cs^{1-\alpha}.
     \end{split}
     \end{equation}
    We now estimate the $\lebe^p$-norm of $\overline u$ on the rectangle $[0,\eta]\times[0,\eta^{\gamma}]$ by use of \eqref{estimate:As} and Fubini's theorem. To this end, we observe that
    \begin{align*}
\int_{[0,\eta]\times[0,\eta^{\gamma}]} \vert \overline {u}(x) \vert^p \dif x & = \frac{1}{2} \int_0^{\eta} \int_0^{\eta^{\gamma}} \vert \overline u(x',y) \vert^p \dif y \dif x' + \frac{1}{2} \int_0^{\eta^{\gamma}} \int_0^{\eta} \vert \overline u(x',y) \vert^p \dif x' \dif y \\
        & \geq \frac{1}{2} \int_0^{\eta} \int_{s}^{s^{\gamma}} \vert \overline u(s,y) \vert^p \dif y \dif s + \frac{1}{2} \int_0^{\eta} \int_{s^{1/\gamma}}^{s} \vert \overline u(x',s) \vert^p \dif x' \dif s \\
        & \geq  \frac{1}{2} \int_0^{\eta} \int_{\partial A_s \cap \Omega_-} \vert \overline u\vert^p \dif \mathscr{H}^{1} \dif s
    \end{align*}
    because of $0<\eta<1$ and $0<\gamma<1$. Since we have 
\begin{align*}
\mathscr{H}^{1}(\partial A_{s} \cap \Omega_-) = (s^{\gamma}-s^{1/\gamma}) < s^{{\gamma}},\qquad 0<s<1,
\end{align*}
 Jensen's inequality and \eqref{estimate:As} then combine to the lower bound
    \begin{align*}
2\int_{[0,\eta]\times[0,\eta^{\gamma}]} \vert \overline {u}(x)\vert^p \dif x &\geq \int_0^{\eta} \mathscr{H}^{1}(\partial A_s \cap \Omega_-)^{1-p}  \left( \int_{\partial A_s \cap\Omega_{-}} \vert u \vert \dif \mathscr{H}^1 \right)^p \dif s\\
        &\geq C^{p}\int_0^\eta s^{(1-p){\gamma}}s^{p(1-\alpha)} \dif s, 
     \end{align*}
where $C=C(\eta,\gamma,\alpha)>0$ is as in \eqref{estimate:As}. By \eqref{eq:barschler}, this lower bound is infinite, and this is the desired contradiction.

\begin{figure}
\begin{tikzpicture}[domain=-4:4]
\begin{axis}
[axis lines= none]
 \coordinate[label=above:\Large{\textcolor{black}{$\Omega_+$}}] (D) at (-0.1,0.4);
  \coordinate[label=above:\Large{\textcolor{black}{$\Omega_-$}}] (E) at (0.62,-0.2);
   \filldraw[color=Gump, pattern=north east lines, pattern color=Gump] (0.09,0.264) rectangle (0.4,0.473);
   \coordinate[label=above:\Large{\textcolor{Gump}{$A_s$}}] (R) at (0.6,0.3);
   \coordinate[label=above:{\textcolor{black}{$\varphi(\vert x' \vert)$}}] (F) at (-0.6,0.3);
   \filldraw[color=blue, pattern=north west lines, pattern color=blue]
   (-0.16,-0.336) rectangle (0.16,0.336);
   \coordinate[label=above:\Large{\textcolor{blue}{$Z_r$}}] (S) at (-0.3,-0.2);
\addplot [
    name path=A,
    domain=0:1.5, 
    color=black,
    thick,
    samples=200,
]
{sqrt{ x} -0.4*x};
\addplot [
    name path=B,
    domain=-1.5:0, 
    color=black,
    thick,
    samples=200
]
{sqrt{ -x} +0.4*x};
\addplot [
    name path=C,
    domain=-1.5:0,
    color=Gump,
    opacity=0.1,
    samples=200,
]
{-0.4};
\addplot [
    name path=D,
    domain=0:1.5,
    color=Gump,
    opacity=0.1,
    samples=200,
]
{-0.4};
 \addplot[
    domain=0:1.5,
     color=Gump,
     opacity=0.3,
 ] 
 fill between[of=A and D];
\addplot[
    domain=-1.5:0,
     color=Gump,
     opacity=0.3,
 ] 
 fill between[of=B and C];
\end{axis}
\end{tikzpicture} 
\caption{$\Omega_+$ and $\Omega_-$ and the integration paths/domains $A_s$ and $Z_r$ in two dimensions}\label{fig:Omegapm}
\end{figure}
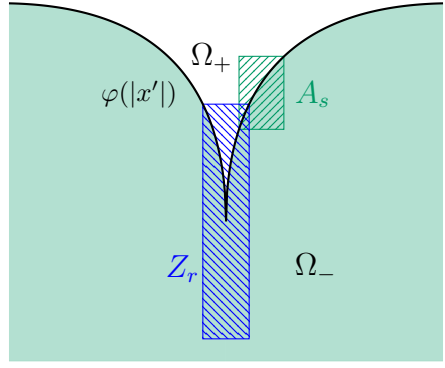
On \ref{counter:2}. Let $1 < p < \infty$, and let $\alpha \in (0,1)$. For simplicity, we only deal with the case $p <\tfrac{n-1}{n-2}$.

    Again, write $x=(x',y) \in \R^{n-1}\times \R$ and consider
    \[
    u(x',y) = \begin{cases}
        \left(\tfrac{x'}{\vert x' \vert^{n-1}} y^{-\alpha},0 \right) & \text{if } y >0 \\
        0 &\text{if } y \leq 0
    \end{cases}
    \]
    Using cylindrical coordinates, one can show that $u \in \lebe^p(\Omega_-;\R^n)$ if and only if 
    \begin{align*}
        (\ast)=\int_0^1 \int_{r^{1/\gamma}}^1 \int_{\partial B^{n-1}_\rho(0)} \vert u(x',r)\vert^p \dif x' \dif \rho \dif r < \infty,
    \end{align*}
    where $B^{n-1}_\rho(0)$ denotes the $(n-1)$-dimensional ball with centre $0$ and radius $\rho$. We may calculate:
    \begin{align*}
        (\ast) &= c_n \int_0^1 r^{-\alpha p} \int_{r^{\gamma}}^1 \rho^{(n-2)(1-p)} \dif \rho \dif r \leq c \int_0^1 r^{-\alpha p} \dif r 
    \end{align*}
    which is smaller than infinity if and only if $p < \alpha^{-1}$.
    In addition, one can show that $\diver u=0$ in $\Omega_+ \cap \{y>0\}$. As $u$ satisfies a jump condition at $\{y=0\}$ we already conclude $\diver u=0$ in $\mathcal{D}'(\Omega_-)$. Now consider the cylinder
    \[
    Z_r= B^{n-1}_{r^{1/\gamma}}(0) \times (-r,r).
    \]
    Suppose that there is a divergence-free $\lebe^p$ extension $\bar{u}$ of $u$. Then there is some $\eta>0$, such that it is an extension onto $Z_\eta$. For any $r< \eta$ we then have
    \[
    \int_{\partial Z_r} \bar{u} \cdot \nu =0.
    \]
    Observe that 
    \begin{align*}
        \int_{\Omega_- \cap \partial Z_r} \bar{u} \cdot \nu &= \int_0^{r} \int_{\partial B^{n-1}_{s^{1/\gamma}}(0)} \tfrac{x'}{\vert x' \vert^{n-1}} s^{-\alpha} \cdot \tfrac{x'}{\vert x' \vert} \dif x' \dif s = c_n\int_0^r s^{-\alpha} \dif s= c_nr^{1-\alpha}.
     \end{align*}
     Therefore, using Gau{\ss}' theorem as in \eqref{estimate:As}, we obtain
     \begin{equation} \label{est:omegaplus}
     \int_{\Omega_+ \cap \partial Z_r} \vert \bar{u} \vert \geq r^{1-\alpha}.
    \end{equation}
    Using that $
    \mathscr{H}^{n-1}(\partial Z_r \cap \Omega_+) = r^{1/\gamma} \mathscr{L}^{n-1}(B^{n-1}(0))$, Fubini's theorem and \eqref{est:omegaplus} imply that 
    \begin{align*}
    \int_{\Omega_+ \cap Z_{\eta}} \vert \bar{u} \vert^p \dif x &= \int_0^{\eta} \int_{\partial Z_r \cap \Omega_+} \vert \bar u \vert^p \dif \mathscr{H}^{n-1} \dif r \\
   & \geq \int_0^\eta (\mathscr{H}^{n-1}(\partial Z_r \cap \Omega_+))^{1-p} \left(\int_{\partial Z_r \cap \Omega_+} \vert  \bar u \vert \dif \mathscr{H}^{n-1} \right)^p \\
   &\geq c_n \int_0^{\eta} r^{(n-1)(1-p)/\gamma}\cdot r^{(1-\alpha)p},
    \end{align*}
    which is infinite if 
    \begin{align} \label{calc:1}
    (n-1)(1-p)/\gamma + (1-\alpha) p <-1.
    \end{align}
    Let now $p < \tfrac{n-1}{n-2}$. Suppose that $u \in \lebe^p(\Omega_-;\R^n)$, i.e. $\delta:= (1-\alpha p) >0$, i.e. we choose $\alpha>0$ to satisfy this. We then have $\bar{u} \notin \lebe^p(\Omega_+)$, i.e. \eqref{calc:1}, if 
    \[
    1/\gamma (n-1)(1-p)+ (1-\tfrac{1}{p})p <-1-\delta,
    \]
    for some small $\delta=(1-\alpha p)>0$, i.e.
    \begin{align*}
        (p-1)(1-(n-1)/\gamma) < -1 -\delta
    \end{align*}
    If we choose
    \[
    1+ \frac{1}{n-2} >p > 1 + \frac{1}{(n-1)/\gamma -1}
    \]
    we obtain $u \in \lebe^p(\Omega_-;\R^n)$ and $\bar{u} \notin \lebe^p(\Omega;\R^n)$.
    
    \noindent \textbf{\ref{counter:3}: } This is a direct consequence of \ref{counter:2} and the fact that in dimension $n>2$, $\Omega_-$ is actually a $(\varepsilon,\delta)$ domain.

\end{proof}

\begin{remark}\begin{enumerate} [label=(\alph*)]
    \item The method of using Gau{\ss}' theorem to obtain counterexamples does not really work for $p=1$ in case \ref{counter:2}. A crucial step is estimate \eqref{est:omegaplus} and the (reverse) use of Jensen's inequality, which does not work for $p=1$.
    \item The examples may be generalised to a larger range of exponents $p$ by taking slightly more care. For instance, in the proof of \ref{counter:2} one may take $\tilde{u}_h = u(x,y-h)$ for some small parameter $h>0$. Then choosing exponents $\alpha>\tfrac{1}{p}$ we can ensure that $\tilde{u}_h \in \lebe^p(\Omega_{-})$ (with an $\lebe^p$ bound depending on $h$). On the other hand, in the same fashion as in the proof one may show a lower bound for the $\lebe^p$-norm of a possible extension, thus giving a lower bound on the operator norm of the extension. This lower bound explodes as $h \to 0$.
    \item Using Theorem \ref{thm:counterexamples} by the interpolation argument of Corollary \ref{Coro:Lpextension} we directly get that there \emph{cannot} exist an extension operator $\lebe^p_{\diver}(\Omega) \to \lebe^p_{\diver}(\Omega)$ for either $p=1$ or $p=\infty$, if the existence fails for some $1<p<\infty$. 
\end{enumerate}
\end{remark}

\begin{remark}
Constructing counterexamples akin to the $\lebe^p$-case, i.e. Theorem \ref{thm:counterexamples}, is more involved in the   $\sobo^{1,p}$ case. We first comment on the \emph{unconstrained} $\sobo^{1,p}$ case. A remarkable work by \textsc{Jones} \cite{Jones} shows that for $(\epsilon,\infty)$-domain (see Remark \ref{remark:Lipschitzepsdel} for this notion) there is such an extension theorem. In particular, some wild, fractal domains fulfil that criterion, but domains with external or internal cusps do not.
For comparison, we recall from \cite{PM,Pob} that for $\Omega_-$ and $\Omega_+$ as in Section \ref{sec:further}
\begin{itemize}
    \item if $p<\infty$, there is no extension operator $\sobo^{1,p}(\Omega_+) \to \sobo^{1,p}(\R^n)$, whereas there is such an extension operator for $p=\infty$, 
    \item if $n=2$, there is no extension operator $\sobo^{1,p}(\Omega_-) \to \sobo^{1,p}(\R^n)$ for any $1<p\leq\infty$, and
    \item if $n>2$, then $\Omega_-$ is an $(\varepsilon,\infty)$-domain and, consequently, there is an extension operator from $\sobo^{1,p}(\Omega_-)$ to $\sobo^{1,p}(\R^n)$ for any $1\leq p \leq \infty$.
\end{itemize}
On the other hand, for $n=2$ being $(\varepsilon,\delta)$ is equivalent to the existence of an extension operator for all $1 \leq p \leq \infty$, \cite{Mazya1,VGL}. It does not seem to be clear as in the $\lebe^p$ case, whether the divergence constraint changes any statement on extendability of Sobolev functions or not.
\end{remark}
\begin{remark} \label{rem:ontheapproach}
    The existence of a counterexample in the $\lebe^p$ case is quite revealing in its own. In the $\sobo^{1,p}$-case (where such a direct counterexample fails and might not exist), instead of using our correctors, it might be imaginable to use a different method of achieving solenoidality, e.g. by using a corrector based on the  Bogovskii-operator (also cf. \cite{BDF12}). In particular, using that corrector one might be able to extend the results of this paper to domains with weaker boundary conditions.

    The counterexample however reveals, that a $\lebe^p$-reflection in the style of \textsc{Jones} \cite{Jones} and then taking a \emph{local} correction \emph{does not} work, as then an extension would be possible for $(\varepsilon,\delta)$-domains. It therefore seems that the construction using simplices, which we obtained for Lipschitz domains but may also be extended to an 'abstract' condition on the domain, is at least close to optimal.
\end{remark}

\section{Applications}\label{sec:applications}
In this concluding section we give some sample applications of the main theorems gathered in Section \ref{sec:intro}. We split this section into different parts according to different applications of the results and techniques used before. First, in Subsections \ref{sec:81} \& \ref{sec:W1curl}, we comment on cases where we replace the differential operator $\diver$ by some general, constant coefficient linear operator $\A$. 
In Subsections \ref{sec:83} \& \ref{sec:84} we shortly show some consequences for regularity theory and inequalities in the borderline case $p=1$.

\subsection{Differential forms and general differential operators} \label{sec:81}

In Section \ref{sec:L1} and \ref{sec:Lip} we used the structure of the differential operator $\diver$ quite heavily. Indeed, recall that we identified $\diver$ with $\dif \colon \hold^{\infty}(\Omega;{\wedgeq}^{n-1}(\R^n)^{\ast}) \to \hold^{\infty}(\Omega;{\wedgeq}^n(\R^n)^{\ast})$. The proof then heavily relies on the use of Stokes' theorem. 

As demonstrated in \cite{Schiffer}, this technique is not only limited to the divergence and we might formulate the following. Denote by $\lebe^p_{\dif}(\Omega;{\wedgeq}^{k}((\R^n)^{\ast}))$ the space of all $u \in \lebe^p(\Omega;{\wedgeq}^{k}((\R^n)^{\ast}))$ that obey $\dif u=0$.
\begin{proposition} \label{prop:diffforms}
    Let $\Omega \subset \R^n$ be a Lipschitz domain and let $1\leq p \leq \infty$. There exists $\Omega^{\varepsilon}$ and a bounded linear extension operator $\E \colon \lebe^p_{\dif}(\Omega;{\wedgeq}^{k}(\R^n)^{\ast}) \to \lebe^p_{\dif}(\Omega^{\varepsilon};{\wedgeq}^{k}(\R^n)^{\ast})$.
\end{proposition}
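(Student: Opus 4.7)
The plan is to repeat the construction carried out for the case $k=n-1$ in Sections~\ref{sec:L1} and \ref{sec:Lip}, with only one genuine structural change: $(n-1)$-dimensional (curvilinear) simplices are replaced by $k$-dimensional ones. All the ingredients required for this replacement -- in particular suitable $k$-simplices close to $\partial\Omega$ -- are at our disposal from Section~\ref{sec:simplices}, where the construction was formulated for arbitrary $k\in\{1,\ldots,n\}$. Specifically, working with a bounded Lipschitz domain $\Omega$, Whitney covers $\Wcal_1,\Wcal_2$ of $\Omega,\R^n\setminus\overline{\Omega}$ and the map $\Psi$ from Lemma~\ref{cor:Jones}, together with a partition of unity $(\phi_i)$ subordinate to the exterior cover, we define for $u\in\hold^{1}(\overline{\Omega};\wedgeq^{k}(\R^{n})^{*})$
\begin{equation*}
\E[u] := \begin{cases} u & \text{in } \Omega, \\ \E^{\text{ext}}[u] & \text{in } \Omega^{\varepsilon}\setminus\overline{\Omega}, \end{cases}\qquad
\E^{\text{ext}}[u] := C_{n,k}\!\!\!\sum_{I=(i_1,\ldots,i_{k+1})\in\N^{k+1}}\!\! \phi_{i_{k+1}}\wedge \dif\phi_{i_k}\wedge \dots\wedge \dif\phi_{i_1} \int_{(\R^n)^{k+1}}\!\int_{M(\mathbf{x}_{I})} u(z)\,\nu\,\dHaus^{k}(z)\,\dif\mu^{I},
\end{equation*}
where $M(\mathbf{x}_{I})$ is the curvilinear $k$-simplex with vertices $\mathbf{x}_{I}=(x_{i_{1}},\ldots,x_{i_{k+1}})$ as defined in Section~\ref{sec:def:simplex} and $\nu$ is the corresponding oriented unit normal. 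For $k=n-1$ this reduces to Definition~\ref{def:extension:Lip}.

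First, I would establish the exact analogue of Lemma~\ref{lemma:pointwise:solenoidal:Lip}: for $u\in\hold^{1}(\overline{\Omega};\wedgeq^{k}(\R^{n})^{*})$ with $\dif u=0$ in $\Omega$ one has $\dif\E[u]=0$ pointwise in $\Omega^{\varepsilon}\setminus\overline{\Omega}$. This proceeds verbatim by introducing an auxiliary index $i_{k+2}$ via $\sum_{i}\phi_{i}=1$, rearranging, and then invoking Stokes' theorem on the $(k+1)$-dimensional simplex whose $k$-dimensional faces are precisely the $M_{\mathbf{x}_{I_s}}$. Because $\dif u=0$ inside $\Omega$, the resulting interior integral vanishes.

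Next, I would prove the counterpart of Lemma~\ref{lemma:globalsolenoidality:Lip}, namely that $\dif\E[u]\in\lebe^{p}(\Omega^{\varepsilon};\wedgeq^{k+1}(\R^{n})^{*})$ for $u\in\hold^{1}(\overline{\Omega};\wedgeq^{k}(\R^{n})^{*})$. Here we run the same inductive integration-by-parts scheme: applying Stokes' theorem backwards on each $M(\mathbf{x}_{I})$ converts the inner integral into an integral over $\partial M(\mathbf{x}_{I})$ of a $(k-1)$-form, plus a regular $\lebe^{p}$ remainder involving $\dif u$. The partition-of-unity identity then kills all but one of the $k+1$ boundary faces, and an integration by parts reduces the multi-index length by one. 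Iterating $k$ times yields a representation $\dif\E[u]=h+\text{(boundary term matched by } u\text{)}$ with $h\in\lebe^{p}$, whence $\dif\E[u]=0$ in $\mathscr{D}'(\Omega^{\varepsilon})$ whenever $\dif u=0$ in $\Omega$.

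For the $\lebe^{p}$ bounds, one proves the analogue of Lemma~\ref{lem:LipschitzL1boonhund}: for $I=(i_1,\ldots,i_{k+1})$ with $\frac{7}{6}Q_{i_j}$ mutually intersecting, we have
\begin{equation*}
\left|\int_{(\R^n)^{k+1}}\int_{M(\mathbf{x}_{I})} u(z)\,\nu\,\dHaus^{k}(z)\,\dif\mu^{I}\right| \leq \frac{c}{\delta_{I}^{n-k}}\int_{\widetilde{M}_{I}}|u(x)|\dif x,
\end{equation*}
which follows by the same parametrisation of $M(\mathbf{x}_{I})$ over the standard simplex, Fubini, and a subdivision into faces as in the proof of Lemma~\ref{lem:convhullL1bound}. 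Combined with $|\phi_{i_{k+1}}\wedge\dif\phi_{i_k}\wedge\ldots\wedge\dif\phi_{i_1}|\leq c\,\delta_{I}^{-k}$ and the strip decomposition from Lemma~\ref{lemma:L1bound:c1}, this yields $\|\E^{\text{ext}}[u]\|_{\lebe^{p}(\Omega^{\varepsilon}\setminus\overline{\Omega})}\leq c\|u\|_{\lebe^{p}(\Omega)}$ for all $1\leq p\leq\infty$ (the cases $p=1$ and $p=\infty$ by direct summation, $1<p<\infty$ by interpolation as in Corollary~\ref{Coro:Lpextension} or by the Jensen-argument of Lemma~\ref{lemma:L1bound:c1:Lip}).

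Finally, the passage from $\hold^{1}(\overline{\Omega};\wedgeq^{k}(\R^n)^{*})\cap\lebe^{p}_{\dif}(\Omega;\wedgeq^{k}(\R^n)^{*})$ to the full space $\lebe^{p}_{\dif}(\Omega;\wedgeq^{k}(\R^n)^{*})$ is carried out by the density argument of Section~\ref{sec:density}: approximate $u$ by mollifications $\phi_{\gamma}\ast u$ on the inner approximations $\Omega_{\delta}$ (which are closed $k$-forms on $\Omega_{\delta}$), extend on $\Omega_{\delta}$, and use that all involved constants are uniform in $\delta$ because $\mathrm{Lip}(\Ocal,\Omega_{\delta})$ is uniformly bounded. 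The main obstacle will be the inductive integration-by-parts step, where the explicit coefficient $(n+1-r)/(r-1)$ appearing in \eqref{claim:sb2} has to be recomputed: the constant $\dif^{\ast}\nu$ produces coefficients that now depend on both $k$ and the current recursion depth $r$, so one must verify that these coefficients are non-degenerate and that the induction terminates. However, since the underlying identity for $\dif^{\ast}\nu$ on a $(k+1)$-dimensional simplex is purely combinatorial, this is a bookkeeping exercise rather than a genuine obstruction.
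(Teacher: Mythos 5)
Your proposal is correct and takes essentially the same approach the paper has in mind: the paper's ``proof'' of Proposition~\ref{prop:diffforms} is just the one-line remark that it is ``a mixture of the proofs of Section~\ref{sec:L1} and \cite{Schiffer},'' and your write-up fleshes out exactly that sketch, correctly replacing $(n-1)$-simplices by $k$-simplices, multi-indices of length $n$ by multi-indices of length $k+1$, the degree of the partition-of-unity wedge product by $k$, and the scaling exponent in the analogue of Lemma~\ref{lem:LipschitzL1boonhund} by $\delta_I^{-(n-k)}$. Your cautionary remark about re-deriving the recursion coefficients in the integration-by-parts step is legitimate bookkeeping, but as you note it is cleanest to run the Lipschitz-style argument of Lemma~\ref{lemma:globalsolenoidality:Lip} (Stokes' theorem backwards) rather than the explicit $\dif^{\ast}\nu$ computation of Lemma~\ref{lemma:globalsolenoidality}, which avoids the coefficient bookkeeping entirely.
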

The proof of this is a mixture of the proofs of Section \ref{sec:L1} and \cite{Schiffer}. Moreover, this extends the result of \cite{MMS} about extension of differential forms to the borderline cases $p=1$ and $p=\infty$.

This result covers the geometrically natural differential operator of exterior differentiation, but one may also pose the question, whether an extension result is possible under \emph{any} linear differential constraint. To fix notation, let $W$ and $Z$ be finite dimensional inner product spaces and let $\Acal_{\beta} \colon W \to Z$ be linear maps for $\beta \in \N_0^n$ $\vert \beta \vert=l$, such that we define the differential operator $\Acal$ as 
\begin{equation} \label{def:Acal}
\Acal v := \sum_{\vert \beta \vert =l} \Acal_{\beta} \partial^{\beta} v, \qquad v \in \hold^{\infty}(\R^n;W).
\end{equation}
Similar as before, we may define 
\[
\lebe^1_{\Acal}(\Omega) := \{ u \in \lebe^1(\Omega;W) \colon \Acal u = 0 \text{ in } \Dcal'(\Omega;Z) \}
\]
and pose the question, whether for Lipschitz bounded $\Omega$ there is a bounded linear extension operator for $\lebe^1$, i.e. $\E_{\Acal} \colon \lebe^1_{\Acal}(\Omega) \to \lebe^1_{\Acal}(\Omega')$.

Proposition \ref{prop:diffforms} settles this question for $\Acal$ being the operator of exterior differentiation (which falls in the framework of \eqref{def:Acal}, leaving the general case open. While the methods of \cite{BGS} for another specific differential operator (the divergence acting on symmetric ($3 \times 3$-matrices) are likely transferable to the present setting, a treatment for \emph{general} differential operators seems to be quite challenging.

Parallel to \cite{BGS}, we conjecture that such an extension is possible for any $\Acal$ that satisfies the (complex) constant rank property. That is, the Fourier symbol of $\Acal$ acting on the complexification of $W$
\begin{equation} \label{def:Acal:FS}
    \Acal [\xi] = \sum_{\vert \beta \vert =l} \Acal_{\beta} \xi^{\beta}, \qquad \xi \in \mathbb C^n \setminus \{0\}
\end{equation}
has constant rank, i.e. the dimension of the kernel does not depend on $\xi$. This condition for instance is satisfied by the operation of exterior differentiation.
\smallskip

On the contrary, it is clear that apart from the homogeneity of the operator, some structural assumption needs to be satisfied. Indeed, take a minimalistic example and $\Acal \colon \hold^{\infty}(\R^2) \to \hold^{\infty}(\R^2)$ as
\[
\Acal \colon u \longmapsto \partial_2 u,
\]
i.e. functions in the kernel of $\Acal$ essentially only depend on the first coordinate.
it is clear that an extension to the full space, i.e. $\E \colon \lebe^1_{\Acal}(\Omega) \to \lebe^1_{\Acal}(\R^2)$ (or even only $\lebe^1_{\Acal,\mathrm{loc}}$), is not possible for topologically trivial domains, for instance take $\Omega= (0,3)^2 \setminus [2,3] \times [1,2]$. Taking the same operator $\Acal$, we can also think of domains where a local extension, i.e. $\E \colon \lebe^1_{\Acal}(\Omega) \to \lebe^1_{\Acal}(\Omega')$ is impossible. For instance, take 
\[
\Omega= \{(x,y) \in \R^2 \colon y \in [0,1], 0 \leq x \leq 1 + \tfrac{1}{2}y \cos(y) \}.
\]
\subsection{Another regularity viewpoint on the extension} \label{sec:W1curl}
Going back to the present setting, which
comprises of divergence-free fields and differential forms. As an example consider the case of closed 1-forms.
\begin{example}\label{ex:curl}
If $\Omega\subset\R^{n}$ is a bounded, topologically trivial domain with Lipschitz boundary $\partial\Omega$, then any field $F=(F_{1},...,F_{n})\in\lebe^{1}(\Omega;\R^{n})$ with vanishing distributional $\curl$ can be written as $F=\D v$ for some $v\in\sobo^{1,1}(\Omega)$. Here, $F$ is said to have \emph{vanishing curl} provided $\partial_{i}F_{j}-\partial_{j}F_{i}=0$ in $\mathscr{D}(\Omega)$ for all $i,j\in\{1,...,n\}$, and we define $\lebe_{\curl}^{1}(\Omega)$ to be the linear space of all $\R^{n}$-valued $\lebe^{1}$-fields on $\Omega$ with vanishing curl. This can be seen by mollification and the classical Poincar\'{e} lemma, and we refer the reader to \cite[Lem. 2.4]{BeckBulicekGmeineder} for more detail. Most importantly, the assignment $T\colon\lebe^{1}(\Omega;\R^{n})\ni F\mapsto v\in\sobo^{1,1}(\Omega)$ is well-defined and linear if we additionally require $(v)_{\Omega}=0$. Letting $E\colon\sobo^{1,1}(\Omega)\to\sobo^{1,1}(\R^{n})$ be an arbitrary but fixed extension operator, we may then put $
\mathscr{E}_{\Omega}[F]:=\D\big(E (TF)\big)$ for $F\in\lebe_{\curl}^{1}(\Omega)$. 
This defines an extension operator 
\[
\E \colon \lebe^1_{\curl}(\Omega) \to \lebe^1_{\curl}(\R^n).
\]
This operator is  a priori not defined on functions with $\curl F \neq 0$, but may extend $\E$ with Hahn-Banach to be defined on $\lebe^1_{\curl}(\Omega)$.
\end{example}

This strategy can be generalised for a certain class of differential operators (which does \emph{not} include the divergence). 

\begin{example}[A generalisation of Example \ref{ex:curl}] \label{ex:curlgen}
Let $\Omega\subset\R^{n}$ be a topologically trivial, open and bounded domain with Lipschitz boundary. If $V,W,Z$ are finite dimensional inner product spaces and $\mathbb{A}_{\alpha}\colon V\to W$, $\mathscr{A}_{\beta}\colon W\to Z$ are linear maps for $\alpha\in\mathbb{N}_{0}^{n}$ with $|\alpha|=k$ and $|\beta|=l$, we consider differential operators 
\begin{align*}
\mathbb{A}u:=\sum_{|\alpha|=k}\mathbb{A}_{\alpha}\partial^{\alpha}u,\;\;\;\mathscr{A}v:=\sum_{|\beta|=l}\mathscr{A}_{\beta}\partial^{\beta}v,\qquad u\in\hold^{\infty}(\R^{n};V),\;v\in\hold^{\infty}(\R^{n};W).
\end{align*}
We denote by $\mathbb{A}[\xi]\colon V\to W$ and $\mathscr{A}[\xi]\colon W\to Z$ the corresponding Fourier symbols, and assume that the short sequence of Fourier symbols 
\begin{align}\label{eq:complex}
V\;\;\stackrel{\mathbb{A}[\xi]}{\longrightarrow}\;\;W\;\;\stackrel{\mathscr{A}[\xi]}{\longrightarrow}\;\; Z\qquad \text{is exact at $W$ for every $\xi\in\R^{n}\setminus\{0\}$}, 
\end{align}
meaning that we have $\ker(\mathscr{A}[\xi])=\mathbb{A}[\xi](V)$ for any $\xi\in\R^{n}\setminus\{0\}$. If $\mathbb{A}$ is $\mathbb{C}$-elliptic, by which we understand that the complexified Fourier symbol map $\mathbb{A}[\xi]\colon V+\mathrm{i}V\to W+\mathrm{i}W$ is a monomorphism for all $\xi\in\mathbb{C}^{n}\setminus\{0\}$, then every $F\in\lebe^{1}(\Omega;W)$ with $\mathscr{A}F=0$ in $\Omega$ can be written as $F=\mathbb{A}TF$ for some $TF\in{\sobo}{^{\mathbb{A},1}}(\Omega)$. As in Example \ref{ex:curl}, the assignment $F\to TF$ can be achieved to be well-defined and linear. Now, subject to the $\mathbb{C}$-ellipticity hypothesis on $\mathbb{A}$, it is established in \cite{GR19} that there exists a bounded linear extension operator $E\colon{\sobo}{^{\mathbb{A},1}}(\Omega)\to{\sobo}{^{\mathbb{A},1}}(\R^{n})$. As in Example \ref{ex:curl}, an operator extending $\mathscr{A}$-free $\lebe^{1}$-fields to $\mathscr{A}$-free $\lebe^{1}$-fields then can be obtained by setting $\mathscr{E}_{\Omega}[F]:=\mathbb{A}(E(TF))$. This strategy e.g. applies to the following situations: 
\begin{itemize} 
\item If $\mathscr{A}=\curl$, then $\mathbb{A}=\D$ is the usual gradient. This operator is $\mathbb{C}$-elliptic, and we recover Example \ref{ex:curl}. 
\item If $\mathscr{A}=\curl\curl$ with $V=\R^{n}$ and $W=\R_{\mathrm{sym}}^{n\times n}$, then $\mathbb{A}$ is the symmetric gradient given by $\mathbb{A}u=\frac{1}{2}(\D u +\D u^{\top})$.
\end{itemize}
\end{example}
However, if we take $\mathscr{A}$ to be the divergence in \eqref{eq:complex}, then $\mathbb{A}=\curl^{\ast}$ (the adjoint of $\curl$) and this operator not only fails to be $\mathbb{C}$-elliptic but even elliptic (meaning that $\mathbb{A}[\xi]$ is not injective on the corresponding real vector space $V$). Since our Theorem \ref{thm:W1} still show that suitable extension operator which map divergence-free to divergence-free fields exist, this case is not entirely hopeless.

Still, one may adapt the viewpoint taken in previous examples and instead of taking 
\[
u \in \lebe^1_{\diver}(\Omega) = \{ u \in \lebe^1(\Omega;\R^n) \colon \diver u =0 \}
\]
one may consider functions in the space $\sobo^{1,\curl^{\ast}}$ (in 3D: $\curl^{\ast}= \curl$, i.e. $\sobo^{1,\curl}$), that is
\[
\sobo^{1,\curl^{\ast}}(\Omega) = \{ v \in \lebe^1(\Omega; \R^{n \times n}_{\mathrm{skew}}) \colon \curl^{\ast} v \in \lebe^1(\Omega;\R^n) \}.
\]
Asking for an extension of $\lebe^{1}_{\diver}(\Omega)$ then is almost the same as searching for an extension result for the space $\sobo^{1,\curl^{\ast}}(\Omega)$, as (for a topologically trivial $\Omega$) any $u \in \lebe^{1}_{\diver}(\Omega)$ can be written as $u=\curl^{\ast} v$.

Contrary to the previously discussed example, adapting this viewpoint mathematically bears no benefits: Both approaches are equally mathematically challenging. On the other hand, closely following the proofs in Sections \ref{sec:L1} \& \ref{sec:Lip}, it is possible to show the following:
\begin{proposition}
Let $\Omega \subset \R^n$ be a bounded Lipschitz domain and $\Omega \Subset \Omega'$.
\begin{enumerate}[label=(\roman*)]
    \item Let $\E_{\diver} \colon \lebe^1_{\diver}(\Omega) \to \lebe^1_{\diver}(\Omega)$ be the extension operator constructed in Section \ref{sec:Lip}. Then the extension $\E_{\diver} \colon \sobo^{1,\diver}(\Omega) \to \sobo^{1,\diver}(\Omega')$ also provides a linear and bounded extension operator.
    \item Let $\dif \colon \hold^{\infty}(\R^n;{\wedgeq}^r((\R^n)^{\ast})) \to \hold^{\infty}(\R^n;{\wedgeq}^{r+1}((\R^n)^{\ast}))$ be the operator of exterior differentiation. Then there exists a linear and bounded extension operator $\E_d \colon \sobo^{1,\dif}(\Omega) \to \sobo^{1,\dif}(\Omega')$.
\end{enumerate}
\end{proposition}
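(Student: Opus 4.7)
The plan is to exploit the fact that the extension operator $\mathscr{E}_{\Omega}$ constructed in Section~\ref{sec:Lip} is defined on all of $\lebe^{1}(\Omega;\R^{n})$ -- not merely on its divergence-free subspace -- and that the $\lebe^{1}$-bound of Lemma~\ref{lemma:L1bound:c1:Lip} applies regardless of whether $\diver u=0$ holds. Under this observation, to establish (i) it suffices to derive the additional estimate $\|\diver\mathscr{E}_{\Omega}u\|_{\lebe^{1}(\Omega')}\leq c(\|u\|_{\lebe^{1}(\Omega)}+\|\diver u\|_{\lebe^{1}(\Omega)})$. As usual, I would first work on the dense subspace $\hold^{1}(\overline{\Omega};\wedgeq^{n-1}(\R^{n})^{*})$ and then conclude by continuity.

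The crucial observation is a direct byproduct of the pointwise computation from Lemma~\ref{lemma:pointwise:solenoidal:Lip}. Re-running its proof without imposing $\dif u = 0$ -- that is, keeping the Gauss-theorem step intact but retaining $\dif u$ as a non-trivial integrand -- yields the pointwise identity
\begin{align*}
\dif\mathscr{E}_{\Omega}u(y) & = (-1)^{n-1}\sum_{\widetilde{I}\in\N^{n+1}} \varphi_{i_{n+1}}\,\dif\varphi_{i_{n}}\wedge\cdots\wedge\dif\varphi_{i_{1}}(y) \\ & \quad \cdot \int_{(\R^{n})^{n+1}} \int_{\widetilde{M}_{\widetilde{I}}} \dif u(z)\cdot \nu\, \dif\mathscr{H}^{n}(z)\, \dif\mu^{\widetilde{I}}
\end{align*}
for $y\in\Omega'\setminus\overline{\Omega}$, where $\widetilde{M}_{\widetilde{I}}$ is the $n$-dimensional (curvilinear) simplex whose boundary faces are precisely the $(n-1)$-simplices entering the $\mathscr{E}_{\Omega}$-construction. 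Structurally this is the same kind of object as $\mathscr{E}_{\Omega}[\dif u]$, only with one more dimension of simplex and one fewer $\dif\varphi$-factor.

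Consequently, an inspection of Lemma~\ref{lem:LipschitzL1boonhund} carried out with $r=n+1$ in place of $r=n$ -- the same Fubini/change-of-variables argument now yields a prefactor $\delta_{\widetilde{I}}^{-(n+1-r)}=1$, consistent with the scaling reasoning appearing in Lemma~\ref{lem:W11bound} -- combined with the Whitney-covering and strip-decomposition argument of Lemma~\ref{lemma:L1bound:c1:Lip}, produces the bound
\begin{align*}
\|\dif\mathscr{E}_{\Omega}u\|_{\lebe^{1}(\Omega'\setminus\overline{\Omega})} \leq c\,\|\dif u\|_{\lebe^{1}(\Omega)}.
\end{align*}
Since $\mathscr{E}_{\Omega}u|_{\Omega}=u$ forces $\dif\mathscr{E}_{\Omega}u|_{\Omega}=\dif u$, and since the $\lebe^{1}$-bound $\|\mathscr{E}_{\Omega}u\|_{\lebe^{1}(\Omega')}\leq c\|u\|_{\lebe^{1}(\Omega)}$ is already at our disposal, this delivers the full graph-norm estimate on the dense class $\hold^{1}(\overline{\Omega};\wedgeq^{n-1}(\R^{n})^{*})$.

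The remaining step is a density argument: any $u\in\sobo^{1,\diver}(\Omega)$ is to be approximated by smooth fields in the graph norm $\|\cdot\|_{\lebe^{1}}+\|\diver\cdot\|_{\lebe^{1}}$. For Lipschitz domains this is standard, achieved via local straightening of $\partial\Omega$, an inward translation (which commutes with $\diver$ up to a shift of the argument), subsequent mollification and a smooth partition of unity. With this in hand, $\mathscr{E}_{\Omega}$ extends by continuity to the claimed bounded operator $\sobo^{1,\diver}(\Omega)\to\sobo^{1,\diver}(\Omega')$. Part~(ii) then follows by the same template applied to the differential-form extension operator of Proposition~\ref{prop:diffforms}: repeating its underlying pointwise identity for a \emph{non-closed} $r$-form produces an integrand involving $\dif u$ over $(r+1)$-dimensional curvilinear simplices, and the analogous $\lebe^{1}$-bound then transfers the whole argument to the $\sobo^{1,\dif}$-graph norm. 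The main obstacle throughout is the careful bookkeeping required to adapt Lemma~\ref{lem:LipschitzL1boonhund} to integrals of $\dif u$ over one-dimension-higher simplices; given the uniform geometric control established in Section~\ref{sec:simplices}, this is tedious but entirely routine.
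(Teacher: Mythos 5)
Your proposal is correct and takes essentially the same route as the paper's own (sketched) argument: compute $\dif\mathscr{E}_{\Omega}u$ pointwisely without imposing $\dif u=0$, observe that the resulting expression has the same structure as the extension formula with the simplex dimension raised by one (so the scaling exponent in the analogue of Lemma~\ref{lem:W11bound} drops to zero), and run the $\lebe^{1}$-bounding machinery of Lemma~\ref{lemma:L1bound:c1:Lip} once more to obtain the graph-norm estimate, then conclude by density. One small bookkeeping slip worth fixing: compared with the formula defining $\mathscr{E}_{\Omega}$, the expression for $\dif\mathscr{E}_{\Omega}u$ carries one \emph{more} $\dif\varphi$-factor ($n$ rather than $n-1$), not one fewer; your subsequent scaling count ($\delta_{I}^{-n}$ from the $n$ derivatives of the partition of unity compensated by the $\delta_{I}^{n}$ support volume and the $\delta_{I}^{0}$ integral prefactor) is consistent with this, so the bound goes through unchanged.
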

\begin{proof}[Sketch of Proof:]
  The proof of showing that $\diver \E u$ is an $\lebe^1$ function stays the same, as there we have never used the assumption that $\diver u=0$. The only worry is to show that the map actually is bounded.For this, first recall Lemma \ref{lemma:pointwise:solenoidal}, where we computed the divergence pointwisely. In particular (adopting the correspondence of $\diver$-free functions to differential forms), we obtained  
\begin{align*}
    \diver \E u = (-1)^{n-1} \sum_{I\in\mathbb{N}^{n+1}} \Big(\varphi_{i_{n+1}}\dif \phi_{i_n} \wedge \dif \phi_{i_{n-1}} \wedge \dots \wedge \dif \phi_{i_1} \Big) \int \int_{\conv_{I}} d u(z) \nu ~\textup{d}z \mu^{I}.
\end{align*} 
In Lemma \ref{lemma:pointwise:solenoidal} we could argue that the integral vanishes due to Stokes' or Gauss' theorem- this is not possible. Instead observe that this term almost has the structure as $u$ itself, i.e. it features $n$ copies of $\dif \phi_{i_r}$ and an integral over a convex hull of $(n+1)$ points. $\lebe^1$ bounds for this object can now be achieved via the same means as for $u$ itself, for instance Lemma \ref{lemma:L1bound:c1} ff.
The general differential form case then uses the same structural assumptions, but with lower codimension.
\end{proof}

\subsection{Bourgain-Brezis estimate on domains} \label{sec:83}
For this subsection we roughly follow the works \cite{BB,BVS,VS04}. A remarkable result due to \textsc{Bourgain \& Brezis} \cite{BB} is the following inequality: There is a dimensional constant $C>0$, such that for all $u \in \lebe^1(\R^n;\R^n)$ that obey $\diver u =0$ and all $\psi \in (\sobo^{1,n} \cap \lebe^{\infty})(\R^n;\R^n)$ we have
\begin{equation} \label{eq:BB}
    \left \vert \int_{\R^n} u \cdot \psi \dy \right \vert \leq C \Vert u \Vert_{\lebe^1(\R^n)} \Vert D \psi \Vert_{\lebe^n(\R^n)}.
\end{equation}
This estimate \emph{does not} hold if the constraint $\diver u=0$ is removed, as the critical Sobolev embedding $\sobo^{1,n} \hookrightarrow \lebe^{\infty}$ \emph{does not} hold. The proofs of this estimate unfortunately quite heavily rely on the structure of the full space (cf. \cite{BB,VS04}) and cannot be easily translated to the framework of an open set $\Omega$. With the extension result for $\lebe^1$, a result on domains $\Omega$ is however an easy consequence of \eqref{eq:BB}, also see \cite{BVS}: 
\begin{lemma} \label{BB:domains}
    Let $\Omega \subset \R^n$ be a bounded Lipschitz domain. There exists a constant $C=C(n,\Omega)$, such that for all $u \in \lebe^1_{\diver}(\Omega)$ and all $\psi \in (\sobo^{1,n}_0 \cap \lebe^{\infty})(\Omega;\R^n)$ we have 
    \[
    \left \vert \int_{\Omega} u \cdot \psi \dif y \right \vert \leq C \Vert u \Vert_{\lebe^1(\Omega)} \Vert D \psi \Vert_{\lebe^n(\R^n)}.
    \]
\end{lemma}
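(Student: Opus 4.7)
The proof strategy is almost dictated by the results already assembled: I would simply reduce the inequality on $\Omega$ to its full-space counterpart \eqref{eq:BB} by means of the global solenoidal extension from Theorem \ref{thm:global:extension}. In particular, unlike the core extension theorems of the paper, essentially no new analytical work is needed here; the hard part is already packaged into the extension operator, and the remaining pairing-level computation is routine.

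The plan is as follows. First, given $u \in \lebe^1_{\diver}(\Omega)$, I would apply Theorem \ref{thm:global:extension} to obtain $\widetilde u := \mathscr{E}'u \in \lebe^1_{\diver}(\R^n;\R^n)$ with the norm bound $\|\widetilde u\|_{\lebe^1(\R^n)} \leq C(\Omega)\|u\|_{\lebe^1(\Omega)}$ and the identity
\[
\widetilde u|_{\Omega} \;=\; u + \sum_{j=1}^{J} \lambda_j(u)\, g_j\qquad \text{in } \Omega,
\]
where $g_j(x) = (x-y_j)/|x-y_j|^n$ with $y_j \in \mathcal{O}_j \subset \R^n\setminus\overline{\Omega}$ and where, by the linearity and boundedness of $\mathscr{E}'$, the scalars obey $|\lambda_j(u)| \leq C(\Omega)\|u\|_{\lebe^1(\Omega)}$. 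Second, I would extend $\psi$ by zero outside $\Omega$; since $\psi\in \sobo_{0}^{1,n}(\Omega)$, this extension (still denoted $\psi$) lies in $(\sobo^{1,n}\cap \lebe^{\infty})(\R^n;\R^n)$ with the norm $\|D\psi\|_{\lebe^n(\R^n)}$ unchanged. Applying \eqref{eq:BB} to the pair $(\widetilde u,\psi)$ then yields
\[
\left|\int_{\R^n} \widetilde u\cdot \psi \dif y\right| \;\leq\; C\,\|\widetilde u\|_{\lebe^1(\R^n)}\|D\psi\|_{\lebe^n(\R^n)} \;\leq\; C(n,\Omega)\|u\|_{\lebe^1(\Omega)}\|D\psi\|_{\lebe^n(\R^n)}.
\]

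Third, since $\spt(\psi)\subset \overline{\Omega}$, the left-hand integral decomposes as
\[
\int_{\R^n} \widetilde u\cdot \psi \dif y \;=\; \int_{\Omega} u\cdot \psi \dif y + \sum_{j=1}^{J}\lambda_j(u) \int_{\Omega} g_j\cdot \psi \dif y,
\]
so it remains to absorb the finitely many correction terms on the right. This is the only genuine (but minor) obstacle: each $g_j$ is smooth and bounded on $\overline{\Omega}$ because $y_j \notin \overline{\Omega}$, hence $\|g_j\|_{\lebe^{n/(n-1)}(\Omega)} \leq C(\Omega)$; combining with Hölder's inequality and the Poincar\'e--Sobolev inequality $\|\psi\|_{\lebe^n(\Omega)} \leq C(\Omega)\|D\psi\|_{\lebe^n(\R^n)}$ (available since $\psi \in \sobo_{0}^{1,n}(\Omega)$) gives
\[
\left|\int_{\Omega} g_j\cdot \psi \dif y\right| \;\leq\; C(\Omega)\|D\psi\|_{\lebe^n(\R^n)}\qquad \text{for each}\; j\in\{1,\ldots,J\}.
\]
Plugging in the bound $|\lambda_j(u)|\leq C(\Omega)\|u\|_{\lebe^1(\Omega)}$ and rearranging then delivers the claimed estimate, with a constant depending only on $n$ and $\Omega$.
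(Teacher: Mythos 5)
Your argument is correct and follows essentially the same route as the paper's proof: apply the global extension Theorem \ref{thm:global:extension}, invoke the full-space Bourgain--Brezis estimate \eqref{eq:BB} against the zero-extension of $\psi$, and absorb the finite-dimensional correction via H\"older plus Poincar\'e--Sobolev. The paper merely phrases the correction as a finite-dimensional projection $Lu$ onto a space $X$ rather than writing out $\sum_j \lambda_j(u)\,g_j$ explicitly, but the estimates are identical.
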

\begin{proof}
    Let $u \in \lebe^1_{\diver }(\Omega)$. Utilising the \emph{global} extension result Theorem \ref{thm:global:extension}, there exists a finite dimensional space $X$ of smooth functions on $\Omega$, a linear map $L \colon \lebe^1_{\diver}(\Omega) \to X$ and a linear and bounded extension operator $\E_{\Omega} \colon \lebe^1_{\diver}(\Omega) \to \lebe^1_{\diver}(\Omega)$, such that 
    \[
    \E_{\Omega} u \equiv u - Lu \quad \text{in } \Omega.
    \]
    Then \[
    \left \vert \int_{\Omega} u \cdot \psi \dif y \right \vert \leq
    \left \vert \int_{\Omega} Lu \cdot \psi \dif y \right \vert 
    + 
    \left \vert \int_{\Omega} (u-Lu) \cdot \psi \dif y \right \vert. 
    \]
    For the former summand in this inequality we may use that $X$ is finite dimensional, i.e. all norms are comparable, and thus
    \begin{align*}
    \left \vert \int_{\Omega} Lu \cdot \psi \dif y \right \vert &\leq \Vert L u \Vert_{\lebe^{n/(n-1)}(\Omega)} \Vert\psi \Vert_{\lebe^n(\Omega)} 
    \leq C(\Omega)  \Vert L u \Vert_{\lebe^{1}(\Omega)} \Vert D\psi \Vert_{\lebe^n(\Omega)}
    \\
    &
    \leq  C(\Omega) \Vert u \Vert_{\lebe^1(\Omega)} \Vert D \psi \Vert_{\lebe^n(\Omega)}.
     \end{align*}
    For the latter we may use the extension result and the Bourgain-Brezis inequality on the full space
    \begin{align*}
    \left \vert \int_{\Omega} (u-Lu) \cdot \psi \dif y \right \vert &= \left \vert \int_{\R^n} \E_{\Omega} u \psi \dif y \right \vert
     \leq \Vert \E_{\Omega} u \Vert_{\lebe^1(\R^n)} \Vert D \psi \Vert_{\lebe^n(\R^n)}
     \\
     & 
     \leq C(\Omega) \Vert u \Vert_{\lebe^1(\Omega)} \Vert D \psi \Vert_{\lebe^n(\Omega)}.
    \end{align*}
    Combining both estimates directly gives the lemma.
\end{proof}
We remark that with the algebraic observations of \cite{VS13} one may generalise Lemma \ref{BB:domains} to other constraints of first-order.
As a direct consequence of Lemma \ref{BB:domains}, following \cite{BVS}, one obtains.
\begin{corollary}
Let $\Omega \subset \R^n$ be a bounded Lipschitz domain. \begin{enumerate} [label=(\alph*)]
    \item  We have the embedding $\lebe^1_{\diver}(\Omega) \hookrightarrow \sobo^{-1,\tfrac{n}{n-1}}(\Omega;\R^n) = \left(\sobo^{1,n}_0(\Omega;\R^n)\right)'$.
    \item The unique solution $u$ to the vectorial Laplace equation
    \begin{equation*}
        \left\{ \begin{array}{rcll}
            - \Delta u &=& f & \text{in } \Omega, \\
            u&=& 0 & \text{on } \partial \Omega,
        \end{array} \right.
    \end{equation*}
    is in $\sobo^{1,\tfrac{n}{n-1}}(\Omega;\R^n)$ whenever $f \in \lebe^1_{\diver}(\Omega)$.
    \end{enumerate}
\end{corollary}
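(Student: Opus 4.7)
Part (a) will follow from Lemma \ref{BB:domains} by a density argument, and part (b) will be reduced to part (a) together with classical $\sobo^{1,p}$-regularity for the Dirichlet Laplacian on Lipschitz domains.

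For part (a), I would start from the observation that each $u \in \lebe^1_{\diver}(\Omega)$ defines a linear functional
\[
T_u \colon \varphi \longmapsto \int_{\Omega} u \cdot \varphi \dif x,
\]
a priori only on $(\sobo^{1,n}_0 \cap \lebe^{\infty})(\Omega;\R^n)$, and that Lemma \ref{BB:domains} provides exactly the quantitative bound
\[
|T_u(\varphi)| \leq C\,\|u\|_{\lebe^{1}(\Omega)}\,\|\!\D\varphi\|_{\lebe^{n}(\Omega)}.
\]
The first step is to verify that $(\sobo^{1,n}_0 \cap \lebe^{\infty})(\Omega;\R^n)$ is dense in $\sobo^{1,n}_0(\Omega;\R^n)$; this is standard via truncation at height $k$ combined with dominated convergence in $\sobo^{1,n}$. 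Once this is in place, $T_u$ extends uniquely by continuity to a bounded linear functional on $\sobo^{1,n}_0(\Omega;\R^n)$, the operator norm still being controlled by $C\|u\|_{\lebe^1(\Omega)}$. Since $\sobo^{-1,n/(n-1)}(\Omega;\R^n)$ is precisely the topological dual of $\sobo^{1,n}_0(\Omega;\R^n)$, this identifies $T_u$ with an element of $\sobo^{-1,n/(n-1)}(\Omega;\R^n)$ and gives the continuity of the embedding $u \mapsto T_u$.

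For part (b), given $f \in \lebe^1_{\diver}(\Omega)$, part (a) immediately provides $f \in \sobo^{-1,n/(n-1)}(\Omega;\R^n)$. The Lax--Milgram theorem applied coordinate-wise yields a unique weak solution $u \in \sobo^{1,2}_0(\Omega;\R^n)$, so the only remaining task is an upgrade of regularity. I would invoke the continuity of the Dirichlet inverse
\[
(-\Delta_{D})^{-1} \colon \sobo^{-1,p}(\Omega;\R^n) \longrightarrow \sobo^{1,p}_{0}(\Omega;\R^n),
\]
applied with $p = n/(n-1)$. For smoothly bounded $\Omega$ this is classical Calder\'{o}n--Zygmund theory; for Lipschitz domains the admissible range of exponents is provided by the results of Jerison--Kenig and contains an open neighbourhood of $p=2$ that always includes $p = n/(n-1) \in (1,2]$.

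The main obstacle I foresee is essentially bookkeeping: carefully checking that $p = n/(n-1)$ does fall within the admissible range of exponents for the $\sobo^{-1,p}\to\sobo^{1,p}_{0}$ isomorphism on the specific Lipschitz domain at hand. If one insists on fully general Lipschitz boundary with no further quantitative assumption, one may alternatively argue by interior $\sobo^{1,p}$-regularity away from $\partial\Omega$ combined with a localisation near the boundary, using that $f$, being divergence-free, has extra structure that often relaxes the threshold in the Jerison--Kenig theory.
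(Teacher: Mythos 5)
Your part (a) is correct and is essentially the intended argument: Lemma \ref{BB:domains} supplies the Bourgain--Brezis bound on the dense subspace $(\sobo^{1,n}_0 \cap \lebe^{\infty})(\Omega;\R^n)$, truncation at height $k$ plus dominated convergence in $\sobo^{1,n}$ gives density, and $T_u$ extends by continuity to an element of $\left(\sobo^{1,n}_0(\Omega;\R^n)\right)'$ with norm controlled by $\|u\|_{\lebe^1(\Omega)}$.

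Part (b) has two genuine problems. First, the Lax--Milgram step is not available: from part (a) you only know $f \in \sobo^{-1,n/(n-1)}(\Omega;\R^n) = \left(\sobo^{1,n}_0(\Omega;\R^n)\right)'$, and since $n/(n-1) < 2$ for all $n \geq 3$, on a bounded domain one has the strict inclusion $\sobo^{-1,2}(\Omega;\R^n) \subsetneq \sobo^{-1,n/(n-1)}(\Omega;\R^n)$. Thus $f$ need not act boundedly on $\sobo^{1,2}_0$, and there is no a priori $\sobo^{1,2}_0$-solution to "upgrade". Existence and uniqueness must be obtained directly from the boundedness of the Green operator
\[
(-\Delta_D)^{-1} \colon \sobo^{-1,n/(n-1)}(\Omega;\R^n) \longrightarrow \sobo^{1,n/(n-1)}_0(\Omega;\R^n);
\]
this boundedness is not a regularity upgrade of a pre-existing object, it \emph{is} the step. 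Second, and more seriously, your claim that the Jerison--Kenig admissible range "always includes $p = n/(n-1) \in (1,2]$" is false. For bounded Lipschitz domains in $\R^n$, $n\geq 3$, Jerison--Kenig give boundedness of $(-\Delta_D)^{-1}\colon \sobo^{-1,p}\to\sobo^{1,p}_0$ precisely for $p \in (3/2 - \epsilon, 3+\epsilon)$, with $\epsilon>0$ depending on the domain and degenerating as the Lipschitz constant grows (for $n=2$ the range is $(4/3-\epsilon,4+\epsilon)$), and this range is sharp. The exponent $n/(n-1)$ lies inside this range for $n=2$ ($p=2$) and $n=3$ ($p=3/2$), but for $n\geq 4$ one has $n/(n-1) < 3/2$, which is \emph{outside} the range for general Lipschitz domains. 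You flag this as "bookkeeping" and suggest that the divergence-free structure "often" relaxes the threshold, but that is not an argument. To make (b) work for $n \geq 4$, one would need to exploit the constraint $\diver f = 0$ beyond its passage through $\sobo^{-1,n/(n-1)}$ --- e.g.\ by extending $f$ with the paper's operator, invoking the full-space Bourgain--Brezis regularity, and separately controlling the resulting harmonic boundary correction --- and that argument must be supplied, not conjectured.
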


\subsection{Korn-Maxwell-Sobolev estimate on domains} \label{sec:84}
As a final application of the main results of the present paper, we discuss a borderline case for the so-called \emph{Korn-Maxwell-Sobolev inequalities}. Such inequalities provide a generalisation of the usual Korn-type inequalities to the so-called incompatible framework, meaning that the admissible competitors are not a priori assumed to be gradients. Given an open and bounded connected set $\Omega\subset\R^{3}$ with Lipschitz boundary, such inequalities are of the form 
\begin{align}\label{eq:KMSmain1}
\inf_{Y\in \mathfrak{Y}}\|X-Y\|_{\lebe^{p^{*}}(\Omega)} \leq c\Big(\|\mathscr{A}[X]\|_{\lebe^{p^{*}}(\Omega)} + \|\mathrm{Curl}(X)\|_{\lebe^{p}(\Omega)} \Big),\qquad X\in\hold^{\infty}(\overline{\Omega};\R^{3\times 3}), 
\end{align}
with a suitable space $\mathfrak{Y}$ of correctors.  
Here, $\mathscr{A}\colon \R^{3\times 3}\to\R^{3\times 3}$ is a \emph{part map}. We single out two examples of particular physical relevance: If, e.g., 
\begin{itemize} 
\item $\mathscr{A}[X]=X^{\mathrm{sym}}$ and $\mathfrak{Y}=\mathfrak{so}(3)$, then inequality \eqref{eq:KMSmain1} gives us a variant of the usual Korn inequality \cite{Friedrichs}: Letting $u\in\hold^{\infty}(\overline{\Omega};\R^{3})$ and putting $X=\D u$, \eqref{eq:KMSmain1} becomes 
\begin{align*}
\inf_{Y\in\mathfrak{so}(3)}\|\D u -Y\|_{\lebe^{p^{*}}(\Omega)}\leq c \|\varepsilon(u)\|_{\lebe^{p^{*}}(\Omega)}. 
\end{align*}
\item $\mathscr{A}[X]:= X^{\mathrm{devsym}}:=X^{\mathrm{sym}}-\frac{1}{3}\mathrm{tr}(X)\mathbbm{1}_{3}$ with the $(3\times 3)$-unit matrix $\mathbbm{1}_{3}=(\delta_{ij})_{1\leq i,j\leq 3}$ and
\begin{align*}
\mathfrak{Y}:=\D\mathcal{K}=\{\D\mathfrak{k}\colon\;\mathfrak{k}\;\text{conformal Killing vector}\}, 
\end{align*}
\eqref{eq:KMSmain1} gives us a variant of Korn inequalities involving the deviatoric symmetric gradient: Letting $u\in\hold^{\infty}(\overline{\Omega};\R^{3})$ and putting $X=\D u$, \eqref{eq:KMSmain1} becomes 
\begin{align*}
\inf_{Y\in\mathfrak{Y}}\|\D u -Y\|_{\lebe^{p^{*}}(\Omega)}\leq c\|\varepsilon^{D}u\|_{\lebe^{p^{*}}(\Omega)}. 
\end{align*}
\end{itemize}
Based on these examples, it is clear in how far Korn-Maxwell-Sobolev inequalities generalise the usual Korn-type inequalities to incompatible, so potentially non-curl-free fields. As for the usual Korn-type inequalities, estimates of this form can be obtained by harmonic analysis techniques \cite{GS21,GLN,GLN1,LMN,Schiffer3}. This especially makes inequality \eqref{eq:KMSmain1} difficult to be established for $p=1$. Yet, on full space, the specific structure of the $\mathrm{Curl}$-operator allows to employ a Bourgain-Brezis-type estimate of Van Schaftingen \cite{VS13} to approach \eqref{eq:KMSmain1}, see \cite{CG21,GS21}. Based on the divergence-free extension operator from Theorem \ref{thm:main:Lip}, we now establish how the critical inequalities for $p=1$ can be reduced to the requisite full space estimates:
\begin{proposition}
Let $\Omega\subset\R^{3}$ be open, bounded and connected with Lipschitz boundary. Then there exists a constant $c>0$ solely depending on $\Omega$ such that we have 
\begin{align}\label{eq:KMSsso}
\inf_{Y\in\D\mathcal{K}}\|X-Y\|_{\lebe^{3/2}(\Omega)}\leq c\Big(\|X^{\mathrm{devsym}}\|_{\lebe^{3/2}(\Omega)} + \|\mathrm{Curl}(X)\|_{\lebe^{1}(\Omega)} \Big)\qquad\text{for all}\;X\in\hold^{\infty}(\overline{\Omega};\R^{3\times 3}).
\end{align}
\end{proposition}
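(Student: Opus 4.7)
The plan is to reduce \eqref{eq:KMSsso} to the corresponding full-space inequality, established via the Bourgain--Brezis--Van Schaftingen circle of estimates (cf.\ \cite{CG21,GS21}):
\begin{equation} \label{eq:FS-KMS}
\inf_{Y\in\D\mathcal{K}}\|Z-Y\|_{\lebe^{3/2}(\R^3)} \leq c\bigl(\|Z^{\mathrm{devsym}}\|_{\lebe^{3/2}(\R^3)} + \|\mathrm{Curl}(Z)\|_{\lebe^{1}(\R^3)}\bigr)
\end{equation}
for $Z$ in the admissible class on $\R^{3}$. Accordingly, it suffices to produce, for each $X\in\hold^{\infty}(\overline{\Omega};\R^{3\times 3})$, a field $\bar X\colon\R^{3}\to\R^{3\times 3}$ with $\bar X|_{\Omega}-X$ lying in a fixed finite-dimensional space of smooth matrix fields and
\[
\|\bar X^{\mathrm{devsym}}\|_{\lebe^{3/2}(\R^{3})} + \|\mathrm{Curl}(\bar X)\|_{\lebe^{1}(\R^{3})} \lesssim \|X^{\mathrm{devsym}}\|_{\lebe^{3/2}(\Omega)} + \|\mathrm{Curl}(X)\|_{\lebe^{1}(\Omega)} + \|X\|_{\lebe^{3/2}(\Omega)}.
\]
The residual $\|X\|_{\lebe^{3/2}(\Omega)}$-term and the finite-dimensional slack are then removed by a standard Korn--Poincar\'e-type compactness argument on $\Omega$ once the infimum over $Y\in\D\mathcal{K}$ has been taken.

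To build $\bar X$, denote the rows of $X$ by $x_1,x_2,x_3\in\hold^{\infty}(\overline{\Omega};\R^{3})$. Each $\curl(x_i)$ is distributionally divergence-free on $\Omega$ since $\diver\curl=0$, so the global $\lebe^{1}$-divergence-free extension of Theorem~\ref{thm:global:extension}, applied row-wise, yields a matrix field $\bar C\in\lebe^{1}(\R^{3};\R^{3\times 3})$ with distributionally row-wise divergence-free rows and
\[
\|\bar C\|_{\lebe^{1}(\R^{3})}\lesssim \|\mathrm{Curl}(X)\|_{\lebe^{1}(\Omega)}, \qquad \bar C|_{\Omega} - \mathrm{Curl}(X) \in V_{0},
\]
where $V_{0}$ is the fixed finite-dimensional space whose rows are spanned by the harmonic fields $g_j$ from \eqref{def:gj}, and $X\mapsto \bar C|_{\Omega} - \mathrm{Curl}(X)$ is linear and bounded by $\|\mathrm{Curl}(X)\|_{\lebe^{1}(\Omega)}$. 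The row-wise $\lebe^{1}\to\sobo^{1,3/2}$ Biot--Savart estimate of Bourgain--Brezis \cite{BB} then produces a potential $X^{\sharp}\in\sobo^{1,3/2}(\R^{3};\R^{3\times 3})$ with $\mathrm{Curl}(X^{\sharp})=\bar C$ and $\|X^{\sharp}\|_{\sobo^{1,3/2}(\R^{3})}\lesssim \|\bar C\|_{\lebe^{1}(\R^{3})}$.

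The remaining task is to correct $X^{\sharp}$ by a matrix gradient so that $\bar X$ coincides with $X$ on $\Omega$ modulo a fixed finite-dimensional subspace. Since each $g_j$ is harmonic on $\Omega$ (being the gradient of $\tfrac{-1}{|x-y_j|}$), a Helmholtz-type decomposition on $\Omega$ gives $X^{\sharp}|_{\Omega}-X=\D w + h$ with $w\in\sobo^{1,3/2}(\Omega;\R^{3})$, $h$ in a finite-dimensional space $V$ of vector potentials of the $g_j$'s, and $\|\nabla w\|_{\lebe^{3/2}(\Omega)}$ dominated by the right-hand side of \eqref{eq:KMSsso} plus $\|X\|_{\lebe^{3/2}(\Omega)}$; the non-simply-connected case is accommodated by absorbing finitely many additional harmonic matrix fields. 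Setting $\bar X := X^{\sharp} - \D(E_J w)$, where $E_J$ is the Jones $\sobo^{1,3/2}$-extension operator, we obtain $\mathrm{Curl}(\bar X)=\bar C$ (since gradients are curl-free), $\bar X|_\Omega - X = h$, and the desired deviatoric bound. Inserting $\bar X$ into \eqref{eq:FS-KMS}, restricting to $\Omega$, and absorbing the finite-dimensional and $\lebe^{3/2}$-slack yields \eqref{eq:KMSsso}. The main technical obstacle is precisely this Helmholtz-decomposition-plus-Jones-extension step: one must produce $w$ with sharp $\lebe^{3/2}$-bounds on $\nabla w$ while keeping the discrepancy $h$ inside a fixed finite-dimensional space, so that the gradient correction does not contaminate the deviatoric symmetric part of $\bar X$.
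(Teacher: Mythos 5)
Your strategy is genuinely different from the paper's, and the divergence occurs at the very step you yourself flag as "the main technical obstacle." The paper does not construct a global extension of $X$ at all. Instead, it invokes the Ne\v{c}as--Lions-type inequality (cf.\ \cite[eq.\ (42)]{LN1})
\begin{equation*}
\|X\|_{\lebe^{3/2}(\Omega)}\leq c\Big(\|X^{\mathrm{devsym}}\|_{\lebe^{3/2}(\Omega)}+\|\mathrm{Curl}(X)\|_{\sobo^{-1,3/2}(\Omega)} + \textstyle\sum_{j=1}^{m}\bigl\vert \int_{\Omega}\mathbf{e}_{j}\cdot X\,\dif x\bigr\vert \Big),
\end{equation*}
which already contains the finite-dimensional correction terms, and then upgrades $\|\mathrm{Curl}(X)\|_{\sobo^{-1,3/2}(\Omega)}$ to $\|\mathrm{Curl}(X)\|_{\lebe^{1}(\Omega)}$. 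For that upgrade one only needs to extend $\mathrm{Curl}(X)$ (row-wise divergence-free, extended by the \emph{local} Theorem~\ref{thm:main:Lip} to a neighbourhood, then cut off) and dualize against $\psi\in\hold_{c}^{\infty}(\Omega;\R^{3\times3})$ via the Bourgain--Brezis estimate \eqref{eq:VShelp}. No Biot--Savart potential, no Helmholtz decomposition, and no Jones extension of $X$ itself.

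The concrete gap in your argument is the final absorption. After inserting your $\bar X$ into the full-space estimate and restricting to $\Omega$, the best you can obtain is an inequality of the form
\begin{equation*}
\inf_{Y\in\D\mathcal{K}}\|X-Y\|_{\lebe^{3/2}(\Omega)}\leq c\Big(\|X^{\mathrm{devsym}}\|_{\lebe^{3/2}(\Omega)}+\|\mathrm{Curl}(X)\|_{\lebe^{1}(\Omega)}+\|X\|_{\lebe^{3/2}(\Omega)}\Big),
\end{equation*}
because your $\|\nabla w\|_{\lebe^{3/2}(\Omega)}$-bound, and hence your control of $\bar X^{\mathrm{devsym}}$ outside $\Omega$, genuinely requires $\|X\|_{\lebe^{3/2}(\Omega)}$ on the right. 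You then claim to remove this term ``by a standard Korn--Poincar\'{e}-type compactness argument once the infimum has been taken.'' But this is precisely where the argument breaks: replacing $X$ by $X-Y'$ and minimising over $Y'\in\D\mathcal{K}$ turns the residual $\|X\|_{\lebe^{3/2}(\Omega)}$ into $\inf_{Y'}\|X-Y'\|_{\lebe^{3/2}(\Omega)}$ --- the same quantity as on the left --- so the inequality becomes vacuous unless one already knows $c<1$. A genuine Peetre--Tartar compactness argument would require a compact embedding letting you pass from weak to strong $\lebe^{3/2}$-convergence of a normalised sequence with vanishing $(X^{\mathrm{devsym}},\mathrm{Curl}(X))$, but no such compactness is available here: the relevant map is the identity on $\lebe^{3/2}$ and the $\lebe^{1}$-bound on the curl does not yield equi-integrability or any embedding gain. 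This is exactly the obstruction that the Ne\v{c}as--Lions input circumvents, since there the curl enters in the $\sobo^{-1,3/2}$-norm and the compactness $\lebe^{3/2}(\Omega)\hookrightarrow\sobo^{-1,3/2}(\Omega)$ does hold. Without importing that (or an equivalent) domain estimate, your route does not close. Separately, the Helmholtz-plus-Jones step is not actually needed and, as written, leaves the dependence of the finite-dimensional space $V$ on $\Omega$ and the uniformity of the decomposition constants unverified.
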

\begin{proof}
We first record from \cite[eq. (42)]{LN1} that there exists a constant $c>0$ such that 
\begin{align}\label{eq:NecasLions}
\|X\|_{\lebe^{3/2}(\Omega)}\leq c\Big(\|X^{\mathrm{devsym}}\|_{\lebe^{3/2}(\Omega)}+\|\mathrm{Curl}(F)\|_{\sobo^{-1,3/2}(\Omega)} + \sum_{j=1}^{m}\left\vert \int_{\Omega}\mathbf{e}_{j}\cdot F\dif x\right\vert \Big), 
\end{align}
where $\{\mathbf{e}_{1},...,\mathbf{e}_{m}\}$ is an $\lebe^{2}$-orthonormal basis of $\mathfrak{Y}=\D\mathcal{K}$. Inequality \eqref{eq:NecasLions} can be obtained by Ne\v{c}as-Lions-type techniques, see \cite{LN1,LN2}. We put $g:=\mathrm{Curl}(X)$ so that, in particular $g=(g_{1},g_{2},g_{3})^{\top}\in\lebe^{1}(\Omega;\R^{3\times 3})$ and the row-wise divergence satisfies $\mathrm{div}(g)=0$. By a componentwise application of Theorem \ref{thm:main:Lip}, we may extend $g$ to a row-wise distributionally divergence-free field $\widetilde{g}\in\lebe^{1}(\Omega_{\varepsilon};\R^{3})$ on an open neighbourhood $\Omega_{\varepsilon}$ of $\Omega$ by use of an $\lebe^{1}$-bounded extension operator. 

For future reference, we recall from \cite{BB} that there exists a constant $c>0$ such that 
\begin{align}\label{eq:VShelp}
\int_{\R^{3}}\Phi\cdot\varphi\dif x \leq c\Big(\|\Phi\|_{\lebe^{1}(\R^{3})}\|\nabla\varphi\|_{\lebe^{3}(\R^{n})} + \|\mathrm{div}(\Phi)\|_{\lebe^{1}(\R^{3})}\|\varphi\|_{\lebe^{3}(\R^{3})} \Big) 
\end{align}
holds for all fields $\Phi,\varphi\in\hold_{c}^{\infty}(\R^{3};\R^{3})$. Smooth approximation then yields that \eqref{eq:VShelp} extends to all $\Phi\in\lebe^{1}(\R^{3};\R^{3})$ such that $\mathrm{div}(\Phi)\in\lebe^{1}(\R^{3})$ and all $\varphi\in\sobo^{1,3}(\R^{3};\R^{3})$. 

We choose a smooth cut-off function $\rho\in\hold_{c}^{\infty}(\Omega_{\varepsilon};[0,1])$ such that $\rho=1$ in $\Omega$, and define $\Phi$ to be the extension of $\rho\widetilde{g}\in\lebe^{1}(\Omega_{\varepsilon};\R^{3\times 3})$ to $\R^{3}$ by zero. For $\psi\in\hold_{c}^{\infty}(\Omega;\R^{3\times 3})$, we then denote by $\overline{\psi}$ its trivial extension to $\R^{3}$. Hence, applying \eqref{eq:VShelp} componentwisely and using that $\mathrm{div}(\widetilde{g})=0$ in $\Omega_{\varepsilon}$, we conclude that 
\begin{align}\label{eq:mainboundKMS}
\begin{split}
\int_{\R^{3}}\Phi\cdot\overline{\psi}\dif x = \int_{\R^{3}}(\rho\widetilde{g})\cdot\overline{\psi}\dif x  & \leq c\Big(\|\rho\widetilde{g}\|_{\lebe^{1}(\R^{3})}\|\nabla\overline{\psi}\|_{\lebe^{3}(\R^{n})} + \|\mathrm{div}(\rho\widetilde{g})\|_{\lebe^{1}(\R^{3})}\|\overline{\psi}\|_{\lebe^{3}(\R^{3})} \Big) \\ 
& \leq c(\varepsilon,\rho)\,\|\widetilde{g}\|_{\lebe^{1}(\Omega_{\varepsilon})}\|{\psi}\|_{\sobo^{1,3}(\Omega)} \leq c(\varepsilon,\rho,\Omega)\,\|g\|_{\lebe^{1}(\Omega)}\|{\psi}\|_{\sobo^{1,3}(\Omega)}. 
\end{split}
\end{align}
We then have that 
\begin{align*}
\|g\|_{\sobo^{-1,3/2}(\Omega)} & = \sup_{\substack{\psi\in\hold_{c}^{\infty}(\Omega;\R^{3\times 3})\\ \|\psi\|_{\sobo^{1,3}(\Omega)}\leq 1}}\int_{\Omega}g\cdot\psi\dif x \leq  \sup_{\substack{\psi\in\hold_{c}^{\infty}(\Omega;\R^{3\times 3})\\ \|\psi\|_{\sobo^{1,3}(\Omega)}\leq 1}}\int_{\R^{3}}\Phi\cdot\overline{\psi}\dif x \stackrel{\eqref{eq:mainboundKMS}}{\leq} c\,\|g\|_{\lebe^{1}(\Omega)}.
\end{align*}
We then insert this bound into \eqref{eq:NecasLions} and apply the resulting inequality to $X-\sum_{j=1}^{N}\langle\mathbf{e}_{j},X\rangle\mathbf{e}_{j}$. This yields the claimed inequality \eqref{eq:KMSsso}, and the proof is complete. 
\end{proof}

 \bibliography{literature.bib}
\bibliographystyle{abbrv}
\end{document}